\newtheorem{thm}{Theorem}[section]
\newtheorem*{specialThm}{Theorem 3.1\'{}}
\newtheorem{lem}[thm]{Lemma}
\newtheorem{prop}[thm]{Proposition}
\newtheorem{qs}[thm]{Question}
\newtheorem{cor}[thm]{Corollary}
\newtheorem{conj}[thm]{Conjecture}
\theoremstyle{remark} 
\newtheorem{rem}[thm]{Remark}
\crefname{rem}{Remark}{Remarks}
\Crefname{rem}{Remark}{Remarks}
\crefname{lem}{Lemma}{Lemmas}
\Crefname{lem}{Lemma}{Lemmas}
\newtheorem{example}[thm]{Example}
\newtheorem{notation}[thm]{Notation}
\theoremstyle{definition} 
\newtheorem{df}[thm]{Definition} 
\titleformat*{\section}{\normalsize \bfseries \filcenter}
\titleformat*{\subsection}{\normalsize \bfseries }
\newtheorem{mainthm}{Theorem}
\Crefname{mainthm}{Theorem}{Theorems}
\newtheorem{maincor}[mainthm]{Corollary}
\Crefname{maincor}{Corollary}{Corollaries}
\def\namedlabel#1#2{\begingroup
   \def\@currentlabel{#2}\label{#1}\endgroup
}
\DeclareFontFamily{U}{mathx}{}
\DeclareFontShape{U}{mathx}{m}{n}{<-> mathx10}{}
\DeclareSymbolFont{mathx}{U}{mathx}{m}{n}
\DeclareMathAccent{\widehat}{0}{mathx}{"70}
\DeclareMathAccent{\widecheck}{0}{mathx}{"71}
\title{\normalsize \textbf{Resolutions of toric subvarieties by line bundles and applications}}
\author{\normalsize Andrew Hanlon, Jeff Hicks, Oleg Lazarev}
\date{}
\newcommand{\eps}{\varepsilon}
\newcommand{\bb}{\mathbb}
\newcommand{\RR}{\mathbb R}
\newcommand{\ZZ}{\mathbb Z}
\renewcommand{\AA}{\mathbb A}
\newcommand{\NN}{\mathbb N}
\newcommand{\PP}{\mathbb P}
\newcommand{\LL}{\mathbb L}
\newcommand{\TT}{\mathbb T}
\newcommand{\KK}{\mathbb K}
\newcommand{\kk}{\mathbb K}
\newcommand{\GG}{\mathbb G}
\newcommand{\sS}{S}
\newcommand{\into}{\hookrightarrow}
\newcommand{\tensor}{\otimes}
\renewcommand{\Im}{\text{Im}}
\newcommand{\ul}{\underline}
\newcommand{\HE}{\Psi}
\newcommand{\sF}{F}
\newcommand{\ot}{\leftarrow}
\newcommand{\ott}{{\color{red}\leftarrow}}
\newcommand{\tto}{{\color{blue}\rightarrow}}
\newcommand{\bF}{\textbf{F}}
\newcommand{\strata}{\sigma}
\newcommand{\stratb}{\tau}
\newcommand{\stratc}{\upsilon}
\newcommand{\X}{\mathcal X}
\newcommand{\Y}{\mathcal Y}
\newcommand{\bm}{\delta}
\newcommand{\fF}{F}
\newcommand{\gentime}{\text{\ClockLogo}}
\DeclareMathOperator{\id}{id}
\DeclareMathOperator{\Sh}{Sh}
\DeclareMathOperator{\EP}{EP}
\DeclareMathOperator{\cone}{cone}
\DeclareMathOperator{\Hom}{Hom}
\DeclareMathOperator{\sgn}{sgn}
\DeclareMathOperator{\Coh}{Coh}
\DeclareMathOperator{\Crit}{Crit}
\DeclareMathOperator{\ind}{ind}
\DeclareMathOperator{\Pic}{Pic}
\DeclareMathOperator{\st}{\; |\; }
\DeclareMathOperator{\coker}{coker}
\DeclareMathOperator{\str}{star}
\DeclareMathOperator{\Ob}{Ob}
\DeclareMathOperator{\Spec}{Spec}
\DeclareMathOperator{\Frob}{Frob}
\DeclareMathOperator{\Rdim}{Rdim}
\newcommand{\Addresses}{{\bigskip
  \footnotesize

  \noindent A.~Hanlon, \textsc{Department of Mathematics, Dartmouth College}\par\nopagebreak
  \noindent \textit{E-mail address}: \texttt{ahanlon.math@gmail.com}

  \medskip

  \noindent J.~Hicks, \textsc{School of Mathematics,  University of Edinburgh}\par\nopagebreak
  \noindent \textit{E-mail address}: \texttt{jeff.hicks@ed.ac.uk}

  \medskip

  \noindent O.~Lazarev, \textsc{Department of Mathematics,  University of Massachusetts Boston}\par\nopagebreak
  \noindent \textit{E-mail address}: \texttt{oleg.lazarev@umb.edu}

  \medskip

}}
\begin{document}

\clearpage
\setcounter{page}{1}
\maketitle

\begin{abstract}
	Given any toric subvariety $Y$ of a smooth toric variety $X$ of codimension $k$, we construct a length $k$ resolution of $\mathcal O_Y$  by line bundles on $X$. Furthermore, these line bundles can all be chosen to be direct summands of the pushforward of $\mathcal O_X$ under the map of toric Frobenius. The resolutions are built from a stratification of a real torus that was introduced by Bondal and plays a role in homological mirror symmetry.
 
    As a corollary, we obtain a virtual analogue of Hilbert's syzygy theorem for smooth projective toric varieties conjectured by Berkesch, Erman, and Smith. Additionally, we prove that the Rouquier dimension of the bounded derived category of coherent sheaves on a toric variety is equal to the dimension of the variety, settling a conjecture of Orlov for these examples. We also prove Bondal's claim that the pushforward of the structure sheaf under toric Frobenius generates the derived category of a smooth toric variety and formulate a refinement of Uehara's conjecture that this remains true for arbitrary line bundles.
\end{abstract}

\section{Introduction}
A fundamental idea in algebraic geometry is to describe a sheaf via a resolution by sheaves in some well-behaved class. For example, cohomology and other derived functors can be defined in terms of projective or injective resolutions. Moreover, resolutions can contain information not captured by the result of a homological algebra computation as a chain complex often contains more data than its cohomology. 

This paper studies resolutions of sheaves by line bundles on a smooth toric variety $X$. As $X$ admits a description by a combinatorial object, namely a fan $\Sigma$ in the real vector space $N_\RR$ generated by the cocharacter lattice $N$ of torus $\TT$ acting on $X$, it is reasonable to expect that many sheaves have explicit, even combinatorial, resolutions. We produce such resolutions for toric subvarieties using a finite set of line bundles. The line bundles are canonically characterized in terms of the toric Frobenius morphism, and the resolution is encoded by a stratification of a real torus introduced by Bondal \cite{bondal2006derived}. These resolutions additionally have minimal length. In particular, we exhibit an explicit resolution of the diagonal that should be fruitful for computational applications.

Classically, a locally free resolution of any coherent sheaf on a smooth projective variety $X$ of length $\dim(X)$ is guaranteed by Hilbert's syzygy theorem. Such minimal free resolutions have found various striking applications in the study of coherent sheaves on projective space, where these resolutions often involve only direct sums of line bundles. However, resolutions by direct sums of line bundles have greater potential than minimal free resolutions when generalizing to other smooth projective toric varieties \cite{berkesch2020virtual}. One way to produce such a resolution is to apply Hilbert's syzygy theorem instead to the Cox ring of $X$, but the resulting resolution is typically much longer than the dimension of $X$. The resolution of the diagonal constructed here remedies this situation by implying that any coherent sheaf on a smooth projective toric variety $X$ has a length at most $\dim(X)$ resolution by direct sums of line bundles as conjectured by Berkesch, Erman, and Smith \cite{berkesch2020virtual}.

As the derived category was developed as a natural setting for homological algebra that keeps track of resolutions up to quasi-isomorphism, our resolutions also have implications for derived categories of toric varieties. 
Much work on the derived category of a toric variety has focused on generalizing Beilinson's resolution of the diagonal on projective space \cite{beilinson1978coherent} and the resulting full strong exceptional collection of line bundles in $D^b\Coh(\PP^n)$.
For a general toric variety $X$, $D^b\Coh(X)$ always admits a full exceptional collection \cite{kawamata2006derived,kawamata2013derived}. Although there are algorithmic approaches to finding such a collection \cite{ballard2019variation} and thus algebraically describing $D^b\Coh(X)$, there does not appear to be a single canonical choice with geometric meaning. Alternatively, for smooth $X$, $D^b\Coh(X)$ is known to be generated by line bundles, but it is also known that even for smooth toric varieties it is not possible to produce an exceptional collection of line bundles \cite{hille2006counterexample,efimov2014maximal}. Instead, the resolution of the diagonal presented here implies that there is a canonically characterized finite set of line bundles that generate $D^b\Coh(X)$ in minimal time.   

\subsection{Results}

We now more carefully describe our setup and results. Although it would seem \emph{a priori} simpler to work entirely within the framework of toric varieties, many of our arguments are cleaner when using the language of toric stacks. Thus, from this point forward we will generalize to toric stacks in the sense of \cite{geraschenko2015toric}, and $\X$ will denote a toric stack over an algebraically closed field determined by a stacky fan $(\Sigma, \ul\beta)$. We will additionally assume that all toric stacks can be covered with smooth stacky charts. In particular, the toric stacks that we consider are smooth. See \cref{subsec:stackbackground} for some background on toric stacks and \cref{subsec:charts} for the definition of a smooth stacky chart.  We would like to point out that the toric stack associated with a smooth stacky chart is $[\AA^n/G]$ where $G$ is a finite group whose order does not divide the characteristic of the ground field $\kk$ and $n$ is the dimension of $\X$. To be clear, stacks will appear as a convenient language for discussing equivariant sheaves and resolutions in our proof, and we do not use any deep theory of stacks. For those so inclined, every instance of stack appearing in this section could be replaced by variety. For simplicity, all results stated in this section are over an algebraically closed field of characteristic zero. However, we prove versions of these results for algebraically closed fields of positive characteristic in the text.

On a toric stack $\X$ covered by smooth stacky charts, we define a finite set $\Theta$ of line bundles on $\X$, closely related to the toric Frobenius morphisms, in \cref{subsec:Thomsen}. We call $\Theta$ the \emph{Thomsen collection}. For an inclusion $i\colon \Y \hookrightarrow \X$ of a closed toric substack, we will abuse notation and write $\mathcal{O}_\Y$ for the pushforward $i_* \mathcal{O}_\Y$. Our main result is a finite resolution of $\mathcal O_\Y$ by elements of $\Theta$. 

\begin{mainthm}[Restatement of \cref{thm:resolutionOfSubstacks}] \label{mainthm:res}
	Let $\X$ be a toric stack that admits a cover by smooth stacky charts and let $\Y$ be a closed toric substack of $\X$. There exists an explicit resolution of $\mathcal{O}_\Y$ by a complex of direct sums of line bundles in the Thomsen collection on $\X$ with length equal to the codimension of $\Y$. 
\end{mainthm}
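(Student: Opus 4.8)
The plan is to reduce to the local model and then globalize via a combinatorial gluing encoded by Bondal's stratification of the real torus. First I would treat the affine case: on a smooth stacky chart $[\AA^n/G]$, a closed toric substack $\Y$ is cut out by a monomial ideal, and after passing to the $G$-equivariant setting this becomes a sub-ideal generated by characters. Here the Koszul complex on the defining equations already gives a resolution by line bundles (characters), but these need not lie in the Thomsen collection $\Theta$; so the first substantive step is to show that, after the change of line bundles effected by toric Frobenius, each term of this local resolution is a direct sum of elements of $\Theta$. This should follow from the characterization of $\Theta$ in \cref{subsec:Thomsen} as precisely the summands of the Frobenius pushforward of $\mathcal{O}_\X$, together with the fact that Frobenius pushforward is exact and sends the structure sheaf of a toric substack to a complex built from $\Theta$.

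Next I would set up the global combinatorial object. Bondal's stratification partitions the real torus $N_\RR/N$ (or an appropriate refinement adapted to $\Sigma$ and to the fan of $\Y$) into cells, and I would build a cochain-type complex whose terms are indexed by cells of dimension $\codim \Y$ down to $0$, with each cell contributing a specific line bundle in $\Theta$ and the differentials given by the combinatorics of face inclusions decorated with signs. The key claim is that this complex is a resolution of $\mathcal{O}_\Y$: exactness can be checked locally on the toric charts, where it reduces to the affine computation above, and the fact that the length equals $\codim \Y$ reflects that the relevant stratification has cells only in the expected range of dimensions. I would verify the differentials square to zero by the standard sign bookkeeping on the cell complex, and verify the augmentation to $\mathcal{O}_\Y$ by restricting to the dense torus and to each chart.

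The main obstacle I expect is the globalization step: showing that the locally-defined Koszul-type resolutions on the smooth stacky charts patch together into a single global complex of line bundles, and that this patching is exactly what Bondal's stratification records. Concretely, one must check that the line bundle attached to each cell is globally well-defined (independent of the chart) and that the chart-local exact complexes are compatible on overlaps — this is where the toric/combinatorial structure is essential, since for a general subvariety no such global Koszul resolution exists. A secondary technical point is handling the stacky structure: one must ensure the group $G$ acting on each chart interacts correctly with $\Theta$, which should be manageable given the standing assumption that $|G|$ is invertible in $\kk$, so that taking $G$-invariants is exact and equivariant resolutions descend. Once the complex is constructed and shown exact, minimality of the length is immediate from the codimension count, and explicitness follows because every ingredient — the cells, the signs, the line bundles — is read off directly from the stacky fan.
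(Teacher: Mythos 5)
Your global skeleton is right — a complex indexed by the strata of a real torus of dimension $\codim(\Y)$, with line bundles from $\Theta$ attached to strata and differentials given by signed face maps, verified to be a resolution by restricting to charts — but the local step contains a genuine gap. A closed toric substack of $\AA^n$ is \emph{not} cut out by a monomial ideal: it is the closure of a subtorus of $\mathbb{G}_m^n$, cut out by a binomial (lattice) ideal, and this ideal is in general not a complete intersection. For example, the affine twisted cubic $t\mapsto(t,t^2,t^3)$ has codimension $2$ but its ideal requires three generators, so the Koszul complex on any generating set is either too long or fails to be exact; for complete-intersection examples such as $z_0=z_1$ the Koszul terms are all trivial line bundles rather than the nontrivial elements of $\Theta$ that the global complex demands. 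No "change of line bundles effected by toric Frobenius" repairs this, because the problem is the shape of the complex, not the labels on its terms. The paper's essential idea, which your proposal is missing, is to sidestep resolving $Y\subset\AA^n$ directly: one deletes the toric strata on which the $\TT_Y$-action is not free to obtain $X^\circ\subset\AA^n$ with $Y\cap X^\circ=\TT_Y$, resolves the \emph{identity point} in the quotient stack $[X^\circ/\TT_Y]$ (where a genuine Koszul-type, length-$\codim(\Y)$ resolution is available by the point case), and transports this back to $\AA^n$ via the composite $\pi^*\circ i_\flat$ — a functor built by hand precisely because the naive pushforward $i_*$ along the open inclusion is neither exact nor line-bundle-preserving; one must prove separately (the paper's "equivariant codimension two" analysis) that $\pi^*\circ i_\flat$ is exact, preserves coherence, and sends the Thomsen collection to the Thomsen collection.

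A secondary but real issue is your globalization step: the restriction of the global complex to a chart is \emph{not} literally the local model, because deleting a divisor coarsens the stratification of the torus. The two complexes are only chain homotopy equivalent, and exhibiting that equivalence requires an acyclic matching and a discrete Morse reduction (the content of the paper's restriction-functoriality lemma); asserting that exactness "reduces to the affine computation" skips this. Finally, even after local exactness on every chart one only knows that $H_0$ is a line-bundle twist of $\mathcal{O}_\Y$; the paper uses the augmentation $\alpha^\phi\colon\mathcal{O}_\X\to C_0$ to rule out a nontrivial twist, a point your proposal does not address.
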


As far as the authors are aware, \cref{mainthm:res} produces new explicit resolutions even when $\X$ is projective space and $\Y$ is a projective toric variety embedded via a toric morphism.
The resolution in \cref{mainthm:res} is constructed geometrically. Namely, we produce this resolution from a stratification by subtori of a real torus whose dimension coincides with the codimension of $\Y$. When $\Y$ is a point, that is, the unit in $\TT$, the real torus is naturally identified with $M_\RR/M$ where $M$ is the character lattice of $\TT$. In that case, the stratification was introduced by Bondal in \cite{bondal2006derived} and is given simply by the subtori orthogonal to the rays of the fan (see also \cite[Section 5]{favero2022homotopy}). The proof of \cref{mainthm:res} is outlined in \cref{subsec:strategy} where the general description of the stratification is also given. 

\begin{rem} 
    The proof of \cref{mainthm:res} uses a few functorial properties of the constructed resolution. However, we expect that the resolution enjoys stronger functoriality that would be interesting to study. In particular, we expect that our assumption that $\X$ ``be covered by smooth stacky charts'' can be weakened to ``be covered by finite quotient stacks of toric varieties.'' In fact, previous work \cite{bruns2003divisorial, bruns2005conic, faber2019non} on the Thomsen collection $\Theta$ and its homological properties for general affine toric varieties, where $\Theta$ is the set of conic modules, may be helpful in such a generalization. Moreover, the shape of the resolution depends only on the choice of toric subvariety and the one-dimensional cones of $\Sigma$. 
\end{rem}

\begin{rem}
    When $\kk$ has positive characteristic, we are only able to prove \cref{mainthm:res} for subvarieties whose local models in charts have a nice behavior with respect to the characteristic (\cref{def:admissibleOverK}). The only section of the proof where these conditions are used is in \cref{subsubsec:characterPushforward} where we employ Schur orthogonality to identify the pushforward along a quotient by a finite abelian group $G$ with a direct sum of sheaves twisted by the characters of $G$.
\end{rem}

For describing more general sheaves on toric stacks, \cref{mainthm:res} is of particular interest when applied to the diagonal in $\X \times \X$.  

\begin{maincor} \label{maincor:diagonal}
    Let $\X$ be a toric stack covered by smooth stacky charts and let $\mathcal{O}_\Delta$ be the structure sheaf of the diagonal in $\X\times \X$. There is a resolution of $\mathcal O_\Delta$ by a complex of direct sums of sheaves in the Thomsen collection on $\X \times \X$ of length $\dim (\X)$.
\end{maincor}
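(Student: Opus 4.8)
The plan is to deduce this directly from \cref{mainthm:res} by exhibiting $\mathcal{O}_\Delta$ as the structure sheaf of a closed toric substack of a toric stack to which that theorem applies. The first step is to observe that $\X\times\X$ is itself a toric stack admitting a cover by smooth stacky charts: its stacky fan is the product $(\Sigma\times\Sigma,\,\ul\beta\oplus\ul\beta)$ in $N_\RR\oplus N_\RR$, with acting torus $\TT\times\TT$, and a product $[\AA^n/G]\times[\AA^{n}/G'] = [\AA^{2n}/(G\times G')]$ of smooth stacky charts of the two factors is again a smooth stacky chart (over a field of characteristic zero the coprimality condition on $|G\times G'|$ is automatic, and $2n = \dim(\X\times\X)$). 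Since such products cover $\X\times\X$, the hypothesis of \cref{mainthm:res} is satisfied.

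Next I would check that the diagonal $\Delta$ is a closed toric substack of $\X\times\X$. The diagonal embedding of tori $\TT\into\TT\times\TT$ is a homomorphism of algebraic groups, given on cocharacter lattices by the map $N\to N\oplus N$, $v\mapsto(v,v)$, which is a map of fans from $\Sigma$ to $\Sigma\times\Sigma$; hence the induced morphism $\X\to\X\times\X$ is toric. Because $\X$ is separated — it is covered by smooth stacky charts — this morphism is a closed immersion, so $\Delta$ is a closed toric substack. It has dimension $\dim\X$, so $\codim_{\X\times\X}\Delta = \dim(\X\times\X) - \dim\X = \dim\X$.

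Finally, applying \cref{mainthm:res} to $\Delta\hookrightarrow\X\times\X$ yields a resolution of $\mathcal{O}_\Delta$ by a complex of direct sums of line bundles in the Thomsen collection on $\X\times\X$ of length $\codim\Delta = \dim\X$, which is the assertion. One can further note that, since toric Frobenius on $\X\times\X$ is the product of the toric Frobenius morphisms on the two factors, these line bundles are precisely the external products $L\boxtimes L'$ with $L, L'$ in the Thomsen collection on $\X$, although this description is not needed. Since the corollary is a direct specialization of \cref{mainthm:res}, there is no genuine obstacle here: the only points requiring care are the stability of the ``covered by smooth stacky charts'' condition under products and the fact that the torus diagonal prolongs to a closed toric substack, both of which are routine. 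In positive characteristic one would additionally have to verify that $\Delta$ satisfies the admissibility hypothesis of \cref{def:admissibleOverK}, which we do not pursue here since the statement is over a field of characteristic zero.
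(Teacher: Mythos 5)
Your route is the paper's: \cref{maincor:diagonal} is obtained by specializing \cref{mainthm:res} to the diagonal inside $\X\times\X$, after observing that products of smooth stacky charts cover $\X\times\X$ and that the Thomsen collection there consists of external products of line bundles from the Thomsen collection on $\X$; since the diagonal has codimension $\dim(\X)$, the resolution has the asserted length. In that sense the proposal is a faithful reconstruction of the (essentially unwritten) deduction in the paper.

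One justification, however, is not correct as stated: you argue that the diagonal is a closed toric substack because ``$\X$ is separated --- it is covered by smooth stacky charts.'' Being covered by smooth stacky charts does not imply separatedness; the non-separated line of \cref{ex:pointInNonSeparatedLine} is covered by (stabilized) smooth stacky charts, and for it the diagonal is not closed. What \cref{mainthm:res} actually consumes is the combinatorial hypothesis of \cref{def:inclusion}: the diagonal is induced by the morphism of stacky fans $(\Sigma,\ul\beta)\to(\Sigma\times\Sigma,\ul\beta\oplus\ul\beta)$ with $\ul\Phi(l)=(l,l)$ and $\ul\phi(v)=(v,v)$; both lattice maps are injective with torsion-free cokernel (isomorphic to $L$ and $N$ respectively), and the preimage of a cone $\sigma_1\times\sigma_2$ is $\sigma_1\cap\sigma_2\in\Sigma$, so the set of preimages is exactly $\Sigma$ and the map is an immersion of stacky fans. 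Replacing the separatedness claim by this direct verification (which is also implicitly how the paper treats the diagonal, cf.\ the example following \cref{def:admissibleOverK}) closes the gap without changing anything else in your argument; your remark on $\kk$-admissibility in positive characteristic is likewise consistent with the paper.
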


\begin{rem} 
    The Thomsen collection on $\X\times \X$ consists of products of sheaves in the Thomsen collection $\Theta$ on $\X$, so all sheaves that appear in the resolution of \cref{maincor:diagonal} are of the form $\mathcal{F} \boxtimes \mathcal{F'}$ for $\mathcal F, \mathcal F' \in \Theta$. 
\end{rem}

For example when $X=\PP^n$, the Thomsen collection $\Theta = \{ \mathcal O(-n), \hdots, \mathcal O(-1), \mathcal O \}$ coincides with Beilinson's exceptional collection from \cite{beilinson1978coherent}.
Thus, \cref{maincor:diagonal} can be viewed as a generalization of Beilinson's resolution of the diagonal to all smooth toric varieties and stacks.
Previously, optimal resolutions of the diagonal generalizing Beilinson's resolution were obtained in \cite{bayer2001syzygies} for unimodular toric varieties\footnote{Here, unimodular is meant in a strong sense. The determinant of any collection of $\dim(X)$ generators of the fan must be $-1, 0$ or $1$.} and more recently in \cite{brown2022short} for smooth projective toric varieties of Picard rank $2$. 
Beilinson's resolution was also generalized to weighted projective spaces in \cite{canonaco2008derived}.
For an alternative generalization for general toric varieties, which is not known to admit a finite subcomplex in general, see \cite{brown2021tate}. 

In fact, a minimal resolution of the diagonal has applications to commutative algebra that motivated the constructions in \cite{brown2021tate,brown2022short}. In algebraic language, a resolution of a coherent sheaf by direct sums of line bundles on a toric variety translates to a virtual resolution of a $B$-saturated module over the Cox ring in the sense of \cite[Definition 1.1]{berkesch2020virtual} where $B$ is the irrelevant ideal. Further, Berkesch, Erman, and Smith suggested \cite[Question 6.5]{berkesch2020virtual} (see also \cite[Conjecture 1.1]{yang2021virtual} and \cite[Conjecture 1.2]{brown2022short}) that virtual resolutions of length at most $\dim(X)$ can be constructed for any such module over the Cox ring of a smooth toric variety $X$. By the argument presented in \cite[proof of Proposition 1.2]{berkesch2020virtual} and \cite[proof of Corollary 1.3]{brown2022short}, the existence of these virtual resolutions follows from \cref{maincor:diagonal} on any smooth projective toric variety.

\begin{maincor} \label{maincor:virtual}
    Let $X$ be a smooth projective toric variety with Cox ring $R$ and irrelevant ideal $B$. Every finitely-generated $\Pic(X)$-graded $B$-saturated $R$-module has a virtual resolution of length at most $\dim(X)$.
\end{maincor}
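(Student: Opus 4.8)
The plan is to use \cref{maincor:diagonal} as a Fourier--Mukai kernel: convolving an arbitrary coherent sheaf with the resolution of the diagonal yields a resolution by direct sums of line bundles, and a suitable twist of the kernel keeps its length equal to $\dim(X)$ rather than $2\dim(X)$. This is the argument indicated in \cite[proof of Proposition 1.2]{berkesch2020virtual} and \cite[proof of Corollary 1.3]{brown2022short}, whose main steps I now outline. First I pass to the sheaf side via the Cox correspondence: a finitely generated $\Pic(X)$-graded $B$-saturated $R$-module $M$ determines a coherent sheaf $\mathcal{F}=\widetilde{M}$ on $X$, and a complex of direct sums of line bundles on $X$ of length $\le\dim(X)$ quasi-isomorphic to $\mathcal{F}$ translates, upon taking global sections of sufficiently positive twists of each line bundle, into a complex of free $\Pic(X)$-graded $R$-modules of length $\le\dim(X)$ whose sheafification is $\mathcal{F}$, that is, into a virtual resolution of $M$ in the sense of \cite[Definition 1.1]{berkesch2020virtual}. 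So it suffices to produce such a complex of sheaves on $X$.

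Apply \cref{maincor:diagonal} to get a resolution $P_\bullet\to\mathcal{O}_\Delta$ on $X\times X$ of length $n:=\dim(X)$ by vector bundles, with $P_{-j}=\bigoplus_\alpha\mathcal{L}_{j,\alpha}\boxtimes\mathcal{L}'_{j,\alpha}$ for finitely many line bundles $\mathcal{L}_{j,\alpha},\mathcal{L}'_{j,\alpha}$ in the Thomsen collection of $X$. Fix an ample line bundle $\mathcal{O}_X(1)$ on the projective variety $X$, and write $\Delta\colon X\to X\times X$ for the diagonal and $\pi_1,\pi_2\colon X\times X\to X$ for the projections. For any $d$, tensoring $P_\bullet$ with the line bundle $\mathcal{O}_X(d)\boxtimes\mathcal{O}_X(-d)$ yields a complex $P_\bullet^{(d)}$ of vector bundles that is still a resolution of $\mathcal{O}_\Delta$, since $\mathcal{O}_\Delta\otimes\bigl(\mathcal{O}_X(d)\boxtimes\mathcal{O}_X(-d)\bigr)\cong\Delta_*\Delta^*\bigl(\mathcal{O}_X(d)\boxtimes\mathcal{O}_X(-d)\bigr)=\Delta_*\mathcal{O}_X=\mathcal{O}_\Delta$ by the projection formula. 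I now compute $R\pi_{2*}\bigl(\pi_1^*\mathcal{F}\otimes P_\bullet^{(d)}\bigr)$ in two ways. As $P_\bullet^{(d)}$ consists of vector bundles, $\pi_1^*\mathcal{F}\otimes P_\bullet^{(d)}$ represents $\pi_1^*\mathcal{F}\otimes^{\LL}\mathcal{O}_\Delta\cong\Delta_*\mathcal{F}$ (projection formula, using that $\pi_1$ is flat and $\pi_1\circ\Delta=\id$), so $R\pi_{2*}\bigl(\pi_1^*\mathcal{F}\otimes P_\bullet^{(d)}\bigr)\cong R\pi_{2*}\Delta_*\mathcal{F}=\mathcal{F}$. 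On the other hand, the $(-j)$-th term of $\pi_1^*\mathcal{F}\otimes P_\bullet^{(d)}$ is $\bigoplus_\alpha\bigl(\mathcal{F}\otimes\mathcal{L}_{j,\alpha}(d)\bigr)\boxtimes\mathcal{L}'_{j,\alpha}(-d)$, and by the K\"unneth formula $R^q\pi_{2*}$ of this term is $\bigoplus_\alpha H^q\bigl(X,\mathcal{F}\otimes\mathcal{L}_{j,\alpha}(d)\bigr)\otimes_{\kk}\mathcal{L}'_{j,\alpha}(-d)$.

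The key step is the choice of $d$. There are only finitely many coherent sheaves $\mathcal{F}\otimes\mathcal{L}_{j,\alpha}$, so Serre vanishing on the projective variety $X$ provides a single $d\gg0$ with $H^q\bigl(X,\mathcal{F}\otimes\mathcal{L}_{j,\alpha}(d)\bigr)=0$ for all $q>0$ and all $(j,\alpha)$. For this $d$, the hypercohomology spectral sequence computing $R\pi_{2*}\bigl(\pi_1^*\mathcal{F}\otimes P_\bullet^{(d)}\bigr)$ is concentrated in the row $q=0$; hence that object is represented by the honest complex $\pi_{2*}\bigl(\pi_1^*\mathcal{F}\otimes P_\bullet^{(d)}\bigr)$, whose $(-j)$-th term $\bigoplus_\alpha H^0\bigl(X,\mathcal{F}\otimes\mathcal{L}_{j,\alpha}(d)\bigr)\otimes_{\kk}\mathcal{L}'_{j,\alpha}(-d)$ is a direct sum of copies of the line bundle $\mathcal{L}'_{j,\alpha}(-d)$, and which has length $n=\dim(X)$. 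Comparing with the first computation, $\mathcal{F}$ is quasi-isomorphic to a complex of direct sums of line bundles of length $\dim(X)$; feeding this back through the Cox correspondence produces the required virtual resolution of $M$.

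The main obstacle is exactly the length bookkeeping: a naive convolution of $\mathcal{F}$ with the length $\dim(X)$ resolution of the diagonal gives a bicomplex whose totalization a priori has length up to $2\dim(X)$ --- the $\dim(X)$ contributed by $P_\bullet$ plus up to $\dim(X)$ from the cohomological amplitude of $R\pi_{2*}$ --- and the twist $\mathcal{O}_X(d)\boxtimes\mathcal{O}_X(-d)$ together with Serre vanishing is precisely the device that collapses the $R\pi_{2*}$ direction to a single row. The remaining ingredients --- the identity $R\pi_{2*}\circ\Delta_*=\id$, the computation $\pi_1^*\mathcal{F}\otimes^{\LL}\mathcal{O}_\Delta\cong\Delta_*\mathcal{F}$ over the smooth $X$, and the dictionary between bounded complexes of sums of line bundles on $X$ and bounded complexes of free modules over the Cox ring up to $B$-saturation --- are standard.
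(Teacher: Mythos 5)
Your proposal is correct and follows the same route the paper takes, namely the argument of Berkesch--Erman--Smith and Brown--Erman that the paper cites: convolve $\mathcal{F}$ with the length-$\dim(X)$ resolution of the diagonal from \cref{maincor:diagonal}, twist the kernel by $\mathcal{O}_X(d)\boxtimes\mathcal{O}_X(-d)$ with $d\gg 0$ so that vanishing of higher cohomology (the paper invokes Fujita vanishing; your use of Serre vanishing for the finitely many sheaves $\mathcal{F}\otimes\mathcal{L}_{j,\alpha}$ is an adequate substitute for a single module) collapses $R\pi_{2*}$ to the $q=0$ row, and transport the resulting length-$\dim(X)$ complex of direct sums of line bundles back to a virtual resolution over the Cox ring. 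The details — the projection-formula identities, the degeneration of the pushforward spectral sequence, and the dictionary between sums of line bundles and graded free modules — are all handled correctly.
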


\begin{rem} The assumption that $X$ is projective in \cref{maincor:virtual} is used to appeal to the Fujita vanishing theorem and to apply a Fourier-Mukai transform, but it is plausible that this assumption can be weakened.
\end{rem}

In \cite{eisenbud2015tate} and \cite{berkesch2020virtual}, \cref{maincor:virtual} was proved for products of projective spaces. Other special cases of the Berkesch-Erman-Smith conjecture were established in \cite{yang2021virtual, brown2022short}. As our resolution of the diagonal is explicit, we expect that \cref{maincor:virtual} can be built upon to obtain further interesting results in computational algebraic geometry.

Now, we turn to applications to derived categories. An object $E \in D^b\Coh(\X)$ is a \emph{classical generator} if $D^b\Coh(\X)$ coincides with the smallest strictly full, saturated, and triangulated subcategory containing $E$. The following is an immediate consequence of \cref{maincor:diagonal}.

\begin{maincor} \label{maincor:generate}
    Let $\X$ be a toric stack covered by smooth stacky charts and let $\Theta$ be the Thomsen collection on $\X$. The sheaf
    $$ E = \bigoplus_{\mathcal{F} \in \Theta} \mathcal{F} $$
    is a classical generator of $D^b\Coh(\X)$.
\end{maincor}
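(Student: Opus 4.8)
The plan is to deduce this from the Fourier--Mukai interpretation of \cref{maincor:diagonal}, following Beilinson's original argument for projective space. Write $p_1,p_2\colon \X\times\X\to\X$ for the two projections and $\Delta\colon\X\to\X\times\X$ for the diagonal. Since $\X$ is smooth, $\Delta$ is a regular closed immersion, and for any $F\in D^b\Coh(\X)$ the projection formula gives $p_1^*F\otimes^{L}\mathcal O_\Delta\cong\Delta_*\bigl(L\Delta^*p_1^*F\bigr)=\Delta_*F$ (using $p_1\circ\Delta=\id$); applying $Rp_{2*}$ and using $p_2\circ\Delta=\id$ yields the familiar identity $F\cong Rp_{2*}\bigl(p_1^*F\otimes^{L}\mathcal O_\Delta\bigr)$. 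Let $\langle E\rangle\subseteq D^b\Coh(\X)$ denote the smallest strictly full, saturated, triangulated subcategory containing $E=\bigoplus_{\mathcal F\in\Theta}\mathcal F$; the goal is to show $\langle E\rangle=D^b\Coh(\X)$, i.e.\ that every $F\in D^b\Coh(\X)$ lies in $\langle E\rangle$.

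First I would substitute the resolution from \cref{maincor:diagonal}: $\mathcal O_\Delta$ is quasi-isomorphic to a bounded complex $C^\bullet$ on $\X\times\X$ each of whose terms is a finite direct sum of sheaves $\mathcal F\boxtimes\mathcal F'$ with $\mathcal F,\mathcal F'\in\Theta$. Since $C^\bullet$ is finite, its stupid filtration realizes it as a finite iterated cone of the shifted terms $C^i[-i]$; applying the triangulated functor $Rp_{2*}\bigl(p_1^*F\otimes^{L}(-)\bigr)$ then exhibits $F\cong Rp_{2*}(p_1^*F\otimes^{L}\mathcal O_\Delta)$ as a finite iterated cone of the objects $Rp_{2*}\bigl(p_1^*F\otimes^{L}(\mathcal F\boxtimes\mathcal F')\bigr)$ and their shifts. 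It therefore suffices to show each such building block lies in $\langle E\rangle$. Writing $\mathcal F\boxtimes\mathcal F'=p_1^*\mathcal F\otimes p_2^*\mathcal F'$ and using the projection formula for $p_2$ together with flat base change along the structure map $\X\to\Spec\kk$, one gets
$$Rp_{2*}\bigl(p_1^*F\otimes^{L}(\mathcal F\boxtimes\mathcal F')\bigr)\;\cong\;R\Gamma\bigl(\X,\,F\otimes^{L}\mathcal F\bigr)\otimes_\kk\mathcal F'.$$

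When $\X$ is proper (in particular projective, the case of primary interest here), $R\Gamma(\X,F\otimes^{L}\mathcal F)$ is a perfect complex of $\kk$-vector spaces, hence quasi-isomorphic to a finite direct sum of shifts of $\kk$, so the object above is a finite direct sum of shifts of the single line bundle $\mathcal F'\in\Theta$. As $\mathcal F'$ is a direct summand of $E$, it lies in $\langle E\rangle$ (this is the only place saturation is used), and then so does any finite direct sum of its shifts; feeding this back into the iterated-cone description gives $F\in\langle E\rangle$, which finishes the proof when $\X$ is proper.

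The one genuinely delicate point is the last step when $\X$ is not proper: then $R\Gamma(\X,F\otimes^{L}\mathcal F)$ need not have finite-dimensional cohomology, the building blocks above are infinite-rank sheaves that need not lie in $D^b\Coh(\X)$, and the naive iterated-cone argument no longer stays inside $D^b\Coh(\X)$. I would handle this by reducing to the proper case---for instance by choosing an open embedding of $\X$ into a proper toric stack still covered by smooth stacky charts and exploiting that restriction of $D^b\Coh$ is essentially surjective and compatible with the resolution of the diagonal, or alternatively by working through a finite affine toric cover of $\X$ with a \v{C}ech argument. I expect this bookkeeping, rather than the Fourier--Mukai argument itself, to be the only real obstacle.
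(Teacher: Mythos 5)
Your argument is correct and is essentially the one the paper uses: the paper deduces this corollary from the resolution of the diagonal via exactly this Fourier--Mukai/Beilinson-style device (spelled out in its appendix on Rouquier dimension, where the building blocks $R\Gamma(\X, F\otimes^{L}\mathcal F)\otimes_\kk\mathcal F'$ appear verbatim), and it handles the non-proper case the same way you propose, by passing to a toric compactification covered by smooth stacky charts and using that localization of derived categories preserves classical generators.
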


\begin{rem} \label{rem:bondalgeneration}
    In \cite{bondal2006derived}, Bondal claimed without proof that \cref{maincor:generate} holds for smooth and proper toric varieties. For instance, \cref{maincor:generate} is referred to as Bondal's conjecture in \cite{uehara2014exceptional} (see, in particular, Remark 3.7) where it is proved for $3$-dimensional smooth toric Fano varieties. \cref{maincor:generate} was proved in \cite{ohkawa2013frobenius} in dimension $2$ and for smooth toric Fano fourfolds in \cite{prabhu2017tilting}. One of the methods presented in \cite{prabhu2017tilting} is analogous to the methods employed here, but on a clever \emph{ad hoc} case-by-case basis. We note that the smooth case follows from the smooth and proper one, as every toric variety has a toric compactification, and localization of derived categories sends classical generators to classical generators.

    While preparing this paper, we learned that Ballard, Duncan, and McFaddin have a different argument for proving \cref{maincor:generate}. In addition, an independent proof of \cref{maincor:generate} explicitly using homological mirror symmetry appeared in \cite[Corollary 3.5]{favero2023rouquier} while this paper was being completed.
\end{rem} 

\begin{rem} 
    As observed in \cite{bondal2006derived}, there is a class of toric varieties for which $\Theta$ forms a full strong exceptional collection of line bundles (see also \cite[Proposition 5.14]{favero2022homotopy}) given \cref{maincor:generate}. Also, there are instances where a subset of the Thomsen collection gives a full strong exceptional collection of line bundles. For example, such collections have been constructed in \cite{bernardi2010derived,costa2010frobenius,costa2012derived,dey2010derived,lason2011full} using \cref{maincor:generate} and in \cite{uehara2014exceptional, prabhu2017tilting} by proving of \cref{maincor:generate} in their setting. However, in general, smooth toric varieties do not admit full exceptional collections of line bundles \cite{hille2006counterexample} even under Fano assumptions \cite{efimov2014maximal}.
\end{rem} 

In \cite{uehara2014exceptional}, Uehara proposes studying the extension of \cref{maincor:generate} to any line bundle on a smooth and proper toric variety. More precisely, \cite[Conjecture 3.6]{uehara2014exceptional} asks if, for any line bundle $\mathcal{L}$ on a smooth and proper toric variety $X$, there is a positive integer $\ell$ such that the Frobenius pushforward of degree $\ell$ of $\mathcal L$ is a classical generator of $D^b \Coh(X)$. The answer is not always affirmative as demonstrated by $\mathcal{O}(-1)$ on $\PP^1$. In \cref{sec:frobgen}, we illustrate some general obstructions to this claim and conjecture that these are the only obstructions.

The fact that the resolution in \cref{maincor:diagonal} has length $\dim(\X)$ has additional consequences. In \cite{rouquier2008dimensions}, Rouquier introduced an interesting notion of dimension for triangulated categories which we will refer to as the \emph{Rouquier dimension} and denote by $\Rdim$. Roughly, the generation time of a classical generator $E$ is $t$ if every object can be obtained from $E$ by taking summands, shifts, finite direct sums, and at most $t$ cones. The Rouquier dimension is the minimum generation time over all classical generators. See \cref{subsec:Rouquierbackground} for more details. For any smooth quasi-projective scheme $Y$, Rouquier proved in \cite{rouquier2008dimensions} that
$$ \dim(Y) \leq \Rdim(D^b\Coh(Y)) \leq 2 \dim(Y) $$
and that the first inequality is equality when $Y$ is, in addition, affine. In \cite{orlov2009remarks}, after additionally verifying
\begin{equation} \label{eq:orlovconj}
    \Rdim(D^b\Coh(Y)) = \dim(Y)
\end{equation}
holds for all smooth quasi-projective curves, Orlov conjectures that \eqref{eq:orlovconj} holds for all smooth quasi-projective schemes. This conjecture has received considerable attention and been established in a variety of cases (see \cite{bai2021rouquier} for a thorough discussion of known cases).  However, the general result remains elusive. Restricting to the toric setting, a notable result is \cite{ballard2019toric} which verifies the conjecture when a certain subset of $\Theta$ gives rise to a tilting object and for all smooth toric Fano threefolds and fourfolds. 

When $\X$ is proper, the length of a resolution of the diagonal gives an upper bound on the generation time of the corresponding classical generator. Thus, we verify Orlov's conjecture for all toric stacks satisfying our covering condition.\footnote{Technically, \cite{rouquier2008dimensions} does not provide the lower bound for all toric stacks, but it follows from localization that $\dim(\X)=\Rdim(D^b\Coh(\TT)) \leq \Rdim(D^b\Coh(\X))$. See also \cite[Section 2.2]{ballard2012hochschild}.}

\begin{maincor} \label{maincor:rdim}
	If $\X$ is a toric stack covered by smooth stacky charts, $\Rdim(D^b\Coh(\X))=\dim(\X)$.
\end{maincor}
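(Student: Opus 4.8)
The plan is to deduce this as a quick consequence of \cref{maincor:diagonal} together with the standard Fourier–Mukai formalism for generation time. First I would recall the key mechanism from \cite{rouquier2008dimensions,orlov2009remarks}: if $\X$ is proper and $\mathcal{O}_\Delta$ admits a resolution on $\X \times \X$ by a complex whose terms are direct sums of sheaves of the form $\mathcal{F} \boxtimes \mathcal{F}'$ with $\mathcal{F}, \mathcal{F}' \in \Theta$ and whose length is $\ell$, then for any object $G \in D^b\Coh(\X)$ one can compute $G \cong \Phi_{\mathcal{O}_\Delta}(G)$ as the Fourier–Mukai transform along the diagonal; feeding the resolution into the integral transform expresses $G$ as an iterated cone (with $\ell$ cones) of objects of the form $\mathcal{F} \otimes H^\bullet(\X, \mathcal{F}' \otimes G)$, i.e.\ direct sums of shifts of the $\mathcal{F} \in \Theta$. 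Hence $E = \bigoplus_{\mathcal{F} \in \Theta} \mathcal{F}$ generates $D^b\Coh(\X)$ in time at most $\ell = \dim(\X)$, giving $\Rdim(D^b\Coh(\X)) \le \dim(\X)$.

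For the reverse inequality, I would invoke the lower bound of Rouquier: for any smooth quasi-projective scheme $Y$ one has $\dim(Y) \le \Rdim(D^b\Coh(Y))$, and more generally for the open torus $\TT \subset \X$ localization of derived categories sends classical generators to classical generators without increasing generation time, so $\dim(\X) = \dim(\TT) = \Rdim(D^b\Coh(\TT)) \le \Rdim(D^b\Coh(\X))$; this is exactly the point flagged in the footnote, using that $D^b\Coh(\TT)$ is the derived category of the affine scheme $\TT$ where Rouquier's affine equality applies. Combining the two bounds yields the claim when $\X$ is proper.

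The one genuine subtlety — and the step I expect to be the main obstacle — is handling the non-proper case, since \cref{maincor:diagonal} is stated for all toric stacks covered by smooth stacky charts but the Fourier–Mukai argument for the upper bound really wants properness (to ensure the integral transform along $\mathcal{O}_\Delta$ is the identity on $D^b\Coh$ and that pushforward preserves coherence/boundedness). The fix is to choose a toric compactification $\bar{\X}$ of $\X$ that is again covered by smooth stacky charts — such a compactification exists by toric resolution of the fan, after subdividing to make all maximal cones smooth — apply the proper case to $\bar{\X}$ to get $\Rdim(D^b\Coh(\bar{\X})) = \dim(\bar{\X}) = \dim(\X)$, and then use that the restriction (localization) functor $D^b\Coh(\bar{\X}) \to D^b\Coh(\X)$ is essentially surjective and does not increase generation time, so $\Rdim(D^b\Coh(\X)) \le \Rdim(D^b\Coh(\bar{\X})) = \dim(\X)$. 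Together with the torus lower bound above this closes the argument; I would also remark that the Thomsen collection behaves compatibly under this restriction, so the generating object can still be taken to be $\bigoplus_{\mathcal{F}\in\Theta}\mathcal{F}$ on $\X$ itself.
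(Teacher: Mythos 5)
Your proposal is correct and follows essentially the same route as the paper: the upper bound comes from feeding the length-$\dim(\X)$ resolution of $\mathcal O_\Delta$ by boxes of Thomsen line bundles into the Fourier--Mukai transform along the diagonal (the paper's version of \cite[Lemma 2.16]{ballard2012hochschild} in \cref{subsec:Rouquierbackground}), the lower bound is the localization to the open torus $\TT$ combined with Rouquier's affine equality exactly as in the paper's footnote, and the non-proper case is handled by passing to a toric compactification and using that localization does not increase Rouquier dimension, just as in the paper's remark following the corollary.
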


\begin{rem}
    An independent and concurrent proof of \cref{maincor:rdim} using homological mirror symmetry and ideas similar to those explained in \cref{subsec:HMSMotication} was obtained by Favero and Huang in \cite{favero2023rouquier}. We would also like to note that Ballard previously suggested that toric Frobenius should be useful for resolving Orlov's conjecture for toric varieties.
\end{rem}

\begin{rem}
    When $\X$ is not proper, we observe that it has a toric compactification $\bar \X$ for which \cref{maincor:rdim} follows from \cref{maincor:diagonal}.  We then obtain \cref{maincor:rdim} for $\X$ as $\Rdim$ does not increase under localization of derived categories.

    It follows from \cref{maincor:rdim} that $\Rdim(D^b\Coh(X)) = \dim(X)$ for any normal toric variety $X$. This is because the singularities of toric varieties are rather well-behaved. In particular, the bounded derived category of coherent sheaves on any toric variety is a localization of that on a smooth projective toric variety. Other singular examples where \eqref{eq:orlovconj} holds have been found in \cite[Corollary 3.3]{ballard2012hochschild} and \cite[Example 4.22]{bai2021rouquier}, but the smoothness assumption is necessary in general \cite[Example 2.17]{bai2021rouquier}.
\end{rem}

\subsection{Mirror symmetry inspiration}
\label{subsec:HMSMotication}

We now briefly explain how this work is inspired by the homological mirror symmetry (HMS) conjecture \cite{kontsevich1995homological}. 
However, we note that the constructions and proofs in the body of the paper are entirely algebro-geometric and nowhere directly involve HMS. 

Bondal's proposal \cite{bondal2006derived} has had a tremendous impact on understanding HMS for toric varieties. Fang, Liu, Treumann, and Zaslow \cite{fang2011categorification,fang2012t, fang2014coherent} demonstrated the relation of Bondal's stratification to HMS by forming a singular Lagrangian $\LL_{\Sigma, \beta} \in T^* T^n$ where $T^n$ in the real torus $M_\RR/M$ and relating sheaves on $T^n$ with microsupport in $\LL_{\Sigma, \beta}$ to coherent sheaves on $\X$. The most general case of HMS for toric stacks was proved using this approach by Kuwagaki \cite{kuwagaki2020nonequivariant} which after applying the main result of \cite{ganatra2018microlocal}, becomes a  quasi-equivalence
\begin{equation} \label{eq:torichms}
    D_{dg}^b\Coh(\X) \simeq \text{Tw}^\pi \mathcal{W}(T^*T^n, \LL_{\Sigma, \beta})
\end{equation}
where $D_{dg}^b\Coh(\X)$ is a dg-enhancement of $D^b\Coh(\X)$ and $\text{Tw}^\pi \mathcal{W}(T^*T^n, \LL_{\Sigma, \beta})$ is the idempotent completion of the pre-triangulated closure of the partially wrapped Fukaya category of $T^*T^n$ stopped at $\LL_{\Sigma,\beta}$. 

The resolutions in \cref{mainthm:res} are motivated by a construction of resolutions in $\mathcal{W}(T^*T^n, \LL_{\Sigma, \beta})$. Namely, the zero section $T^n \subset T^*T^n$ is an object of $\mathcal{W}(T^*T^n, \LL_{\Sigma, \beta})$. Given a Morse function, well-behaved with respect to the stratification on $T^n$, one can produce a Lagrangian cobordism between $T^n$ and a disjoint union of Lagrangians which are isotopic to cotangent fibers in $T^*T^n$ and correspond to line bundles under \eqref{eq:torichms}. By \cite[Proposition 1.37]{ganatra2018sectorial}, this Lagrangian cobordism induces an isomorphism in the Fukaya category. This procedure can be generalized to apply to conormals of subtori mirror to toric subvarieties. Moreover, the relevant collection of line bundles can be identified with $\Theta$ using \cite{abouzaid2009morse,hanlon2022aspects}. In fact, it is possible to prove \cref{maincor:generate} by itself using variations on \cite[Proposition 5.22]{ganatra2018microlocal} and \cite[Lemma 3.14]{hanlon2022aspects}. Of course, there are various technical aspects of partially wrapped Fukaya categories that need to be wrangled with to make these arguments precise.
Even then, the output would be a quasi-isomorphism in the wrapped Fukaya category or mirror derived category of coherent sheaves with a complex with unspecified morphisms rather than an explicit resolution as constructed in \cref{mainthm:res}.
Our work \cite{hanlon2023rouquier}, which appeared after this article, carries out the strategy outlined above to relate Rouquier dimension of wrapped Fukaya categories to topological invariants.

\subsection{Strategy of Proof}
\begin{wrapfigure}{r}{0.5\textwidth}
    \centering
      \scalebox{.5 }{\begin{tikzpicture}[scale=2]
\newcommand{\OO}{\mathcal O}
\usetikzlibrary{calc, decorations.pathreplacing,shapes.misc}
\usetikzlibrary{decorations.pathmorphing}

\tikzstyle{fuzz}=[red, 
    postaction={draw, decorate, decoration={border, amplitude=0.15cm,angle=90 ,segment length=.15cm}},
]

\draw  (-1.5,3) rectangle (3.5,-2);
\draw[fuzz](1,3) node (v7) {} -- (1,-2);
\draw[fuzz](-1.5,0.5) -- (3.5,0.5) node (v6) {};
\draw[fuzz] (3.5,-2) node (v10) {} -- (-1.5,3);
\node[fill=white, fill opacity=50] (v2) at (1,0.5) {$\OO(0)$};
\node[fill=white, fill opacity=50] (v11) at (-1.5,3) {$\OO(-1)$};
\node[fill=white, fill opacity=50] (v1) at (-1.5,0.5) {$\OO(-1)$};
\node[fill=white, fill opacity=50] (v4) at (1,-2) {$\OO(-1)$};
\node[fill=white, fill opacity=50] (v3) at (-0.5,-1) {$\OO(-1)$};
\node[fill=white, fill opacity=50] (v8) at (2.5,2) {$\OO(-2)$};
\draw  (v1) edge (v2);
\node (v9) at (3.5,3) {};
\node (v5) at (-1.5,-2) {};

\draw[thick,  ->]  (v3) edge  node[midway, fill=white, fill opacity=50]{1} (v1);
\draw[thick,  ->]  (v3) edge  node[midway, fill=white, fill opacity=50]{1} (v4);
\draw[thick,  ->]  (v3) edge  node[midway, fill=white, fill opacity=50]{1} (v5);
\draw[thick,  ->]  (v1) edge  node[midway, fill=white, fill opacity=50]{$-x_2$}(v2);
\draw[thick,  ->]  (v6) edge  node[midway, fill=white, fill opacity=50]{$x_0$}(v2);
\draw[thick,  ->]  (v4) edge  node[midway, fill=white, fill opacity=50]{$x_2$}(v2);
\draw[thick,  ->]  (v7) edge  node[midway, fill=white, fill opacity=50]{$-x_1$}(v2);
\draw[thick,  ->]  (v8) edge  node[midway, fill=white, fill opacity=50]{$-x_0$}(v7);
\draw[thick,  ->]  (v8) edge  node[midway, fill=white, fill opacity=50]{$-x_1$}(v6);
\draw[thick,  ->]  (v8) edge  node[midway, fill=white, fill opacity=50]{$-x_2$}(v9);
\draw[thick,  ->]  (v10) edge node[midway, fill=white, fill opacity=50]{$-x_0$}(v2);
\draw[thick,  ->]  (v11) edge node[midway, fill=white, fill opacity=50]{$x_1$}(v2);
\end{tikzpicture} }
    \caption{The complex of sheaves associated to $\phi\colon e\to \PP^2$. The stratification $S^\phi$ is a stratification of the real 2-torus and indicated by the dashed red lines}
    \label{fig:introFig}
  \end{wrapfigure}
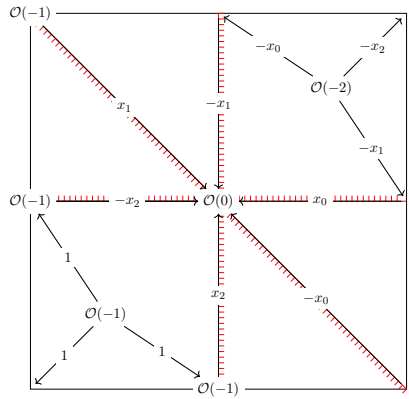
The proof of \cref{mainthm:res} breaks into two portions. First, we prove the special case where we resolve the skyscraper sheaf of the identity $e$ in a toric variety $X$ in \cref{thm:resolutionOfPoints}. The second part of the proof is to bootstrap this result to the resolution of toric subvarieties $Y \subset X$. To pass from the first to the second step, we will need to enhance the first step to \href{thm:resolutionOfPointsInStacks}{Theorem $3.1'$} ---``the resolution of identity in a toric \emph{stack}.'' The second step then naturally generalizes to the statement of 
\cref{mainthm:res} --- ``resolution of toric substacks inside a toric stack.''

To resolve a point $\phi\colon e\to  X$, we first construct an (a priori non-exact) complex of sheaves $C_\bullet(S^\phi, \mathcal O^\phi)$ on $X$ for every such $\phi$.  To each morphism $\phi\colon e\to X$, we associate a stratified space $S^\phi$ equipped with a Morse function and a sheaf $\mathcal O^\phi$ with values in line bundles on $X$. The complex $C_{\bullet}(S^\phi, \mathcal O^\phi)$ is the discrete Morse complex associated with this data. See \cref{fig:introFig} for an example of a stratified space equipped with a sheaf taking values in line bundles on $\PP^2$, whose associated Morse complex computes a resolution of $\phi\colon e\to \PP^2$.
We then prove that these complexes satisfy the following properties.
\begin{description}
    \item[Koszul resolution of points] When $\phi \colon e\to \AA^n$, the complex  $C_\bullet(S^\phi, \mathcal O^\phi)$ is the Koszul resolution of $\phi_*\mathcal O_e$. This follows for checking $\phi\colon e\to \AA^1$ by hand, and proving a K\"unneth-type formula (\cref{lem:kunneth}).
    \item[Pullback up to homotopy] Let $D_\rho$ be a toric divisor and $X_\rho=X\setminus D_\rho$ giving us an open inclusion $i \colon X_\rho\to X$. Let $\phi_\rho \colon e\to X_\rho$ be the inclusion of the identity. \cref{lem:functorialityRestriction} shows that $i^*C_\bullet(S^\phi, \mathcal O^\phi)$ and $C_\bullet(S^{\phi_\rho}, \mathcal O^{\phi_\rho})$ are homotopic. This is done by showing that $S^{\phi_\rho}$ and $S^{\phi}$ are two different stratifications of the same space with the same sheaf so their associated Morse complexes are homotopic.
\end{description}
Using these two properties and that $X$ is covered by smooth charts, we show that $C_\bullet(S^\phi, \mathcal O^\phi)$ restricts to a resolution of $\phi_*\mathcal O_e$ on every toric chart. This proves that the complex is a resolution of $\mathcal O_e$.  

We take a similar approach to resolving subvarieties. To every toric subvariety $\phi\colon Y\to X$ we associate a complex of sheaves $C_\bullet(S^\phi, \mathcal O^\phi)$ using a discrete Morse function. This complex also satisfies the pullback up to homotopy property. On the next step, we arrive at an impasse: on the toric charts, we do not have local models for resolutions of $\phi_*\mathcal O_Y|_{\AA^n}$. To get around this we reduce to the previous setting. For example, to resolve a line $\AA$ in $\AA^2$, one can first resolve a point in $\PP^1$, pull that back to a resolution of a line in $\AA^2\setminus\{0\}$, and then push that forward to a resolution on $\AA^2$. This example is examined in more detail in \cref{ex:resolutionOfLine}.

Generally, we will remove from $\AA^n$ some toric boundary components to obtain $X^\circ\subset \AA^n$ an open toric subvariety. This open subvariety will have the property that $Y\cap X_{\Sigma^\circ}$ is an algebraic torus $\TT_Y$. Then we attempt to take a quotient $X^\circ/\TT_Y$ and resolve a point there by our previous strategy. For this to work, we need to justify that the pullback-pushforward (by $\pi^*\circ i_*$ in \cref{fig:quotientStrategy}) of the resolution of a point $C_\bullet(S^{\phi_e}, \mathcal O^{\phi_e})$ gives us a resolution of $\phi_*\mathcal O_{\Y}$ and that this resolution agrees with $C_\bullet(S^\phi, \mathcal O^\phi)$. 
\begin{figure}[h]\centering
		\begin{tikzcd}
			e \arrow{d}{\phi_e}  & \mathbb{T}_Y  \arrow{l}  \arrow{d}{\phi^\circ}  \arrow{r} & Y \arrow{d}{\phi} \\
			\; [X^\circ/\mathbb{T}_Y] 	\arrow[bend right]{rrr}{i} &  X^\circ \arrow{r}{I} 	\arrow{l}{\pi^\circ}		  & \mathbb{A}^n \arrow{r}{\pi} & \;[\mathbb{A}^n/\mathbb{T}_Y]
		\end{tikzcd}
        \caption{Resolutions of toric subvarieties of $\AA^n$ are obtained by resolving a point on an appropriately chosen quotient stack.}
        \label{fig:quotientStrategy}
\end{figure}
To capture some of the geometric intuition from \cref{ex:resolutionOfLine}, we work with toric stacks. The second part of the argument can be broken into the steps:
\begin{description}
    \item[Points in stacks] Prove \href{thm:resolutionOfPointsInStacks}{Theorem $3.1'$}, which resolves $\phi\colon e\to \X$ for toric stacks $\X$ which are covered by smooth stacky charts. The main difference to the non-stacky version is in constructing the local resolutions: the case of $\phi \colon  e\to [\AA^n/G] $, where $G$ is a finite subgroup of the torus. This follows from \cref{lem:pushforwardquotient}, which shows that our resolution is functorial under some quotients by finite groups.
    \item[Checking \cref{fig:quotientStrategy}] The functor $i_*$ does not send line bundles to line bundles. We define a functor of sheaves $i_\flat:\Sh([X^\circ/\mathbb{T}_Y])\to \Sh([X/\mathbb{T}_Y])$ so that the composition $\pi^*\circ i_\flat$ is exact and sends line bundles to line bundles. Then we show that $C_\bullet(S^\phi, \mathcal O^\phi)$ is functorial under the composition $\pi^*\circ i_\flat$. This is proven in \cref{lem:pullbackquotient,lemma:functorialityCodim2}. 
\end{description}
The proofs of \cref{lem:pushforwardquotient,lem:pullbackquotient,lemma:functorialityCodim2} rely on the machinery for toric stacks developed in \cref{sec:backgroundB}. 

From an expositional viewpoint, it would be desirable to treat the cases of resolving points/ subvarieties/ substacks inside varieties/ stacks separately. However, we have taken the more economical route of defining $C_\bullet(S^\phi, \mathcal O^\phi)$ for the most general case (resolutions for substacks of stacks) for which the other cases are specializations. We suggest that the reader first looks through the construction for resolving points inside toric varieties before moving on to the other cases. For the case of $\phi:e\to X$, one can read the following sections, replacing everywhere the words ``toric substack'' with ``$e$'' and ``toric stack'' with ``smooth toric variety''.

\begin{itemize}
    \item \cref{subsec:stackbackground,subsec:Thomsen} which discuss notation for line bundles on toric varieties and the Thomsen collection.
    \item \cref{subsec:strategy,subsec:pathsandsheaves,subsec:defOfCbullet,ex:pointInA1,ex:pointInP2}, which cover the overall strategy in greater detail, define the complex $C_\bullet(S^\phi, \mathcal O^\phi)$, and provide some examples.
    \item \cref{pf:kunneth,pf:restrictionHomotopy} which prove the K\"unneth formula and pullback up to homotopy property of the complex.
\end{itemize}

\subsection{Outline}
The remainder of the paper is organized as follows. 
\cref{sec:backgroundB} contains some relevant background on line bundles on toric stacks and a plethora of examples motivating the proof of \cref{mainthm:res}. 
In particular, we give a thorough introduction to toric stacks including a discussion of the Thomsen collection. 

In \cref{sec:Resolutionideas}, we present the general outline of the proof of \cref{mainthm:res}. In \cref{subsec:strategy,subsec:pointsinstacks,subsec:stacksinstacks}, we demarcate the steps for proving \cref{mainthm:res} working in increasing generality on the cases of points in toric varieties, points in toric stacks, and substacks of toric stacks. We discuss stratifications of real tori in \cref{subsec:pathsandsheaves}, which leads to the definition of $C_\bullet(S^\phi, \mathcal O^\phi)$ in \cref{subsec:defOfCbullet}.

We then prove the lemmas constituting the proof of \cref{mainthm:res} in \cref{sec:Resolutionproofs}. \cref{sec:frobgen} proposes a generalization of \cref{maincor:generate} inspired by \cite[Conjecture 3.6]{uehara2014exceptional}. 

Finally, \cref{sec:quivers} contains some facts about discrete Morse theory that we use in our proofs. We also include a short discussion of quotient stacks in \cref{subsec:stackBackground} followed by some background information on generation in triangulated categories and Rouquier dimension in \cref{subsec:Rouquierbackground}. 

\subsection{Acknowledgements}
The authors thank Mohammed Abouzaid, Laurent C\^{o}t\'{e}, David Favero, Sheel Ganatra, Jesse Huang, Qingyuan Jiang, Mahrud Sayrafi, Nick Sheridan, and Abigail Ward for useful discussions. 
We are additionally grateful to Michael K. Brown and Daniel Erman for connecting our work to virtual resolutions and sharing their insights with us.
We also thank the anonymous referee for several improvements and for correcting various minor errors.
This project was born out of discussions at the workshop ``Recent developments in Lagrangian Floer theory" at the Simons Center for Geometry and Physics, Stony Brook University, and we thank the Simons Center and the workshop organizers for a stimulating scientific environment.

AH was supported by  NSF RTG grant DMS-1547145 and by the Simons Foundation (Grant Number
814268 via the Mathematical Sciences Research Institute, MSRI). JH was supported by EPSRC Grant EP/V049097/1. 
\section{Line bundles on toric stacks} \label{sec:backgroundB}
This section serves three purposes.
\begin{itemize}
	\item \cref{subsec:stackbackground}: Provide some background on line bundles on toric varieties/stacks from the perspective of support functions.
	\item \cref{subsec:Thomsen}: Introduce the Thomsen collection for toric varieties/stacks. This is a collection of line bundles on $X$ indexed by the strata of a stratification of the torus $M_\RR/M$.
	\item Develop the tools needed to prove \cref{thm:resolutionOfSubstacks}. In \cref{subsec:finiteQuotient}, we describe how line bundles pushforward under quotients. \Cref{subsec:charts} defines a toric stacky chart. In \cref{subsec:equivCodim2}, we examine how to interpret the codimension of the complement of an open immersion between toric stacks, and prove that when this ``equivariant codimension'' is greater than 1, the pushforward map along the inclusion enjoys special properties.
\end{itemize}

Additionally, this section includes many examples of resolutions of toric substacks of toric stacks of dimension 1 or 2, mostly designed to illustrate the arguments in \cref{thm:resolutionOfSubstacks} which allow us to pass from resolutions of points in toric stacks to resolutions of toric substacks. These examples are presented in increasing difficulty as follows.
\begin{itemize}
	\item \cref{ex:resolutionOfLine}: Resolving the line $z_0=z_1\subset \AA^2$.
	\item \cref{ex:nonequivParaResolution}: A non-example: resolving the parabola $z_0^2=z_1\subset \AA^2$ nonequivariantly.
			\item \cref{ex:pointInOrbifoldLine}: Resolving a point in an orbifold line. 
	\item \cref{ex:pointInWeightedP1}: Resolving a point in $\PP(2, 1)$. 
	\item \cref{ex:pointInNonSeparatedLine}: Resolving a point in the non-separated line.
	\item \cref{ex:equivariantResolutionOfParabola}: Resolving the parabola $\{ z_0^2=z_1 \} \subset \AA^2$ equivariantly.
	\item \cref{ex:resolutionOfHyperbola}: Resolving the hyperbola $\{ z_0z_1=1 \}\subset \AA^2$.
\end{itemize}
\subsection{Some resolutions of toric subvarieties in \texorpdfstring{$\AA^2$}{the plane}}
We work over an algebraically  closed field $\kk$ with characteristic $p\geq 0$. We start with some motivating examples and non-examples.  We denote by $\TT^k$ the (split) algebraic torus of dimension $k$. The examples are chosen to highlight the strategy of \cref{thm:resolutionOfSubstacks} of reducing resolutions of toric subvarieties to resolutions of points in a quotient stack.
\begin{example}[Resolving $z_0=z_1$]
	\label{ex:resolutionOfLine}
	Take the line $\phi \colon \AA^1\to \AA^2 $ parameterized by $z\mapsto (z, z)$. Let $i \colon \AA^2\setminus \{(0,0)\}\to \AA^2$ be the inclusion and let $\TT^1= \AA^1 \setminus \{0\}$. The relevant maps of fans are drawn on the left-hand side of \cref{fig:resolutionOfLine}. 
	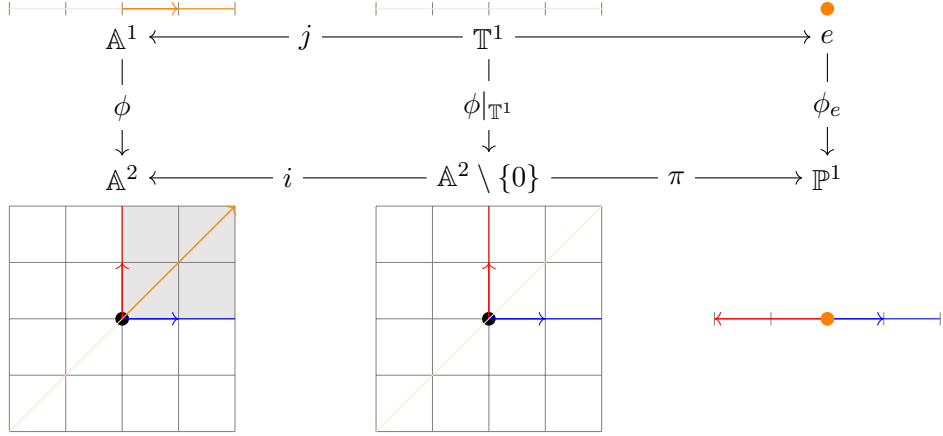
\begin{figure}
		\centering
		\usetikzlibrary{decorations.pathreplacing}
\begin{tikzpicture}[decoration=ticks, segment length=.75cm, scale=.75]

			\begin{scope}[]
				\begin{scope}[]
					\fill[gray!20]  (0,0) rectangle (2,2);
					\draw [help lines, step=1cm] (-2,-2) grid (2,2);
					\node[circle, fill=black, scale=.5] at (0,0) {};
					\draw[blue] (0,0) edge[->] (1,0) edge (2, 0);
					\draw[red] (0,0) edge[->] (0, 1) edge (0, 2);
					\draw[orange!20](-2,-2) edge (2,2);
					\draw[orange] (0,0) edge[->] (2,2) edge (2,2);
				\end{scope}
				\begin{scope}[shift={(0,5.5)}]
					\draw[decorate, help lines](-2,0)-- (2.01,0);
					\draw[orange!20](-2,0) -- (2,0);
					\draw[orange] (0,0) edge[->] (1,0) edge (2,0);
				\end{scope}
			\end{scope}

			\begin{scope}[shift={(6.5,0)}]
				\begin{scope}[]
					\draw [help lines, step=1cm] (-2,-2) grid (2,2);
					\node[circle, fill=black, scale=.5] at (0,0) {};
					\draw[blue] (0,0) edge[->] (1,0) edge (2,0);
					\draw[red] (0,0) edge[->] (0,1) edge (0,2);
					\draw[orange!20](-2,-2) edge (2,2);
				\end{scope}
				\begin{scope}[shift={(0,5.5)}]
					\draw[decorate, help lines](-2,0)-- (2.01,0);
					\draw[orange!20](-2,0) -- (2,0);
				\end{scope}
			\end{scope}

			\begin{scope}[shift={(12.5,0)}]
				\draw[decorate, help lines](-2,0)-- (2.01,0);
				\draw[gray!20](-2,0) -- (2,0);
				\draw[blue] (0,0) edge[->] (1,0) edge (2, 0);
				\draw[red] (0,0) edge[->] (-2,0) edge (-2,0);
				\node[fill=orange, circle, scale=.5] at (0,0) {};
			\end{scope}

			\node[fill=orange, circle, scale=.5] at (12.5,5.5) {};
			\node (v1) at (0,5) {$\mathbb A^1$};
			\node (v2) at (0,2.5) {$\mathbb A^2$};
			\node (v3) at (6.5,5) {$\mathbb T^1$};
			\node (v4) at (6.5,2.5) {$\mathbb A^2\setminus \{0\}$};
			\node (v5) at (12.5,5) {$e$};
			\node (v6) at (12.5,2.5) {$\mathbb P^1$};
			\draw  (v1) edge[->] node[fill=white] {$\phi$} (v2);
			\draw  (v1) edge[<-] node[fill=white] {$j$} (v3);
			\draw  (v2) edge[<-] node[fill=white] {$i$} (v4);
			\draw  (v3) edge[->] node[fill=white] {$\phi|_{\mathbb T^1}$} (v4);
			\draw  (v3) edge[->]  (v5);
			\draw  (v4) edge[->] node[fill=white] {$\pi$}(v6);
			\draw  (v5) edge[->] node[fill=white] {$\phi_e$}(v6);
		\end{tikzpicture} 		\caption{Reducing the resolution of a line in $\AA^2$ to the resolution of a point in $\PP^1$. Compare to \cref{fig:quotientStrategy}.}
		\label{fig:resolutionOfLine}
	\end{figure}
	We have a projection $\pi \colon \AA^2\setminus \{(0,0)\}\to \PP^1 $. We give $\PP^1$ the coordinates $[x_0: x_1]$. Let $e$ denote the toric variety which is a single point, and let $\phi_e \colon  e\to \PP^1 $ be the inclusion whose image is $[1:1]\in \PP^1$, that is, the origin of the algebraic torus. This inclusion is presented on the right-hand side of \cref{fig:resolutionOfLine}. We have that $\pi^{-1}(e)=\TT^1$.

	Now, something a bit unusual happens. Let $C_\bullet(S^{\phi_e}, \mathcal O^{\phi_e})$ be the resolution of $(\phi_*)\mathcal O_e$ on $\PP^2$ given by the complex
	$\mathcal O_\PP(-1)\xrightarrow{x_0-x_1}\mathcal O_{\PP}\dashrightarrow\mathcal \phi'_*O_e$. This resolution can be naturally ``drawn'' on the real torus $M_\RR/M$ by
	\begin{equation}
		\begin{tikzpicture}

			\draw (-2.5,-0.5) -- (1.5,-0.5);
			\node (v1) at (-0.5,-2) {$\mathcal O_{\mathbb P}(-1)$};
			\node (v2) at (-2.5,-1) {$\mathcal O_{\mathbb P}$};
			\node (v3) at (1.5,-1) {};
			\draw[->]  (v1) edge node[red,midway, fill=white]{$x_0$} (v2);
			\draw[->]  (v1) edge node[blue,midway, fill=white]{$-x_1$}(v3);
			\draw[thick, red] (-2.5,-0.5) -- (-2.25,-0.5);
			\draw[thick, blue] (-2.5,-0.5) -- (-2.75,-0.5);
			\node[circle, fill=black, scale=.5] at (-2.5,-0.5) {};
\end{tikzpicture}
		\label{eq:resolutionPtInP1}
	\end{equation}
	This resolution of the point comes from \cref{thm:resolutionOfPoints}.
	To see that this is indeed the desired resolution, we can restrict to smooth charts by specializing to $x_0=1$ or $x_1=1$ where it becomes the Koszul resolution of the point $1 \in \AA^1$. See also \cref{ex:pointInA1}.
	
	Now  pull \cref{eq:resolutionPtInP1} along $\pi$, and push forward along $i$. This need not be an exact resolution (as $i_*$ is not exact) of $\mathcal O_{\AA_1}$ (as $i_*(\phi_{\TT^1})\mathcal O_{\TT^1}$ is not $\phi_*\mathcal O_{\AA^1}$) by line bundles (as pushforward, in general, doesn't send line bundles to line bundles). A priori, we do not even know if $i_*\pi^*C_\bullet(S^{\phi_e}, \mathcal O^{\phi_e})$ is a complex of coherent sheaves (as $i$ is not proper). Disregarding all these red flags, one can compute that $i_*\pi^*C_\bullet(S^{\phi_e}, \mathcal O^{\phi_e})$ is the following complex
	\[
		\begin{tikzpicture}
			\draw (-2.5,-0.5) -- (1.5,-0.5);
			\node (v1) at (-0.5,-2) {$\mathcal O_{\AA^2}$};
			\node (v2) at (-2.5,-1) {$\mathcal O_{\AA^2}$};
			\node (v3) at (1.5,-1) {};
			\draw[->]  (v1) edge node[red,midway, fill=white]{$z_0$} (v2);
			\draw[->]  (v1) edge node[blue,midway, fill=white]{$-z_1$}(v3);
			\draw[thick, red] (-2.5,-0.5) -- (-2.25,-0.5);
			\draw[thick, blue] (-2.5,-0.5) -- (-2.75,-0.5);
			\node[circle, fill=black, scale=.5] at (-2.5,-0.5) {};
		\end{tikzpicture}
	\]
	which matches the usual resolution of $\phi_*\mathcal O_\AA$ as the structure sheaf of a hypersurface in $\AA^2$. We would like to highlight that two unusual things are happening here. Namely, the pushforward $i_*$ is not exact, but it happens to preserve exactness of our resolution and resolve the desired sheaf.  Additionally, since the inclusion $i$ has complement of codimension two, $i_*$ sends line bundles to line bundles (Hartog's principle). In general, $i_*$ will not satisfy the second property, and so we will have to address this issue.
\end{example}
We now provide an example that looks similar to the one before in that we reduce resolving a subvariety to resolving a point on a quotient, but it is \emph{not} a special case of our main theorem.
\begin{example}[Resolving $z_0^2=z_1$]
	\label{ex:nonequivParaResolution}
	We now describe a resolution of the parabola, which \emph{does not quite} fit into our proof strategy.
	Take the parabola $\phi \colon \AA\into \AA^2 $ parameterized by $z\mapsto (z, z^2)$. To construct the map $\pi$, we observe that $\TT^1 = \AA\setminus\{0\}\into \AA^2$ is the inclusion of a subgroup of the algebraic torus. We therefore consider the inclusion $i\colon \AA^2\setminus \{0\}\to \AA^2$. The maps of fans are drawn in \cref{fig:resolutionOfParabola}.
	\begin{figure}
		\centering
		\begin{tikzpicture}[scale=.75, decoration=ticks, segment length=.75cm]

    \begin{scope}[]
        \begin{scope}[]
            \fill[gray!20]  (0,0) rectangle (2,2);
            \draw [help lines, step=1cm] (-2,-2) grid (2,2);
            \node[circle, fill=black, scale=.5] at (0,0) {};
            \draw[blue] (0,0) edge[->] (1,0) edge (2, 0);
            \draw[red] (0,0) edge[->] (0, 1) edge (0, 2);
            \draw[orange!20](-2,-1) edge (2,1);
            \draw[orange] (0,0) edge[->] (2,1) edge (2,1);
        \end{scope}
        \begin{scope}[shift={(0,5.5)}]
            \draw[decorate, help lines](-2,0)-- (2.01,0);
            \draw[orange!20](-2,0) -- (2,0);
            \draw[orange] (0,0) edge[->] (1,0) edge (2,0);
        \end{scope}
    \end{scope}

    \begin{scope}[shift={(6.5,0)}]
        \begin{scope}[]
            \draw [help lines, step=1cm] (-2,-2) grid (2,2);
            \node[circle, fill=black, scale=.5] at (0,0) {};
            \draw[blue] (0,0) edge[->] (1,0) edge (2,0);
            \draw[red] (0,0) edge[->] (0,1) edge (0,2);
            \draw[orange!20](-2,-1) edge (2,1);
        \end{scope}
        \begin{scope}[shift={(0,5.5)}]
            \draw[decorate, help lines](-2,0)-- (2.01,0);
            \draw[orange!20](-2,0) -- (2,0);
        \end{scope}
    \end{scope}

    \begin{scope}[shift={(12.5,0)}]
        \draw[decorate, help lines](-2,0)-- (2.01,0);
        \draw[gray!20](-2,0) -- (2,0);
        \draw[blue] (0,0) edge[->] (1,0) edge (2, 0);
        \draw[red] (0,0) edge[->] (-2,0) edge (-2,0);
        \node[fill=orange, circle, scale=.5] at (0,0) {};
    \end{scope}

    \node[fill=orange, circle, scale=.5] at (12.5,5.5) {};
    \node (v1) at (0,5) {$\mathbb A^1$};
    \node (v2) at (0,2.5) {$\mathbb A^2$};
    \node (v3) at (6.5,5) {$\mathbb T^1$};
    \node (v4) at (6.5,2.5) {$\mathbb A^2\setminus \{0\}$};
    \node (v5) at (12.5,5) {$e$};
    \node (v6) at (12.5,2.5) {$\mathbb P(1,2)$};
    \draw  (v1) edge[->] node[fill=white] {$\phi$} (v2);
    \draw  (v1) edge[<-] node[fill=white] {$j$} (v3);
    \draw  (v2) edge[<-] node[fill=white] {$i$} (v4);
    \draw  (v3) edge[->] node[fill=white] {$\phi|_{\mathbb T^1}$} (v4);
    \draw  (v3) edge[->]  (v5);
    \draw  (v4) edge[->] node[fill=white] {$\pi$}(v6);
    \draw  (v5) edge[->] node[fill=white] {$\phi_e$}(v6); 
\end{tikzpicture} 		\caption{Reducing the resolution of a parabola to the resolution of a point}
		\label{fig:resolutionOfParabola}
	\end{figure}
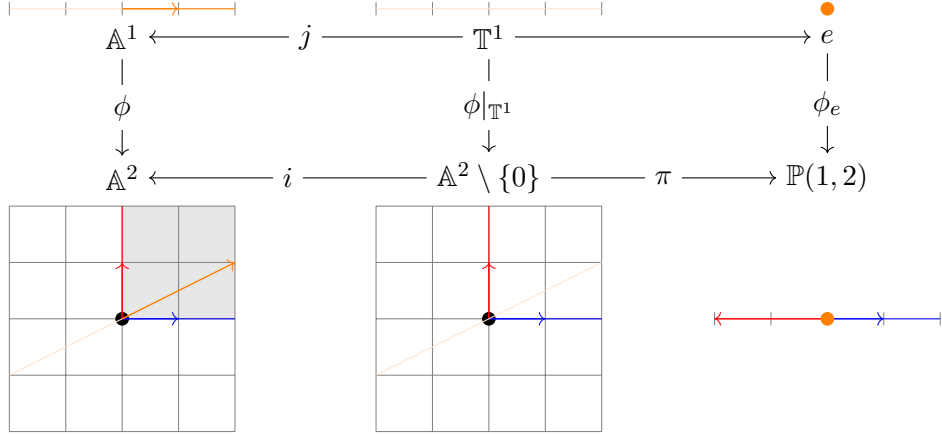
	We now mimic the previous example.
	There is a projection
	\begin{align*}
		\pi \colon \AA^2\setminus \{0\}\to  \PP(1, 2) &  &
		(z_0, z_1)\mapsto [z_0: z_1]_{(1,2)}.
	\end{align*}
	Then, the identity in the algebraic torus $e=[1:1]\in \PP(1, 2)$ satisfies the property that $\pi^{-1}(e)=\TT^1$.
	We now must produce a resolution of $(\phi_e)_*\mathcal O_e$. This is \emph{not} one of the examples covered in \cref{thm:resolutionOfPoints}, which cover resolving points in \emph{smooth} toric varieties. For this particular example, we happen to know a resolution $C_\bullet$ of $e$ given by
	\[
		\begin{tikzpicture}			
			\node (v1) at (0,-2) {$\mathcal O_{\PP(1, 2)}$};
			\node (v2) at (-3,-1) {$\mathcal O_{\PP(1,2)}$};
			\node (v3) at (3,-1) {};
			\draw[->]  (v1) edge node[red,midway, fill=white]{$x_0^2$} (v2);
			\draw[->]  (v1) edge node[blue,midway, fill=white]{$-x_1$}(v3);
		\end{tikzpicture}
	\]
	As before, disregard all warnings and compute $i_*\pi^* C_\bullet$ to obtain
	\[
		\begin{tikzpicture}			
			\node (v1) at (0,-2)  {$\mathcal O_{\AA^2}$};
			\node (v2) at (-3,-1) {$\mathcal O_{\AA^2}$};
			\node (v3) at (3,-1) {};
			\draw[->]  (v1) edge node[red,midway, fill=white]{$z_0^2$} (v2);
			\draw[->]  (v1) edge node[blue,midway, fill=white]{$-z_1$}(v3);

		\end{tikzpicture}
	\]
	the usual resolution of $\phi_*\mathcal O_\AA$.
\end{example}
In both examples, the first step --- producing the map $\pi$ --- is clear. We restrict the subvariety to the open torus which is a subgroup of the larger torus. Then, we study a point in the quotient by that subgroup. However, the next step --- producing a resolution of that point --- can be difficult in general as both the quotient space and morphism $\pi$ may not be particularly nice. In particular, the strategy of restricting to smooth charts in \cref{fig:resolutionOfLine} cannot be immediately applied to \cref{ex:nonequivParaResolution}.

Our approach is to instead take the stack quotient. The stack $[\AA^2\setminus \{0\}/\TT^1]$, where $t \in \TT^1$ acts by $t\cdot (z_1, z_2)=(tz_1, t^2z_2)$, is a smooth stack and can be covered with smooth stacky charts. The resolution of a point on this toric stack is covered by \href{thm:resolutionOfPointsInStacks}{Theorem $3.1'$}, which we work out in \cref{ex:equivariantResolutionOfParabola}. When taking this approach, we obtain the following resolution for the parabola:
\[\begin{tikzpicture}
	\draw (-3,-0.5) -- (4,-0.5);
	\node (v1) at (-1.5,-2.5) {$\mathcal O_{\AA^2}$};
	\node (v2) at (-3,-1) {$\mathcal O_{\AA^2}$};
	\node (v3) at (0.5,-1) {$\mathcal O_{\AA^2}$};
	\node (v5) at (4,-1) {};
	\draw[->]  (v1) edge node[red,midway, fill=white]{$z_0$} (v2);
	\draw[->]  (v1) edge node[midway, fill=white]{$-1$}(v3);
	\draw[thick, red] (0.5,-0.5) -- (0.75,-0.5);
	\node[circle, fill=black, scale=.5] at (0.5,-0.5) {};
	\draw[thick, red] (-3,-0.5) -- (-2.75,-0.5);
	\draw[thick, blue] (-3,-0.5) -- (-3.25,-0.5);
	\node[circle, fill=black, scale=.5] at (-3,-0.5) {};
	\node (v4) at (2.5,-2.5) {$\mathcal O_{\AA^2}$};
	\draw[->]  (v4) edge  node[red, midway, fill=white]{$z_0$}(v3);
	\draw[->]  (v4) edge  node[blue,midway, fill=white]{$-z_1$}(v5);
\end{tikzpicture}
\]
which is easily seen to be homotopy equivalent to the one produced in \cref{ex:nonequivParaResolution}. This new resolution also has the nice property that all the structure coefficients of the differential are given by primitive monomials, which was not the case with the previous resolution. 
\begin{rem}
	As noted above, we choose to construct resolutions on toric stacks which are covered by smooth stacky charts because we can produce a nice description of the resolution of a point in a smooth stacky chart. However, as \cref{ex:nonequivParaResolution} shows, there are reasonable guesses for what the resolution of a point in a simplicial toric chart should be. As we do not need those particular charts to prove our main theorem, we do not discuss them further in this article.
\end{rem}
\subsection{Line bundles on toric stacks} \label{subsec:stackbackground}

Before going further with resolutions, we will discuss aspects of the theory of toric stacks that we will use. 
There are several notions of toric stacks in the literature. We will use toric stacks in the sense of \cite{geraschenko2015toric} on which the following exposition is based.
 
\begin{notation}
	We now fix the following notation for discussion of toric stacks, following \cite{geraschenko2015toric}. 
	\begin{itemize}
		\item $\kk$ is an algebraically closed field of characteristic $p\geq 0$, and $\GG_m$ is the multiplicative group.
		\item $L$ and $N$ are lattices. $K=L^*$ and $ M=N^*$ are the dual lattices. 
		\item Linear maps between lattices (or their $\RR$-vector spaces $L_\RR = L\tensor_\ZZ \RR$, $N_\RR= N\tensor_\ZZ, \RR$) will be underlined.
		\item $\TT_L$ and $\TT_N$ are the tori whose 1-parameter subgroups are naturally isomorphic to $L$ and $N$, respectively.
		They can be constructed via Cartier duality. Starting with $M$, construct the constant group scheme $\KK^M$, so that $M=\Spec(\KK^M)$. The Cartier dual is $\TT_N=\widehat{M}:=\Spec(\hom(\kk^M, \ZZ))$. 
		The dual lattice $M$ is the group of characters of $\TT_N$, and we will write $\chi^m \colon \TT_N\to \GG_m$ for the character associated with $m\in M$. 
		\item Given $\ul\beta \colon  L\to  N $, we obtain a map $\ul\beta^* \colon M\to K$ inducing a map $\TT_{\beta} \colon \TT_L \to \TT_{N} $. We set $G_\beta=\ker(\TT_{\beta})$.
	\end{itemize}
\end{notation}
 Suppose we have a short exact sequence of groups $0\to M'\xrightarrow{\ul\phi} M\to C_\phi\to 0$. Applying Cartier duality gives us $0\to G_\phi \to \TT_{N'}\to \TT_{N}\to 0$, so that $G_\phi = \widehat{C_\phi}$. Since Cartier duality is a duality, we also have $C_{\phi}=\widehat{G_{\phi}}$. When $G_{\phi}$ is a finite group, its Cartier dual is isomorphic to its group of characters giving us the identification
\begin{equation}
    \coker(\ul\phi^*\colon M'\to M) =\hom_{gp}(G_{\phi}, \GG_m).
    \label{eq:cokerToKer}
\end{equation}

A \emph{stacky fan}\footnote{There are several definitions of stacky fan in the literature. We use the definition in \cite{geraschenko2015toric}} is a pair $(\Sigma, \ul\beta \colon L\to N)$, where  $\Sigma$ is a fan on $L$, and $\ul\beta$ is a morphism of lattices with finite cokernel. The fan $\Sigma$ gives us a toric variety $X_\Sigma$. As $\ul \beta$ has finite cokernel, the induced map $\TT_{\beta} \colon  \TT_L\to \TT_N$ is surjective. 
The toric stack associated with a stacky fan is the quotient stack $\X_{\Sigma, \ul\beta}:=[X_\Sigma/G_\beta]$.
We will frequently drop the subscripts and write $\X$ or $\Y$ for a toric stack.
\begin{example}[Projective line]
	\begin{figure}
		\centering
		\scalebox{.5}{\begin{tikzpicture}[decoration=ticks, segment length=1cm]

			\begin{scope}[]
				\draw [help lines, step=1cm] (-2,-2) grid (2,2);
				\node[circle, fill=black, scale=.5] at (0,0) {};
				\draw[blue] (0,0) edge[->] (1,0) edge (2, 0);
				\draw[red] (0,0) edge[->] (0, 1) edge (0, 2);
			\end{scope}

			\begin{scope}[shift={(0,-4.5)}]
				\draw[decorate, help lines](-2,0)-- (2.01,0);
				\draw[gray!20](-2,0) -- (2,0);
				\draw[blue] (0,0) edge[->] (1,0) edge (2,0);
				\draw[red] (0,0) edge[->] (-1,0) edge (-2,0);
				\node[fill=black, circle, scale=.5] at (0,0) {};
			\end{scope}

			\node (v1) at (0,-2.5) {};
			\node (v2) at (0,-4) {};
			\draw  (v1) edge[->] node[fill=white]{$\beta$} (v2);
		\end{tikzpicture} }
		\caption{A presentation of $\PP^1$ as a toric stack.}
		\label{fig:p2StackyFan}
	\end{figure}
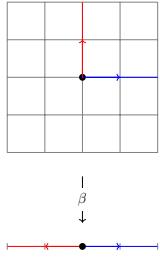
	The projective line can be presented as a toric stack as in \cref{fig:p2StackyFan}. That is, we consider the fan $\Sigma$ on $L_\RR=\RR^2$ whose cones are $\{0, \RR\cdot e_1, \RR\cdot e_2\}$ so that $X_\Sigma= \AA^2\setminus\{0\}$. Let $\ul \beta \colon  L \to N=\ZZ $ be the projection $\begin{pmatrix} 1 & -1 \end{pmatrix}$. $G_\beta= \TT^1$, and it acts by $g\cdot(z_1, z_2)= (gz_1, gz_2)$. 
		This defines a toric stack. Since the action is free, we can identify $\X_{\Sigma, \beta}=X_\Sigma/G_\beta = \PP^1$.
\end{example}

Let $(\Sigma,\ul\beta\colon L\to N)$ and $(\Sigma', \ul\beta^\circ\colon L'\to N')$ be two stacky fans.
A morphism of stacky fans $(\ul\phi,\ul\Phi)\colon (\Sigma', \ul\beta^\circ)\to (\Sigma, \ul\beta)$ is a pair of lattice morphisms making the following diagram commute.
\[\begin{tikzcd}
		L'\arrow{r}{\ul\Phi} \arrow{d}{\ul\beta^\circ} & L \arrow{d}{\ul\beta}\\
		N' \arrow{r}{\ul\phi}& N
	\end{tikzcd}
\]
Additionally, we ask that $\ul\Phi$ be a morphism of fans (so that for all $\sigma'\in \Sigma'$, there exists $\sigma\in \Sigma$ so that $\ul\Phi(\sigma')\subset \sigma$). A morphism of stacky fans induces a toric morphism of the corresponding toric stacks which we denote by $\phi \colon  \X_{\Sigma', \ul\beta'}\to \X_{\Sigma, \ul\beta}$. More descriptively, we obtain from this data a group homomorphism $f_\phi\colon G_\beta\to G_\beta'$ such that the map of toric varieties $\Phi \colon  X_\Sigma\to X_{\Sigma,'}$ is an $f_\phi$-map (see \cref{subsec:stackBackground}). 
\begin{example}
	Let $(\Sigma_e,\ul\beta_e)$ be the stacky fan of a point. Given any stacky fan $(\Sigma, \ul\beta)$, there exists a unique morphism of stacky fans $(\ul\phi_e, \ul\Phi_e)\colon (\Sigma_e, \ul{\beta}_e)\to (\Sigma, \ul\beta)$. 
\end{example}

We will be interested in line bundles on $\X_{\Sigma, \ul\beta}$ that can be described by a line bundle on $X_{\Sigma}$ equipped with a $G_\beta$ action. The sections of a line bundle on $\X_{\Sigma, \ul\beta}$ are the $G_\beta$-equivariant sections of the respective bundle on $X_\Sigma$.

We first give a short and standard description of line bundles on toric varieties in preparation for discussing line bundles on toric stacks.
A torus invariant Cartier divisor on $X_\Sigma$ is uniquely determined by a support function $\sF \colon  |\Sigma| \to \ZZ $, which is a continuous function on the support of the fan $|\Sigma| \subset L_\RR$ whose restriction to every $\sigma\in \Sigma$ is an integral linear function. Namely, the divisor associated to $\sF$ is $D_F =\sum_{\rho\in \Sigma(1)} -\sF(u_\rho) D_\rho$ where $u_\rho$ is the primitive generator of $\rho$ and $D_\rho$ the orbit closure associated to $\rho$. Thus, a support function determines a line bundle $\mathcal{O}_{X_\Sigma}(F) := \mathcal{O}_{X_\Sigma}(D_F)$, and, in fact, every line bundle on $X_\Sigma$ is isomorphic to a line bundle associated to a torus invariant divisor. Two support functions determine isomorphic line bundles when their difference is a linear function.

We can also construct a fan on $L\oplus \ZZ$ from the data of a support function, whose cones are
\[\Sigma_F:=\{(\sigma, \sF(\sigma))\st \sigma\in \Sigma\}\cup \{\langle (0, 1) , (\sigma, \sF(\sigma))\rangle\st \sigma\in \Sigma\}.\]
There is a map of fans $\ul\Pi \colon  \Sigma_{\sF}\to \Sigma $ from projection onto the first factor. The toric variety $X_{\Sigma_\sF}$ is the total space of the corresponding line bundle over $X_{\Sigma}$.

The sections of $\Sigma_\sF\to \Sigma$ are generated by the toric monomial sections. Each of these can be expressed as a map of fans $\ul S \colon \Sigma\to \Sigma_F $ so that $\ul\Pi\circ \ul S=\ul\id_\Sigma$, i.e.,  linear maps $k\colon L\to \ZZ$ with the property that $k(u_\rho)\geq F(u_\rho)$ for every $\rho\in \Sigma(1)$. Typically, the monomial sections are identified with the points of the lattice polytope
\[\Delta(\sF):=\{k\in K \st \forall \rho\in \Sigma(1), k(u_\rho)\geq F(u_\rho)\}.\]
A linear map inducing an isomorphism of line bundles can also be seen as an isomorphism of fans intertwining with the projection $\ul\Pi$. 

\begin{example}[Line bundles on $\PP^1$]
	\label{ex:lineBundlesOnP1}
	Let $X_\Sigma = \PP^1$ with complete fan generated by $u_{\rho_0}= -1$ and $u_{\rho_1}= 1$. Consider the support function $\sF$ determined by 
	\begin{align*}
		\sF(u_{\rho_0})= 0, \sF(u_{\rho_1})=-3.
	\end{align*}
	so that $\mathcal{O}_{\PP^1}(\sF)\simeq \mathcal{O}_{\PP^1}(3)$. We have drawn the fans $\Sigma$ and $\Sigma_{\sF}$ in \cref{fig:sectionsOnP1}.
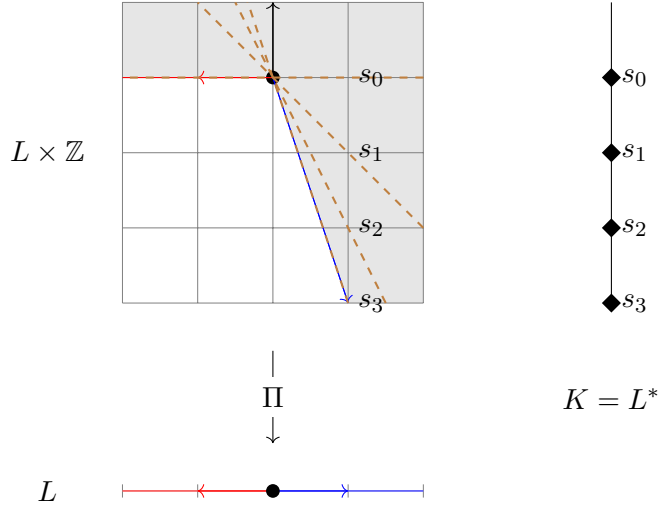
\begin{figure}
    \centering
    \begin{tikzpicture}[decoration=ticks, segment length=1cm]

			\begin{scope}[yscale=1]
				\fill[gray!20] (-2,1) -- (0,1) -- (1,-2) -- (2,-2) -- (2,2) -- (-2,2) -- cycle;
				\draw [help lines, step=1cm] (-2,-2) grid (2,2);
				\node[circle, fill=black, scale=.5] at (0,1) {};
				\draw[blue] (0,1) edge[->] (1,-2) edge (1,-2) ;
				\draw[red] (0,1) edge[->] (-1,1) edge (-2,1);

				\draw[black] (0,1) edge[->] (0,2) edge (0,2);
				\begin{scope}[dashed, brown, thick]
					\clip  (-2,-2) rectangle (2,2);
					\draw (1.5,-2) -- (-0.5,2);
					\draw (2,-1) -- (-1,2);
					\draw (2,1) -- (-2,1);
					\draw (1,-2) -- (-0.5,2.5);
				\end{scope}
				\begin{scope}[]
					\node[right] at (1,-2) {$s_3$};
					\node[right] at (1,-1) {$s_2$};
					\node[right] at (1,0) {$s_1$};
					\node[right] at (1,1) {$s_0$};
				\end{scope}

			\end{scope}

			\begin{scope}[shift={(0,-4.5)}]
				\draw[decorate, help lines](-2,0)-- (2.01,0);
				\draw[gray!20](-2,0) -- (2,0);
				\draw[blue] (0,0) edge[->] (1,0) edge (2,0);
				\draw[red] (0,0) edge[->] (-1,0) edge (-2,0);
				\node[fill=black, circle, scale=.5] at (0,0) {};

			\end{scope}

			\begin{scope}[shift={(3.5,0)},yscale=1]
				\draw (1,2) -- (1,-2);
				\node[below] at (1,-3) {$K=L^*$};
				\node[right] at (1,-2) {$s_3$};
				\node[right] at (1,-1) {$s_2$};
				\node[right] at (1,0) {$s_1$};
				\node[right] at (1,1) {$s_0$};
				\node[diamond, fill, scale=.5] at (1,-2) {};
				\node[diamond, fill, scale=.5] at (1,-1) {};
				\node[diamond, fill, scale=.5] at (1,0) {};
				\node[diamond, fill, scale=.5] at (1,1) {};
			\end{scope}

			\node (v1) at (0,-2.5) {};
			\node (v2) at (0,-4) {};
			\draw  (v1) edge[->] node[fill=white]{$\Pi$} (v2);
			\node at (-3,-4.5) {$L$};
			\node at (-3,0) {$L\times \mathbb Z$};
		\end{tikzpicture}     \caption{The total space of the line bundle $\mathcal O_{\PP^1}(3)$. The three equivariant sections of this map are labeled by dashed lines, which are also described by the polygon on the right-hand side.}
    \label{fig:sectionsOnP1}
\end{figure}
	The dashed lines are the images of the sections $\ul S\colon \Sigma\to \Sigma_{\sF}$. These are in bijection with the black diamonds on the right-hand side of the figure, which are the lattice points in the polytope $\Delta(\sF)=\{k\in K \st k(u_{\rho_0})\geq 0, k(u_{\rho_1})  \geq -3 \}$.
\end{example}

We now apply this construction to toric stacks. As before, to each support function $\sF \colon |\Sigma| \to \RR,$ we can associate the fan $\Sigma_\sF$ on $L\oplus \ZZ$. We define $\ul\beta_\sF:=\ul\beta\oplus \ul\id \colon  L\oplus \ZZ\to N\oplus \ZZ $.

The projection $\ul\Pi \colon \Sigma_\sF\to \Sigma$ extends to a morphism of stacky fans $(\ul\pi, \ul\Pi)\colon (\Sigma_\sF, \ul\beta_\sF)\to (\Sigma, \ul\beta)$ in the obvious way.
The total space $\X_{\Sigma_\sF, \ul\beta_{\sF}}$ now comes with a $G_\beta$ action, making the sheaf of sections of $\pi\colon\X_{\Sigma_\sF, \ul\beta_{\sF}}\to \X_{\Sigma}$ a $G_\beta$-equivariant sheaf. We write $\mathcal O_{\X_{\Sigma, \beta}}(\sF)$ for this sheaf. 

A different way to understand this sheaf is to describe the 
 $G_\beta$ action on fibers (see \cref{subsec:stackBackground} for notation). 
	Let $(\Sigma, \ul \beta)$ be a stacky fan containing a saturated (not necessarily top dimensional) smooth cone $\sigma$ of $\dim(N)$ with the property that $\ul \beta|_{\sigma}$ is injective. Let $\Sigma_{\sigma}$ be the fan of cones subordinate to $\sigma$, so we have an open immersion of stacky fans $(\ul i, \ul I)\colon(\Sigma_{\sigma}, \ul \beta)\to (\Sigma,  \ul\beta)$.

    Let $L'= L/\ker(\ul \beta), N'=N$ and $\ul \beta' \colon  L'\to N' $, and let $\Sigma'_\sigma$ be the image of $\Sigma_\sigma$ in the quotient. This gives us a stacky fan $(\Sigma_\sigma', \ul\beta')$. The map $(\ul\pi, \ul\Pi):(\Sigma_\sigma, \ul \beta)\to (\Sigma_\sigma', \ul \beta')$ induces an equivalence of stacks  $\X_{\Sigma_\sigma, \beta}=[\AA^{|\sigma|}/G_{\beta'}]$.
        
    Let $\sF$ be a support function $\Sigma$. 
    Then, $\mathcal O_{\AA^{|\sigma|}}(i^*\sF)$ has a canonical trivialization given by the linear map $k':L'
\to \ZZ$ satisfying $k'|_{\ul\Pi(\sigma)}=\sF|_{\ul\Pi(\sigma)},$ whose section $s_k\in \Gamma(\mathcal O_{\AA^{|\sigma|}}(i^*\sF))$ we declare to be the constant $e$-section. In coordinates on $\TT_L$, the section is parameterized by
\begin{align*}
	s_{k'}:\TT_{L'}\mapsto \TT_{L'}\times \GG_m\\
	x \mapsto (x, \chi^{k'}(x)).
\end{align*}
    
    We now prove that with respect to this trivialization the $G_{\beta'}$ action on the sheaf $\mathcal O_{\AA^{|\sigma|}}(i^*\sF)$ is given by multiplication on the fibers by the character $\chi^{k'}\in\hom(G_{\beta'}, \GG_m)$.

	Let $x_0$ denote the origin in $\AA^{|\sigma|}$, which is fixed on the $G_{\beta'}$-action on $\AA^{|\sigma|}$. For all $g\in G_{\beta}$, we compare 
	\begin{align}
		s_{k'}(g\cdot x_0) = (x_0, \chi^{k'}(x_0)) && g\cdot s_{k'} (x_0 )= (x_0,\chi^{k'}(g)\cdot \chi^{k'}(x_0)) \label{eq:gbetaAction}
	\end{align}
	It follows that the $G_\beta$ action on the sections of $\TT_{L'}\times \GG_m\to \TT_{L'}$ in this trivialization is given multiplication by the character $\chi^{k'}(g)$.

In this fashion, $\sF$ defines not only a line bundle $\mathcal O_{X_\Sigma}(\sF)$, but also a $G_\beta$ action on the sheaf, making it a $G_\beta$-equivariant sheaf. We define $\mathcal O_{\X_{\Sigma, \ul\beta}}(\sF)$ to be the sheaf of $G_{\beta}$-equivariant sections. 

 Among these $G_\beta$-equivariant sections are those which are additionally monomial sections. Each monomial section corresponds to a map of fans $\ul S \colon \Sigma\to \Sigma_\sF $ that is a section and extends to a map of stacky fans $(\ul s, \ul S)\colon (\Sigma, \ul\beta)\to (\Sigma_\sF, \ul\beta_\sF)$. The set of such sections is
\[\Delta_\beta(\sF):=\{k\in K \st \forall \rho\in \Sigma, k(u_\rho) \geq \sF(u_\rho) \text{ and there exists } m\in M \text{ with } k=\ul\beta^* m)\}, \]
and can also be written as
\[\Delta_\beta(\sF)=\{m\in M \st \forall \rho\in \Sigma, \ul\beta^*m( u_\rho )\geq\sF(u_\rho)\}\]
since $\ul \beta^*$ is injective.
This is a subset of $\Delta(\sF)$ (reflecting that all $G_\beta$-equivariant monomial sections are monomial sections, but not all monomial sections are $G_\beta$-equivariant).

\begin{example}[Non-separated line]
	\label{ex:LineBundlesOnNonSeparatedLine}
	Consider the fan $\Sigma = \{0, \langle e_1\rangle , \langle e_2\rangle\}$, so that $X_\Sigma=\AA^2\setminus \{0\}$. Let $\ul \beta \colon  \ZZ^2\to \ZZ $ be given by the matrix $\begin{pmatrix}1 & 1\end{pmatrix}$. Then the action $G_\beta\curvearrowright X_\Sigma$ is given by $(z_1, z_2)\mapsto (tz_1, t^{-1}z_2)$. The toric stack $\X_{\Sigma, \ul\beta}$ corresponds to the non-separated line (which has 2 origin points).
	
	Now consider the support function $\sF$ defined by $\sF (e_i)=a_i$ for $a_1, a_2 \in \ZZ$. The sections of $\mathcal O(\sF)$ on $\mathcal O_{X_\Sigma}=\AA^2$ are generated by $\Delta(\sF)=\{(k_1, k_2) \st k_i \leq a_i\}$. However, the $G_\beta$-equivariant sections are indexed by the subset $\Delta_\beta(\sF)=\{(k_1, k_2)\st k_1=k_2, k_i \leq a_i\}$. This can also be identified with $\{(m)\in M=N^*\st m\leq \min(a_1, a_2)\}$. 
\end{example}

We base our notation for toric line bundles on support functions (as opposed to toric divisors) for several expositional reasons.
One reason for using the support function notation is that it allows us to compare different $G_\beta$ actions on $\mathcal O_{X_\Sigma}(\sF)$. Since this is a line bundle, the set of $G_\beta$-structures is a torsor of over the character group of $G_\beta$. Moreover, the action of $\hom(G_\beta, \GG_m)$ can be understood via its identification with $\coker(\ul\beta)$ and \cref{eq:gbetaAction}. In summary:
\begin{prop}
    Let $\sF$ be a support function for $\Sigma$, and $\ul\beta \colon L\to N$. Let $\chi^{(-)}: K\to \hom(G_\beta, \GG_m)$ be the identification from \cref{eq:cokerToKer}. For any $k\in K$, we have a canonical isomorphism
	\[\mathcal O_{\X_{\Sigma, \beta}}(\sF+k) = \mathcal O_{\X_{\Sigma, \beta}}(\sF)_{\chi^{k}}\]
	where the latter bundle is the bundle $\mathcal O_{\X_{\Sigma, \beta}}(\sF)$ where the $G_{\beta}$ action has been twisted by multiplication by $\chi^k(g)$ on the fibers. 
	\label{prop:identifyingCharacter}
\end{prop}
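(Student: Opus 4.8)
The plan is to reduce \cref{prop:identifyingCharacter} to a statement about $G_\beta$-equivariant line bundles on the toric variety $X_\Sigma$ and then realize the required isomorphism geometrically via a shear automorphism of the total space. First I would unwind the definitions from this subsection: $\mathcal{O}_{\X_{\Sigma,\beta}}(\sF)$ is the sheaf of $G_\beta$-equivariant sections of $\mathcal{O}_{X_\Sigma}(\sF)$ equipped with its natural $G_\beta$-linearization, while $\mathcal{O}_{\X_{\Sigma,\beta}}(\sF)_{\chi^k}$ is the same sheaf computed with the linearization multiplied fiberwise by $\chi^k$. Since $\sF$ and $\sF+k$ differ by the linear function $k$, the underlying line bundles $\mathcal{O}_{X_\Sigma}(\sF)$ and $\mathcal{O}_{X_\Sigma}(\sF+k)$ are canonically isomorphic, so it suffices to show this canonical isomorphism carries the natural $G_\beta$-linearization of $\mathcal{O}_{X_\Sigma}(\sF+k)$ to the natural linearization of $\mathcal{O}_{X_\Sigma}(\sF)$ twisted fiberwise by $\chi^k$, and then take $G_\beta$-equivariant sections.

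To build the underlying isomorphism I would use the shear $\ul T_k\colon L\oplus\ZZ\to L\oplus\ZZ$, $(l,t)\mapsto(l,t+k(l))$, which is a lattice automorphism precisely because $k\in K=L^*$. It sends the fan $\Sigma_\sF$ to $\Sigma_{\sF+k}$ and commutes with the projection $\ul\Pi$ to $\Sigma$, hence induces an isomorphism of the associated total spaces over $X_\Sigma$ — that is, the canonical isomorphism $\mathcal{O}_{X_\Sigma}(\sF)\xrightarrow{\sim}\mathcal{O}_{X_\Sigma}(\sF+k)$ attached to two support functions differing by a linear function. The essential point is that $\ul T_k$ is \emph{not} compatible with $\ul\beta_\sF=\ul\beta\oplus\ul\id$ unless $k$ lies in the image of $\ul\beta^*$, so it is not a morphism of stacky fans in general; this failure is exactly what produces the nontrivial twist. (As a sanity check: when $k=\ul\beta^* m$ the shear does descend to stacks, giving $\mathcal{O}_{\X_{\Sigma,\beta}}(\sF+\ul\beta^* m)=\mathcal{O}_{\X_{\Sigma,\beta}}(\sF)$ with no twist, consistent with $\chi^{\ul\beta^* m}$ being the trivial character of $G_\beta$ since $\ul\beta^* m$ vanishes in $\coker(\ul\beta^*)$.)

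Finally, to read off the equivariance I would compute the torus automorphism induced by $\ul T_k$, namely $\TT_L\times\GG_m\to\TT_L\times\GG_m$, $(y,w)\mapsto(y,\chi^k(y)\,w)$. Restricting the subgroup $G_{\beta_\sF}=G_\beta\times\{1\}$ through it, the element $(g,1)$ is carried to $(g,\chi^k(g))=(g,1)\cdot(1,\chi^k(g))$; the first factor is the natural $G_\beta$-action on the total space of $\mathcal{O}_{X_\Sigma}(\sF)$ and the second is fiberwise multiplication by $\chi^k(g)$. Thus, transported along the canonical isomorphism, the natural linearization of $\mathcal{O}_{X_\Sigma}(\sF+k)$ becomes the natural linearization of $\mathcal{O}_{X_\Sigma}(\sF)$ twisted fiberwise by $g\mapsto\chi^k(g)$ — which matches the local computation already recorded in \cref{eq:gbetaAction} — and this character of $G_\beta$ is, by construction, the image of $k$ under the identification $\chi^{(-)}\colon K\to\coker(\ul\beta^*\colon M\to K)=\hom(G_\beta,\GG_m)$ of \cref{eq:cokerToKer}, since that map is restriction of characters along $G_\beta\hookrightarrow\TT_L$. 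The one genuinely delicate part is this last paragraph's bookkeeping — the precise form of the induced torus automorphism, the direction of the fiberwise twist, and the matching of $\chi^{(-)}$ with the quotient $K\to\coker(\ul\beta^*)$ — each of which comes down to unwinding the sign conventions fixed earlier in this subsection; the geometric input, the shear together with its compatibility with $\ul\Pi$, is immediate.
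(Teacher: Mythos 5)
Your proposal is correct and follows essentially the same route as the paper, which treats this proposition as a summary of the preceding discussion: the underlying line bundles agree because $\sF$ and $\sF+k$ differ by the linear function $k$, and the fiberwise computation in \cref{eq:gbetaAction} shows the linearization in the trivialization by $k'$ is multiplication by $\chi^{k'}$, so shifting $k'$ to $k'+k$ twists the action by exactly $\chi^k$. Your shear automorphism $\ul T_k$ of $\Sigma_\sF$ is just a global repackaging of that canonical isomorphism, and the sign/direction bookkeeping you flag does reduce to the convention already fixed in \cref{eq:gbetaAction}.
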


Secondly, some functoriality properties of line bundles are easier to state in the language of support functions. For example:
\begin{prop}
	Let $(\ul\phi, \ul\Phi)\colon (\Sigma', \ul\beta')\to(\Sigma, \ul\beta)$ be a morphism of stacky fans, let $\phi \colon  \X'\to \X $ be the associated morphism of stacks. Let $\sF \colon  |\Sigma| \to \RR $ be a support function. Define $\ul\Phi^*F \colon  |\Sigma'|\to \RR $ by pullback. Then $\phi^*\mathcal O_{\X}(\sF)=\mathcal O_{\X'}(\ul\Phi^*F)$.  
	\label{prop:pullbackOfSupportFunction}
\end{prop}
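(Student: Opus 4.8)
The plan is to split the statement into its nonequivariant shadow on toric varieties and the compatibility of the $G_\beta$-linearizations, and to treat both through the total-space (fan-on-$L\oplus\ZZ$) description of line bundles from \cref{subsec:stackbackground}. Recall that a line bundle on $\X=[X_\Sigma/G_\beta]$ is precisely a $G_\beta$-equivariant line bundle on $X_\Sigma$, that $\mathcal O_\X(\sF)$ is by definition the sheaf of $G_\beta$-invariant sections of the total space $\X_{\Sigma_\sF,\ul\beta_\sF}\to\X_\Sigma$, and that $\phi^*$ of a line bundle is computed by base change of total spaces along $\phi$. So first I would record that $\phi^*\mathcal O_\X(\sF)$ is the line bundle whose total space is $\X'\times_\X\X_{\Sigma_\sF,\ul\beta_\sF}$, and then show that this fiber product is exactly $\X_{\Sigma'_{\ul\Phi^*\sF},\ul\beta'_{\ul\Phi^*\sF}}$, the total space defining $\mathcal O_{\X'}(\ul\Phi^*\sF)$.

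The heart of the argument is then a fan computation. Set $\ul\beta_\sF=\ul\beta\oplus\ul\id$ and $\ul\beta'_{\ul\Phi^*\sF}=\ul\beta'\oplus\ul\id$, and consider $\ul\Phi\oplus\ul\id\colon L'\oplus\ZZ\to L\oplus\ZZ$. I would first check this is a morphism of fans $\Sigma'_{\ul\Phi^*\sF}\to\Sigma_\sF$: a cone of $\Sigma'_{\ul\Phi^*\sF}$ is either $(\sigma',(\ul\Phi^*\sF)(\sigma'))$ or $\langle(0,1),(\sigma',(\ul\Phi^*\sF)(\sigma'))\rangle$ for some $\sigma'\in\Sigma'$; picking $\sigma\in\Sigma$ with $\ul\Phi(\sigma')\subseteq\sigma$ (possible since $\ul\Phi$ is a morphism of fans) and using the definition $\ul\Phi^*\sF=\sF\circ\ul\Phi$ on $\sigma'$, one sees $\ul\Phi\oplus\ul\id$ carries these cones into $(\sigma,\sF(\sigma))$ and $\langle(0,1),(\sigma,\sF(\sigma))\rangle$ respectively. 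Together with $\ul\phi\oplus\ul\id$ this assembles into a morphism of stacky fans over $(\ul\phi,\ul\Phi)$, compatible with the projections $\ul\Pi$. The remaining point is that $\Sigma'_{\ul\Phi^*\sF}$ is, cone by cone, the fiber-product fan of $\Sigma'$ and $\Sigma_\sF$ over $\Sigma$, with $\ul\beta'\oplus\ul\id$ the stacky datum induced on the fiber product; this yields the cartesian square of toric stacks, hence the identification of total spaces, hence $\phi^*\mathcal O_\X(\sF)=\mathcal O_{\X'}(\ul\Phi^*\sF)$. On the level of underlying varieties this is just the classical toric fact that the pullback of a torus-invariant Cartier divisor is computed by pulling back its support function.

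If one prefers to avoid fiber products and argue on charts, the alternative is to use the explicit linearization computed around \cref{eq:gbetaAction}: on a smooth stacky chart the $G_{\beta'}$-action on the fiber of $\mathcal O_{\X'}(\ul\Phi^*\sF)$ is multiplication by $\chi^{k''}$, where $k''$ is the linear map agreeing with $\ul\Phi^*\sF$ on the relevant cone; since $\ul\Phi^*\sF=\sF\circ\ul\Phi$ there, $k''=\ul\Phi^*k'$ with $k'$ the linearizing map for $\mathcal O_\X(\sF)$; and because $\ul\Phi^*\colon K\to K'$ descends, under the identification \cref{eq:cokerToKer}, to precomposition with $f_\phi\colon G_{\beta'}\to G_\beta$, one gets $\chi^{k''}=\chi^{k'}\circ f_\phi$, which is precisely the linearization obtained by pulling back that of $\mathcal O_\X(\sF)$ along $\phi$. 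The nonequivariant identification of the underlying line bundles is again the classical toric fact, so this route also finishes the proof.

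The main obstacle I expect is essentially bookkeeping: verifying that the square of stacky fans $(\ul\phi\oplus\ul\id,\ul\Phi\oplus\ul\id)$ is genuinely a base-change square for toric stacks, so that the total space really \emph{is} the fiber product and not merely receives a compatible map; or, on the chart route, keeping straight the two $\ZZ$-summands together with the Cartier duality between $K$-gradings and $\hom(G_\beta,\GG_m)$ when matching $\chi^{k'}\circ f_\phi$ with the character prescribed by $\ul\Phi^*\sF$. Everything else — that pullback of a line bundle is base change of its total space, and that a line bundle on a quotient toric stack is a linearized line bundle on the cover — is formal.
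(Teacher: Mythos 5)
Your second route is essentially the paper's own proof: the paper likewise quotes the classical toric fact that $\Phi^*\mathcal O_{X}(\sF)\cong\mathcal O_{X'}(\ul\Phi^*\sF)$ on underlying varieties and then matches the linearizations by computing the character acting on the constant $e$-section, using that $\chi^{k'}(\TT_\Phi g)=\chi^{\ul\Phi^*k'}(g)$ by contravariance of Cartier duality. Your first (total-space / fiber-product) route is just a geometric repackaging of the same content and is also fine, though the paper does not bother to verify the base-change square of stacky fans and instead argues directly on sections as you do in the chart version.
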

\begin{proof}
	Recall that the line bundle  $\mathcal O_{\X}(\sF)$ is a line bundle $\mathcal O_{X}(\sF)$ on $X$ with a $G_\beta$ action, and the map $\Phi \colon  X'\to X$ is $f_\phi\colon G_{\beta'}\to G_{\beta}$ equivariant. Given a $G_{\beta}$-sheaf on $\X$, the pullback sheaf naturally inherits a $G_{\beta'}$ action via pushforward of the $G_{\beta}$ action via the homomorphism $f_\phi:G_\beta\to G_{\beta'}$. It is a classical fact on toric varieties that $\Phi^*\mathcal O_X(\sF)\cong \mathcal O_{X'}(\ul\Phi^* \sF)$.
 
    We need to also check that the $G_{\beta}$ action on $ \mathcal O_{X'}(\ul\Phi^* \sF)$ arises from the pushforward $f_\phi$. From \cref{prop:identifyingCharacter}, we have the constant $e$ section over a top dimensional cone of $X'$ which is parameterized by the section $s'_{k'}(x')=(x', \chi^{k'}(x'))$. We have that the $G_{\beta'}$ action on this section is given by multiplication with character $\chi^{k'}(g')$.  
    
    Let $k=\ul \Phi^*(k')$; then $s_k$ is the constant $e$ section.
     The $G_\beta$ action on $s_k$ is multiplication by the pullback character
    \begin{align*}
        \chi^{k'}(\TT_\Phi g)=\TT_\Phi^*\chi^{k'}(g)=\chi^{\ul \Phi^* k'}(g)=\chi^k(g)
    \end{align*}
    where the second equality uses that Cartier duality is a contravariant functor. This matches the $G_\beta$ action on $\mathcal O_{\X_{\Sigma,'}}(\ul\Phi^* \sF)$.
\end{proof}

Finally, some of the intuition for our constructions come from homological mirror symmetry (\cref{subsec:HMSMotication}), where support functions make an appearance in the form of a certain Hamiltonian function on the mirror space.

\begin{example}[Orbifold line]
	\label{ex:bundleInOrbifoldLine}
	We describe $[\AA^1/(\ZZ/2\ZZ)]$ as a toric stack, its line bundles, and a resolution of the point $e$ in this stack. We take $L=N=\ZZ$, $\ul\beta:L\xrightarrow{(2)} N$, and $\Sigma=\{0, \langle e_1\rangle \}$. Then, $X_{\Sigma}=\AA^1$ and the toric stack $\X_{\Sigma, \ul\beta}$ is the orbifold line $[\AA^1/(\ZZ/2\ZZ)]$.
 
	A support function $\sF$ on $L$ is determined by its value on $e_1$ so let $F_a$ be the support function such that $F_{a}(e_1)=a$. On $X_{\Sigma}$, the line bundles $\mathcal O(F_a)$ are all isomorphic. However, up to isomorphism, there are two line bundles on $\X_{\Sigma, \ul\beta}$. In particular, observe that when $G: N\to \ZZ$ is a linear function that $\ul\beta^* \colon  L\to \ZZ $ takes values in $2\ZZ$, therefore $\sF_1$ is not linearly equivalent to $\sF_0$. More concretely:
	\begin{itemize}
		\item The sheaf $\mathcal O_{\X_{\Sigma, \ul\beta}}$ corresponds to the $\ZZ/2\ZZ$-invariant sections of $\mathcal O_{\AA^1}$. Under identification of $\mathcal O_{\AA^1}$ with  $\kk[x]$, these sections correspond to the even degree polynomials.
		\item the second sheaf $\mathcal O_{\X_{\Sigma, \ul\beta}}(\sF_1)$ has sections corresponding to the $\ZZ/2\ZZ$ anti-invariant sections of $\mathcal O_{\AA^1}$, i.e., the odd degree polynomials (where the action of $G_\beta$ on sections is multiplication by $-1$).
	\end{itemize}
	A specific example with sections given by maps of fans, and their corresponding points in the lattice polytope is drawn in \cref{fig:sectionsOnOrbifoldLine}.
	\begin{figure}
		\centering
		\begin{tikzpicture}[decoration=ticks, segment length=1cm]

    \begin{scope}[yscale=-1]
        \fill[gray!20] (2,2) -- (0,0) -- (0,-2) -- (2,-2) -- cycle;
        \draw [help lines, step=1cm] (-2,-2) grid (2,2);
        \node[circle, fill=black, scale=.5] at (0,0) {};
        \draw[blue] (0,0) edge[->] (1,1) edge (2,2) ;
        \draw[black] (0,0) edge[->] (0,-1) edge (0,-2);
    \end{scope}

    \begin{scope}[shift={(6,0)},yscale=-1]
        \fill[gray!20] (2,1) -- (0,0) -- (0,-2) -- (2,-2) -- cycle;
        \draw [help lines, step=1cm] (-2,-2) grid (2,2);
        \node[circle, fill=black, scale=.5] at (0,0) {};
        \draw[blue] (0,0) edge[->] (2,1) edge (2,1) ;
        \draw[black] (0,0) edge[->] (0,-1) edge (0,-2);
    \end{scope}

    \begin{scope}[shift={(6,-4.5)}]
        \draw[decorate, help lines](-2,0)-- (2.01,0);
        \draw[gray!20](-2,0) -- (2,0);
        \draw[blue] (0,0) edge[->] (2,0) edge (2,0);
        \node[fill=black, circle, scale=.5] at (0,0) {};
    \end{scope}

    \begin{scope}[shift={(0,-4.5)}]
        \draw[decorate, help lines](-2,0)-- (2.01,0);
        \draw[gray!20](-2,0) -- (2,0);
        \draw[blue] (0,0) edge[->] (1,0) edge (2,0);
        \node[fill=black, circle, scale=.5] at (0,0) {};
    \end{scope}

    \begin{scope}[shift={(8.5,0)}]
        \draw (1,2) -- (1,-2);
        \node[below] at (1,-2) {$M=N^*$};
        \node[right] at (1,-0.5) {$s_0$};
        \node[right] at (1,0) {$s_1$};
        \node[right] at (1,0.5) {$s_2$};
        \node[right] at (1,1) {$s_3$};
        \node[diamond, fill=red, scale=.5] at (1,-0.5) {};
        \node[diamond, fill=green, scale=.5] at (1,0) {};
        \node[diamond, fill=red, scale=.5] at (1,0.5) {};
        \node[diamond, fill=green, scale=.5] at (1,1) {};
    \end{scope}

    \node (v1) at (0,-2.5) {$L_F$};
    \node (v2) at (0,-4) {$L$};
    \node at (-3,-4.5) {$L$};
    \node (v3) at (6,-2.5) {$N_F$};
    \node (v4) at (6,-4) {$N$};
    \draw  (v1) edge[->] node[fill=white]{$\Pi$} (v2);
    \draw  (v3) edge[->] node[fill=white]{$\pi$} (v4);
    \draw  (v1) edge[->] node[fill=white]{$\beta_F$} (v3);
    \draw  (v2) edge[->] node[fill=white]{$\beta$} (v4);
    \begin{scope}[yscale=-1]
        \draw[red, dashed] (-2,2) -- (2,-2);
        \draw[red, dashed] (-2,-2) -- (2,2);
        \draw[green, dashed] (2,0) -- (-2,0);
        \draw[green, dashed] (-1,2) -- (1,-2);
        \node[right] at (1,1) {$S_0$};
        \node[right] at (1,0) {$S_1$};
        \node[right] at (1,-1) {$S_2$};
        \node[right] at (1,-2) {$S_3$};
    \end{scope}

    \begin{scope}[shift={(6,0)},yscale=-1]
        \begin{scope}
            \clip (-2, -2) rectangle (2,1);
            \draw[red, dashed] (-2,1) -- (2,-1);
            \draw[red, dashed] (-2,-1) -- (2,1);
            \draw[green, dashed] (2,0) -- (-2,0);
            \draw[green, dashed] (-2,2) -- (2,-2);
        \end{scope}
        \node[right] at (1,0.5) {$s_0$};
        \node[right] at (1,0) {$s_1$};
        \node[right] at (1,-0.5) {$s_2$};
        \node[right] at (1,-1) {$s_3$};
    \end{scope}

    \begin{scope}[shift={(-5,0)}, yscale=-1]
        \draw (1,2) -- (1,-2);
        \node[below] at (1,2) {$K=L^*$};
        \node[right] at (1,-2) {$S_3$};
        \node[right] at (1,-1) {$S_2$};
        \node[right] at (1,0) {$S_1$};
        \node[right] at (1,1) {$S_0$};
        \node[diamond, fill, scale=.5] at (1,-2) {};
        \node[diamond, fill, scale=.5] at (1,-1) {};
        \node[diamond, fill, scale=.5] at (1,0) {};
        \node[diamond, fill, scale=.5] at (1,1) {};
    \end{scope}
\end{tikzpicture} 		\caption{Sections of a line bundle on $[\AA/(\ZZ/2\ZZ)]$ where the support function takes the value $\sF(u_{\rho_1
		})=-1$. On the left-hand side, the dashed lines represent the (not necessarily $\ZZ/2\ZZ$ equivariant) sections of the bundle $\mathcal O_\AA(\sF)$. The green dashed lines represent the $\ZZ/2\ZZ$ equivariant sections. }
		\label{fig:sectionsOnOrbifoldLine}
	\end{figure}
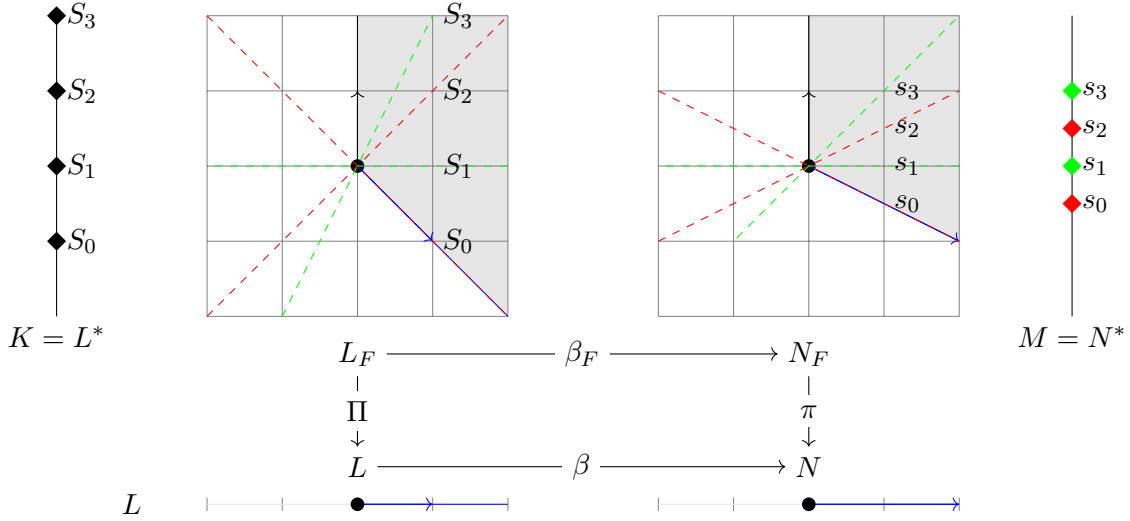
\end{example}

\subsection{Thomsen collection} \label{subsec:Thomsen}
In this section, we describe the set of line bundles from which our resolutions will be built. The construction of these line bundles is based on \cite{bondal2006derived}. We can assign to any $m \in M_\RR$ a $\TT_N$-invariant divisor on $X_\Sigma$ by
\begin{equation} \label{eq:bondalmap}
m \mapsto \sum_{\rho \in \Sigma(1)} \left\lfloor - \ul \beta^* m(u_\rho)  \right\rfloor D_\rho 
\end{equation}
where $u_\rho\in L$ is the primitive generator of the corresponding ray $\rho\in \Sigma(1)$ and $\lfloor \cdot \rfloor$ is the floor function.

For the remainder of this section, we will assume that $\Sigma$ is smooth. Then, we can identify the set of $\TT_N$ invariant divisors on $X_\Sigma$ with the set of support functions SF$(\Sigma)$ on $\Sigma$. Under that identification, \cref{eq:bondalmap} becomes a map
    $$ \bF \colon M_\RR \to \text{ SF}(\Sigma) $$
given by
    $$\bF(m)(u_\rho) = \left\lceil \ul\beta^*m(u_\rho) \right\rceil $$
for all $\rho \in \Sigma(1)$ and where $\lceil \cdot \rceil$ is the ceiling function.  Observe that $\bF(m_\RR)$ and $\bF(m_\RR+m_\ZZ)$ are linearly equivalent support functions for any $m_\ZZ\in M$. As a result, we obtain a map 
\begin{equation}
	\label{eq:lineBundleFromTorusPoints}
	\mathcal O(\bF(-))\colon M_\RR/M \to \Pic(\X_{\Sigma, \ul\beta}).
\end{equation}
where $\Pic(\X_{\Sigma, \ul \beta})$ is the Picard group. We call the image of this map the \emph{Thomsen collection}. This nomenclature is explained by \cref{sec:frobgen}.

We can describe the regions $\bF^{-1}(\sF)\subset M_\RR$ via the hyperplane arrangement $\{\langle m, \beta(u_\rho)\rangle \in \ZZ\}_{\rho\in \Sigma(1)}$. When drawing the stratification, we label each hyperplane with small ``hairs'' indicating the direction of $\beta(u_\rho)$. The hyperplane arrangement induces a stratification $\sS_{\Sigma, \ul\beta}$ of $M_\RR$, and each stratum $\strata\in \sS_{\Sigma, \ul \beta}$ is labeled by a line bundle on $\X_{\Sigma, \ul\beta}$ via $\bF$. We call the associated line bundle $\mathcal O_{\X_{\Sigma, \ul\beta}}(\strata)$.
Note that if $\tau$ is codimension 1 boundary component of $\sigma$, and there are no ``hairs'' pointing from $\tau$ into $\sigma$, that $\mathcal O_{\X_{\Sigma, \ul\beta}}(\strata)=\mathcal O_{\X_{\Sigma, \ul\beta}}(\tau)$. 

The stratification is $M$-periodic, so we obtain an induced stratification on $M_\RR/M$. This stratification was originally studied in \cite{bondal2006derived}.

\begin{rem}
This torus arrangement describes a portion of the FLTZ stop from \cite{fang2011categorification} corresponding to data from the 1-dimensional cones of the fan. This stratification plays a central role in the coherent-constructible correspondence and in homological mirror symmetry for toric varieties.
\end{rem}

\begin{example}[Beilinson Collection]
	On $\PP^n$, the Thomsen collection coincides with the Beilinson collection. In \cref{fig:thomsenP2}, we draw the stratification of $M_\RR$ given by $\bF$ for $n = 2$.
	\begin{figure}
		\centering
		\scalebox{.6}{\begin{tikzpicture}
\usetikzlibrary{calc, decorations.pathreplacing,shapes.misc}
\usetikzlibrary{decorations.pathmorphing}

\tikzstyle{fuzz}=[red, 
    postaction={draw, decorate, decoration={border, amplitude=0.15cm,angle=90 ,segment length=.15cm}},
]

\draw[dotted]  (-1.5,3) rectangle (3.5,-2);

\begin{scope}[]

\draw[fuzz](1,3)  -- (1,-2);
\draw[fuzz](-1.5,0.5) -- (3.5,0.5);
\draw[fuzz] (3.5,-2)  -- (-1.5,3);
\end{scope}

\begin{scope}[shift={(5,0)}]

\draw[fuzz](1,3)  -- (1,-2);
\draw[fuzz](-1.5,0.5) -- (3.5,0.5) ;
\draw[fuzz] (3.5,-2)  -- (-1.5,3);
\end{scope}

\begin{scope}[shift={(0,5)}]

\draw[fuzz](1,3)  -- (1,-2);
\draw[fuzz](-1.5,0.5) -- (3.5,0.5) ;
\draw[fuzz] (3.5,-2)  -- (-1.5,3);
\end{scope}

\begin{scope}[shift={(5,5)}]

\draw[fuzz](1,3)  -- (1,-2);
\draw[fuzz](-1.5,0.5) -- (3.5,0.5);
\draw[fuzz] (3.5,-2)  -- (-1.5,3);
\end{scope}
\node[fill=white, fill opacity =.5] at (1,0.5) {$\mathcal O$};
\node[fill=white, fill opacity =.5] at (2.5,2) {$\mathcal O(-D_x-D_y)$};
\node[fill=white, fill opacity =.5] at (4.5,4) {$\mathcal O(-D_x-D_y+D_z)$};
\node[fill=white, fill opacity =.5] at (2.5,7) {$\mathcal O(-D_x-2D_y+D_z)$};
\node[fill=white, fill opacity =.5] at (7.5,2) {$\mathcal O(-2D_x-D_y+D_z)$};
\node[fill=white, fill opacity =.5] at (6,5.5) {$\mathcal O(-D_x-D_y+2D_z)$};
\node[fill=white, fill opacity =.5] at (6,0.5) {$\mathcal O(-D_x+D_z)$};
\node[fill=white, fill opacity =.5] at (1,5.5) {$\mathcal O(-D_y+D_z)$};
\node[fill=white, fill opacity =.5] at (-0.5,-1) {$\mathcal O(-D_z)$};
\node[fill=white, fill opacity =.5] at (-0.5,4) {$\mathcal O(-D_y)$};
\node[fill=white, fill opacity =.5] at (4.5,-1) {$\mathcal O(-D_x)$};
\end{tikzpicture} }
		\caption{The stratification $S_\Sigma$ and Thomsen collection for $\PP^2$. The fundamental domain for the torus $M_\RR/M_\ZZ$ is outlined by the dotted line}
		\label{fig:thomsenP2}
	\end{figure}
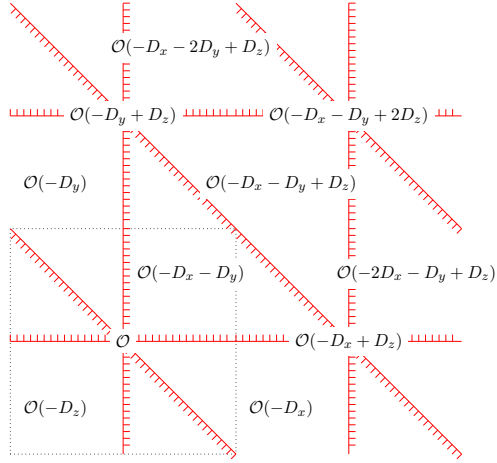
	A fundamental domain for the torus $M_\RR/M_\ZZ$ is marked out by the dashed lines. Observe that every line bundle in the figure is isomorphic to one inside the fundamental domain as claimed. In particular, the line bundles which appear in the Thomsen collection are $\mathcal O, \mathcal O(-1)$ and $\mathcal O(-2)$.
\end{example}
We can also produce a natural set of morphisms of line bundles in the Thomsen collection using the stratification $\sS_{\Sigma, \ul\beta}$. A version of the following statement appears in \cite[Proposition 5.5]{favero2022homotopy}.
\begin{prop}
	\label{prop:cocoreMorphisms}
	For any strata $\strata, \stratb\in \sS_{\Sigma, \ul \beta}$ with $\stratb<\strata$, we have $0 \in \Delta_\beta(\bF(\stratb)-\bF(\strata))$.
\end{prop}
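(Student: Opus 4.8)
The plan is to strip the statement down to an elementary inequality: first I would reduce the membership $0 \in \Delta_\beta(\bF(\stratb) - \bF(\strata))$ to a family of inequalities between the support functions $\bF(\stratb)$ and $\bF(\strata)$ evaluated on the ray generators $u_\rho$, and then prove that family by exploiting that $\bF$ is constant on each stratum and that $\stratb$ lies in the Euclidean closure of $\strata$.

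For the reduction, I would use the description $\Delta_\beta(\sF) = \{m \in M : \ul\beta^* m(u_\rho) \ge \sF(u_\rho) \text{ for all } \rho \in \Sigma(1)\}$ recorded above, together with the fact that $\ul\beta^*(0) = 0$. Applied to the support function $\bF(\stratb) - \bF(\strata)$ and $m = 0$, this says that $0 \in \Delta_\beta(\bF(\stratb) - \bF(\strata))$ is equivalent to the requirement that
\[ \bF(\stratb)(u_\rho) \le \bF(\strata)(u_\rho) \qquad \text{for every } \rho \in \Sigma(1). \]
So it suffices to establish this inequality ray by ray.

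To do so, fix $\rho \in \Sigma(1)$ and write $\ell_\rho$ for the linear functional $\ell_\rho(m) = \ul\beta^* m(u_\rho) = \langle m, \ul\beta(u_\rho)\rangle$ on $M_\RR$, so that $\bF(m)(u_\rho) = \lceil \ell_\rho(m) \rceil \ge \ell_\rho(m)$ for every $m$. Since $\bF$ is constant on the stratum $\strata$, with value $\bF(\strata)$, we get $\ell_\rho(m) \le \bF(\strata)(u_\rho)$ for all $m \in \strata$, and because $\ell_\rho$ is continuous this inequality persists on the closure $\overline{\strata}$. The hypothesis $\stratb < \strata$ means precisely that $\stratb \subseteq \overline{\strata}$, so picking any $m_0 \in \stratb$ we obtain $\ell_\rho(m_0) \le \bF(\strata)(u_\rho)$. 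Applying the (monotone) ceiling function and using that $\bF(\strata)(u_\rho)$ is already an integer gives $\bF(\stratb)(u_\rho) = \bF(m_0)(u_\rho) = \lceil \ell_\rho(m_0) \rceil \le \bF(\strata)(u_\rho)$, as needed.

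I do not expect a genuine obstacle here; the statement is essentially a bookkeeping consequence of the definitions. The only points that deserve a moment's attention are that $\bF$ really is constant on each stratum of $\sS_{\Sigma, \ul\beta}$ — so that $\bF(\strata)$ and $\bF(\stratb)$ make sense as values at single points — and that $\stratb < \strata$ is to be read as containment of $\stratb$ in the closure of $\strata$; both are immediate from the description of $\sS_{\Sigma, \ul\beta}$ as the stratification of $M_\RR$ cut out by the integral hyperplane arrangement $\{\langle m, \ul\beta(u_\rho)\rangle \in \ZZ\}_{\rho \in \Sigma(1)}$, on whose relatively open strata each $\ell_\rho$ is either a fixed integer or confined to a fixed open unit interval.
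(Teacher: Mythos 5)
Your proof is correct and follows essentially the same route as the paper's: reduce to the ray-by-ray inequality $\bF(\stratb)(u_\rho)\le\bF(\strata)(u_\rho)$, then combine $\ul\beta^*m(u_\rho)\le\bF(\strata)(u_\rho)$ on $\strata$ with passage to the closure and the integrality of $\bF(\strata)(u_\rho)$ under the ceiling. The only difference is cosmetic: you make the continuity/closure step explicit where the paper states the resulting non-strict inequality directly.
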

\begin{proof}
	The claim is equivalent to showing that $\bF(m_1)(u_\rho) \geq \bF(m_2)(u_\rho)$ for any $m_1 \in \strata$, $m_2 \in \stratb$ and all $\rho \in \Sigma(1)$ (since $0$ is clearly in the image of $\ul \beta^* \colon M\to K $). Observe that for all $\rho \in \Sigma(1)$ and $m_1 \in \strata$, we have
    \[ \bF(m_1)(u_\rho) - 1 < m_1(u_\rho) \leq \bF(m_1)(u_\rho). \]
    Thus, if $\stratb < \strata$ and $m_2 \in \stratb$, we have
    \[\bF(m_1)(u_\rho) - 1 \leq m_2(u_\rho) \leq \bF(m_1)(u_\rho),  \]
    and, therefore, 
    \[ \bF(m_2) (u_\rho) = \lceil m_2(u_\rho) \rceil \leq \bF(m_1)(u_\rho) \]
    as required.
\end{proof}

\begin{df}
    For all $\strata, \stratb \in \sS_{\Sigma, \ul \beta}$ with $\stratb < \strata$, the \emph{boundary morphism}
        $$\bm_{\strata,\stratb}\in \hom(\mathcal O_\X(\bF(\strata), \mathcal O_\X(\bF(\stratb))$$
    is the morphism corresponding to $0 \in \Delta_\beta(\bF(\stratb)-\bF(\strata))$ guaranteed to exist by \cref{prop:cocoreMorphisms}.
\end{df}

The morphisms appearing in \cref{ex:resolutionOfLine,eq:resolutionPtInOrbiA1} are all boundary morphisms. Additional examples appear in  \cref{ex:pointInA1,fig:fltzCP}. Our resolutions will be built from boundary morphisms. We first observe that boundary morphisms compose to a boundary morphism.

\begin{prop}
	Let $\strata>\stratb>\stratc$. Then $\bm_{\stratb,\stratc}\circ \bm_{\strata,\stratb}=\bm_{\strata,\stratc}$.
\end{prop}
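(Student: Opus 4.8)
The plan is to unwind the definition of the boundary morphism and use the fact that, for toric line bundles presented via support functions, the monomial section corresponding to $0 \in \Delta_\beta(\cdot)$ is the ``constant $e$-section'' under the identifications of \cref{prop:identifyingCharacter}. Concretely, each $\bm_{\strata,\stratb}$ is the element of $\Hom(\mathcal{O}_\X(\bF(\strata)),\mathcal{O}_\X(\bF(\stratb)))$ labeled by $0\in\Delta_\beta(\bF(\stratb)-\bF(\strata))$. Under the standard dictionary between $\Hom$ of toric line bundles and lattice points, multiplying two such morphisms corresponds to \emph{adding} the labeling lattice points: the composite $\bm_{\stratb,\stratc}\circ\bm_{\strata,\stratb}$ is the morphism labeled by $0+0=0 \in \Delta_\beta(\bF(\stratc)-\bF(\strata))$, which is by definition $\bm_{\strata,\stratc}$.

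First I would make the identification of morphisms with monomials precise: a $\TT_N$-equivariant (resp.\ $G_\beta$-equivariant) morphism $\mathcal{O}_\X(\sF) \to \mathcal{O}_\X(\sF')$ is the same as a global monomial section of $\mathcal{O}_\X(\sF'-\sF)$, i.e.\ an element of $\Delta_\beta(\sF'-\sF) \subset M$ (using the description of $\Delta_\beta$ given in \cref{subsec:stackbackground}), and composition of morphisms corresponds to the addition $M\times M \to M$ restricted to the relevant polytopes — this is immediate from the description of monomial sections as maps of stacky fans $(\ul s,\ul S)\colon (\Sigma,\ul\beta)\to(\Sigma_\sF,\ul\beta_\sF)$, where composing sections stacks the grading shifts additively. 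Then I would note that $\bm_{\strata,\stratb}$, $\bm_{\stratb,\stratc}$, and $\bm_{\strata,\stratc}$ are all the morphism labeled by $0\in M$ in their respective $\Hom$-spaces (each such $0$ lies in the appropriate $\Delta_\beta$ by \cref{prop:cocoreMorphisms}, applied to the three pairs $\strata>\stratb$, $\stratb>\stratc$, $\strata>\stratc$). Since $0+0=0$, the composite is the morphism labeled by $0$, which is $\bm_{\strata,\stratc}$.

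One small point to be careful about is non-degeneracy: for the equation to make sense I should confirm the composite is the $0$-labeled morphism and not, say, zero as a map of sheaves. But a monomial section is never identically zero, and the labeling of $\Hom$-spaces by $\Delta_\beta$ is injective, so there is no ambiguity; the identity $\bm_{\stratb,\stratc}\circ\bm_{\strata,\stratb}=\bm_{\strata,\stratc}$ holds on the nose (not just up to scalar) once we fix the canonical trivializations from \cref{prop:identifyingCharacter}. I would also double-check compatibility of the canonical $e$-section trivializations across the three line bundles — they are all defined by the linear map $k'$ agreeing with the support function on a fixed top-dimensional cone, so the composite of the two canonical sections is again canonical.

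The main (and only) obstacle is bookkeeping: making the identification ``$\Hom$ of toric line bundles $=$ lattice points, composition $=$ addition'' precise in the stacky setting with the $G_\beta$-equivariant structure, so that the constant $e$-sections compose correctly and the character twists of \cref{prop:identifyingCharacter} are tracked consistently. Once that dictionary is set up — which is essentially already done in \cref{subsec:stackbackground} — the proof is a one-line computation $0+0=0$.
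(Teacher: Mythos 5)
Your proposal is correct and follows essentially the same route as the paper, whose entire proof is the one-line observation that composition of equivariant monomial sections corresponds to addition of their lattice-point representatives in $K$, so the composite of two $0$-labeled boundary morphisms is the $0$-labeled boundary morphism. Your additional care about the canonical trivializations and injectivity of the lattice-point labeling is a reasonable elaboration of the same idea, not a different argument.
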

\begin{proof}
	The tensor product of equivariant sections is given by the addition of their lattice point representatives in $K$.
\end{proof}
The Thomsen collection behaves well under open inclusions.
\begin{lem}
	\label{lem:thomsenPullback}
	Let $(\ul i, \ul I):(\Sigma, \ul\beta)\to (\Sigma', \ul\beta')$ be an open inclusion. Let $m'\in M'_\RR$ be any point. Then 
	\[\mathcal O_{\X_{\Sigma, \beta}}(\bF(\ul i^* m'))= i^*\mathcal O_{\X_{\Sigma', \beta'}}(\bF'(m'))\]
	and 
	\[ i^*\bm_{\strata,\stratb}= \bm_{\ul i ^*\strata,\ul i ^*\stratb}.\]
\end{lem}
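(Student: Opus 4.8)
The plan is to deduce both identities from the explicit formula $\bF(m)(u_\rho) = \lceil \ul\beta^* m(u_\rho)\rceil$ together with \cref{prop:pullbackOfSupportFunction}, which already records how line bundles and monomial sections pull back along a morphism of stacky fans. First I would reduce the first equality to an equality of support functions: by \cref{prop:pullbackOfSupportFunction} we have $i^*\mathcal O_{\X_{\Sigma', \beta'}}(\bF'(m')) = \mathcal O_{\X_{\Sigma, \beta}}(\ul I^* \bF'(m'))$, so it suffices to prove $\ul I^*\bF'(m') = \bF(\ul i^* m')$ as support functions on $\Sigma$. Since $\Sigma$ is smooth, a support function is determined by its values on the primitive ray generators $u_\rho$, $\rho\in\Sigma(1)$, so it is enough to check equality there.

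The core computation is the following chain. Because $(\ul i, \ul I)$ is an open inclusion, $\ul I$ identifies $\Sigma$ with a subfan of $\Sigma'$; in particular $\ul I(u_\rho)$ is the primitive generator $u_{\ul I(\rho)}$ of a ray of $\Sigma'$, so the defining formula for $\bF'$ applies termwise and
\[
\ul I^*\bF'(m')(u_\rho) = \bF'(m')(\ul I(u_\rho)) = \lceil (\ul\beta')^* m'(\ul I (u_\rho))\rceil = \lceil \langle m', (\ul\beta'\circ\ul I)(u_\rho)\rangle\rceil .
\]
Using the commuting square $\ul\beta'\circ\ul I = \ul i\circ\ul\beta$ that defines a morphism of stacky fans, the right-hand side equals $\lceil \langle \ul i^* m', \ul\beta(u_\rho)\rangle\rceil = \lceil \ul\beta^*(\ul i^* m')(u_\rho)\rceil = \bF(\ul i^* m')(u_\rho)$. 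This proves the first identity.

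For the morphisms, recall that $\bm_{\strata,\stratb}$ is by definition the morphism $\mathcal O_{\X_{\Sigma',\beta'}}(\bF'(\strata)) \to \mathcal O_{\X_{\Sigma',\beta'}}(\bF'(\stratb))$ corresponding to the monomial section $0 \in \Delta_{\beta'}(\bF'(\stratb) - \bF'(\strata))$. The content of (the proof of) \cref{prop:pullbackOfSupportFunction} is precisely that $i^*$ sends the monomial section with exponent $k'\in K'$ to the one with exponent $\ul I^*(k')\in K$, compatibly with the identification $i^*\mathcal O_{\X_{\Sigma',\beta'}}(\sF') = \mathcal O_{\X_{\Sigma,\beta}}(\ul I^*\sF')$. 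Applying this with $k'=0$ and invoking the first identity, $i^*\bm_{\strata,\stratb}$ is the morphism $\mathcal O_{\X_{\Sigma,\beta}}(\bF(\ul i^*\strata)) \to \mathcal O_{\X_{\Sigma,\beta}}(\bF(\ul i^*\stratb))$ corresponding to $0 \in \Delta_\beta(\bF(\ul i^*\stratb) - \bF(\ul i^*\strata))$, i.e. $\bm_{\ul i^*\strata, \ul i^*\stratb}$. Here $\ul i^*\strata$ (resp. $\ul i^*\stratb$) denotes the stratum of $\sS_{\Sigma,\beta}$ containing $\ul i^*(\strata)$; this is well defined and order-preserving because $\Sigma(1)\subseteq\Sigma'(1)$ makes $\sS_{\Sigma,\beta}$ a coarsening of the stratification induced from $\sS_{\Sigma',\beta'}$ via $\ul i^*$, and the target boundary morphism is defined by \cref{prop:cocoreMorphisms} since $\stratb<\strata$ forces $\ul i^*\stratb \le \ul i^*\strata$ (with the degenerate case $\ul i^*\stratb = \ul i^*\strata$, where both sides are the identity of a single line bundle, included automatically).

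I do not expect a serious obstacle once the definition of ``open inclusion of stacky fans'' is unwound: everything reduces to the commuting square $\ul\beta'\circ\ul I = \ul i\circ\ul\beta$ and the ceiling formula for $\bF$. The only genuinely fiddly points are bookkeeping — confirming that an open inclusion carries primitive ray generators to primitive ray generators (so the $\bF'$ formula applies on each $\ul I(u_\rho)$), and making the induced coarsening map on strata precise enough that the symbols $\ul i^*\strata$, $\ul i^*\stratb$ are unambiguous.
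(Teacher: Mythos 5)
Your proposal is correct and follows the paper's proof essentially verbatim: reduce via \cref{prop:pullbackOfSupportFunction} to an equality of support functions, check it on primitive ray generators using the ceiling formula and the commuting square defining a morphism of stacky fans, and observe that the same bookkeeping sends the section $0\in\Delta_{\beta'}(\bF'(\stratb)-\bF'(\strata))$ to $0\in\Delta_{\beta}(\bF(\ul i^*\stratb)-\bF(\ul i^*\strata))$. The paper leaves the boundary-morphism half as ``a similar computation,'' which you have simply spelled out.
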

\begin{proof}
	By \cref{prop:pullbackOfSupportFunction}, we have $i^*\mathcal O_{\X_{\Sigma', \beta'}}(\bF'(m'))=\mathcal O_{\X_{\Sigma, \beta}}(\ul I^*\bF'(m'))$. We check that this support function agrees with $\bF(\ul i^*m')$ by evaluation against primitive vectors $u_\rho$ for the 1-dimensional cones $\rho\in \Sigma(1)$. 
	\begin{align*}
		\bF(\ul i^* m')(u_\rho)=&\lceil\ul\beta^*(\ul i^* m')(u_\rho)\rceil
		=\lceil \ul I^*(\ul\beta')^*m'(u_\rho)\rceil
  \intertext{Because $(\ul i, \ul I)$ is an open inclusion, $I(u_\rho)\in \Sigma'(1)$, allowing us to substitute the definition of $\bF'$,}
		=&\bF'(m')(\ul I u_\rho)=\ul I^*\bF'(m')(u_\rho)
	\end{align*}
	A similar computation proves that pullback sends boundary morphisms to boundary morphisms.
\end{proof}
\subsection{Pushforward along finite quotients}
\label{subsec:finiteQuotient}
As illustrated by \cref{ex:pointInOrbifoldLine}, we will need to compute the pushforward of a line bundle along a finite quotient. The goal of this section is to make such a computation.

\begin{df}
        Let $(\ul\pi, \ul\Pi) \colon (\Sigma, \ul\beta)\to (\Sigma', \ul\beta')$ be a morphism of stacky fans such that $\ul\Pi$ is the identity and $\ul\pi$ has trivial kernel and finite cokernel. We call $(\ul\pi, \ul\Pi)$ a change of group with finite cokernel.
\end{df} 
The map is called a change of group with finite cokernel for the following reason. From the commutativity of the diagram
\[\begin{tikzcd}
	G_\beta\arrow{r}{i_\TT} \arrow[dashed]{d}{f_\pi}& \TT_L\arrow{r}{\TT_\beta} \arrow{d}{\TT_\Pi} &\TT_N \arrow{d}{\TT_\pi}\\
	G_{\beta'}\arrow{r}&\TT_{L'}\arrow{r}{\TT_{\beta'}} &\TT_{N'}
\end{tikzcd}
\]
we obtain $\TT_{\beta'}\circ\TT_{L'}\circ i_\TT=0$ and conclude that there exists a group homomorphism $f_\pi \colon G_{\beta}\to G_{\beta'}$ such that the morphism of toric stacks $[X_\Sigma/G_\beta]\to [X_\Sigma/G_{\beta'}]$ arises from an $f_\pi$ map.
We claim that the cokernel of $f_\pi$ is finite.
\begin{prop}
	\label{prop:soManyThingsAreTheSame}
	Suppose that $(\ul\pi, \ul\Pi): (\Sigma, \ul\beta)\to (\Sigma', \ul\beta')$ is a change of group with finite cokernel. Let $\tilde{\pi}^* \colon M'_\RR/M'\to M_\RR/M$ and $f_\pi: G_{\beta}\to G_{\beta'}$ be the induced maps. We have the following relations between groups 
	\[
		\coker(f_\pi) =\ker(\TT_{\pi})\text{ $\ot$ Cartier Dual $\to$ }
		\coker(\ul\pi^* \colon M'\to M)=\ker(\tilde \pi^*).\]
\end{prop}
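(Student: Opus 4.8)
The statement is a compatibility/naturality claim among four groups: $\coker(f_\pi)$, $\ker(\TT_\pi)$, $\coker(\ul\pi^*\colon M'\to M)$, and $\ker(\tilde\pi^*)$, where the left pair and the right pair are Cartier-dual to each other. The plan is to establish the two horizontal identifications (left pair, right pair) by directly unwinding the definitions, and then verify that Cartier duality sends one pair to the other. First I would treat the identification $\coker(f_\pi)=\ker(\TT_\pi)$. By hypothesis $\ul\pi$ has trivial kernel and finite cokernel, and $\ul\Pi=\id$; applying Cartier duality to $0\to M'\xrightarrow{\ul\pi^*} M\to C\to 0$ gives (as in the discussion around \eqref{eq:cokerToKer}) a short exact sequence $0\to \widehat C\to \TT_{N'}\xrightarrow{\TT_\pi}\TT_N\to 0$, so $\ker(\TT_\pi)=\widehat{C}=\widehat{\coker(\ul\pi^*)}$. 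This already gives the right-to-left Cartier duality statement as well: $\ker(\TT_\pi)$ and $\coker(\ul\pi^*\colon M'\to M)$ are Cartier dual by construction. To get $\coker(f_\pi)=\ker(\TT_\pi)$, I would use the commutative diagram with exact rows displayed just above the statement: since $\TT_\Pi=\id$ (because $\ul\Pi=\id$), a diagonal chase shows that $f_\pi\colon G_\beta\to G_{\beta'}$ fits into a short exact sequence $0\to G_\beta\xrightarrow{f_\pi} G_{\beta'}\to \ker(\TT_\pi)\to 0$ — that is, the snake lemma applied to the map of the two short exact sequences $G_\bullet \to \TT_{L_\bullet}\to \TT_{N_\bullet}$ identifies the cokernel of $f_\pi$ with the kernel of $\TT_\pi$ (the outer map $\TT_L\to\TT_{L'}$ being the identity kills the relevant connecting terms). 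Hence $\coker(f_\pi)\cong \ker(\TT_\pi)$.

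For the right-hand identification $\coker(\ul\pi^*\colon M'\to M)=\ker(\tilde\pi^*)$, recall $\tilde\pi^*\colon M'_\RR/M'\to M_\RR/M$ is induced by the $\RR$-linear map $\ul\pi^*_\RR$ (which is an isomorphism, since $\ul\pi^*$ has trivial kernel and finite cokernel, hence $\ul\pi^*_\RR$ is injective between spaces of equal dimension). So an element of $M'_\RR/M'$ lies in $\ker(\tilde\pi^*)$ iff its image under $\ul\pi^*_\RR$ lies in $M\subset M_\RR$, i.e. iff it lies in $(\ul\pi^*_\RR)^{-1}(M)/M'$. The inclusion $M'\hookrightarrow (\ul\pi^*_\RR)^{-1}(M)$ composed with $\ul\pi^*_\RR$ identifies $(\ul\pi^*_\RR)^{-1}(M)/M'$ with $M/\ul\pi^*(M')=\coker(\ul\pi^*)$, giving the claimed isomorphism. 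Finally, the Cartier-duality arrow between the left pair and the right pair: $\coker(f_\pi)$ is a finite abelian group whose Cartier dual is its character group, and under \eqref{eq:cokerToKer} (applied to $f_\pi$, or rather to the corresponding cokernel-of-lattice-duals statement) this character group is $\coker(\ul\pi^*)$; dually, $\ker(\TT_\pi)=\widehat{\coker(\ul\pi^*)}$ as already noted, and $\ker(\tilde\pi^*)\cong\coker(\ul\pi^*)$ as just shown — so the two horizontal isomorphisms are exchanged by Cartier duality, which is the content of the displayed line.

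I expect the main obstacle to be bookkeeping rather than conceptual: making the snake-lemma argument for $\coker(f_\pi)\cong\ker(\TT_\pi)$ fully rigorous requires care about which maps are the identity, which are injective, and the directions of the induced maps $f_\pi$ (note $f_\pi$ goes $G_\beta\to G_{\beta'}$ while $\TT_\pi$ goes $\TT_{N'}\to\TT_N$ in the ``same'' direction as $\ul\pi$ but $\ul\pi^*$ reverses it), together with checking that all the identifications are natural enough that the single displayed equation genuinely records all four comparisons simultaneously. Once the diagram is set up correctly with exact rows and the vertical identity $\TT_\Pi=\id$, the rest is a routine diagram chase plus two applications of Cartier duality.
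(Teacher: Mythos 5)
Your proposal is correct and follows essentially the same route as the paper: the identification $\coker(f_\pi)\cong\ker(\TT_\pi)$ via the snake (zig-zag) lemma applied to the two short exact sequences $G_{\beta}\to\TT_L\to\TT_N$ and $G_{\beta'}\to\TT_{L'}\to\TT_{N'}$ with the middle vertical map $\TT_\Pi=\id$ (using surjectivity of $\TT_\beta$), and the identification $\coker(\ul\pi^*)\cong\ker(\tilde\pi^*)$ by using that $\ul\pi^*_\RR$ is an isomorphism to transport $(\ul\pi^*_\RR)^{-1}(M)/M'$ onto $M/\ul\pi^*(M')$. The Cartier-duality link between the two pairs is also handled as in the paper via the duality of the short exact sequence $0\to M'\to M\to\coker(\ul\pi^*)\to 0$.
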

\begin{proof}
	First, we prove $\coker(f_\pi \colon G_{\beta}\to G_{\beta'})=\ker(\TT_\pi) $. First, we observe that the map $\TT_\beta \colon  \TT_L\to \TT_N $ is surjective (see the discussion following \cite[Definition 2.4]{geraschenko2015toric}) so $\coker(\TT_\beta)=0$. Then, we apply the zig-zag lemma to obtain the  diagram:
	\[	
	\begin{tikzpicture}
			\node (v1) at (-2,1) {$0$};
			\node (v2) at (0.5,1) {$G_{\beta}$};
			\node (v3) at (2.5,1) {$G_{\beta'}$};
			\node (v4) at (-2,0) {$0$};
			\node (v5) at (0.5,0) {$\TT_L$};
			\node (v6) at (2.5,0) {$\TT_{L'}$};
			\node (v7) at (4.5,0) {$0$};
			\node (v8) at (-4,-2) {$0$};
			\node (v9) at (-2,-2) {$\ker(\TT_{\pi})$};
			\node (v10) at (0.5,-2) {$\TT_N$};
			\node (v11) at (2.5,-2) {$\TT_{N'}$};
			\node (v12) at (-2,-3) {$\ker(\TT_\pi)$};
			\node (v13) at (0.5,-3) {$\text{coker}(\TT_{\beta})=0$};
			
			\draw[->]  (v1) edge (v2);
			\draw[->]  (v2) edge node[midway, fill=white]{$f_\pi$} (v3);
			\draw[->]  (v4) edge (v5);
			\draw[->]  (v5) edge node[midway, fill=white]{$\TT_\Pi$}(v6);
			\draw[->]  (v6) edge (v7);
			\draw[->]  (v8) edge (v9);
			\draw[->]  (v9) edge (v10);
			\draw[->]  (v10) edge node[midway, fill=white]{$\TT_\pi$}(v11);
			\draw[->]  (v12) edge (v13);
			\draw[->]  (v1) edge (v4);
			\draw[->]  (v4) edge (v9);
			\draw[->]  (v9) edge (v12);
			\draw[->]  (v2) edge (v5);
			\draw[->]  (v5) edge node[midway, fill=white]{$\TT_{\beta}$} (v10);
			\draw[->]  (v10) edge (v13);
			\draw[->]  (v3) edge (v6);
			\draw[->]  (v6) edge node[midway, fill=white]{$\TT_{\beta'}$} (v11);
			\draw[->] (v3) .. controls (3.5,1) and (3.5,1) .. (3.5,1) .. controls (4,1) and (4,-1.5) .. (3.5,-1.5) .. controls (2.5,-1.5) and (-2,-1.5) .. (-3,-1.5) .. controls (-3.5,-1.5) and (-3.5,-3) .. (-3,-3) .. controls (-3,-3) and (-3,-3) .. (v12);	
			\node[above] at (1.5,-1.5) {$\delta$};
			\end{tikzpicture}
	\]
	verifying the first claim.
 
    Now, we prove the second equality. Given $[m]\in \coker(\ul\pi^*)$, consider the element $m\tensor_\RR 1\in M_\RR$. Since $\pi$ has a trivial kernel and finite cokernel, the map $\ul\pi_\RR^* \colon  M'_\RR\to M_\RR $ is an isomorphism. Consider the lift $(\ul\pi^*_\RR)^{-1}(m\tensor_\RR 1)\in M'_\RR$, and let $[(\pi^*_\RR)^{-1}(m\tensor_\RR 1)]\in M'_\RR/M'$. Since $\ul\pi^*([(\ul\pi^*_\RR)^{-1}(m\tensor_\RR 1)])=[m\tensor_\RR 1]$, we learn that $[(\ul\pi^*_\RR)^{-1}(m\tensor 1)]\in \ker(\tilde{\pi}^*)$. This yields a map $\coker(\ul \pi^*)\to \ker(\tilde{\pi}^*)$.

	Conversely, any element $[m'\tensor_{\RR}r]\in \ker(\tilde \pi^*)$, 
    we have $\ul\pi^*_\RR(m'\tensor r)\in M$ and thus represents a class in $\coker(\ul\pi^* \colon  M'\to M) $. This gives the inverse $\coker(\ul\pi^*)\ot \ker(\tilde{\pi}^*)$.
\end{proof}

\begin{df} \label{def:finitequotient}
    Let $(\ul \pi, \ul \Pi)\colon (\Sigma, \ul \beta)\to (\Sigma', \ul \beta')$ be a change of group with finite cokernel. If the surjective map $f_\pi$ splits, we say that $(\ul \pi, \ul \Pi)$ is a finite quotient. 
\end{df}

\begin{example}
    Let $(\ul \pi, \ul \Pi)\colon (\Sigma, \ul \id)\to (\Sigma', \ul \beta')$ be a change of group with finite cokernel so that the first toric stack is presented as a toric variety. As $G_{\id}$ is the trivial group, $f_\pi$ splits, and this map is a finite quotient.
\end{example}

A morphism of stacky fans satisfying \cref{def:finitequotient} is called a finite quotient because the stack $\X_{\Sigma', \ul\beta'}$ is the stacky quotient of $\X_{\Sigma, \ul \beta}$ by a finite group.

\begin{lem}[Pushforward along finite quotients]
	\label{prop:quotientOfLineBundle}
        Assume that $|\coker(\ul \pi_*)|\in \kk^\times$. Let $\sF$ be a support function on $\Sigma$ and let $(\ul\pi, \ul\Pi) \colon (\Sigma, \ul\beta) \to(\Sigma', \ul\beta')$ be a finite quotient.  Define the support function $\ul\Pi_*F$ to be the pushforward of $\sF$ along $\ul\Pi$. Then,
	\begin{equation}
		\pi_*\mathcal O(\sF)=\bigoplus_{[m]\in \coker(\ul\pi^*:M'\to M)}\mathcal O_{\X'}(\ul\Pi_*(\sF-\ul\beta^*m)).
		\label{eq:quotientOfLineBundle}
	\end{equation}
\end{lem}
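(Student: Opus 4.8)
The plan is to reduce the statement to a purely combinatorial computation about monomial sections, using the description of line bundles on toric stacks via support functions developed in Section~\ref{subsec:stackbackground}. First I would unwind the definitions: since $(\ul\pi,\ul\Pi)$ is a finite quotient, we have $\ul\Pi = \ul\id$, so $X_\Sigma = X_{\Sigma'}$ as toric varieties and the map $\pi\colon \X_{\Sigma,\ul\beta}\to\X_{\Sigma',\ul\beta'}$ is the morphism of stacks induced by $f_\pi\colon G_\beta\to G_{\beta'}$, which by \cref{def:finitequotient} is surjective with (finite, by \cref{prop:soManyThingsAreTheSame}) kernel that we may identify with $\widehat{\coker(\ul\pi^*\colon M'\to M)}$ via Cartier duality and \cref{eq:cokerToKer}. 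Thus $\pi$ is (étale-locally) a quotient by the finite abelian group $H := \ker(f_\pi)$, whose character group is $\coker(\ul\pi^*)$, and $\pi_*$ of an $H$-equivariant sheaf decomposes as the direct sum over the characters of $H$ of its isotypic components — this is exactly the Schur orthogonality step referenced in the remark after \cref{mainthm:res}, and it is where the hypothesis $|\coker(\ul\pi_*)|\in\kk^\times$ enters.

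The second step is to compute each isotypic component. The line bundle $\mathcal O_{\X_{\Sigma,\ul\beta}}(\sF)$ is $\mathcal O_{X_\Sigma}(\sF)$ with a $G_\beta$-action; restricting this action along $H\subset G_\beta$ and decomposing $\pi_*\mathcal O_{X_\Sigma}(\sF)$ into $H$-eigensheaves, each eigensheaf is again a line bundle on $X_{\Sigma'} = X_\Sigma$ (since $\pi_*$ of a line bundle under a finite quotient remains a vector bundle, and the eigensheaves are summands that become invertible after the quotient identification), and by \cref{prop:identifyingCharacter} twisting the $G_\beta$-action by the character $\chi^{m}$ for $m\in M$ corresponds to $\sF \mapsto \sF - \ul\beta^* m$ (note the sign convention in \cref{prop:identifyingCharacter}: $\mathcal O(\sF+k) = \mathcal O(\sF)_{\chi^k}$, so one must track whether the eigenspace for $\chi^{\ul\beta^*m}$ corresponds to $\sF + \ul\beta^*m$ or $\sF - \ul\beta^*m$ — this is a bookkeeping point to get right). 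The characters of $H$ that appear are precisely $\coker(\ul\pi^*\colon M'\to M)$, and for the class $[m]$ the corresponding eigensheaf, viewed on the quotient $\X' = \X_{\Sigma',\ul\beta'}$, has $G_{\beta'}$-structure determined by the support function $\ul\Pi_*(\sF - \ul\beta^* m)$ — here $\ul\Pi_* = \ul\id_*$ is essentially the identity on support functions since $\ul\Pi$ is the identity, but the pushforward notation accounts for the change of equivariant structure from $G_\beta$-sections to $G_{\beta'}$-sections. Summing over $[m]\in\coker(\ul\pi^*)$ then yields \cref{eq:quotientOfLineBundle}.

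A cleaner way to organize the second step — and the one I would actually write — is to work directly with monomial sections and the polytopes $\Delta_\beta$. The sections of $\mathcal O_{X_\Sigma}(\sF)$ over the big torus are spanned by characters $\chi^k$ for $k\in K = L^*$ lying in $\Delta(\sF)$, with $G_\beta$ acting on $\chi^k$ through the character $\chi^k|_{G_\beta}$, which depends only on the image of $k$ in $\coker(\ul\beta^*\colon M'\to K)\cong\hom(G_\beta,\GG_m)$. Grouping the $k$'s by their $H$-character and reassembling each group into a line bundle on the quotient $\X'$, one reads off precisely that the $[m]$-isotypic piece is $\mathcal O_{\X'}(\ul\Pi_*(\sF - \ul\beta^*m))$ by comparing the defining inequalities of $\Delta_\beta$; the set of $[m]$ that occur is a complete set of coset representatives for $\coker(\ul\pi^*)$. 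I expect the main obstacle to be purely organizational rather than deep: carefully matching the three bookkeeping items — (i) the identification $\ker(f_\pi)\cong\widehat{\coker(\ul\pi^*)}$ from \cref{prop:soManyThingsAreTheSame}, (ii) the sign in \cref{prop:identifyingCharacter} relating character twists to shifts of $\sF$, and (iii) verifying that each isotypic summand is genuinely a \emph{line} bundle (not merely a reflexive sheaf) on the quotient, which uses smoothness of $\Sigma$ — and confirming that the Schur-orthogonality decomposition is valid, i.e.\ that $|H| = |\coker(\ul\pi^*)| = |\coker(\ul\pi_*)|$ is invertible in $\kk$, so that $\kk[H]$ is semisimple and the eigenspace decomposition of $\pi_*$ holds.
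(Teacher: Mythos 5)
Your proposal follows essentially the same route as the paper's proof: decompose $\pi_*$ into character-isotypic pieces for the finite abelian group by which one is quotienting (this is the coinduced-representation / finite Fourier transform computation of \cref{subsubsec:characterPushforward}, valid precisely because the order of that group is invertible in $\kk$), identify its character group with $\coker(\ul\pi^*\colon M'\to M)$ via \cref{prop:soManyThingsAreTheSame} and \cref{eq:cokerToKer}, and convert each character twist into a shift of the support function via \cref{prop:identifyingCharacter}; your alternative bookkeeping through monomial sections and the polytopes $\Delta_\beta$ is just this same computation made explicit. One correction of direction: $f_\pi\colon G_\beta\to G_{\beta'}$ is \emph{injective} with finite \emph{cokernel}, not surjective with finite kernel --- the stack $\X'=[X_\Sigma/G_{\beta'}]$ is the further quotient of $\X=[X_\Sigma/G_\beta]$ by the finite group $\coker(f_\pi)\cong\ker(\TT_\pi)$, and it is this cokernel (Cartier dual to $\coker(\ul\pi^*)$) whose characters index the summands. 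Since the indexing set and the resulting formula are unchanged, this is a relabeling rather than a gap, and with it your argument coincides with the paper's.
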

\begin{proof}
	The pushforward is given by the coinduced representation, which can be understood in this setting via the finite Fourier transform (\cref{subsubsec:characterPushforward})
	\begin{align*}
		\pi_*\mathcal O(\sF)=\phi_c\mathcal O(\sF)
		=& \bigoplus_{\chi\in \hom(\coker(f_\pi), \GG_m)} \mathcal O(\ul \Pi_* \sF)_\chi\\
		\intertext{We identify $\coker(f_\pi)$ with $\ker(\TT_{\pi})$ using \cref{prop:soManyThingsAreTheSame}, and $\hom_{gp}(\ker(\TT_{\pi}), \GG_m)$ with $\coker(\ul\pi^* \colon  M'\to M) $ using \cref{eq:cokerToKer}.  }
		=&\bigoplus_{[m]\in \coker(\ul\pi^*:M'\to M)} \mathcal O(\ul \Pi_* \sF)_{\chi^{\ul \beta^*m}}\\
		\intertext{Finally, by \cref{prop:identifyingCharacter}}
		=&\bigoplus_{[m]\in \coker(\ul\pi^*:M'\to M)} \mathcal O(\ul \Pi_* \sF-\ul\beta^*m)
	\end{align*}
 as claimed.
\end{proof}
\begin{example}[Resolving point on the orbifold line]
	We pick up from \cref{ex:bundleInOrbifoldLine}. Observe that under the quotient map $\pi \colon \AA^1\to [\AA^1/(\ZZ/2\ZZ)]$, we have
	\begin{align*}
		\pi_* \mathcal O_{\AA^1}=\mathcal O_{\X_{\Sigma, \ul\beta}} \oplus O_{\X_{\Sigma, \ul\beta}} (\sF_1) &  & \pi^*\mathcal O_{\X_{\Sigma, \ul\beta}}\cong\pi^*\mathcal O_{\X_{\Sigma, \ul\beta}}(\sF_1)\cong \mathcal O_{\AA^1}
	\end{align*}

	Let $\phi \colon  e\to [\AA^1/(\ZZ/2\ZZ)] $ denote the inclusion of the identity point. We now describe a resolution of the sheaf $\phi_*\mathcal O_e$. Consider the point $e'= 1 \in \AA^1$. Using the quotient map, we can push forward a resolution of $\mathcal O_{e'}$ to obtain a resolution for $\mathcal O_{e}$. We take the standard resolution for $\mathcal O_{e'}$ as in \cref{eq:resolutionPtInA1}. The pushforward is
	\begin{equation}
		\begin{tikzpicture}
			\draw (-3,-0.5) -- (4,-0.5);
			\node (v1) at (-1.5,-2.5) {$\mathcal O_{\X_{\Sigma, \ul\beta}}(-D_1)$};
			\node (v2) at (-3,-1) {$\mathcal O_{\X_{\Sigma, \ul\beta}}$};
			\node (v3) at (0.5,-1) {$\mathcal O_{\X_{\Sigma, \ul\beta}}(-D_1)$};
			\node (v5) at (4,-1) {};
			\draw[->]  (v1) edge node[red,midway, fill=white]{$x_0$} (v2);
			\draw[->]  (v1) edge node[midway, fill=white]{$-1$}(v3);
			\draw[thick, red] (0.5,-0.5) -- (0.75,-0.5);
			\node[circle, fill=black, scale=.5] at (0.5,-0.5) {};
			\draw[thick, red] (-3,-0.5) -- (-2.75,-0.5);
			\node[circle, fill=black, scale=.5] at (-3,-0.5) {};
			\node (v4) at (2.5,-2.5) {$\mathcal O_{\X_{\Sigma, \ul\beta}}$};
			\draw[->]  (v4) edge  node[red,midway, fill=white]{$x_0$}(v3);
			\draw[->]  (v4) edge  node[midway, fill=white]{$-1$}(v5);
		\end{tikzpicture}
		\label{eq:resolutionPtInOrbiA1}
	\end{equation}
	The pushforward turns out to be exact so this indeed gives us a resolution of the point $e$ in $[\AA^1/(\ZZ/2\ZZ)]$.
    \label{ex:pointInOrbifoldLine}
\end{example}
We now discuss the behavior of the Thomsen collection under finite quotients.
\begin{lem}
	\label{lemma:thomsenQuotient}
	Let $(\ul\pi, \ul\Pi): (\Sigma, \ul\beta)\to (\Sigma', \ul\beta')$ be a finite quotient, where $|\coker(f_\beta)|\in \kk^\times$. Let $p\in M_\RR/M_\ZZ$ be any point. Let $\tilde \pi \colon  M'_\RR/M'_\ZZ\to M_\RR/M_\ZZ $ be the associated map on tori. Then 
	\[\pi_* \mathcal O_{\X_{\Sigma, \ul\beta}}(\bF(p))= \bigoplus_{p'\in \tilde \pi^{-1}(p)} \mathcal O_{\X_{\Sigma, \ul\beta'}}(\bF(p')).\]
	Furthermore, given $\strata>\stratb$, we have 
	\[\pi_*\bm_{\strata,\stratb}(x_1, \ldots x_k)=(\bm_{\strata_1,\stratb_1}x_1, \ldots,\bm_{\strata_k,\stratb_k}x_k )\]
	where $\strata_i>\stratb_i$ are the lifts of $\strata, \stratb$ which preserve adjacency.
\end{lem}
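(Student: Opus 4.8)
The plan is to reduce the first assertion to \cref{prop:quotientOfLineBundle} together with a compatibility between $\bF$ and $\bF'$, and the second assertion to the exactness and naturality of the finite Fourier transform description of $\pi_*$. The basic observations are that, since $(\ul\pi,\ul\Pi)$ is a finite quotient, $\ul\Pi=\ul\id_L$ (so $\Sigma=\Sigma'$ and $\ul\Pi_*$ is the identity on support functions), and $\ul\beta'=\ul\pi\circ\ul\beta$, hence $(\ul\beta')^*=\ul\beta^*\circ\ul\pi^*$. Evaluating against the primitive generators $u_\rho$ for $\rho\in\Sigma(1)$ gives at once the key identity
\[
\bF'(m') = \bF(\ul\pi^* m') \qquad \text{for all } m'\in M'_\RR,
\]
equivalently $\bF' = \bF\circ\tilde\pi$ after descent to the tori (here $\tilde\pi$ is the map induced by $\ul\pi^*$, which agrees with the one in \cref{prop:soManyThingsAreTheSame}). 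Also $\ul\pi^*_\RR\colon M'_\RR\to M_\RR$ is an isomorphism and, by hypothesis, $|\coker(\ul\pi^*\colon M'\to M)|\in\kk^\times$, so \cref{prop:quotientOfLineBundle} applies.

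First I would fix a lift $\tilde p\in M_\RR$ of $p$ and apply \cref{prop:quotientOfLineBundle} with $\sF=\bF(\tilde p)$ (and $\ul\Pi_*=\id$), obtaining
\[
\pi_*\mathcal O_{\X_{\Sigma,\ul\beta}}(\bF(p)) \;=\; \bigoplus_{[m]\in\coker(\ul\pi^*\colon M'\to M)} \mathcal O_{\X_{\Sigma,\ul\beta'}}\bigl(\bF(\tilde p)-\ul\beta^* m\bigr).
\]
For $m\in M$ the functional $\ul\beta^*m$ is integral on every $u_\rho$, so subtracting it commutes with the ceiling function and $\bF(\tilde p)-\ul\beta^* m=\bF(\tilde p-m)$ as support functions. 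Setting $q'_m:=(\ul\pi^*_\RR)^{-1}(\tilde p-m)\in M'_\RR$, the key identity gives $\bF(\tilde p-m)=\bF'(q'_m)$, so the $[m]$-summand is $\mathcal O_{\X_{\Sigma,\ul\beta'}}(\bF'([q'_m]))$ with $[q'_m]\in M'_\RR/M'$. It then remains to observe that $[m]\mapsto[q'_m]$ is a bijection $\coker(\ul\pi^*)\xrightarrow{\sim}\tilde\pi^{-1}(p)$: it is well defined and injective since $q'_{m+\ul\pi^*m'_0}=q'_m-m'_0$ for $m'_0\in M'$; its image lies in $\tilde\pi^{-1}(p)$ because $\tilde\pi([q'_m])=[\ul\pi^*_\RR q'_m]=[\tilde p-m]=p$; and it is surjective because any $[r']\in\tilde\pi^{-1}(p)$ satisfies $\ul\pi^*_\RR r'=\tilde p-m_0$ for some $m_0\in M$, whence $[r']=[q'_{m_0}]$. (This is the bijection $\coker(\ul\pi^*)\cong\ker(\tilde\pi)$ of \cref{prop:soManyThingsAreTheSame}, translated by $p$.) This establishes the first assertion.

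For the boundary morphisms I would use that $\pi_*$ is computed by the finite Fourier transform of \cref{subsubsec:characterPushforward}: it is exact on $G_{\beta'}$-equivariant sheaves and every morphism decomposes along the character decomposition, the $[m]$-component being the restriction to the $\chi^{\ul\beta^* m}$-isotypic part. Since $\bm_{\strata,\stratb}$ corresponds to $0\in\Delta_\beta(\bF(\stratb)-\bF(\strata))$, in the canonical trivializations over a maximal cone it is multiplication by the constant section $1$, i.e. the inclusion of $\TT_L$-invariant subsheaves $\mathcal O(\bF(\strata))\hookrightarrow\mathcal O(\bF(\stratb))$ induced by $\bF(\strata)\ge\bF(\stratb)$; hence each isotypic component is again such a canonical inclusion, which under the identifications above is exactly a boundary morphism $\bm_{\strata_i,\stratb_i}$. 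Finally I would check that the indexing is compatible: applying the recipe $[m]\mapsto[q'_{\bullet}]$ to a lift of $\strata$ and to a lift of $\stratb$ using the \emph{same} class $[m]$ yields lifts with $\strata_i>\stratb_i$, because translation by $m\in M$ and the linear isomorphism $(\ul\pi^*_\RR)^{-1}$ preserve incidences of strata — the arrangement defining $\sS_{\Sigma,\ul\beta'}$ is the $(\ul\pi^*_\RR)^{-1}$-preimage of the one defining $\sS_{\Sigma,\ul\beta}$, and $\tilde\pi$ is a covering, so closures of strata pull back to closures. Thus the $[m]$-component of $\pi_*\bm_{\strata,\stratb}$ is $\bm_{\strata_i,\stratb_i}$ for the adjacency-preserving lifts, as claimed.

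The part I expect to be the main obstacle is this last bookkeeping: correctly matching the summand labelled $[m]$ in $\pi_*\mathcal O(\bF(\strata))$ with the one in $\pi_*\mathcal O(\bF(\stratb))$ and verifying the chosen lifts $\strata_i,\stratb_i$ are genuinely adjacency-preserving. The algebra of the pushforward is routine given \cref{prop:quotientOfLineBundle} and \cref{subsubsec:characterPushforward}; it is the geometry of how strata and their closures lift along the finite cover $\tilde\pi$ that requires care.
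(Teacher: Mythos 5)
Your proposal is correct and follows the same route as the paper, which proves this lemma simply by citing \cref{prop:quotientOfLineBundle} for the character decomposition and \cref{prop:soManyThingsAreTheSame} for the identification $\coker(\ul\pi^*)\cong\ker(\tilde\pi)$ (translated by $p$ to give $\tilde\pi^{-1}(p)$). You have merely filled in the details the paper leaves implicit — the compatibility $\bF'=\bF\circ\ul\pi^*_\RR$, the fact that subtracting $\ul\beta^*m$ commutes with the ceiling function, and the bookkeeping for the boundary morphisms — all of which check out.
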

\begin{proof}
	This follows from \cref{eq:quotientOfLineBundle}, and the identification $\coker(\ul\pi^* \colon M'\to M)=\ker(\tilde \pi^*) $ from \cref{prop:soManyThingsAreTheSame}.
\end{proof}

\subsection{Toric stacky charts} \label{subsec:charts}
In this section, we introduce and discuss the notion of a chart on a toric stack. We will eventually use these charts to cover our toric stack and localize computations of our resolutions. 

\begin{df} \label{def:inclusion}
	A morphism of stacky fans $(\ul\phi, \ul\Phi) \colon (\Sigma', \ul\beta')\to (\Sigma, \ul\beta)$ is an \emph{inclusion} if $\ul\phi$ and $\ul\Phi$ have trivial kernel and torsion-free cokernel, and for every cone, $\sigma$ of $\Sigma'$ there exists a cone $\tau$ of $\Sigma$ so that $\sigma= \ul\Phi^{-1}(\tau)$. 
	An inclusion of stacky fans is an \emph{immersion} if $\Sigma'= \{\ul\Phi^{-1}(\tau) \st \tau\in \Sigma\}$.
	An inclusion of stacky fans is an \emph{open inclusion} if $\ul\phi$ and $\ul\Phi$ are isomorphisms. 
\end{df}

\begin{df}
	\label{def:toricStackyChart}
	  A \emph{stacky coordinate fan} is a stacky fan $(\Sigma, \ul\beta\colon L\to N)$ such that $\ul\beta$ is injective and $\Sigma$ is the closure of a unique top dimensional cone. A stacky coordinate fan is called a \emph{simplicial stacky coordinate fan} if the fan $\Sigma$ is simplicial.
	A \emph{smooth stacky coordinate fan} is a simplicial stacky coordinate fan where the one-dimensional cones $\Sigma(1)$ form a basis for $L$, i.e., $\X_{\Sigma, \beta}=[\AA^n/G]$ for a finite group $G$, and $|G|\in \kk^\times$.
\end{df}

Every inclusion/immersion of stacky fans induces an inclusion/immersion of a toric stack. Unfortunately, since every toric stack may be represented by several different stacky fans, to represent a morphism of toric stacks we may have to modify our (possibly preferred) choice of stacky fans.
\begin{df}
	Given a stacky fan $(\Sigma, \ul\beta\colon L\to N)$, the stacky fan $(\Sigma\oplus 0, \ul\beta\oplus \ul\beta'\colon L\oplus \ZZ^n\to N)$ is a \emph{stabilization} of $(\Sigma, \ul \beta)$. A \emph{stabilized} (smooth, simplicial) stacky coordinate fan is a stabilization of a (smooth, simplicial) stacky coordinate fan.

	A \emph{(simplicial, smooth) toric chart} for $(\Sigma, \ul\beta)$ is an open inclusion of a substack $(\ul i, \ul I)\colon (\Sigma', \ul \beta')\to (\Sigma, \ul \beta)$ and $(\Sigma',\ul \beta')$ is a stabilization of a (simplicial, smooth) stacky toric coordinate fan.

	We say that a toric stack $\X_{\Sigma, \ul\beta}$ is \emph{covered by (simplicial, smooth) charts} if there exists a collection of (simplicial, smooth) toric charts $\{(\ul i_\rho, \ul I_\rho) \colon  (\Sigma'_\rho, \ul \beta'_\alpha)\to (\Sigma, \ul \beta)\}_{\alpha\in A} $ so that the maps $I_\alpha: X_{\Sigma'}\to X_{\Sigma}$ form a covering of $X_\Sigma$.
\end{df}
\begin{rem}
	In particular, if $\X_{\Sigma, \ul\beta}$ is covered by simplicial stacky charts, it contains at least one cone $\sigma$ of $\dim(N)$, and the one-dimensional cones of $\ul\beta(\sigma)$ form an $\RR$-basis for $N_\RR$.
\end{rem}
\begin{prop} 
The morphism of stacky fans $(\ul \id, \ul {\text{Id}}\oplus 0): (\Sigma, \ul\beta)\to (\Sigma\oplus 0, \ul\beta\oplus \ul\beta')$ induces an equivalence of toric stacks.\label{prop:stabilization}
\end{prop}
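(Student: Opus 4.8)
The plan is to make both toric stacks explicit as quotient stacks and then exhibit the equivalence by a single ``shearing'' isomorphism.

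First I would fix notation. Writing $\TT^n := \TT_{\ZZ^n}$, the fan $\Sigma\oplus 0$ has all of its cones inside the $L$-summand of $L\oplus\ZZ^n$, so $X_{\Sigma\oplus 0} = X_\Sigma\times\TT^n$ with $\TT_L\times\TT^n$ acting one factor at a time; and the torus map induced by $\ul\beta\oplus\ul\beta'$ is $\TT_L\times\TT^n\to\TT_N$, $(t,s)\mapsto\TT_\beta(t)\,\TT_{\beta'}(s)$, so that $G_{\beta\oplus\beta'}$ is its kernel. The morphism of the statement is covered by the inclusion $X_\Sigma\into X_\Sigma\times\TT^n$, $x\mapsto(x,1)$, which is equivariant for the injective homomorphism $G_\beta\to G_{\beta\oplus\beta'}$, $t\mapsto(t,1)$; thus it is the induced map $\X_{\Sigma,\ul\beta}=[X_\Sigma/G_\beta]\to[(X_\Sigma\times\TT^n)/G_{\beta\oplus\beta'}]=\X_{\Sigma\oplus 0,\ul\beta\oplus\ul\beta'}$, and this is what must be shown to be an equivalence.

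Next, since $\ul\beta$ has finite cokernel the map $\TT_\beta$ is surjective, so the second projection $\pi_2\colon G_{\beta\oplus\beta'}\to\TT^n$ is surjective with kernel exactly $G_\beta\times\{1\}$; this gives a short exact sequence $1\to G_\beta\to G_{\beta\oplus\beta'}\xrightarrow{\pi_2}\TT^n\to 1$ of group schemes, which are commutative (being subgroups of a torus), so $G_\beta$ is normal. Because $G_\beta\times\{1\}$ acts trivially on the $\TT^n$-factor we have $[(X_\Sigma\times\TT^n)/(G_\beta\times\{1\})]=\X_{\Sigma,\ul\beta}\times\TT^n$, and the standard fact that $[Z/H]\simeq[[Z/K]/(H/K)]$ for a normal subgroup $K\subseteq H$ (see \cref{subsec:stackBackground}) rewrites
\[
    \X_{\Sigma\oplus0,\ul\beta\oplus\ul\beta'}=\bigl[(\X_{\Sigma,\ul\beta}\times\TT^n)/\TT^n\bigr],
\]
where $\TT^n=G_{\beta\oplus\beta'}/G_\beta$ acts by translation on $\TT^n$ and, on $\X_{\Sigma,\ul\beta}$, through the first projection $\pi_1\colon G_{\beta\oplus\beta'}\to\TT_L$ composed with the $\TT_L$-action and descent. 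The key point here is that this first-factor action is well defined on $G_{\beta\oplus\beta'}/G_\beta$ and defines a homomorphism $\bar a\colon\TT^n\to\operatorname{Aut}(\X_{\Sigma,\ul\beta})$, which holds precisely because $G_\beta$ acts trivially on the quotient stack $[X_\Sigma/G_\beta]$; in particular the $\TT^n$-action on the product is diagonal.

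Finally I would conclude with the shearing automorphism $(\xi,u)\mapsto(\bar a(u)^{-1}\xi,\,u)$ of $\X_{\Sigma,\ul\beta}\times\TT^n$: it intertwines the diagonal $\TT^n$-action with the action that is translation on $\TT^n$ and trivial on $\X_{\Sigma,\ul\beta}$, whose quotient is $\X_{\Sigma,\ul\beta}\times[\TT^n/\TT^n]=\X_{\Sigma,\ul\beta}$. Tracing through all the identifications, the resulting equivalence $\X_{\Sigma,\ul\beta}\xrightarrow{\sim}\X_{\Sigma\oplus0,\ul\beta\oplus\ul\beta'}$ is the map covered by $x\mapsto(x,1)$, i.e.\ exactly the morphism in the statement. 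I expect the only non-formal ingredients --- and hence the main obstacle --- to be bookkeeping: the well-definedness of $\bar a$ (reduced above to the triviality of the $G_\beta$-action on $[X_\Sigma/G_\beta]$), the two-step quotient identity in the stacky setting, and the final check that the constructed equivalence coincides with the canonical morphism; all three are routine once the conventions for Cartier duality and for the directions of the induced group homomorphisms are pinned down.
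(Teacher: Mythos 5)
Your argument is correct, but it is not the route the paper takes: the paper disposes of this proposition in one line by citing the isomorphism criterion for morphisms of stacky fans, \cite[Theorem B.3]{geraschenko2015toric}, whereas you give a self-contained construction. Concretely, you identify $X_{\Sigma\oplus 0}=X_\Sigma\times\TT^n$, fit the groups into the extension $1\to G_\beta\to G_{\beta\oplus\beta'}\xrightarrow{\pi_2}\TT^n\to 1$ (surjectivity of $\pi_2$ coming from surjectivity of $\TT_\beta$), quotient in two steps, and kill the residual free $\TT^n$-action by a shearing automorphism. What this buys is transparency: one sees explicitly how the extra $\TT^n$ factor of the atlas is absorbed by the extra torus in the group. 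What the citation buys is avoiding the $2$-categorical bookkeeping you flag: the residual action $\bar a\colon\TT^n\to\operatorname{Aut}(\X_{\Sigma,\ul\beta})$ is only well defined up to the canonical $2$-isomorphisms trivializing the $G_\beta$-action on $[X_\Sigma/G_\beta]$, so strictly one must check that these can be chosen coherently (routine for abelian group actions on quotient stacks, and harmless here since all the actions commute strictly on the atlas $X_\Sigma\times\TT^n$). You are also right to perform the shearing only after the first quotient, since $\pi_2$ need not admit a group-theoretic section. Both routes are fine; the paper's is shorter, yours is more informative.
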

\begin{proof}
    Follows immediately from \cite[Theorem B.3]{geraschenko2015toric}.
\end{proof}

\begin{example}[Non-separated line]
	\label{ex:pointInNonSeparatedLine}
	We return to the non-separated line introduced in \cref{ex:LineBundlesOnNonSeparatedLine}
We draw its stacky fan on the right-hand side in \cref{fig:coveringStackyLine}. Observe that $\X_{\Sigma, \ul\beta}$ is covered by stabilized smooth stacky charts. 
	\begin{figure}
	    \centering
	    \begin{tikzpicture}[decoration=ticks, segment length=1cm]

	\begin{scope}[]
		\begin{scope}[]
				\draw [help lines, step=1cm] (-2,-2) grid (2,2);
				\node[circle, fill=black, scale=.5] at (0,0) {};
				\draw[blue] (0,0) edge[->] (1,0) edge (2, 0);
				\draw[red] (0,0) edge[->] (0, 1) edge (0, 2);
			\end{scope}

			\begin{scope}[shift={(0,-4.5)}]
				\draw[decorate, help lines](-2,0)-- (2.01,0);
				\draw[gray!20](-2,0) -- (2,0);
				\draw[blue] (0,0) edge[->] (1,0) edge (2,0);
				\draw[red] (0,-0.1) edge[->] (1,-0.1) edge (2,-0.1);
				\node[fill=black, circle, scale=.5] at (0,0) {};
			\end{scope}

			\node (v1) at (0,-2.5) {};
			\node (v2) at (0,-4) {};
			\draw  (v1) edge[->] node[fill=white]{$\beta$} (v2);
\end{scope}

\begin{scope}[shift={(-8,0)}]
		\begin{scope}[]
				\draw [help lines, step=1cm] (-2,-2) grid (2,2);
				\node[circle, fill=black, scale=.5] at (0,0) {};
				\draw[blue] (0,0) edge[->] (1,0) edge (2,0);
			\end{scope}

			\begin{scope}[shift={(0,-4.5)}]
				\draw[decorate, help lines](-2,0)-- (2.01,0);
				\draw[gray!20](-2,0) -- (2,0);
				\draw[blue] (0,0) edge[->] (1,0) edge (2,0);
				\node[fill=black, circle, scale=.5] at (0,0) {};
			\end{scope}

			\node (v1) at (0,-2.5) {};
			\node (v2) at (0,-4) {};
			\draw  (v1) edge[->] node [fill=white] {$\beta$} (v2);

\end{scope}
		\draw[->] (-5,0) -- (-4,0) node[fill=white]{$I$} -- (-3,0);
\draw[->] (-5,-4.5) -- (-4,-4.5) node[fill=white]{$i$} -- (-3,-4.5);
\end{tikzpicture} 	    \caption{A smooth chart covering a portion of the non-separated line.}
	    \label{fig:coveringStackyLine}
	\end{figure}
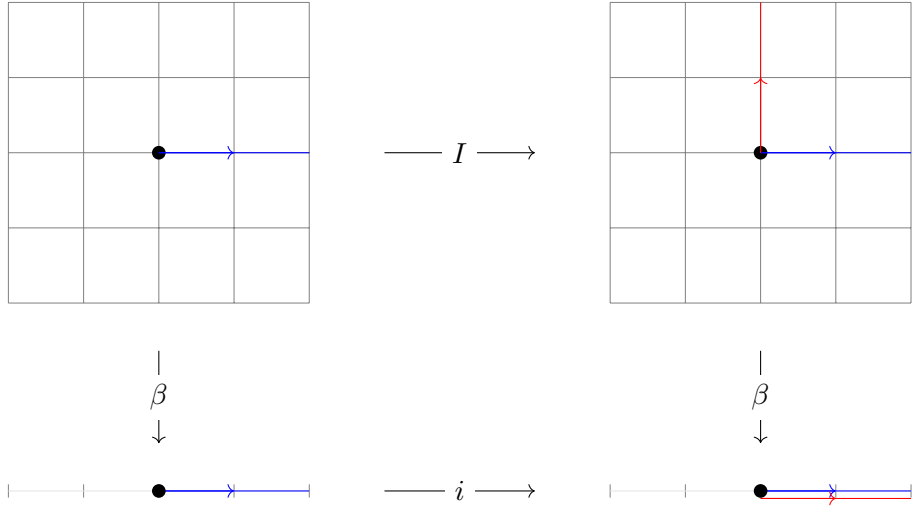
    We draw the map of stacky fans corresponding to one of the charts in the figure. Observe that the $\AA^1$ chart does not use the usual (non-stacky) fan, but rather the stacky fan which is obtained by stabilizing the standard fan.

	The doubled origin corresponds to two toric divisors $D_0, D_1$ (coming from the $G_\beta$-invariant divisors $z_1=0, z_2=0$), but by the discussion in \cref{ex:LineBundlesOnNonSeparatedLine}, we only need to consider divisors of the form $a(D_0+D_1)$.
    The $G_\beta$-equivariant sections of $\mathcal O_{X_\Sigma}$ are given by the polynomial ring $\KK[z_1z_2]$.
	Let $e=[1]$ in $\X_{\Sigma, \ul\beta}$.
	The complex
	\begin{equation}
		\begin{tikzpicture}
			\draw (-2.5,-0.5) -- (4,-0.5);
			\node (v1) at (1,-1) {$\mathcal O_{X_\beta}$};
			\node (v2) at (-2.5,-1) {$\mathcal O_{X_\beta}$};
			\node (v3) at (4,-1) {};
			\draw[->]  (v1) edge node[ fill=white]{${\color{red}x_0}{\color{blue}x_1}$} (v2);
			\draw[->]  (v1) edge node[midway, fill=white]{$-1$}(v3);
			\draw[thick, red] (-2.5,-0.55) -- (-2.25,-0.55);
			\draw[thick, blue] (-2.5,-0.45) -- (-2.25,-0.45);
			\node[circle, fill=black, scale=.5] at (-2.5,-0.5) {};
		\end{tikzpicture}
		\label{eq:resolutionPtInNonSepLine}
	\end{equation}
	is a resolution of  $\mathcal O_e$, which is seen by restricting the resolution to the  $\AA^1$ charts. When restricted, this resolution exactly matches \cref{eq:resolutionPtInA1}. 
\end{example}

\begin{example}[Weighted projective space]
	\label{ex:pointInWeightedP1}
	We now consider the stacky fan associated with a weighted projective space. Consider the fan $\Sigma$ on $\ZZ^2$ so that $X_\Sigma=\AA^2\setminus \{0\}$. We take $\ul\beta \colon  \ZZ^2\to \ZZ$ which is given by the matrix $\begin{pmatrix}1 &  -2\end{pmatrix}$. This corresponds to the action of $G_\beta \curvearrowright X_\Sigma$ by  $(z_1, z_2)\mapsto (tz_1, t^2 z_2)$. The action has finite stabilizers, and the quotient $X_{\Sigma}/G$ is the $(1,2)$ weighted projective space, denoted $\PP(1, 2)$. Note that sheaves on $[X_{\Sigma}/G]$ do not agree with sheaves on the quotient.

	The weighted projective space can also be described via a covering by smooth stacky charts. The first chart, which we call $U_0$, comes from the toric morphism which descends to the quotient as
	\begin{align*}
		\AA^1 \to& \PP(1,2) \\
		z_1\mapsto& [z_1:1]_{(1, 2)}.
	\end{align*}
	The second chart, which  we call $U_1$, is parameterized by a morphism of toric stacks $[\AA^1/(\ZZ/2\ZZ)]\to [X_\Sigma/G_\beta]$ where $[\AA^1/(\ZZ/2\ZZ)]$ is realized as a toric stack with $L=\ZZ, N=\ZZ, \beta'=(2)$ and $\Sigma=\{\langle 0 \rangle,\langle 1\rangle\cdot \NN \}$. The morphism is given by
	\[\begin{tikzcd}
			\ZZ\arrow{d}{2 \cdot} \arrow{r}{\begin{pmatrix}0\\1 \end{pmatrix}}& \ZZ^2 \arrow{d}{\beta}\\
			\ZZ\arrow{r}{\id}&  \ZZ
		\end{tikzcd}
	\]

	Support functions for $[X_\Sigma/G_\beta]$ are determined by the integer values
	\begin{align*}
		a_0=\sF(\ul\beta(\langle 1, 0\rangle)) = F(\langle 1\rangle ) &  & a_1=\sF(\ul\beta(\langle 0, 1\rangle))=F(\langle 2 \rangle)
	\end{align*}
	Let $e\in [X_\Sigma/G_\beta]$ be the identity point.
	We claim that the following chain complex is a resolution of $\mathcal O_{e}$ 
	\[
		\begin{tikzpicture}
			\draw (-3,-0.5) -- (4,-0.5);
			\node (v1) at (-1.5,-2.5) {$\mathcal O_{\mathbb P_{(1,2)}}(-D_0)$};
			\node (v2) at (-3,-1) {$\mathcal O_{\mathbb P_{(1,2)}}$};
			\node (v3) at (0.5,-1) {$\mathcal O_{\mathbb P_{(1,2)}}(-D_0)$};
			\node (v5) at (4,-1) {};
			\draw[->]  (v1) edge node[red,midway, fill=white]{$x_0$} (v2);
			\draw[->]  (v1) edge node[blue,midway, fill=white]{$-x_1$}(v3);
			\draw[thick, blue] (0.5,-0.5) -- (0.25,-0.5);
			\node[circle, fill=black, scale=.5] at (0.5,-0.5) {};
			\draw[thick, red] (-3,-0.5) -- (-2.75,-0.5);
			\draw[thick, blue] (-3,-0.5) -- (-3.25,-0.5);
			\node[circle, fill=black, scale=.5] at (-3,-0.5) {};
			\node (v4) at (2.5,-2.5) {$\mathcal O_{\mathbb P_{(1,2)}}(-D_1)$};
			\draw[->]  (v4) edge  node[midway, fill=white]{$1$}(v3);
			\draw[->]  (v4) edge  node[blue,midway, fill=white]{$-x_1$}(v5);
		\end{tikzpicture}
	\]
	To check that this is a resolution of $\mathcal O_e$, we restrict to the $U_0$ and $U_1$ charts. The pullback of this resolution to the $U_1$ chart is \cref{eq:resolutionPtInOrbiA1}, while the pullback of the resolution to the $U_0$ charts is chain homotopic to \cref{eq:resolutionPtInA1}. Therefore, this is a resolution of $\mathcal O_e$ in every toric chart and therefore a resolution of $\mathcal O_e$.
\end{example}

\subsection{Equivariant codimension two complements}
\label{subsec:equivCodim2}
In this section, we will address a few additional technical aspects of toric stacks that we will need to verify that our resolutions behave as expected.

First, we verify that immersions of toric stacks behave well under further quotients by a toric subgroup. Let $(\Sigma, \ul\beta\colon L\to N)$ be a stacky fan. Given a saturated subgroup $N_G\to N$, we obtain a new stacky fan $(\Sigma, \ul\beta'\colon L\to N/N_G)$, along with a morphism of stacky fans $(\ul\phi, \ul\Phi)\colon (\Sigma, \ul\beta)\to (\Sigma, \ul\beta')$. This construction identifies the stack $ [\X_{\Sigma, \ul \beta}/\TT_{G}]$ as $\X_{\Sigma, \ul \beta'}$.

\begin{prop}
	\label{prop:quotientOfSubstack}
	Let $(\ul \phi, \ul \Phi)\colon (\Sigma_Y, \ul \beta_Y\colon L_Y\to N_Y) \to (\Sigma_X, \ul \beta_X\colon L_X\to N_X)$ be an immersion of a closed toric substack inducing a morphism of toric stacks $\phi: \Y \to \X$. Let  $\TT_G\subset \TT_N$ be a toric subgroup corresponding to a saturated sublattice $N_G\subset N$. If $\Y$ is $\TT_G$-invariant, there is an immersion $\phi'$, making the following diagram commute. 
        \[\begin{tikzcd}
	   \Y \arrow{r}{\phi} \arrow{d} & \X \arrow{d} \\
	   \left[ \Y/\TT_{G} \right] \arrow{r}{\phi'} & \left[ \X/\TT_{G}\right]
        \end{tikzcd}\]
\end{prop}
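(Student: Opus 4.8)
The plan is to construct the immersion $\phi'$ directly at the level of stacky fans, by applying the change-of-group construction recalled just before the statement simultaneously to $\X$ and to $\Y$ and checking that the two applications are compatible. Write $(\ul\phi,\ul\Phi)\colon(\Sigma_Y,\ul\beta_Y\colon L_Y\to N_Y)\to(\Sigma_X,\ul\beta_X\colon L_X\to N_X)$ for the immersion presenting $\phi$, and let $N_G\subseteq N_X$ be the saturated sublattice cut out by $\TT_G$.

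The first step is to turn the hypothesis into a lattice statement. Since $(\ul\phi,\ul\Phi)$ is an inclusion, $\ul\phi$ is a split injection, so $\ul\phi(N_Y)$ is saturated in $N_X$ and $\TT_{\ul\phi}$ identifies $\TT_{N_Y}$ with the subtorus of $\TT_{N_X}$ whose cocharacter lattice is $\ul\phi(N_Y)$. The dense torus orbit of the closed substack $\Y$ maps under $\phi$ isomorphically onto this subtorus, and $\Y$ is $\TT_G$-invariant iff its dense orbit is; hence $\TT_G$-invariance forces $\TT_G\subseteq\TT_{\ul\phi}(\TT_{N_Y})$, which Cartier-dualizes to the containment of cocharacter lattices
\[ N_G\subseteq\ul\phi(N_Y). \]
Now set $N_G^Y:=\ul\phi^{-1}(N_G)\subseteq N_Y$. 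Because $N_G$ and $\ul\phi(N_Y)$ are both saturated in $N_X$ with $N_G\subseteq\ul\phi(N_Y)$, the lattice $N_G^Y$ is saturated in $N_Y$ and $\ul\phi$ restricts to an isomorphism $N_G^Y\xrightarrow{\sim}N_G$. Thus $\TT_{N_G^Y}\subseteq\TT_{N_Y}$ is carried isomorphically onto $\TT_G$ by $\TT_{\ul\phi}$, and by $\TT_{\ul\phi}$-equivariance of $\phi$ the $\TT_G$-action on $\Y$ is the $\TT_{N_G^Y}$-action. Applying the construction preceding the statement to $\Y$ and to $\X$ gives $[\Y/\TT_G]=\Y':=\X_{\Sigma_Y,\ul\beta_Y'}$ with $\ul\beta_Y'\colon L_Y\to N_Y/N_G^Y$, and $[\X/\TT_G]=\X':=\X_{\Sigma_X,\ul\beta_X'}$ with $\ul\beta_X'\colon L_X\to N_X/N_G$.

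Next I would exhibit the morphism $(\ul\phi',\ul\Phi')\colon(\Sigma_Y,\ul\beta_Y')\to(\Sigma_X,\ul\beta_X')$ by taking $\ul\Phi':=\ul\Phi$ (the fans $\Sigma_Y,\Sigma_X$ and the lattices $L_Y,L_X$ are untouched by the construction) and $\ul\phi'\colon N_Y/N_G^Y\to N_X/N_G$ the map induced by $\ul\phi$, which is well defined because $\ul\phi(N_G^Y)=N_G$. The required commuting square of lattice maps is obtained from the commuting square of $(\ul\phi,\ul\Phi)$ by postcomposing with the quotients $N_Y\to N_Y/N_G^Y$ and $N_X\to N_X/N_G$. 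That $(\ul\phi',\ul\Phi')$ is an immersion is then checked termwise: $\ul\Phi'=\ul\Phi$ already has trivial kernel and torsion-free cokernel and satisfies $\Sigma_Y=\{(\ul\Phi')^{-1}(\tau)\mid\tau\in\Sigma_X\}$; $\ker(\ul\phi')=\ul\phi^{-1}(N_G)/N_G^Y=0$; and $\coker(\ul\phi')=N_X/(N_G+\ul\phi(N_Y))=N_X/\ul\phi(N_Y)$ is torsion-free. Hence $(\ul\phi',\ul\Phi')$ induces an immersion $\phi'\colon\Y'\to\X'$, i.e. $\phi'\colon[\Y/\TT_G]\to[\X/\TT_G]$.

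Finally, the square in the statement commutes because it is induced by the square of stacky fans whose vertical arrows are the two change-of-group maps (with $\ul\Pi=\ul\id$ and $\ul\pi$ the respective quotient) and whose horizontal arrows are $(\ul\phi,\ul\Phi)$ and $(\ul\phi',\ul\Phi')$: on the $L$-side every arrow is $\ul\Phi$ or an identity, and on the $N$-side commutativity is precisely the relation $(N_X\to N_X/N_G)\circ\ul\phi=\ul\phi'\circ(N_Y\to N_Y/N_G^Y)$ built into the definition of $\ul\phi'$. Functoriality of the passage from stacky fans to toric stacks then transports this to the square of stacks. The one genuinely delicate point is the first step, translating ``$\Y$ is $\TT_G$-invariant'' into $N_G\subseteq\ul\phi(N_Y)$; everything after that is routine bookkeeping with saturated sublattices together with the quotient construction already established in the text.
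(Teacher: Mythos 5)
Your proof is correct and follows essentially the same route as the paper: translate $\TT_G$-invariance into $N_G\subseteq\ul\phi(N_Y)$, form the saturated sublattice $\ul\phi^{-1}(N_G)\subset N_Y$, apply the change-of-group construction to both stacky fans with $\ul\Phi'=\ul\Phi$ and $\ul\phi'$ the induced map on quotient lattices, and check the immersion conditions. You are in fact slightly more thorough than the paper, which only verifies injectivity of $\ul\phi'$, whereas you also record that $\coker(\ul\phi')=\coker(\ul\phi)$ is torsion-free and justify the lattice translation of the invariance hypothesis.
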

\begin{proof}
For brevity of notation, we set $\X' = [\X/\TT_G]$ and $\Y' = [\Y/\TT_G]$.
	As $\Y$ is a closed toric substack, the maps $\ul\phi, \ul\Phi$ are inclusions. The condition that $\Y$ is a $G$-invariant substack is equivalent to  $N_G\subset \phi(N_Y)$ We therefore obtain a saturated sublattice $\ul\phi^{-1}(N_G)\subset N_Y$. The toric fan $(\Sigma_{\Y'},\ul \beta_{\Y'})$ for  $\Y'$ is defined using the construction preceding the proposition. The map $\ul\phi \colon  N_Y\to N_X $ descends to a map $\ul\phi' \colon N_{Y'}\to N_{X'}$.
   The map $\ul\Phi' \colon  L_{Y'}\to L_{X'} $ is taken so it agrees with $\ul\Phi$. 

   \[\begin{tikzcd}
	L_{\Y'}=L_\Y\arrow{r}{\ul\Phi=\ul\Phi'} \arrow{d}{\ul\beta_Y} \arrow[bend right = 60]{dd}{\ul\beta_{\Y'}} & L_\X=L_{\X'} \arrow{d}{\ul\beta_X} \arrow[bend left= 60]{dd}{\beta_{\X'}}\\
	N_Y \arrow{r}{\ul\phi} \arrow{d}& N_X \arrow{d} \\
	N_{Y'}=N_Y/\ul\phi^{-1}(N_G)\arrow{r}{\ul\phi'} &  N_X/N_G= N_{X'}
\end{tikzcd}\]
It remains to show that $\phi' \colon  \Y'\to \X' $ is an immersion of a closed substack. Since $\ul\Phi'=\ul\Phi$, we only need to check that the map $\ul\phi' \colon  N_{Y'}\to N_{X'} $ is an injection. This follows as $\ul\phi$ is an injection and $\ul\phi(\ul\phi^{-1}(N_G))=N_G$.
\end{proof}
When $X^\circ\into X$ has a complement of codimension at least 2, then the pushforward of a line bundle is a coherent sheaf and is still a line bundle if $X$ is smooth (Hartog's principle). If $G$ acts freely on $X$, and $X^\circ \to X$ is $G$-equivariant, it is clear that the dimension of $(X\setminus X^\circ)/G$ is the relevant quantity to study (as opposed to the codimension of $X^\circ$ in $X$). When $G$ does not act freely, it is not clear to us in general what quantity is appropriate to examine. For our applications, the following criterion will be useful.
\begin{df}
	\label{def:equivariantCodimension2Complement}
	An open inclusion $(\ul i, \ul I)\colon (\Sigma^\circ, \ul\beta^\circ)\to (\Sigma, \ul\beta)$ has \emph{complement of equivariant codimension two} if for all $\rho\in \Sigma(1)$ with $\ul \beta(\rho)\neq 0$, there exists $\rho^\circ\in \Sigma^\circ$ with $\ul I(\rho^\circ)=\rho$. 
\end{df}

\begin{example}
	Let $\Sigma$ be the coordinate fan on $\ZZ^2$ so that $X_\Sigma=\AA^2$, and let $\ul \beta \colon  \ZZ^2\xrightarrow{(1\;0)}\ZZ $. Further, let $\Sigma'$ be the fan for $\AA^1 \times \bb{G}_m$ and $\ul\beta^\circ=\ul\beta$. $\X_{\Sigma^\circ, \ul\beta^\circ}$ is a stabilization of $\AA^1$, and $\X_{\Sigma^\circ, \ul\beta^\circ}\hookrightarrow \X_{\Sigma, \ul \beta}$ is an immersion with complement of equivariant codimension two. Note that in this example $X_\Sigma\setminus X^\circ_\Sigma$ is \emph{not} codimension two, but it is codimension two in the quotient.
\end{example}

An important feature of open inclusions with complements of equivariant codimension two is that we can ``by hand'' define a correspondence from line bundles on $\X^\circ=[X^\circ/G]$ to line bundles on $\X=[X/G]$. This can be thought of as a version of Hartog's principle for equivariant line bundles.
Let $(\ul i, \ul I)\colon (\Sigma^\circ, \ul\beta^\circ)\to (\Sigma, \ul\beta)$ be an open immersion with a complement of equivariant codimension 2 where $\Sigma$ is smooth. Write $I_\Sigma \colon \Sigma^\circ(1)\into \Sigma(1)$ for the map on rays induced by $\ul i$. Given $\sF^\circ \colon  L^\circ\to \RR $ a support function, we obtain a support function $\ul I_\flat\sF \colon  L \to \RR $ whose values on primitive generators for $\Sigma^\circ(1)$ are defined by 
\[\ul I_\flat\sF(u_\rho)= \begin{cases}
	\sF\left(u_{\ul I^{-1}(\rho)}\right) &\text{ if $\rho\in I_\Sigma(\Sigma^\circ(1))$}\\
	0 & \text{otherwise}.
\end{cases}
\]

We now show that this construction comes from a functor between sheaves on $[X^\circ/G]$ and $[X/G]$.
\begin{rem}
	If $X^\circ\subset X$ had codimension two (and not simply equivariant codimension two) then $I_*\colon\Sh(X^\circ)\to \Sh(X)$ takes line bundles to line bundles and $I_*\mathcal O_{X^\circ}(\sF)= \mathcal O_{X}(\ul I_\flat\sF)$. However, there is no reason for this operation to compute the pushforward when $X^\circ$ has codimension 1 complement. 
\end{rem}

\begin{example}
    Consider the toric stack $[X/G]=[\AA^1/\TT^1]$. The inclusion $i: [X^\circ/G]:=[\TT^1/\TT^1]\to[X/G]$ is an example of an inclusion with equivariant codimension two. The pushforward $i_*\mathcal O_{[X^\circ/G]}(0)$ is not a line bundle on $[X/G]$. This can be seen, for instance, by letting $\mathcal F_0$ be the skyscraper sheaf of the point $0 \in \AA^1$ with the trivial group action and computing 
    \[\hom(i_*\mathcal O_{[X^\circ/G]}(0), \mathcal F_0)=0\]
	However, when one takes the $\TT^1$-invariant sections of $\mathcal O_{[X^\circ/G]}(0)$ first, we observe that $\mathcal O^G_{[X^\circ/G]}(0)$ is the constant $\kk$-sheaf. Its pushforward to $[X/G]$ is again the constant $\kk$-sheaf. After tensoring with $\mathcal O_{[X/G]}$, we obtain our desired answer.
\end{example}

Let $\mathcal F$ be a sheaf on $[X^\circ/G]$. For $U\subset X$, define 
\[i_\flat \mathcal F(U):= \mathcal F^{G}(U\cap X^\circ)\tensor_{\mathcal O_{[X^\circ/G]}^G(U\cap X^\circ)} \mathcal O_{[X/G]}(U).\]
For $f\in \hom_{[X^\circ/G]}(\mathcal F, \mathcal G)$, we similarly define the morphism $i_\flat f$ on sections  $x\in i_\flat \mathcal F(U)$ as 
\[i_\flat f(x)= f(y)\tensor r.\]
after writing $x=y\tensor r$ where $y\in \mathcal F^{G}(U\cap X^\circ)$ and $r \in \mathcal{O}_{[X/G]}(U)$.
\begin{lem}\label{lem:exactEquivariantPushforward}
	The functor $i_\flat$ satisfies the following properties.
	\begin{enumerate}
		\item \emph{Intertwines support function:} \label{prop:support}For any line bundle $\mathcal O_{[X^\circ/G]}$ on $[X^\circ/G]$, we have 
		\[i_\flat\mathcal O_{[X^\circ/G]}(\sF)=\mathcal O_{[X/G]}(\ul I_\flat\sF).\]
		\item \emph{Comparison of stalks to orbit closure:} \label{prop:orbits}Let $\pi^*:\Sh([X/G])\to \Sh(X)$ be the forgetful functor.  Let $x_0\in X$ be in the closure of the orbit $G\cdot(x_1)$. Then for $\alpha\in \{0, 1\}$ the stalk functors 
		\[\pi^*\circ i_\flat(-)_{x_\alpha}: \Sh([X^\circ/G])\to \kk-\text{mod}\]
		are isomorphic.
		\item \emph{Exactness:} \label{prop:exactness}The composition $\pi^*\circ i_\flat$ is an exact functor.
		\item \emph{Coherence:} \label{prop:coherence}The image of $\pi^*\circ i_\flat|_{\Coh([X^\circ/G])} \colon \Coh([X^\circ/G]) \to \Sh(X)$ is contained in $\Coh(X)$.
	\end{enumerate}
\end{lem}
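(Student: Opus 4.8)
The plan is to deduce all four assertions from one structural fact about the composite $\pi^*\circ i_\flat$. Write $q\colon X\to Z:=X/\!\!/G$ for the (good, categorical) quotient by the torus $G$. The hypothesis of \cref{def:equivariantCodimension2Complement} says that every ray $\rho\in\Sigma(1)$ with $\ul\beta(u_\rho)\ne0$ already lies in $\Sigma^\circ$; equivalently, the toric divisors deleted to form $X^\circ$ are ``$G$-vertical,'' i.e.\ their primitive generators lie in $\ker\ul\beta$. On the level of toric monomials this is exactly the assertion that a $G$-invariant section of $\mathcal O_X$ regular on $X^\circ$ is regular on all of $X$: along a vertical ray $\rho$ one has $\ul\beta^*m(u_\rho)=m(\ul\beta u_\rho)=0$, so no new pole is allowed. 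Geometrically this means $Z\setminus q(X^\circ)$ has codimension at least two in the normal variety $Z$, so $G$-invariant data on $X^\circ$ extends uniquely and coherently across it. Since $G$ is a torus it is linearly reductive in every characteristic, so the invariants functor $(-)^G$ is exact; combining exactness of $(-)^G$, the codimension-two extension, and the definition of $i_\flat$ on $G$-invariant affine opens, one obtains a natural identification exhibiting $\pi^*\circ i_\flat$ as a functor that factors through pullback along $q$ — roughly, $\pi^*\circ i_\flat\mathcal F\cong q^*\bigl(j_*(\mathcal F^G)\bigr)$, with $j_*$ the extension to $Z$. Setting this identification up carefully is the first step, and essentially all of the content is in it.

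For the support-function property I would work on a $G$-invariant affine toric chart small enough to $G$-equivariantly trivialize $\mathcal O_{[X/G]}(\ul I_\flat\sF)$. Because $\ul i$ is an open inclusion with $\ul\beta^\circ=\ul\beta$ and $\ul I_\flat\sF$ agrees with $\sF$ on every ray of $\Sigma^\circ$, we have $\ul i^*(\ul I_\flat\sF)=\sF$, hence a canonical $G$-equivariant isomorphism $\mathcal O_{[X^\circ/G]}(\sF)\cong\mathcal O_{[X/G]}(\ul I_\flat\sF)|_{X^\circ}$; so it suffices to check $i_\flat(\mathcal L|_{X^\circ})=\mathcal L$ when $\mathcal L=\mathcal O_{[X/G]}(\sG)$ and $\sG$ vanishes on the deleted rays. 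For such $\mathcal L$ the $G$-invariant sections $\mathcal L^G$ form a rank-one locally free sheaf on $Z$ (the deleted rays impose no constraint on invariant monomial sections, by the displayed computation), so the extension $j_*$ changes nothing and extension of scalars along $q$ returns $\mathcal L$; matching the monomial generators is then a direct computation with the polytopes $\Delta_\beta$ of \cref{subsec:stackbackground}.

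The stalk-comparison property is the crux and is precisely why we pass through $Z$. From the identification above, the stalk of $\pi^*\circ i_\flat\mathcal F$ at $x\in X$ is obtained by base change along $\mathcal O_{Z,q(x)}\to\mathcal O_{X,x}$ from the stalk $\bigl(j_*\mathcal F^G\bigr)_{q(x)}$. The defining property of a good quotient is that a point and every point in the closure of its $G$-orbit have the same image in $Z$; thus $x_0\in\overline{G\cdot x_1}$ forces $q(x_0)=q(x_1)=:z$, and both stalk functors $\mathcal F\mapsto(\pi^*i_\flat\mathcal F)_{x_\alpha}$ factor through $\mathcal F\mapsto(j_*\mathcal F^G)_z$. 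The delicate remaining point is to identify the two base changes $\mathcal O_{Z,z}\to\mathcal O_{X,x_\alpha}$: for the $\kk$-rational points (torus fixed points in particular) that arise in the applications, both maps factor through the residue field $\kappa(z)$, so each stalk functor is naturally isomorphic to $\mathcal F\mapsto(j_*\mathcal F^G)_z\otimes_{\mathcal O_{Z,z}}\kappa(z)$, which does not depend on $\alpha$. I expect this — the structural identification together with this final comparison of base changes — to be the main obstacle.

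Exactness then follows: exactness of a complex of sheaves on $X$ is checked on stalks, and by the stalk comparison it is enough to test at points of $X^\circ$ — concretely, at a point in the torus orbit obtained from the given $x_0$ by deleting the vertical rays, which lies in $X^\circ$ and has $x_0$ in the closure of its $G$-orbit — where $\pi^*\circ i_\flat$ is, up to a flat base change along $q$, the exact functor $(-)^G$. Finally, coherence is local on $X$: over a small chart any coherent sheaf on $[X^\circ/G]$ is the cokernel of a map of finite direct sums of line bundles pulled back from $[X/G]$ (on a smooth affine toric chart, equivariant line bundles are given by characters of $G$), and applying the now-established exact functor $\pi^*\circ i_\flat$, which by the support-function property sends these line bundles to coherent — indeed locally free — sheaves on $X$, exhibits the image as a cokernel of coherent sheaves, hence coherent.
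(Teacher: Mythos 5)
Your overall strategy --- reduce everything to $G$-invariant data and compare stalks along orbit closures --- is the right instinct and is broadly what the paper does, but the specific route through the affine GIT quotient $Z=X/\!\!/G$ has genuine gaps. The structural identification $\pi^*\circ i_\flat\cong q^*\circ j_*\circ(-)^G$ is not available as you set it up: $X^\circ$ is almost never $q$-saturated (the deleted strata contain points whose orbit closures meet $X^\circ$ --- that is the whole point of ``equivariant codimension two''), so $\mathcal F^G$ does not live on an open subset of $Z$, and $q(X^\circ)$ is typically \emph{all} of $Z$ rather than the complement of a codimension-two locus. For $z_0z_1=1$ in $\AA^2$ one has $Z=\AA^1$, $q(X^\circ)=Z$, while $X^\circ/\TT_Y$ is the non-separated line mapping non-injectively to $Z$; the real discrepancy is between $U\cap X^\circ$ (which appears in the definition of $i_\flat$) and the saturation $q^{-1}(q(U\cap X^\circ))$, and your factorization silently conflates them. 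Relatedly, the claim that $\mathcal L^G$ is a rank-one locally free sheaf on $Z$ is false in general: the invariants of an equivariant line bundle form a conic module over the invariant ring, which is reflexive but usually not locally free (already for $\GG_m$ acting on $\AA^3$ with weights $(1,1,-1)$).

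The step you correctly flag as the crux also contains an error: the stalk of $q^*\mathcal G$ at $x$ is $\mathcal G_{q(x)}\otimes_{\mathcal O_{Z,q(x)}}\mathcal O_{X,x}$, not $\mathcal G_{q(x)}\otimes\kappa(q(x))$, and the local homomorphism $\mathcal O_{Z,z}\to\mathcal O_{X,x}$ does not factor through the residue field (for $z_0z_1=1$ it is $\kk[w]_{(w)}\to\kk[z_0,z_1]_{(z_0,z_1)}$, $w\mapsto z_0z_1$). So the two stalk functors at $x_0$ and $x_1$ are base changes along genuinely different ring maps, and identifying them naturally in $\mathcal F$ is precisely the content of part (2); it does not follow from passing to fibers. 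Your exactness step additionally leans on flatness of $q$, which fails for general torus quotients of $\AA^n$. The paper avoids $Z$ entirely: for (1) it matches invariant monomial sections of $\mathcal O(\sF)$ over $U\cap X^\circ$ with those of $\mathcal O(\ul I_\flat\sF)$ over $U$ chart by chart and then tensors up; for (2) it computes the stalk at $x_0$ as a limit over neighborhoods $U_i$ with $U_i\cap X^\circ=\bigcup_{g\in G_i}g\cdot V_i$ for shrinking $V_i\ni x_1$, using $G$-invariance of $\mathcal F^G$ to collapse the union to $V_i$ and land on $\mathcal F^G_{x_1}\otimes(\mathcal O_X)_{x_0}$. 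Your deductions of (3) and (4) from (1) and (2) are essentially fine, but (1) and (2) themselves are not established by the argument you give.
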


\begin{proof}
	We first prove that this is a functor of topological sheaves; this follows as all operations (taking invariant sections, pullback along open inclusion, and tensor product) are functors of topological sheaves. By a similar argument, $i_\flat \mathcal F(U)$ is an $\mathcal O_{[X/G]}$ module, and $i_\flat f$ is a morphism of $\mathcal O_{[X/G]}$ modules.

	We now check \cref{prop:support} by exhibiting an isomorphism of sheaves $i_\flat\mathcal O_{[X^\circ/G]}(\sF)=\mathcal O_{[X/G]}(\ul I_\flat\sF)$.
	 Let $[X_\sigma/G]$ be a smooth stacky chart for $X$, with the  1-dimensional cones of $\Sigma_\sigma$ generated by $\{u_{\rho_i}\}_{i=1}^k$.
	  Then a section $s\in \mathcal O_{[X^\circ G]}^{G}(\sF)(U\cap X^\circ)$ can be written as a sum of monomials $s=\sum_{m'\in M'} a_{m'} \chi^{\ul\beta^*m'}$, where the $k'=\ul\beta^*m'$ satisfy the following.
	\begin{itemize}
		\item Because the section is $G$-invariant, $\langle k', l'\rangle =0$ for all $l'\in \ker(\ul \beta^\circ)$. Further, $(\Sigma_\sigma(1)\setminus \Sigma^\circ(1))\subset \ker(\ul\beta^\circ)$ implies that $\langle u_{\rho_i}, k'\rangle \geq 0$ for all $\rho_i\in \Sigma_\sigma(1)\setminus \Sigma^\circ$. 
		\item For each $\rho_i\in \Sigma_\sigma(1)\cap \Sigma^\circ(1)$, we have the relation $\langle u_{\rho_i}, k'\rangle \geq \sF(u_{\rho_i})$.
	\end{itemize}
	It follows that for all $u_{\rho_i}\in \Sigma_\sigma(1)$ we have $\langle u_{\rho_i}, \ul\beta^*m  \rangle \geq \sF(u_{\rho_i})$. Therefore, $s$ corresponds to a section of $\mathcal O_{[X/G]}^G(\ul I_\flat\sF)(U)$. The result follows from 
	\begin{align*}
		\mathcal O_{[X^\circ/G]}^G(\ul I_\flat\sF)(U\cap X^\circ)\tensor_{\mathcal O_{[X^\circ/G]}^G(U\cap X^\circ)}\mathcal O_{[X/G]}(U)
		=&\mathcal O_{[X/G]}^G(\ul I_\flat\sF)(U)\tensor_{\mathcal O_{[X/G]}^G(U)}\mathcal O_{[X/G]}(U)\\
		=&\mathcal O_{[X^\circ/G]}(\ul I_\flat\sF)(U)
	\end{align*}

	We now prove \cref{prop:orbits} which allows us to compare the stalks of $(\pi^*\circ i_\flat)\mathcal F$ over $X\setminus X^\circ$ to stalks in $X^\circ$. Pick $x_0\in X$ and $x_1\in X^\circ$ so that $x_0$ is in the closure of $G\cdot x_1$. Because $x_0$ is in the closure of $G\cdot x_1$, we can take a sequence of subgroups $G_0\supset G_1\supset\cdots$ of $G$ and sequence of open sets $V_i\ni x_1$ satisfying:  
 \begin{itemize}
  \item for all $i$, there exists $U_i\ni x_0$ an open subset of $X$ so that $U_i\cap X^\circ = \bigcup_{g\in G_i}V_i$ and;
  \item for every $U\ni x_0$ and open subset of $X$ there exists $j$ sufficiently large so that for all $i>j$  
  \[\bigcup_{g\in G_i} g\cdot V_i\subset U\cap X^0.\]
  \end{itemize}
  We then compute the stalk 
 \begin{align*}
	(\pi^*\circ  i_\flat F)_{x_0}=& \lim_{U\ni x_0} (\pi^*\circ i_\flat  \mathcal F)(U)\\
	=& \lim_{i\to\infty} \mathcal F^{G}(U_i\cap X^0)\tensor_{\mathcal O^G_{[X^\circ/G](U_i\cap X^\circ)}}\mathcal O_X(U_i)\\
	\cong& \lim_{i\to\infty} \mathcal F^{G}	\left(\bigcup_{g\in G_i} g\cdot V_i\right)\tensor_{\mathcal O^G_{[X/G]}(\bigcup_{g\in G_i} g\cdot V_i)}\mathcal O_X(U_i)\\
	\intertext{Since $\mathcal F^{G}$ and $\mathcal O^G_{[X^\circ/G]}$ are sheaves of $G$ invariant sections,}
	\cong & \lim_{i\to\infty} \mathcal F^{G}	\left(V_i\right)\tensor_{\mathcal O^G_{[X^\circ/G]}(V_i)}\mathcal O_X(U_i)\\
	=&\mathcal F^G_{x_1}\tensor_{(\mathcal O^G_{[X^\circ /G]})_{x_1}}\tensor \mathcal (O_X)_{x_0}\\
	\cong& \mathcal F^G_{x_1}\tensor_{(\mathcal O^G_{[X^\circ /G]})_{x_1}}\tensor \mathcal (O_X)_{x_1}\cong (\pi^*\circ  i_\flat F)_{x_1}
 \end{align*}

 From \cref{prop:orbits}, it follows that $(\pi^*\circ i_\flat)$ is exact, as exactness can be computed on stalks, and for points $x_1\in X\setminus X^\circ$ we have an isomorphism of stalk functors 
 \[(\pi^*\circ i_\flat)(-)_{x_1}= (I_*\circ (\pi^\circ)^*)(-)_{x_1}.\]
 The latter functor is exact, as $(\pi^\circ)^*$ is an equivalence of categories, $I_*$ is an open inclusion, and $x_1$ is in the image of $I_*$.
 
Finally, it remains to show \cref{prop:coherence}. Since $\pi^*\circ i_\flat$ is exact, and sends $\mathcal O_{[X^\circ/\TT_Y]}$ to $\mathcal O_X$, it sends coherent sheaves to coherent sheaves.
\end{proof}

\begin{rem}
    In a discussion with Mahrud Sayrafi, it was observed that the properties of $\pi^*\circ i_\flat$ we exploit are reminiscent of properties of functors between sheaves on Mori dream spaces  \cite{hu2000mori}.
\end{rem}

We will make use of \cref{lem:exactEquivariantPushforward} in conjunction with the following local construction of an open inclusion with equivariant codimension two complement from a toric subvariety. 

\begin{prop}
	\label{prop:quotientOfAffineSubstacks}
	Let $\phi \colon  Y\to \AA^n $ be a toric subvariety. Assume that $\kk$ has characteristic zero.
	There exists $X^\circ \subset \AA^n$ a toric subvariety such that
	\begin{itemize}
		\item $Y|_{X^\circ}=\TT_Y$ is an algebraic subtorus of $\mathbb{G}_m^n$,
		\item The action of $\TT_Y$ on $X^\circ$ is free,
		\item $[X^\circ/\TT_Y]$ is covered by smooth stacky charts,\footnote{This is the only property which requires the assumption of $\text{char}(\kk)=0$ to prove.} and
		\item $[X^\circ/\TT_Y]\into [\AA^n/\TT_Y]$ has a complement of equivariant codimension two.
	\end{itemize}
\end{prop}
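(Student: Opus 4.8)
The plan is to write down $X^\circ$ by a combinatorial recipe — delete from $\AA^n$ precisely the torus orbits along which $\TT_Y$ fails to act with finite stabilizer — and then verify the four bullets one at a time. To set up: write $\AA^n = X_{\sigma_0}$ for the orthant $\sigma_0 = \RR_{\geq 0}^n \subset L_\RR = \RR^n$, so that $L = N = \ZZ^n$, $\ul\beta = \ul\id$, and the faces of $\sigma_0$ are the coordinate subcones $\RR_{\geq 0}^S$, $S \subseteq \{1,\dots,n\}$. Since $\phi$ is a closed toric immersion it corresponds (see \cref{def:inclusion}) to an injective lattice map $\ul\phi\colon N_Y \into \ZZ^n$ with torsion-free — hence saturated — cokernel, together with the pullback fan $\Sigma_Y = \{\ul\phi^{-1}(\tau) : \tau \leq \sigma_0\}$. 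I would then put $N_{\TT_Y} := \ul\phi(N_Y)$, a saturated sublattice of rank $d = \dim Y$, and $\TT_Y := \TT_{N_{\TT_Y}} \subset \GG_m^n$; saturation makes $\TT_Y$ an algebraic subtorus, and it is the image of the dense torus of $Y$, so $Y \cap \GG_m^n = \TT_Y$. Finally, define
\[ \Sigma^\circ := \{\, \tau \leq \sigma_0 \;:\; \mathrm{span}_\RR(\tau) \cap (N_{\TT_Y}\otimes\RR) = 0 \,\}, \]
which is closed under passing to faces and so is a subfan; set $X^\circ := X_{\Sigma^\circ} \subset \AA^n$.

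The first three properties should then be short. Since $\{0\} \in \Sigma^\circ$, we get $\GG_m^n \subset X^\circ$ and hence $\TT_Y \subseteq Y \cap X^\circ$; conversely, for $\tau \in \Sigma^\circ$ with $\tau \neq \{0\}$ one has $O_\tau \cap Y = \emptyset$, since an orbit of $Y$ inside $O_\tau$ would be nondense, of the form $\phi(O_{\tau_Y})$ with $\tau_Y \neq \{0\}$, and then $0 \neq \ul\phi(\mathrm{span}_\RR \tau_Y) \subseteq N_{\TT_Y}\otimes\RR$ would lie in $\mathrm{span}_\RR(\tau)$, a contradiction. Hence $Y|_{X^\circ} = \TT_Y$. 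For $\tau \in \Sigma^\circ$, the stabilizer in $\TT_Y$ of the orbit $O_\tau$ is $\TT_Y \cap \TT_\tau$, where $\TT_\tau \subset \GG_m^n$ is the coordinate subtorus with cocharacter lattice $\mathrm{span}_\RR(\tau) \cap \ZZ^n$; its identity component has cocharacter lattice $N_{\TT_Y} \cap \mathrm{span}_\RR(\tau) = 0$, so the stabilizer is finite — this is the second bullet (the stabilizers are in general nontrivial, e.g.\ $\mu_2$ for the parabola $z \mapsto (z, z^2)$, which is exactly why the quotient must be taken as a stack). For the third bullet, the stacky fan of $[\AA^n/\TT_Y]$ has underlying fan that of $\sigma_0$ with map $\ul\beta\colon \ZZ^n \twoheadrightarrow \ZZ^n/N_{\TT_Y}$, and $[X^\circ/\TT_Y]$ is its restriction to $\Sigma^\circ$; a ray $\rho = \RR_{\geq 0}e_i$ has $\ul\beta(\rho) \neq 0$ iff $e_i \notin N_{\TT_Y}\otimes\RR$, and then $\RR e_i \cap (N_{\TT_Y}\otimes\RR) = 0$, so $\rho \in \Sigma^\circ$ — precisely the condition of \cref{def:equivariantCodimension2Complement}.

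The hard part will be the last bullet, and — consistent with the footnote — it is the only step where $\mathrm{char}(\kk) = 0$ enters. First I would check that every maximal cone $\tau = \RR_{\geq 0}^S$ of $\Sigma^\circ$ has $\dim \tau = n - d$: if $|S| < n - d$ then $N_{\TT_Y}\otimes\RR \oplus \RR^S$ is a proper subspace of $\RR^n$, so some $e_j$ lies outside it and $S \cup \{j\}$ still spans a cone of $\Sigma^\circ$, contradicting maximality. Choosing a splitting $\ZZ^n = \ZZ^S \oplus \ZZ^{S^c}$ (so $|S^c| = d$), the restriction $\ul\beta|_{\ZZ^S}\colon \ZZ^S \to \ZZ^n/N_{\TT_Y}$ is injective — its kernel is $N_{\TT_Y} \cap \ZZ^S \subseteq N_{\TT_Y} \cap \RR^S = 0$ — and since $|S|$ and $\mathrm{rank}(\ZZ^n/N_{\TT_Y})$ are both $n - d$ it has finite cokernel. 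Hence the affine chart $U_\tau \subset X^\circ$ attached to $\tau$ has $[U_\tau/\TT_Y]$ corresponding to the stacky fan $(\text{faces of }\tau \text{ in }\ZZ^n,\, \ul\beta)$, which by \cref{prop:stabilization} is a stabilization of the stacky coordinate fan $(\text{faces of }\tau \text{ in }\ZZ^S,\, \ul\beta|_{\ZZ^S})$; the latter is a \emph{smooth} stacky coordinate fan once $\mathrm{char}(\kk) = 0$, as its finite structure group $G = \ker(\TT_{\ZZ^S} \to \TT_{\ZZ^n/N_{\TT_Y}})$ then automatically has $|G| \in \kk^\times$ (\cref{def:toricStackyChart}). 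As the $U_\tau$ cover $X^\circ$, this finishes the proof. The one thing to watch throughout is that the single fan $\Sigma^\circ$, tailored to make stabilizers finite, simultaneously keeps every ray needed for equivariant codimension two; as seen above this comes down to the elementary fact that a coordinate line either lies in $N_{\TT_Y}\otimes\RR$ or meets it only at $0$.
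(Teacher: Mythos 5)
Your construction is the same as the paper's: the paper also takes $\Sigma^\circ=\{\sigma\in\Sigma : \ul\pi|_\sigma \text{ injective}\}$ (which is exactly your condition $\mathrm{span}_\RR(\tau)\cap L_{Y,\RR}=0$), proves $Y|_{X^\circ}=\TT_Y$ by noting the cones of $Y$ land in $\ker\ul\pi$, gets equivariant codimension two directly from the definition of $\Sigma^\circ$, and establishes the smooth stacky charts by exactly your two steps (every cone of $\Sigma^\circ$ extends to one of dimension $n-d$ by adding a coordinate ray outside $L_{Y,\RR}+\RR\cdot\ul\pi(\tau)$, and the resulting chart is a stabilization of a smooth stacky coordinate fan via the coordinate splitting $L=L_\sigma\oplus L_\sigma^\perp$). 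The one place you genuinely diverge is the second bullet: the paper's proof asserts that the cones of $\Sigma^\circ$ being disjoint from $L_Y$ gives \emph{trivial} stabilizers, whereas you only conclude \emph{finite} stabilizers and exhibit the $\mu_2$ stabilizer for the parabola. You are right on this point — the paper's own \cref{ex:equivariantResolutionOfParabola} produces the chart $[\AA^1/(\ZZ/2\ZZ)]$ for precisely this $X^\circ$, so the action is not free there and the bullet (and the paper's one-line justification of it) is stated too strongly; finiteness of stabilizers is what the rest of the argument actually needs, since the quotient is only ever used as a stack covered by charts $[\AA^{n-d}/G]$ with $G$ finite of order invertible in $\kk$. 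Your explicit verification that $\ul\beta|_{\ZZ^S}$ is injective with finite cokernel, and your remark that $|G|\in\kk^\times$ is automatic in characteristic zero, make the role of the characteristic hypothesis clearer than the paper's proof does.
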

	We represent this data in the form of the following diagram.
	\[
		\begin{tikzcd}
			e \arrow{d}{\phi_e}  & \mathbb{T}_Y  \arrow{l}  \arrow{d}{\phi^\circ}  \arrow{r} & Y \arrow{d}{\phi} \\
			\; [X^\circ/\mathbb{T}_Y] 	\arrow[bend right]{rrr}{i} &  X^\circ \arrow{r}{I} 	\arrow{l}{\pi^\circ}		  & \mathbb{A}^n \arrow{r}{\pi} & \;[\mathbb{A}^n/\mathbb{T}_Y]
		\end{tikzcd}
	\]
\begin{proof}
	Let $\Sigma$ be the coordinate fan so $X_\Sigma=\AA^n$. Consider $\underline \pi \colon  L_{\AA^n}\to L_{\AA^n}/L_Y $. Then, define  $\Sigma^\circ:=\{\sigma\in \Sigma\st \underline \pi|_{\sigma} \text{ is injective}\}$. Suppose that $\sigma\in \Sigma^\circ$. Since $\underline \pi|_{\sigma}$ is injective, $\underline \pi|_{\tau}$ with $\tau<\sigma$ is injective. Therefore, $\Sigma^\circ$ is a fan. 
	
	Observe that every cone of $Y$ will be in the kernel of $\underline \pi$, so $Y|_{X^\circ}=\TT_Y$. Additionally, since every cone of $\Sigma^\circ$ is disjoint from $L_Y$, the action of $\TT_Y$ on the corresponding toric orbit has trivial stabilizers. It follows that $\TT_Y$ acts freely on $X^\circ$.

    The toric stack $[X^\circ/\mathbb{T}_Y]$ is defined via the data of $(\Sigma^\circ, \underline \pi)$. The inclusion $(\Sigma^\circ,\ul \pi)\into (\Sigma,\ul \pi)$ is an open immersion. By construction, it has a complement of equivariant codimension 2.

    We now show that it is covered by smooth stacky charts indexed by the cones $\Sigma^\circ(n-k)$, where $k=\dim(Y)$.
	Given any $\tau\in \Sigma^\circ$, let $\rho_1, \ldots, \rho_m$ be the 1-dimensional cones belonging to $\tau$. Then $m\leq (n-k)$. Suppose that $m<(n-k)$. Let $\rho_{m+1}, \ldots \rho_n$ be the remaining cones of $\Sigma(1)$ (recall that this is the standard coordinate fan). Look at the subspace $\RR\cdot \ul\pi(\tau)$ of $L/L_Y$ which is generated by the image of $\tau$. This is an $m$-dimensional subspace of $L/L_Y$, which has dimension $(n-k)$.  Since the map $\ul\pi: L\to L/L_Y$ is surjective, at least one of the $\ul\pi(\rho_{i})$ with $i\geq m+1$ must disjoint from $\RR\cdot \ul\pi(\tau)$ (otherwise the image of $\ul\pi$ would be contained in $\RR\cdot\ul\pi(\tau)$). Let $\tau'$ be the cone spanned by $\tau$ and this particular $\rho_i$. It follows that $\dim(\RR\ul\pi(\tau'))=m+1$, i.e., $\ul\pi|_{\tau'}$ is injective. From this we conclude that every cone $\tau\in \Sigma^\circ$ is contained within some $\sigma\in \Sigma^\circ(n-k)$. 
 
    It remains to construct for each $\sigma\in \Sigma^\circ(n-k)$ a smooth stacky chart that covers this cone.  Let $\Sigma_{\sigma}$ be the fan of cones of $\Sigma$ subordinate to $\sigma$. Then $(\Sigma_\sigma, \ul\pi)\into (\Sigma^\circ, \ul\pi)$ covers $\sigma$.  We claim that $(\Sigma_\sigma, \ul\pi)$ is the stabilization of a smooth stacky chart.
    Consider the smooth stacky fan 
       $(\Sigma_{\sigma}, \ul \beta_\sigma)$ where $L_\sigma$ is the sublattice spanned by $\sigma$, $\Sigma_{\sigma}$ is the induced coordinate fan on $L_\sigma$, and 
	   $\ul \beta_\sigma\colon L_{\sigma} \to N$ is given by restriction of $ \ul \pi|_{L_\sigma}$. Since $\sigma$ is a cone of the coordinate fan for $L$, we have a splitting $L=L_\sigma\oplus L_{\sigma}^\bot$. Therefore, $(\Sigma_\sigma, \ul\pi)$ is a stabilization of the stacky chart $(\Sigma^\circ_\sigma, \beta)$.
\end{proof}
We do not have a good characterization for when the third property of \cref{prop:quotientOfAffineSubstacks} holds over fields of positive characteristic.
\begin{df}
    We say that $\phi\colon Y\to \AA^n$ is $\kk$-admissible if 
	\cref{prop:quotientOfAffineSubstacks} holds.\label{def:admissibleOverK} We say that a toric substack $\phi\colon \Y \to \X$ is $\kk$-admissible if $\X$ is covered by stacky charts, and if in every smooth stacky chart $[\AA^n/G]$ on $\X$ the lift (see \cref{lem:pushforwardquotient}) of $\Y \cap [\AA^n/G]$ in $\AA^n$ is $\kk$-admissible.
\end{df}
\begin{example}
    If $\X$ is a toric stack covered by smooth stacky charts (i.e. at each chart $[\AA^n/G]$ the order of $G$ does not divide the characteristic of $\kk$), then the diagonal in $\X\times \X$ is $\kk$-admissible. 
\end{example}
\subsection{Returning to resolutions of toric subvarieties in \texorpdfstring{$\AA^2$}{the plane}}
 Now that we have established the background on toric stacks and the tools needed to apply our general strategy, we conclude this section with two more examples of putting those tools to work.

\begin{example}[Resolving $z_1^2=z_2$ equivariantly]
	\label{ex:equivariantResolutionOfParabola}
	Take the parabola $\phi \colon \AA^1 \to \AA^2 $ parameterized by $z\mapsto (z, z^2)$. Let $i\colon \AA^2\setminus \{0\}\to \AA^2$ be the inclusion. Consider the action of $\TT_\beta\curvearrowright\AA^2$ given by $(z_1, z_2)\mapsto (tz_1, t^2z_2)$. We can take the quotient $\pi_\beta \colon \AA^2\setminus \{0\}\to [\AA^2\setminus\{(0,0)\}/\TT_\beta]$. There is a resolution $C_\bullet(S^{\phi_e}, \mathcal O^{\phi_e})$ for a point on the quotient stack so that $\pi^*i_\flat C_\bullet(S^{\phi_e}, \mathcal O^{\phi_e})$ is
	\[
		\begin{tikzpicture}
			\draw (-3,-0.5) -- (4,-0.5);
			\node (v1) at (-1.5,-2.5) {$\mathcal O_{\AA^2}$};
			\node (v2) at (-3,-1) {$\mathcal O_{\AA^2}$};
			\node (v3) at (0.5,-1) {$\mathcal O_{\AA^2}$};
			\node (v5) at (4,-1) {};
			\draw[->]  (v1) edge node[red,midway, fill=white]{$z_0$} (v2);
			\draw[->]  (v1) edge node[midway, fill=white]{$-1$}(v3);
			\draw[thick, red] (0.5,-0.5) -- (0.75,-0.5);
			\node[circle, fill=black, scale=.5] at (0.5,-0.5) {};
			\draw[thick, red] (-3,-0.5) -- (-2.75,-0.5);
			\draw[thick, blue] (-3,-0.5) -- (-3.25,-0.5);
			\node[circle, fill=black, scale=.5] at (-3,-0.5) {};
			\node (v4) at (2.5,-2.5) {$\mathcal O_{\AA^2}$};
			\draw[->]  (v4) edge  node[red, midway, fill=white]{$z_0$}(v3);
			\draw[->]  (v4) edge  node[blue,midway, fill=white]{$-z_1$}(v5);
		\end{tikzpicture}
	\]
	which is homotopic to our original resolution from \cref{ex:nonequivParaResolution}. Note that the line bundles used in all phases of the construction are in a Thomsen collection.
\end{example}

\begin{example}[Resolving $z_0z_1=1$]
	\label{ex:resolutionOfHyperbola}
	Take the toric subvariety $\phi_{(-1, 1)} \colon  \mathbb{G}_m \to \AA^2 $ parameterized by $z\mapsto (z, z^{-1})$. We now look at the map $\pi_{(-1, 1)} \colon  \AA^2\setminus\{(0,0)\}\to X_\beta:=[\AA^2\setminus\{(0,0)\}/\TT_\beta] $; the latter is the toric stack corresponding to the non-separated line. Then applying $\pi^*i_\flat$ to our resolution from \cref{eq:resolutionPtInNonSepLine} of $\mathcal O_e$ is the standard resolution of $z_0z_1=1$
	\[
		\begin{tikzpicture}
			\draw (-2.5,-0.5) -- (4,-0.5);
			\node (v1) at (1,-1) {$\mathcal O_{\AA^2}$};
			\node (v2) at (-2.5,-1) {$\mathcal O_{\AA^2}$};
			\node (v3) at (4,-1) {};
			\draw[->]  (v1) edge node[ fill=white]{$-{\color{red}z_0}{\color{blue}z_1}$} (v2);
			\draw[->]  (v1) edge node[midway, fill=white]{$-1$}(v3);
			\draw[thick, red] (-2.5,-0.55) -- (-2.25,-0.55);
			\draw[thick, blue] (-2.5,-0.45) -- (-2.25,-0.45);
			\node[circle, fill=black, scale=.5] at (-2.5,-0.5) {};
		\end{tikzpicture}
	\]
\end{example}

\section{Resolution of toric substacks} \label{sec:Resolutionideas}
This section serves to give the broad strokes of the proof of \cref{mainthm:res} reducing it to a variety of lemmas that will be addressed in \cref{sec:Resolutionproofs}. In fact, we break the proof into steps of increasing difficulty working from points in toric varieties to substacks of stacks covered by smooth stacky charts. Finally, it is important to note that the resolutions appearing in \cref{mainthm:res} are explicitly defined in \cref{subsec:defOfCbullet}.

\subsection{Resolving points in  toric varieties} \label{subsec:strategy}
We first lay out the steps for resolving the identity point in a smooth toric variety.

\begin{thm}
	\label{thm:resolutionOfPoints}
	Let $X$ be a toric variety covered by smooth charts.
	Let $\phi \colon  e\to X $ be the inclusion of the identity point.
	There exists a resolution $C_\bullet(S^\phi, \mathcal O^\phi)\xrightarrow \epsilon \phi_*\mathcal O_e$ on $X$, where $C_{k}(S^\phi, \mathcal O^\phi)$ is a direct sum of line bundles from the Thomsen collection. Furthermore, there is an epimorphism  $\alpha^\phi \colon \mathcal O_{\X}\to C_0(S^\phi, \mathcal O^\phi) $, and $C_k(S^\phi, \mathcal O^\phi)=0$ whenever $k<0$ or $k> \dim(X)$.
\end{thm}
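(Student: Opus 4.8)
The plan is to build $C_\bullet(S^\phi,\mathcal O^\phi)$ from Bondal's stratification and then verify exactness of the augmented complex one smooth chart at a time. First I would recall that for $\phi\colon e\to X$ the relevant real torus is $M_\RR/M$, of dimension $n=\dim X$, stratified by the hyperplanes orthogonal to the rays $u_\rho$, with each stratum $\sigma$ carrying the Thomsen line bundle $\mathcal O_X(\bF(\sigma))$ and adjacent strata $\tau<\sigma$ joined by the boundary morphism $\bm_{\sigma,\tau}$; the complex $C_\bullet(S^\phi,\mathcal O^\phi)$ is the discrete Morse complex of this stratified torus for an adapted discrete Morse function, with $C_k$ the direct sum over critical $k$-cells of the corresponding line bundles and differential an alternating sum of boundary morphisms. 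Three of the claimed properties are then immediate: $C_k$ is by construction a direct sum of Thomsen line bundles; $C_k=0$ for $k<0$ or $k>n$ since the stratified space is $n$-dimensional; and the unique critical $0$-cell is the stratum of the origin, whose line bundle is $\mathcal O_X(\bF(0))=\mathcal O_X$ because $\bF(0)(u_\rho)=\lceil 0\rceil=0$, so $C_0=\mathcal O_X$, we may take $\alpha^\phi=\id$, and $\epsilon\colon C_0=\mathcal O_X\twoheadrightarrow\phi_*\mathcal O_e$ is the canonical quotient onto the structure sheaf of the identity point. What remains is to prove that $C_\bullet(S^\phi,\mathcal O^\phi)\xrightarrow{\epsilon}\phi_*\mathcal O_e$ is a resolution.

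For this I would combine a base computation with two properties of the complex proved in later sections. The base case is $\phi\colon e\to\AA^1$: the stratified circle has a single marked point, $C_\bullet(S^\phi,\mathcal O^\phi)$ is the two-term complex $\mathcal O_{\AA^1}(-D)\to\mathcal O_{\AA^1}$ whose differential is the boundary morphism, i.e.\ multiplication by the coordinate, which is exactly the Koszul resolution of $\phi_*\mathcal O_e$; similarly $\phi\colon e\to\mathbb{G}_m$ gives $\mathcal O\xrightarrow{z-1}\mathcal O$, again resolving the point. The first property is the K\"unneth formula (\cref{lem:kunneth}): for a product of toric varieties the stratified torus is a product and its discrete Morse complex is the tensor product, so for $\phi\colon e\to\AA^k\times\mathbb{G}_m^{n-k}$ the complex $C_\bullet(S^\phi,\mathcal O^\phi)$ is the tensor product of $n$ two-term complexes, hence the Koszul resolution of $\phi_*\mathcal O_e$. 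The second property is pullback up to homotopy (\cref{lem:functorialityRestriction}): for a ray $\rho$ and the open inclusion $j\colon X_\rho=X\setminus D_\rho\hookrightarrow X$, the complexes $j^*C_\bullet(S^\phi,\mathcal O^\phi)$ and $C_\bullet(S^{\phi_\rho},\mathcal O^{\phi_\rho})$ are chain homotopy equivalent, because removing $D_\rho$ only deletes the family of hyperplanes orthogonal to $u_\rho$, so $S^\phi$ refines $S^{\phi_\rho}$ as two stratifications of the same torus carrying the same line-bundle data, and a refinement changes the discrete Morse complex only up to homotopy equivalence.

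The assembly is then formal: choose a cover of $X$ by smooth charts $U_\alpha$; each $U_\alpha$ is obtained from $X$ by deleting the divisors $D_\rho$ for the rays outside its cone, so $U_\alpha\cong\AA^k\times\mathbb{G}_m^{n-k}$ and contains the identity $e$. Applying \cref{lem:functorialityRestriction} once per deleted ray, $C_\bullet(S^\phi,\mathcal O^\phi)\big|_{U_\alpha}$ is chain homotopy equivalent to $C_\bullet(S^{\phi|_{U_\alpha}},\mathcal O^{\phi|_{U_\alpha}})$, which by the base case and \cref{lem:kunneth} is the Koszul resolution of $\phi_*\mathcal O_e\big|_{U_\alpha}$. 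Hence the augmented complex has cohomology sheaves vanishing in nonzero degree and equal to $\phi_*\mathcal O_e$ in degree $0$ over each $U_\alpha$; since cohomology sheaves are computed locally and the $U_\alpha$ cover $X$, the complex $C_\bullet(S^\phi,\mathcal O^\phi)\xrightarrow{\epsilon}\phi_*\mathcal O_e$ is a resolution, which together with the structural observations of the first paragraph gives the theorem.

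I expect the real work to lie entirely in the deferred inputs and in making the discrete Morse complex well defined in the first place: choosing a discrete Morse function adapted to Bondal's combinatorially intricate stratification so that the critical cells and the differential are the expected ones, and checking $\partial^2=0$ from the composition law $\bm_{\tau,\upsilon}\circ\bm_{\sigma,\tau}=\bm_{\sigma,\upsilon}$. Of the two properties, pullback up to homotopy will be the main obstacle, since it compares discrete Morse complexes of two distinct stratifications with twisted line-bundle coefficients and is exactly the mechanism that localizes the problem to affine space; the signs and explicit chain homotopies there will need to be handled carefully. Once \cref{lem:kunneth} and \cref{lem:functorialityRestriction} are in hand, the theorem follows as sketched above.
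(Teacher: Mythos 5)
Your overall strategy is exactly the paper's: check $\phi\colon e\to\AA^1$ by hand, use \cref{lem:kunneth} to get the Koszul resolution on $\AA^n$, use \cref{lem:functorialityRestriction} repeatedly to identify the restriction of $C_\bullet(S^\phi,\mathcal O^\phi)$ to each smooth chart with the local complex up to homotopy, and conclude by locality of cohomology sheaves. That part is fine and matches Steps 1--2 of the paper's proof verbatim.

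Two of your structural claims are off, though. First, it is not true that the origin is the unique $0$-dimensional stratum, so $C_0(S^\phi,\mathcal O^\phi)$ is in general \emph{not} $\mathcal O_X$ and you cannot take $\alpha^\phi=\id$ or describe $\epsilon$ as a quotient of a single copy of $\mathcal O_X$. For the Hirzebruch surface $\mathbb F_2$, with rays $(1,0),(0,1),(-1,2),(0,-1)$, the circles $\{m_1\in\ZZ\}$ and $\{-m_1+2m_2\in\ZZ\}$ in $M_\RR/M$ also meet at $(0,1/2)$, which is a second vertex carrying a nontrivial Thomsen bundle; so $C_0$ is a direct sum over all vertices, $\alpha^\phi$ is the inclusion of the identity-stratum summand (as in the paper), and the identification $H_0\cong\phi_*\mathcal O_e$ has to come out of the chart computation rather than from an a priori augmentation $\mathcal O_X\twoheadrightarrow\phi_*\mathcal O_e$. (For the point case this is harmless, since a sheaf supported at $e$ with one-dimensional stalk is the skyscraper, but the claim as you stated it is false.) Second, your base case on $\GG_m$ is not an instance of the construction: when there are no rays the single stratum of $M_\RR/M$ is the whole circle, which is not simply connected, so $C_\bullet(S^\phi,\mathcal O^\phi)$ is not defined there by the paper's recipe, and in any case $\mathcal O\xrightarrow{z-1}\mathcal O$ is not built from boundary morphisms ($z-1$ is not a monomial section). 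This case is also unnecessary: with the paper's definition of a smooth chart, the charts in the hypothesis are the affine pieces of full-dimensional smooth cones, i.e.\ copies of $\AA^n$, so only the $\AA^1$ computation and \cref{lem:kunneth} are needed, exactly as in the paper. With those two corrections your argument coincides with the paper's proof of \cref{thm:resolutionOfPoints}.
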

 In \cref{subsec:pathsandsheaves}, we describe the (a priori not exact) complex of sheaves $C_{\bullet}(S^\phi, \mathcal O^\phi)$, which is defined combinatorially from the morphism of stacky fans $(\ul \phi, \ul \Phi)\colon (\Sigma_Y, \ul \beta_Y)\to (\Sigma_X, \ul \beta_X)$ whenever $\X$ is covered by smooth stacky charts; the version needed for \cref{thm:resolutionOfPoints} comes from specializing $\Y=e$ and $\X=X$.
 From the construction of $C_{\bullet}(S^\phi, \mathcal O^\phi)$, it is immediate that there is a morphism  $\alpha^\phi \colon \mathcal O_{\X}\to C_0(S^\phi, \mathcal O^\phi) $ and  $C_k(S^\phi, \mathcal O^\phi)=0$ whenever $k<0$ or $k> \dim(\X)-\dim(\Y)$.
 Before we get to the definition of $C_{\bullet}(S^\phi, \mathcal O^\phi)$ in \cref{subsec:pathsandsheaves}, we list some of the properties that this complex will satisfy. We state the lemmas for the general setting of toric substacks, as we will use them later. Proofs are delayed until \cref{sec:Resolutionproofs}.

\begin{lem}[$C_{\bullet}(S^\phi, \mathcal O^\phi)$ respects products]
    \label{lem:kunneth}
    Let $\phi_1 \colon  \Y_1\to \X_1 $ and $\phi_2 \colon  \Y_2\to \X_2 $ be two immersed closed substacks. Then $C_\bullet(S^{\phi_1\times \phi_2}, \mathcal O^{\phi_1\times \phi_2})=C_\bullet(S^{\phi_1}, \mathcal O^{\phi_1})\tensor C_\bullet(S^{\phi_2}, \mathcal O^{\phi_2})$
\end{lem}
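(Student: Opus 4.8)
The plan is to observe that every ingredient entering the construction of $C_\bullet(S^\phi,\mathcal O^\phi)$ in \cref{subsec:pathsandsheaves,subsec:defOfCbullet} --- the stratified space $S^\phi$, its discrete Morse data, the line-bundle-valued sheaf $\mathcal O^\phi$, and the boundary morphisms --- is built functorially from the morphism of stacky fans $(\ul\phi,\ul\Phi)\colon(\Sigma_\Y,\ul\beta_\Y)\to(\Sigma_\X,\ul\beta_\X)$, and that this data is monoidal: the stacky fan of $\X_1\times\X_2$ is the direct sum of the two stacky fans, with $N_{\X_1\times\X_2}=N_{\X_1}\oplus N_{\X_2}$, ray set $\Sigma_{\X_1\times\X_2}(1)=\Sigma_{\X_1}(1)\sqcup\Sigma_{\X_2}(1)$ (each inside its own summand), and likewise for $\Y_1\times\Y_2$. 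Granting this, the statement reduces to a Künneth theorem for the discrete Morse complex, which I will record in \cref{sec:quivers}.

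First I would identify the stratified spaces. The relevant real torus for $\phi_1\times\phi_2$ is $(M_\RR/M)^{\X_1\times\X_2}=(M_\RR/M)^{\X_1}\times(M_\RR/M)^{\X_2}$ (for general $\Y$ one uses the corresponding quotient torus entering the construction of $S^\phi$), and, because the defining subtorus arrangement only sees the rays of the fan and these split between the two factors, the arrangement for $\phi_1\times\phi_2$ is literally the product arrangement; hence each stratum of $S^{\phi_1\times\phi_2}$ is a product $\strata_1\times\strata_2$. Choosing the discrete Morse function on $S^{\phi_1\times\phi_2}$ to be the product of the functions chosen on the two factors --- legitimate since a product of discrete Morse functions on a product of regular CW complexes is again a discrete Morse function, with critical cells exactly the products of critical cells --- one obtains a canonical identification of graded pieces $C_k(S^{\phi_1\times\phi_2},-)=\bigoplus_{i+j=k}C_i(S^{\phi_1},-)\tensor C_j(S^{\phi_2},-)$.

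Next I would check that the sheaf and the boundary morphisms are external products. A support function on the product fan decomposes as $\sF_1\oplus\sF_2$ and the Bondal map satisfies $\bF(m_1,m_2)=\bF(m_1)\oplus\bF(m_2)$, so \cref{prop:pullbackOfSupportFunction} applied to the two projections $\X_1\times\X_2\to\X_i$ gives $\mathcal O_{\X_1\times\X_2}(\bF(m_1,m_2))=\mathcal O_{\X_1}(\bF(m_1))\boxtimes\mathcal O_{\X_2}(\bF(m_2))$, whence $\mathcal O^{\phi_1\times\phi_2}(\strata_1\times\strata_2)=\mathcal O^{\phi_1}(\strata_1)\boxtimes\mathcal O^{\phi_2}(\strata_2)$. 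For the differential, the boundary morphisms $\bm$ are built from inclusions of lattice-point representatives of equivariant sections, which add under tensor product, so $\bm_{\strata_1\times\strata_2,\,\stratb_1\times\strata_2}=\bm_{\strata_1,\stratb_1}\boxtimes\id$, $\bm_{\strata_1\times\strata_2,\,\strata_1\times\stratb_2}=\id\boxtimes\bm_{\strata_2,\stratb_2}$, and the differential of $C_\bullet(S^{\phi_1\times\phi_2},\mathcal O^{\phi_1\times\phi_2})$ receives contributions only from gradient paths moving in a single factor (a path in one factor with a critical cell held fixed in the other). Combining these shows the differential is $d\tensor\id\pm\id\tensor d$, i.e. exactly that of the tensor product of complexes; the augmentations are compatible because $\phi_*\mathcal O_{\Y_1\times\Y_2}=\phi_{1*}\mathcal O_{\Y_1}\boxtimes\phi_{2*}\mathcal O_{\Y_2}$.

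The main obstacle I expect is not conceptual but a matter of fixing conventions so the claims above hold on the nose: pinning down the product discrete Morse / gradient-path data so that trajectories in $S^{\phi_1}\times S^{\phi_2}$ are exactly the ``horizontal'' and ``vertical'' ones, and matching the sign conventions of the discrete Morse differential with the Koszul sign in the tensor product complex. Once the definition of $C_\bullet(S^\phi,\mathcal O^\phi)$ in \cref{subsec:defOfCbullet} is phrased with products in mind, the lemma follows formally from the monoidality of the input data.
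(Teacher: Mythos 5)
Your proposal is correct and follows essentially the same route as the paper: identify $T^{\phi_1\times\phi_2}$ with $T^{\phi_1}\times T^{\phi_2}$ carrying the product stratification, note that $\bF$ and hence $\mathcal O^{\phi_1\times\phi_2}$ split as external tensor products of the factors, and observe that a covering relation between product strata degenerates in exactly one factor, so the differential is the tensor-product differential. The only cosmetic difference is that you phrase the chain-level identification via a choice of product discrete Morse function, whereas $C_\bullet(S^\phi,\mathcal O^\phi)$ is defined directly from the quiver of exit paths of the stratification (discrete Morse functions only enter later, in the proof of \cref{lem:functorialityRestriction}), so no such auxiliary choice is needed.
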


 Recall that given a ray $\rho \in \Sigma(1)$, the star of $\rho$ is $\str(\rho) = \{ \sigma \in \Sigma : \rho < \sigma \}$. Removing $\str(\rho)$ from $\Sigma$ corresponds to taking the complement of the divisor corresponding to $\rho$. We then prove that $C_{\bullet}(S^\phi, \mathcal O^\phi)$ transforms under deletion of a toric boundary divisor in the following way.
 
\begin{lem}[Functoriality along restrictions up to homotopy] 
	Let $\phi: \Y_{\Sigma', \ul \beta'} \into \X_{\Sigma, \ul \beta}$ be an immersion of a closed toric substack.
	For any $\rho \in \Sigma(1)$, let $\Sigma_\rho=\Sigma \setminus \str(\rho)$ giving us the toric stack $\X_\rho:=\X_{\Sigma_{\rho},\ul \beta}$ and let $\Sigma'_{\rho}=\Sigma'\setminus \{\alpha \st \phi(\alpha)\cap \str(\rho)\neq \emptyset\},$ giving us the toric stack $\Y_\rho:= \Y_{\Sigma_{\rho}', \ul \beta'}$ fitting into the diagram
	      \[\begin{tikzcd}
			     \Y_\rho \arrow{r}{\phi_\rho} \arrow{d}{i_{\rho}'}& \X_\rho \arrow{d}{i_\rho}\\
			      \Y\arrow{r}{\phi} & \X
		      \end{tikzcd}
	      \]
    where $\phi_\rho$ is induced by the same morphism of stacky fans as $\phi$ and $i_\rho, i_\rho'$ are the open inclusions. Suppose that $\X_{\Sigma_{\rho},\ul\beta}$ is covered by smooth stacky charts.
	      There is a homotopy equivalence $\HE\colon i^*_\rho C_\bullet(S^\phi, \mathcal O^\phi)\to C_\bullet(S^{\phi_\rho}, \mathcal O^{\phi_\rho})$. Furthermore, the diagram
            \begin{equation} \label{eq:extrarefdiagram} \begin{tikzcd}
			     \mathcal O_{\X} \arrow{r}{\alpha^\phi} \arrow{dd} & C_0(S^\phi, \mathcal O^\phi) \arrow{d}\\
                & i_\rho^* C_0 (S^\phi, \mathcal O^\phi) \arrow{d}{\Psi} \\
			      \mathcal O_{\X_\rho} \arrow{r}{\alpha^{\phi \rho}} & C_0(S^{\phi_\rho}, \mathcal O^{\phi_\rho})
		      \end{tikzcd}
	        \end{equation}
       commutes where the left vertical arrow and upper right vertical arrow are the natural restriction morphisms. 
		  \label{lem:functorialityRestriction}
\end{lem}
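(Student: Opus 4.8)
The plan is to treat $C_\bullet(S^\phi, \mathcal O^\phi)$ as the discrete Morse complex of the stratified real torus $S^\phi$ equipped with an adapted discrete Morse function and with the sheaf $\mathcal O^\phi$, valued in Thomsen line bundles on $\X$ and with structure maps the boundary morphisms $\bm$, and to reduce the lemma to the discrete Morse theory fact (to be proved in \cref{sec:quivers}) that such a Morse complex is, up to an explicit chain homotopy equivalence, independent of the stratification and of the adapted Morse function, depending only on the underlying sheaf. Note that the hypothesis that $\X_{\Sigma_\rho, \ul\beta}$ be covered by smooth stacky charts is precisely what makes $C_\bullet(S^{\phi_\rho}, \mathcal O^{\phi_\rho})$ defined.

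First I would identify $i_\rho^* C_\bullet(S^\phi, \mathcal O^\phi)$. As $i_\rho$ is an open inclusion, $i_\rho^*$ is exact, and by \cref{lem:thomsenPullback} it carries Thomsen line bundles on $\X$ to Thomsen line bundles on $\X_\rho$ and boundary morphisms to boundary morphisms; hence $i_\rho^* C_\bullet(S^\phi, \mathcal O^\phi)$ is the discrete Morse complex of $S^\phi$ with the same adapted Morse function but the sheaf $i_\rho^*\mathcal O^\phi$, whose value on a stratum $\sigma$ is $i_\rho^*\mathcal O_\X(\bF(\sigma)) = \mathcal O_{\X_\rho}(\bF'(\sigma))$ (in the notation of \cref{lem:thomsenPullback}, with $\bF'$ computed from $\Sigma_\rho(1) = \Sigma(1)\setminus\{\rho\}$) and structure maps the restricted boundary morphisms. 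The crucial observation is that $i_\rho^*\mathcal O^\phi$ descends to the coarser stratification $S^{\phi_\rho}$. The underlying torus does not change (it is built only from the lattices and $\ul\beta, \ul\beta'$, which are untouched when $\Sigma$ is replaced by $\Sigma_\rho$, and its dimension is $\codim(\Y)$), and $S^{\phi_\rho}$ is the coarsening of $S^\phi$ obtained by erasing the walls associated with $\rho$; if two $S^\phi$-strata $\sigma_1, \sigma_2$ lie in a common $S^{\phi_\rho}$-stratum, they are separated only by $\rho$-walls, so $\bF'(\sigma_1)$ and $\bF'(\sigma_2)$ agree (these support functions on $\Sigma_\rho$ can differ at most in their value on $u_\rho$, which is discarded), whence $\mathcal O_{\X_\rho}(\bF'(\sigma_1)) = \mathcal O_{\X_\rho}(\bF'(\sigma_2))$ and the intervening boundary morphism restricts to the identity, by the remark following \cref{prop:cocoreMorphisms} and \cref{lem:thomsenPullback}. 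Therefore $i_\rho^*\mathcal O^\phi$ and $\mathcal O^{\phi_\rho}$ are the same constructible sheaf, presented over the two stratifications $S^\phi$ and $S^{\phi_\rho}$.

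Feeding this common sheaf, the two adapted Morse functions, and the identity on the torus into the discrete Morse comparison result of \cref{sec:quivers} produces the homotopy equivalence $\HE\colon i_\rho^* C_\bullet(S^\phi, \mathcal O^\phi) \to C_\bullet(S^{\phi_\rho}, \mathcal O^{\phi_\rho})$. For the commutativity of \eqref{eq:extrarefdiagram}, note that $C_0(S^\phi, \mathcal O^\phi)$ and $C_0(S^{\phi_\rho}, \mathcal O^{\phi_\rho})$ each carry a distinguished summand, the component labeled by the trivial support function $\bF(0) = 0$, namely $\mathcal O_\X$, respectively $\mathcal O_{\X_\rho}$, and $\alpha^\phi$, $\alpha^{\phi_\rho}$ are the canonical inclusions of these summands sending the constant section to the constant section. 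Under $i_\rho^*$ and the natural restriction morphism, $\alpha^\phi$ becomes the inclusion of $\mathcal O_{\X_\rho}$; after normalizing $\HE$ to act as the identity on this distinguished summand — which the construction of \cref{sec:quivers} permits — both routes around \eqref{eq:extrarefdiagram} carry $1 \in \mathcal O_\X$ to the constant section of the distinguished summand of $C_0(S^{\phi_\rho}, \mathcal O^{\phi_\rho})$, so the square commutes.

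I expect the main obstacle to be the discrete Morse theory input itself: establishing in \cref{sec:quivers} that the discrete Morse complex of a fixed constructible sheaf valued in line bundles is a homotopy invariant of the sheaf, uniformly over the adapted stratifications and Morse functions, and in a form explicit and rigid enough to control $\HE$ on the distinguished summand so that \eqref{eq:extrarefdiagram} commutes on the nose rather than merely up to homotopy. A secondary technical point — relevant only for general substacks — is keeping precise track of the correspondence between $S^\phi$- and $S^{\phi_\rho}$-strata and the Thomsen stratifications $\sS_{\Sigma, \ul\beta}$ and $\sS_{\Sigma_\rho, \ul\beta}$, which rests on the explicit definitions of \cref{subsec:defOfCbullet}.
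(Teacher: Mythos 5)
Your proposal is correct and follows essentially the same route as the paper: restriction along $i_\rho$ makes the sheaf constant across the $\rho$-walls (so boundary morphisms crossing them become identities), the coarsening $S^\phi \rightsquigarrow S^{\phi_\rho}$ is handled by the discrete Morse invariance theorem of the appendix, and commutativity of the augmentation square follows because the identity stratum is critical of lowest degree. The only difference is one of packaging: where you invoke an abstract "independence of adapted stratification" statement, the paper realizes it concretely by exhibiting the acyclic partial matching (edges ending positively inside $T^\phi(\rho)$), verifying acyclicity via lifts to the universal cover, and checking that the Morse reduction is isomorphic — not merely homotopic — to $C_\bullet(S^{\phi_\rho},\mathcal O^{\phi_\rho})$; that construction is exactly the "main obstacle" you correctly flag.
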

Given these two lemmas, we can prove \cref{thm:resolutionOfPoints}.
\begin{proof}[Proof of \cref{thm:resolutionOfPoints}] The theorem is proved in two steps:
\begin{description}
        \item[Step 1: Resolution of points in charts.]
 By \cref{ex:pointInA1}, the theorem holds for $\phi \colon e\to \AA^1 $ and furthermore $C_{\bullet}(S^\phi, \mathcal O^\phi)$ is the Koszul resolution. The case $\phi \colon  e\to \AA^n $ follows from applying \cref{lem:kunneth}.
	\item[Step 2: Restricting to smooth toric charts.] Let $\Y=\{e\}$ and let $X$ be a toric variety which can be covered with smooth toric charts $\{X_\sigma\}_{\sigma\in \Sigma(n)}$.  Let $j_\sigma\colon X_{\sigma}\to X$ be the corresponding inclusion of a coordinate chart. For each $\sigma$, let  $\rho_{\sigma,1}, \ldots, \rho_{\sigma,k}\in \Sigma(1)$ be the 1-dimensional cones which are disjoint from $\sigma$.
	Consider the iterated inclusion 
	\[\begin{tikzcd}
		e \arrow{d}{\phi}& e \arrow{d}\arrow{l}{i_{\rho_{\sigma, 1}'}} & \arrow{l}\cdots &\arrow{l}{i_{\rho_{\sigma, k}'}} e \arrow{d}{\phi_{\rho_{\sigma, 1},\ldots,\rho_{\sigma, k}}} \arrow[bend right]{lll}{j'_\sigma}\\
		X  & X_\rho  \arrow{l}{i_{\rho_{\sigma, 1}}}  & \arrow{l} \cdots & X_{\rho_{\sigma, 1},\rho_{\sigma, 2},\ldots,  \rho_{\sigma, k}} = \arrow{l}{i_{\rho_{\sigma, k}}} X_{\sigma} \arrow[bend left]{lll}{j_\sigma}\\
	 \end{tikzcd}
	 \]
	 where $X_{\rho_{\sigma,1}, \ldots, \rho_{\sigma, l}}$ is the toric stack whose fan consists of cones which do not contain rays $\rho_{\sigma,1}, \ldots, \rho_{\sigma, l}$. 
	 By repeated application of \cref{lem:functorialityRestriction}, we obtain that $j_\sigma^*C_\bullet(S^\phi, \mathcal O^\phi)$ is chain homotopic to $C_\bullet(S^{\phi_{\rho_1,\ldots,\rho_k}},\mathcal O^{\rho_1,\ldots,\rho_k})$. The latter is the Koszul resolution of the skyscraper sheaf of $\{e\}$ by the previous step.  
	 This implies that $C_\bullet(S^\phi, \mathcal O^\phi)$ resolves $\phi_*\mathcal O_{e|_{X_{\sigma}}}$ on every smooth toric chart. Since these charts cover $X$, it is a resolution of $\phi_*\mathcal O_e$.
\end{description}
\end{proof}
\subsection{Resolving points in toric stacks}\label{subsec:pointsinstacks}

We will now generalize \cref{thm:resolutionOfPoints} to toric stacks. Note that we use this generalization even to resolve some smooth toric subvarieties of a smooth toric variety (see, for instance, \cref{ex:equivariantResolutionOfParabola,ex:resolutionOfHyperbola}). 

\begin{specialThm}
	\hypertarget{thm:resolutionOfPointsInStacks}{}
	 Let $\X$ be a toric stack which is covered by smooth stacky charts.
Let $\phi \colon  e\to X $ be the inclusion of the identity point.
	There exists a resolution $C_\bullet(S^\phi, \mathcal O^\phi)\xrightarrow \epsilon \phi_*\mathcal O_e$ on $\X$, where $C_{k}(S^\phi, \mathcal O^\phi)$ is a direct sum of line bundles from the Thomsen collection. Furthermore, there is a map   $\alpha^\phi \colon \mathcal O_{\X}\to C_0(S^\phi, \mathcal O^\phi) $ which is an epimorphism in homology, and $C_k(S^\phi, \mathcal O^\phi)=0$ whenever $k<0$ or $k> \dim(\X)$.
\end{specialThm}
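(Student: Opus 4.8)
The plan is to mimic the proof of \cref{thm:resolutionOfPoints}, using the same two-step ``local model plus restriction'' scheme, but with the local charts now being the smooth \emph{stacky} charts $[\AA^n/G]$ rather than affine spaces. The complex $C_\bullet(S^\phi,\mathcal O^\phi)$ is already defined in the full generality of substacks of stacks (see \cref{subsec:pathsandsheaves,subsec:defOfCbullet}), so the content is to show that it resolves $\phi_*\mathcal O_e$, and the formal properties ($\alpha^\phi$, vanishing outside degrees $0,\dots,\dim\X$) are immediate from the construction exactly as in the variety case. Since exactness of a complex of sheaves on a quotient stack $[X_\Sigma/G_\beta]$ can be checked after applying the (faithful, exact) forgetful functor $\pi^*\colon \Sh(\X)\to\Sh(X_\Sigma)$ and then on stalks, and since being a resolution is local, it suffices to verify the claim on each smooth stacky chart, i.e. to resolve $\phi\colon e\to[\AA^n/G]$.

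\textbf{Step 1: resolution of the identity in $[\AA^n/G]$.} By \cref{lem:kunneth} it is enough to treat a chart of the form $[\AA^n/G]$ directly; but the cleanest route is to realize the stacky chart as a finite quotient of a smooth \emph{variety} chart and push forward. Concretely, for the smooth stacky coordinate fan $(\Sigma,\ul\beta)$ with $\X_{\Sigma,\ul\beta}=[\AA^n/G]$, there is a finite quotient $(\ul\pi,\ul\Pi)\colon(\Sigma,\ul{\id})\to(\Sigma,\ul\beta)$ in the sense of \cref{def:finitequotient} presenting $\AA^n\to[\AA^n/G]$, with $|G|=|\coker\ul\pi_*|\in\kk^\times$ by the definition of a smooth stacky chart. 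Let $e'\in\AA^n$ be the identity point, so $\pi^{-1}(e)=e'$ (a single reduced point since $G$ acts freely on the open torus). The Koszul resolution of $\mathcal O_{e'}$ on $\AA^n$ is $C_\bullet(S^{\phi_{e'}},\mathcal O^{\phi_{e'}})$ by Step 1 of the proof of \cref{thm:resolutionOfPoints}, and this complex carries a natural $G$-equivariant structure; the assertion to prove is that $\pi_* C_\bullet(S^{\phi_{e'}},\mathcal O^{\phi_{e'}})$ is a resolution of $\phi_*\mathcal O_e$ and is isomorphic to $C_\bullet(S^{\phi},\mathcal O^{\phi})$ on the stack. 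Exactness is automatic: $\pi$ is finite, so $\pi_*$ is exact, and $\pi_*\mathcal O_{e'}=\phi_*\mathcal O_e$ because $\pi$ restricts to an isomorphism over $e$. That the terms land in the Thomsen collection and that $\pi_*$ of the differentials are again (block) boundary morphisms is exactly the content of \cref{lemma:thomsenQuotient} (using \cref{prop:quotientOfLineBundle} and \cref{prop:soManyThingsAreTheSame} to identify $\coker(\ul\pi^*)$ with the fiber of $\tilde\pi$). The identification $\pi_* C_\bullet(S^{\phi_{e'}},\mathcal O^{\phi_{e'}})\cong C_\bullet(S^{\phi},\mathcal O^{\phi})$ is the combinatorial heart of this step: it amounts to matching the stratification of $M_\RR/M$ attached to $(\Sigma,\ul\beta)$ with the $\ul\pi^*$-image of the stratification attached to $(\Sigma,\ul{\id})$, under which strata (and their adjacencies, hence the Morse differential) correspond to their preimages under $\tilde\pi$ — precisely the adjacency-preserving lifts in \cref{lemma:thomsenQuotient}. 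This is stated in the text as \cref{lem:pushforwardquotient}; I would invoke it here.

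\textbf{Step 2: restricting to smooth stacky charts.} Now let $\X$ be an arbitrary toric stack covered by smooth stacky charts $\{[\AA^n/G_\sigma]\}$, with open inclusions $j_\sigma$ obtained (as in the variety case) as iterated deletions of star neighborhoods of rays $\rho\in\Sigma(1)$ lying outside the relevant maximal cone. \cref{lem:functorialityRestriction} applies verbatim — it is stated for immersions of toric substacks of toric stacks, with the hypothesis that the smaller stack be covered by smooth stacky charts, which holds here — so repeated application gives a chain homotopy equivalence $j_\sigma^* C_\bullet(S^{\phi},\mathcal O^{\phi})\simeq C_\bullet(S^{\phi_\sigma},\mathcal O^{\phi_\sigma})$, where $\phi_\sigma\colon e\to[\AA^n/G_\sigma]$; the compatibility of the augmentations $\alpha$ is the commuting square in \cref{lem:functorialityRestriction}. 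By Step 1 the right-hand side resolves $(\phi_\sigma)_*\mathcal O_e$, so $C_\bullet(S^\phi,\mathcal O^\phi)$ restricts to a resolution of $\phi_*\mathcal O_e$ on each chart of the cover, hence is a resolution of $\phi_*\mathcal O_e$ on $\X$. The statements about $\alpha^\phi$ being an epimorphism on homology (here one must be slightly careful: on a stack the augmentation $\mathcal O_\X\to C_0$ need not be an honest epimorphism of sheaves, only a homological one — this is why the theorem weakens ``epimorphism'' to ``epimorphism in homology'') and the degree bounds follow from the construction of $C_\bullet$ together with the fact that $\dim\X-\dim e=\dim\X$.

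\textbf{Main obstacle.} The delicate point is Step 1, specifically the honest identification $\pi_* C_\bullet(S^{\phi_{e'}},\mathcal O^{\phi_{e'}})\cong C_\bullet(S^\phi,\mathcal O^\phi)$ as complexes of sheaves on the stack — not merely that both are resolutions of $\phi_*\mathcal O_e$, which would already suffice for the bare statement, but the stronger functorial identity that is needed later (in the ``points in stacks'' bullet of the bootstrap to toric substacks). Making this precise requires tracking the $G$-equivariant structure through the Fourier decomposition of $\pi_*$, which uses Schur orthogonality and hence the hypothesis $|G|\in\kk^\times$; this is exactly the characteristic restriction flagged in the introduction. The matching of differentials — that $\pi_*$ sends the discrete Morse differential built from boundary morphisms of the fine stratification to the one built from boundary morphisms of the coarse stratification — is combinatorially routine once \cref{lemma:thomsenQuotient} is in hand, but it is the place where one has to be careful that the Morse function and the stratification upstairs are chosen $G$-equivariantly so that they descend.
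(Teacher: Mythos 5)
Your proposal is correct and follows essentially the same route as the paper: Step 1 is the paper's Step 1$'$ (push the Koszul resolution forward from $\AA^n$ to $[\AA^n/G]$ via \cref{lem:pushforwardquotient}, using the character decomposition of \cref{lemma:thomsenQuotient}/\cref{prop:quotientOfLineBundle} to identify the result with the complex on the stack), and Step 2 is the paper's Step 2$'$ (iterated application of \cref{lem:functorialityRestriction} to reduce to the smooth stacky charts). You also correctly flag the two delicate points the paper leans on — the Schur-orthogonality/characteristic hypothesis and the weakening of $\alpha^\phi$ to an epimorphism in homology.
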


 We will need an additional lemma in this setting. 
 
\begin{lem}[Pushforward functoriality along finite group quotients]
	Let $\pi_X \colon \X\to \X'=[\X/G]$ be a finite group quotient and let $\phi'\colon\Y'\to \X'$ be an inclusion of a toric substack. Suppose that $\X$ is covered with smooth stacky charts. Then there exists $\Y\to \X$ an inclusion of a toric substack and $\pi_Y\colon\Y\to \Y'$ a finite group quotient so that the diagram  
	\[
		\begin{tikzcd}
			\mathcal{Y} \arrow{d}{\pi_Y} \arrow{r}{\phi}&  {\X} \arrow{d}{\pi_X}\\
			\mathcal{Y}'\arrow{r}{\phi'} &  {\X'} 
		\end{tikzcd}.
	\]
	commutes.\footnote{This is the pullback in the category of toric stacks with toric morphisms.} Additionally assume $|G|\in \kk^\times$ and  that $\X$ is covered with smooth stacky charts.
	Observe that $(\phi'\circ \pi_Y)_*\mathcal O_\Y=\bigoplus_{q\in \ker(\tilde \pi_Y)}(\phi'_*)\mathcal O_{\Y'}(\bF'(q))$. The pushforward $(\pi_X)_*C_\bullet(S^\phi, \mathcal O^\phi)$ is isomorphic to $\bigoplus_{q\in \ker(\tilde \pi_Y)}C_{\bullet}(S^{\phi'},\mathcal O^{\phi'})\tensor \mathcal O_{\X'}(\bF'(q))$ with the augmentation map splitting across the decomposition.
	\label{lem:pushforwardquotient}
\end{lem}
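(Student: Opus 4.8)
The plan is to reduce everything to the explicit description of $C_\bullet(S^\phi,\mathcal O^\phi)$ together with the termwise behavior of Thomsen bundles under finite quotients recorded in \cref{lemma:thomsenQuotient}. First I would build the commuting (indeed Cartesian) square on stacky fans. Since $\pi_X$ is a finite quotient it is presented by a change of group with finite cokernel $(\ul\pi,\ul\Pi)\colon(\Sigma,\ul\beta_\X)\to(\Sigma,\ul\beta_{\X'})$ with $\ul\Pi=\id$ and $\ul\pi\colon N_\X\hookrightarrow N_{\X'}$ of finite cokernel, and $\phi'$ is presented by an inclusion $(\ul\phi',\ul\Phi')$. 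Define $\Y$ by keeping the fan $\Sigma_{\Y'}$ and the ambient-lattice map $\ul\Phi'$ but replacing $N_{\Y'}$ with the fiber product $N_\Y:=N_{\Y'}\times_{N_{\X'}}N_\X$ (i.e.\ $N_{\Y'}\cap N_\X$ inside $N_{\X'}$) and $\ul\beta_{\Y'}$ with the map to $N_\Y$ assembled from $\ul\beta_{\Y'}$ and $\ul\beta_\X\circ\ul\Phi'$; commutativity of $\phi'$ guarantees this lands in $N_\Y$. From \cref{def:inclusion} the two projections give an inclusion $\phi\colon\Y\to\X$ of a closed toric substack and a change of group with finite cokernel $\pi_Y\colon\Y\to\Y'$, and the square is Cartesian. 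Since $\coker(\ul\pi_Y)\hookrightarrow\coker(\ul\pi)$, \cref{prop:soManyThingsAreTheSame} shows $|\ker(\tilde\pi_Y)|=|\coker(f_{\pi_Y})|$ divides $|G|=|\coker(f_{\pi_X})|$ and is therefore invertible in $\kk$, and one checks (exactly as in the worked examples) that $\pi_Y$ is again a finite quotient in the sense of \cref{def:finitequotient}. Now \cref{lemma:thomsenQuotient}, applied to $\pi_Y$ at the stratum of the torus where $\bF=0$, gives $(\pi_Y)_*\mathcal O_\Y=\bigoplus_{q\in\ker(\tilde\pi_Y)}\mathcal O_{\Y'}(\bF'(q))$, and pushing forward along the closed immersion $\phi'$ yields the first displayed identity $\phi'_*(\pi_Y)_*\mathcal O_\Y=\bigoplus_q\phi'_*\mathcal O_{\Y'}(\bF'(q))$; this also equals $(\pi_X)_*\phi_*\mathcal O_\Y$ by commutativity of the square.

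For the second statement, note that $(\pi_X)_*$ is exact because $|G|\in\kk^\times$ (it is the finite Fourier transform of \cref{subsubsec:characterPushforward}, i.e.\ coinduction along a finite-index subgroup), so $(\pi_X)_*C_\bullet(S^\phi,\mathcal O^\phi)$ is a complex resolving $\bigoplus_q\phi'_*\mathcal O_{\Y'}(\bF'(q))$. To pin it down I would unwind \cref{subsec:defOfCbullet}: $C_\bullet(S^\phi,\mathcal O^\phi)$ is the discrete Morse complex of the stratified real torus $S^\phi$, with terms $\bigoplus_\sigma\mathcal O_\X(\sigma)$ over critical cells and differentials built from the boundary morphisms $\bm_{\sigma,\tau}$. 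Because $\ul\Pi=\id$ leaves the rays $u_\rho$ untouched, $\tilde\pi_X$ carries the hyperplane arrangement defining $S^{\phi'}$ onto the one defining $S^\phi$, only the period lattice changing; consequently $\tilde\pi_X^{-1}(S^\phi)$ is a \emph{disjoint} union, indexed by $\ker(\tilde\pi_Y)$, of translates $S^{\phi'}+\hat q$, each a copy of the stratified torus $S^{\phi'}$. Choosing the discrete Morse data on $S^{\phi'}$ and letting that on $S^\phi$ be its pushdown along the covering $S^{\phi'}+\hat q\to S^\phi$ makes critical cells and gradient paths correspond. Applying \cref{lemma:thomsenQuotient} termwise, $(\pi_X)_*$ sends $\mathcal O_\X(\sigma)\mapsto\bigoplus_{\sigma'}\mathcal O_{\X'}(\sigma')$ over lifts of $\sigma$ and $\bm_{\sigma,\tau}\mapsto\bigoplus\bm_{\sigma',\tau'}$ over adjacency-preserving lifts; disjointness of the components means no differential joins cells over distinct $\hat q$, so $(\pi_X)_*C_\bullet(S^\phi,\mathcal O^\phi)$ is block diagonal with blocks indexed by $q\in\ker(\tilde\pi_Y)$. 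Translating the $q$-th block back onto $S^{\phi'}$ then identifies it with $C_\bullet(S^{\phi'},\mathcal O^{\phi'})$ up to twisting each label: representing $q$ by a lattice point $\hat q\in M_\X=N_\X^*$, the value $\langle\hat q,\ul\beta_\X(u_\rho)\rangle$ is an integer, hence $\bF'(p+\hat q)=\bF'(p)+\ul\beta_\X^*\hat q$ on the nose, and \cref{prop:identifyingCharacter} identifies the resulting twist with $\otimes\,\mathcal O_{\X'}(\bF'(q))$, uniformly over cells. Running the same termwise analysis on the epimorphism $\alpha^\phi\colon\mathcal O_\X\to C_0$ shows the augmentation splits across the decomposition, finishing the proof.

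The main obstacle is this last step: matching $(\pi_X)_*C_\bullet(S^\phi,\mathcal O^\phi)$ with the honest \emph{direct sum} $\bigoplus_q C_\bullet(S^{\phi'},\mathcal O^{\phi'})\otimes\mathcal O_{\X'}(\bF'(q))$ forces one to choose the discrete Morse functions on $S^\phi$ and $S^{\phi'}$ compatibly under the covering $\tilde\pi_X^{-1}(S^\phi)\to S^\phi$, and then to check that on each translate $S^{\phi'}+\hat q$ the line-bundle labels differ from those of $\mathcal O^{\phi'}$ by a single twist independent both of the cell and of the chosen representative $\hat q$ of $q\in\ker(\tilde\pi_Y)$. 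This is the one place where the arithmetic of the nested lattices $M_{\X'}\subset M_\X$ and $\Lambda'\subset\Lambda=\ker(M_\X\to M_\Y)$ enters essentially, through \cref{prop:soManyThingsAreTheSame} and \cref{prop:identifyingCharacter}; by contrast the exactness of $(\pi_X)_*$, the termwise pushforward formulas, and the disjointness that rules out cross terms are all immediate consequences of results already established.
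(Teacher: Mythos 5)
Your proposal is correct and follows essentially the same route as the paper: the same fiber-product construction of $\Y$, the decomposition of $\tilde\pi_X^{-1}(T^\phi)$ into translates of $T^{\phi'}$ indexed by $\ker(\tilde\pi_Y)$ via the splitting of the kernel sequence, termwise application of \cref{lemma:thomsenQuotient}, and the uniform character twist identified through \cref{prop:identifyingCharacter}. The only cosmetic difference is your appeal to compatibly chosen discrete Morse data, which is unnecessary: $C_\bullet(S^\phi, \mathcal O^\phi)$ is defined directly from strata and exit paths (no matching is involved), so the block-diagonal identification of the differentials is exactly the lift-counting argument the paper uses.
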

The proof of \href{thm:resolutionOfPointsInStacks}{Theorem $3.1'$} proceeds in a similar way to the proof of \cref{thm:resolutionOfPoints}. 
\begin{description}
    \item[Step 1\'{}: Resolution of points in smooth stacky coordinate charts]
   We show that  when $\X$ is a smooth stacky coordinate chart (\cref{def:toricStackyChart}) and $\phi \colon  e\to \X $ is the inclusion of the identity that $C_\bullet(S^\phi, \mathcal O^\phi)$ is a resolution of $\mathcal O_e$.

	As $\X$ is a smooth stacky coordinate chart, $\X$ is a finite quotient $\X=[\AA^n/G]$. Let $\pi_G \colon  \AA^n\to \X $, and $\phi' \colon  e\to \AA^n$.
	The statement then follows from our earlier calculation on $\AA^n$ and \cref{lem:pushforwardquotient}. By Nakayama's isomorphism, $(\pi_X)_* C_\bullet(S^\phi, \mathcal O^\phi) =\bigoplus_{q\in \ker(\tilde \pi_Y)}C_{\bullet}(S^{\phi'},\mathcal O^{\phi'})\tensor \mathcal O_{\Y'}(\bF'(q))$ is an exact resolution of $(\phi' \circ \pi_Y)_* \mathcal O_\Y = \bigoplus_{q\in \ker(\tilde \pi_Y)}(\phi'_*)\mathcal O_{\Y'}(\bF(q))$. Since the augmentation respects the splitting, we conclude $C_\bullet(S^{\phi'}, \mathcal O^{\phi'})$ resolves $\phi_*'\mathcal O_Y$.

    \item[Step 2\'{}: restricting to smooth stacky charts] We can apply exactly the same argument as in Step 2 above simply by replacing $X$ with $\X$, and smooth chart with smooth stacky chart.
\end{description}

\subsection{Resolution of toric substacks}\label{subsec:stacksinstacks}

We now move to the most general case that we handle here -- resolving the structure sheaf of a closed toric substack in a toric stack covered by smooth stacky charts.

\begin{thm}
	\label{thm:resolutionOfSubstacks}
	Let $\X$ be a toric stack that can be covered by smooth stacky charts.
	Let $\phi \colon  \Y\to \X $ be an immersion of a closed toric substack.
	There exists a resolution $C_\bullet(S^\phi, \mathcal O^\phi)\xrightarrow \epsilon \phi_*\mathcal O_\Y$ on $\X$, where $C_{k}(S^\phi, \mathcal O^\phi)$ is a direct sum of line bundles from the Thomsen collection. Furthermore, there is a map  $\alpha^\phi \colon \mathcal O_{\X}\to C_0(S^\phi, \mathcal O^\phi)$ which is an epimorphism on homology, and $C_k(S^\phi, \mathcal O^\phi)=0$ whenever $k<0$ or $k> \dim(\X)-\dim(\Y)$.
\end{thm}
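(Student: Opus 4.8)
The complex $C_\bullet(S^\phi,\mathcal O^\phi)$, its length bound ($C_k=0$ for $k<0$ or $k>\dim\X-\dim\Y$), the augmentation $\alpha^\phi\colon\mathcal O_\X\to C_0(S^\phi,\mathcal O^\phi)$, and the map $\epsilon\colon C_0(S^\phi,\mathcal O^\phi)\to\phi_*\mathcal O_\Y$ are all read off directly from the combinatorial construction in \cref{subsec:defOfCbullet}; the only thing to prove is that the augmented complex is exact with homology $\phi_*\mathcal O_\Y$, and that $\alpha^\phi$ is an epimorphism on homology (which will then be immediate, since $\epsilon\circ\alpha^\phi$ is the canonical surjection $\mathcal O_\X\twoheadrightarrow\phi_*\mathcal O_\Y$). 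Exactness of a complex of coherent sheaves can be checked locally, so it suffices to restrict along the smooth stacky charts $j_\sigma\colon\X_\sigma\to\X$ covering $\X$. Exactly as in Step 2 of the proof of \cref{thm:resolutionOfPoints}, repeated application of \cref{lem:functorialityRestriction} (deleting the toric divisors $D_\rho$ for the rays $\rho\notin\sigma$, together with the commuting square \cref{eq:extrarefdiagram}) shows that $j_\sigma^*C_\bullet(S^\phi,\mathcal O^\phi)$ is chain homotopy equivalent, compatibly with augmentations, to $C_\bullet(S^{\phi_\sigma},\mathcal O^{\phi_\sigma})$ for the restricted immersion $\phi_\sigma\colon\Y_\sigma\to\X_\sigma=[\AA^n/G]$. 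Thus we are reduced to the case $\X=[\AA^n/G]$.

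Next I would strip off the finite group $G$, mirroring Step $1'$ of the proof of \href{thm:resolutionOfPointsInStacks}{Theorem $3.1'$}. Let $\phi'\colon Y'\to\AA^n$ be the lift of $\phi_\sigma$ from \cref{lem:pushforwardquotient}, and $\pi_G\colon\AA^n\to[\AA^n/G]$ the quotient. Since $\pi_G$ is finite (hence $(\pi_G)_*$ is exact and preserves resolutions), \cref{lem:pushforwardquotient} gives $(\pi_G)_*C_\bullet(S^{\phi'},\mathcal O^{\phi'})\cong\bigoplus_q C_\bullet(S^{\phi_\sigma},\mathcal O^{\phi_\sigma})\tensor\mathcal O(\bF'(q))$ and $(\pi_G)_*\phi'_*\mathcal O_{Y'}=\bigoplus_q(\phi_\sigma)_*\mathcal O_{\Y_\sigma}\tensor\mathcal O(\bF'(q))$, with the augmentation respecting the summands. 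Since twisting by a line bundle is exact, if $C_\bullet(S^{\phi'},\mathcal O^{\phi'})$ resolves $\phi'_*\mathcal O_{Y'}$ on $\AA^n$ then each summand, hence $C_\bullet(S^{\phi_\sigma},\mathcal O^{\phi_\sigma})$, resolves the corresponding sheaf. So it suffices to prove the affine case $\phi'\colon Y'\to\AA^n$. (In positive characteristic this is exactly where the $\kk$-admissibility hypothesis of \cref{def:admissibleOverK} is needed, since it is used in \cref{prop:quotientOfAffineSubstacks} below.)

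For the affine case I would follow \cref{fig:quotientStrategy}. Apply \cref{prop:quotientOfAffineSubstacks} to obtain an open toric $X^\circ\subset\AA^n$ with $Y'|_{X^\circ}=\TT_{Y'}$ an algebraic subtorus acting freely, $[X^\circ/\TT_{Y'}]$ covered by smooth stacky charts, and $i\colon[X^\circ/\TT_{Y'}]\into[\AA^n/\TT_{Y'}]$ of equivariant codimension two. Let $\phi_e\colon e\to[X^\circ/\TT_{Y'}]$ be the inclusion of the identity. By \href{thm:resolutionOfPointsInStacks}{Theorem $3.1'$}, $C_\bullet(S^{\phi_e},\mathcal O^{\phi_e})$ is a resolution of $(\phi_e)_*\mathcal O_e$ by Thomsen line bundles of length $\dim[X^\circ/\TT_{Y'}]=n-\dim Y'$. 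Applying the exact functor $\pi^*\circ i_\flat$ of \cref{lem:exactEquivariantPushforward}, where $i_\flat\colon\Sh([X^\circ/\TT_{Y'}])\to\Sh([\AA^n/\TT_{Y'}])$ and $\pi^*$ is the forgetful functor to $\Sh(\AA^n)$ — which by \cref{lem:exactEquivariantPushforward} sends coherent sheaves to coherent sheaves and line bundles to line bundles — yields an exact complex of coherent sheaves on $\AA^n$ resolving $\pi^*i_\flat(\phi_e)_*\mathcal O_e$. By \cref{lem:pullbackquotient} this homology sheaf is $\phi'_*\mathcal O_{Y'}$ (here one uses \cref{lem:exactEquivariantPushforward}(2), identifying $\pi^*\circ i_\flat$ with $I_*\circ(\pi^\circ)^*$ on torus orbits and their closures, so that the closure of $\TT_{Y'}$ is filled in to $Y'$), and by \cref{lem:pullbackquotient,lemma:functorialityCodim2} the resulting complex is chain homotopy equivalent, compatibly with augmentations, to $C_\bullet(S^{\phi'},\mathcal O^{\phi'})$. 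This proves exactness in the affine case, and unwinding the two reductions proves the theorem; the vanishing $C_k=0$ for $k>\dim\X-\dim\Y$ is part of the construction, so the length is as claimed.

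I expect the main obstacle to be the last step — precisely \cref{lem:pullbackquotient,lemma:functorialityCodim2}, i.e.\ showing that $\pi^*\circ i_\flat$ applied to the point resolution on the quotient stack agrees, up to homotopy and compatibly with augmentations, with the combinatorially defined $C_\bullet(S^{\phi'},\mathcal O^{\phi'})$. The delicate points are that $i_\flat$ is not a genuine pushforward, so one must exploit the equivariant-codimension-two hypothesis and the stalk comparison of \cref{lem:exactEquivariantPushforward}(2) to control it, and that one must match the stratification and discrete Morse function attached to $\phi_e\colon e\to[X^\circ/\TT_{Y'}]$ with those attached to $\phi'\colon Y'\to\AA^n$ under the correspondence on rays induced by $\ul I$ and $\ul\pi$; the reductions in the first two paragraphs, by contrast, are formal consequences of \cref{lem:functorialityRestriction,lem:pushforwardquotient}.
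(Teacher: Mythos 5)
Your route is the paper's route: reduce to smooth stacky charts by iterating \cref{lem:functorialityRestriction} with the compatibility square \cref{eq:extrarefdiagram}, strip the finite group with \cref{lem:pushforwardquotient}, and settle the affine case $Y'\subset \AA^n$ via \cref{prop:quotientOfAffineSubstacks}, Theorem $3.1'$, \cref{lemma:functorialityCodim2}, and the stalk comparison of \cref{lem:exactEquivariantPushforward}; the only difference from the paper (Steps 3--5 of its proof) is that you perform the chart reduction first and the local models second, which is a harmless reordering. Two small misattributions: in the affine step the identification of $H_0$ with $\phi'_*\mathcal O_{Y'}$ comes from \cref{lem:exactEquivariantPushforward}(2) together with the epimorphism induced by $\alpha$, not from \cref{lem:pullbackquotient} (which only matches the two combinatorial complexes), and \cref{lemma:functorialityCodim2} gives an equality of complexes rather than merely a homotopy equivalence.

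The one place where your write-up is genuinely too quick is the opening claim that the augmentation $\epsilon\colon C_0(S^\phi,\mathcal O^\phi)\to\phi_*\mathcal O_\Y$ is ``read off directly from the combinatorial construction,'' so that everything reduces to a local exactness check. No such global $\epsilon$ is defined in \cref{subsec:defOfCbullet}; only $\alpha^\phi$ is, and the existence of $\epsilon$ is precisely part of the theorem. Without a globally defined augmentation, restricting to charts shows only that each $j_\sigma^*C_\bullet(S^\phi,\mathcal O^\phi)$ is exact in positive degrees with $H_0$ isomorphic to $(\phi_\sigma)_*\mathcal O_{\Y_\sigma}$; the chart-wise homotopy equivalences produced by \cref{lem:functorialityRestriction} need not glue, so a priori the global $H_0$ is only $\phi_*\mathcal L$ for some line bundle $\mathcal L$ on $\Y$. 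This is exactly the issue the paper flags in Step 5 and closes with the twist argument: $\alpha^\phi$ induces a global map $\mathcal O_\X\to H_0$ which, by \cref{eq:extrarefdiagram}, is chart-wise the canonical surjection, hence a global epimorphism onto $\phi_*\mathcal L$; such an epimorphism trivializes $\mathcal L$, giving $H_0\cong\phi_*\mathcal O_\Y$ and, only then, the map $\epsilon$. You have the right ingredient in hand (you invoke \cref{eq:extrarefdiagram} and the composite with $\alpha^\phi$), but you should state this globalization step explicitly rather than treating the identification of $H_0$ as an immediate consequence of local exactness.
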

To prove this more general version of the theorem, we add a few more lemmas to our repertoire.
\begin{lem}[Pullback functoriality along toric quotients]
	\label{lem:pullbackquotient}
	 Suppose that we have a subtorus  $\TT\subset \TT_N$ acting on $\X$, and that $\Y$ is $\TT$-equivariant. Additionally, suppose that the action of $\TT$ has trivial stabilizers. From \cref{prop:quotientOfSubstack}, we obtain the diagram:
	      \[
		      \begin{tikzcd}
			     \; \mathcal Y \arrow{d}{\pi_\TT} \arrow{r}{\phi}&  \mathcal X \arrow{d}{\pi_\TT}\\
			      \; [\mathcal Y/\TT]  \arrow{r}{\phi/\TT} &\;  [\mathcal X/\TT]
		      \end{tikzcd}
	      \]
	Then, $\pi^*_\TT C_\bullet(S^{\phi/\TT}, \mathcal O^{\phi/\TT})= C_\bullet(S^\phi, \mathcal O^\phi)$, and $\pi^*_\TT\circ \alpha^{\phi/\TT}= \alpha^\phi$
\end{lem}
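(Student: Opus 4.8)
The plan is to exploit the fact that the complex $C_\bullet(S^\phi,\mathcal O^\phi)$, as constructed in \cref{subsec:defOfCbullet}, splits cleanly into two pieces of data: a purely combinatorial piece --- the stratified space $S^\phi$ together with its discrete Morse function and associated cell poset --- which depends only on the fans $\Sigma_Y$, $\Sigma_X$, the fan map $\ul\Phi$, and the subtorus of the real torus cut out by $\Y$; and a sheaf-theoretic piece $\mathcal O^\phi$, which labels each cell by a Thomsen line bundle $\mathcal O_\X(\bF(\strata))$ and each incidence by a boundary morphism, and which is the only place the structure maps $\ul\beta_X,\ul\beta_Y$ enter. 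I would show that the combinatorial piece is \emph{literally unchanged} when one passes from $\phi$ to $\phi/\TT$, and that $\pi_\TT^*$ carries the sheaf-theoretic piece for $\phi/\TT$ to the one for $\phi$. Since $\pi_\TT^*$ is exact and symmetric monoidal, it then commutes with the formation of the discrete Morse complex, yielding $\pi_\TT^*C_\bullet(S^{\phi/\TT},\mathcal O^{\phi/\TT})=C_\bullet(S^\phi,\mathcal O^\phi)$; and since $\alpha^\phi$ is itself one of these boundary morphisms --- the one out of $\mathcal O_\X$ at the stratum of $e$ --- the same matching gives $\pi_\TT^*\circ\alpha^{\phi/\TT}=\alpha^\phi$.

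For the combinatorial piece: by \cref{prop:quotientOfSubstack}, the morphism $\phi/\TT$ is induced by the morphism of stacky fans obtained from that of $\phi$ by post-composing $\ul\beta_X\colon L_X\to N_X$ and $\ul\beta_Y\colon L_Y\to N_Y$ with the quotients $N_X\twoheadrightarrow N_X/N_\TT$ and $N_Y\twoheadrightarrow N_Y/\ul\phi^{-1}(N_\TT)$, while $L_X$, $L_Y$, $\Sigma_X$, $\Sigma_Y$, and $\ul\Phi$ are untouched; likewise $\pi_\TT$ is induced by $(\ul q,\ul{\id}_{L_X})$ with $\ul q\colon N_X\twoheadrightarrow N_X/N_\TT$. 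Because $\Y$ is $\TT$-invariant, $N_\TT\subseteq\ul\phi(N_Y)$, so $\codim(\Y/\TT,\X/\TT)=\dim\X-\dim\Y=\codim(\Y,\X)$, and the two real tori on which $S^{\phi/\TT}$ and $S^\phi$ are built have equal dimension. I would then check that the induced map $\tilde\pi_\TT$ between these tori restricts to an isomorphism of the relevant ``normal'' real tori that intertwines the two stratifications: each ray $u_\rho$ of $\Sigma_X$ contributes the same hyperplane $\{m(u_\rho)\in\ZZ\}$, with the same hair direction, in both pictures, and $\tilde\pi_\TT$ pulls back the stratification of the one to that of the other. With the stratified spaces and their discrete Morse functions identified, $S^{\phi/\TT}=S^\phi$ together with an identification of cell posets.

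For the sheaf-theoretic piece I would establish the exact analogue of \cref{lem:thomsenPullback} with the open inclusion replaced by $\pi_\TT$; the argument there uses only that the underlying fan map is a bijection on rays, which here holds because that map is $\ul{\id}_{L_X}$. Concretely, for a stratum $\strata$ with representative $m'$ in the real torus of $[\X/\TT]$, the relation $(\ul\beta'_X)^*=\ul\beta_X^*\circ\ul q^*$ (from $\ul\beta'_X=\ul q\circ\ul\beta_X$) gives $\bF'(m')(u_\rho)=\lceil \ul\beta_X^*(\ul q^*m')(u_\rho)\rceil=\bF(\ul q^*m')(u_\rho)$ for every ray $\rho\in\Sigma_X(1)$, hence $\ul{\id}^*\bF'(m')=\bF(\ul q^*m')$ as support functions on $\Sigma_X$; \cref{prop:pullbackOfSupportFunction} then yields $\pi_\TT^*\mathcal O_{[\X/\TT]}(\bF'(m'))=\mathcal O_\X(\bF(\ul q^*m'))$, which is exactly the label of $\mathcal O^\phi$ on the corresponding cell, and the same computation (with the lattice-point description of equivariant monomial sections) shows $\pi_\TT^*\bm_{\strata,\stratb}=\bm_{\strata,\stratb}$. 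I expect the only genuine obstacle to be the combinatorial step: unwinding the definition in \cref{subsec:defOfCbullet} far enough to be certain that the stratified-space-with-Morse-function depends only on the fan data and not on $\ul\beta$, and that $\tilde\pi_\TT$ identifies the two normal real tori compatibly with all of that structure. Once this bookkeeping is in place, the line-bundle and boundary-morphism matching is purely mechanical.
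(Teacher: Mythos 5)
Your proposal is correct and follows essentially the same route as the paper: identify the real tori $T^{\phi/\TT}\cong T^\phi$ and their stratifications (using that $\pi_\TT$ is induced by $(\ul q,\ul{\id}_{L_X})$, so every ray cuts out the same hyperplane), then match the Thomsen labels and boundary morphisms via the support-function computation $\bF'(m')\circ\ul{\id}=\bF(\ul q^*m')$, which is exactly the paper's reduction to \cref{lem:thomsenPullback}, with the $\alpha$ compatibility coming from preservation of the identity stratum. The only cosmetic slip is calling $\alpha^\phi$ a boundary morphism — it is the identity into the $\sigma_0$ summand — but this does not affect the argument.
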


 \begin{cor}[Invariance under stabilization] Consider maps $(\ul \phi, \ul \Phi)\colon(\Sigma_Y, \ul \beta_Y)\to (\Sigma_X, \ul \beta_X)$ and $(\ul \phi', \ul \Phi')\colon(\Sigma_{\Y'}, \ul \beta_{\Y'})\to (\Sigma_{\X'}, \ul \beta_{\X'})$  which are morphisms of stacky fans which are related by stabilization. Then the complexes of sheaves $C_{\bullet}(S^\phi, \mathcal O^\phi)$ and $C_{\bullet}(S^{\phi'}, \mathcal O^{\phi'})$ are canonically isomorphic.
 \end{cor}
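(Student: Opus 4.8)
The plan is to reduce to one ``elementary'' stabilization and then realize it as a free quotient by an algebraic torus, so that \cref{lem:pullbackquotient} applies. Any two presentations related by stabilization are joined by a finite chain of stabilizations of the source and of the target, and the two cases are handled symmetrically, so it suffices to describe the target case. So suppose $(\Sigma_X,\ul\beta_X\colon L_X\to N_X)$ is replaced by $(\Sigma_X\oplus 0,\ \ul\beta_X\oplus\ul\beta'\colon L_X\oplus\ZZ^n\to N_X)$, with the source stabilized compatibly; for this to be possible while keeping the target lattice $N_Y$ unchanged one needs $\ul\beta'(\ZZ^n)\subseteq\ul\phi(N_Y)$, and then $\ul\beta'_Y:=\ul\phi^{-1}\circ\ul\beta'$ gives the compatible stabilization of $\Y$.

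First I would introduce the auxiliary stacky fan $(\Sigma_X\oplus 0,\ \ul\beta_X\oplus\ul\id\colon L_X\oplus\ZZ^n\to N_X\oplus\ZZ^n)$. Since $G_{\ul\beta_X\oplus\ul\id}=G_{\ul\beta_X}$ and $X_{\Sigma_X\oplus 0}=X_{\Sigma_X}\times\TT^n$, its toric stack is $\Z\cong\X\times\TT^n$, with closed substack $\Y\times\TT^n$ obtained by stabilizing the source by $\ul\id$. Inside $\TT_{N_X\oplus\ZZ^n}=\TT_{N_X}\times\TT^n$ sit two subtori isomorphic to $\TT^n$: the evident factor $\TT_0=\{1\}\times\TT^n$, with saturated cocharacter sublattice $\{0\}\oplus\ZZ^n$, and the ``twisted'' torus $\TT'$ with saturated cocharacter sublattice $\{(-\ul\beta'(v),v):v\in\ZZ^n\}$. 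Each acts on $\Z\cong\X\times\TT^n$ by translation (twisted in the first coordinate) on the $\TT^n$-factor, hence with trivial stabilizers; $\TT_0$ visibly preserves $\Y\times\TT^n$, and $\TT'$ does as well, precisely because $\ul\beta'(\ZZ^n)\subseteq\ul\phi(N_Y)$. Forming the quotients as in \cref{prop:quotientOfSubstack} identifies $[\Z/\TT_0]$ with $\X$ (and $[\,\Y\times\TT^n/\TT_0]$ with $\Y$), and $[\Z/\TT']$ with the stabilization $\X'=\X_{\Sigma_X\oplus 0,\ul\beta_X\oplus\ul\beta'}$ (and $[\,\Y\times\TT^n/\TT']$ with $\Y'$), the latter identification being exactly the equivalence of \cref{prop:stabilization}. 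Applying \cref{lem:pullbackquotient} to the two quotient maps $p\colon\Z\to\X$ and $q\colon\Z\to\X'$ then gives
\[
	p^*\,C_\bullet(S^{\phi},\mathcal O^{\phi})\;=\;C_\bullet(S^{\phi_\Z},\mathcal O^{\phi_\Z})\;=\;q^*\,C_\bullet(S^{\phi'},\mathcal O^{\phi'}),
\]
together with $p^*\alpha^\phi=\alpha^{\phi_\Z}=q^*\alpha^{\phi'}$, where $\phi_\Z\colon\Y\times\TT^n\to\Z$.

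To descend this equality on $\Z$ to a canonical isomorphism on $\X$ versus $\X'$, I would restrict along the section $\iota\colon\X\xrightarrow{\sim}\X\times\{e\}\hookrightarrow\Z$ of $p$: one has $p\circ\iota=\id_\X$ while $q\circ\iota$ is exactly the stabilization equivalence $g\colon\X\xrightarrow{\sim}\X'$ of \cref{prop:stabilization}, so applying $\iota^*$ to the displayed identity yields a canonical isomorphism $C_\bullet(S^{\phi},\mathcal O^{\phi})\cong g^*C_\bullet(S^{\phi'},\mathcal O^{\phi'})$ intertwining $\alpha^\phi$ with $g^*\alpha^{\phi'}$. The stabilization of the source is treated symmetrically (exchanging the roles of $L$ and $N$), or even more directly by observing that the stratified real torus $S^\phi$ with its subtorus stratification, its discrete Morse function, and the Thomsen labels $\mathcal O_\X(\bF(\sigma))$ of the strata depend on the source only through $N_Y$, the rays of $\Sigma_Y$, and the images of their primitive generators under $\ul\beta_Y$ — and a stabilization alters none of these, enlarging $L_Y$ only by a free summand that meets no cone of $\Sigma_Y$. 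The step I expect to demand the most care is checking that \cref{lem:pullbackquotient} genuinely applies: that $\TT'$ acts with trivial stabilizers and preserves $\Y\times\TT^n$ (which is where $\ul\beta'(\ZZ^n)\subseteq\ul\phi(N_Y)$ is used), and that the identification $[\Z/\TT']\cong\X'$ agrees on the nose with the equivalence of \cref{prop:stabilization}, so that the final $\iota^*$ step really produces the canonical isomorphism and not merely some isomorphism. The remainder is formal.
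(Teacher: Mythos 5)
Your main idea --- realizing both presentations as torus quotients of the auxiliary stack $\Z=\X\times\TT^n$ and applying \cref{lem:pullbackquotient} twice --- can be made to work, but it is a long way around and, as written, mildly circular. The observation you relegate to a parenthetical at the end is in fact the whole proof, for target stabilizations just as much as for source stabilizations. Concretely: $T^\phi$ depends only on $\ul\phi\colon N_Y\to N_X$; the hyperplanes $T^\phi(\rho)$, the quiver $Q^\phi$ with its signs, the support functions $\bF(\strata)$, and the lattice points cutting out the boundary morphisms depend only on $M_X$, on the rays $\Sigma_X(1)$ (unchanged, since the added $\ZZ^n$ summand carries no cones), and on the values $\ul\beta_X(u_\rho)$ (also unchanged, since every ray lies in $L_X\oplus 0$, where $\ul\beta_X\oplus\ul\beta'$ restricts to $\ul\beta_X$). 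The only thing left to check is that $\mathcal O_{\X'}(\sF)$ corresponds to $\mathcal O_{\X}(\sF)$ under the equivalence of \cref{prop:stabilization}, compatibly with monomial sections; this is \cref{prop:pullbackOfSupportFunction} applied to $(\ul\id,\ul{\mathrm{Id}}\oplus 0)$.

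Two concrete problems with the quotient route. First, the circularity: $[\Z/\TT_0]$ is presented by $(\Sigma_X\oplus 0,\ul\beta_X\oplus 0)$, not by $(\Sigma_X,\ul\beta_X)$, so \cref{lem:pullbackquotient} compares $C_\bullet(S^{\phi_\Z},\mathcal O^{\phi_\Z})$ with the complex attached to the $\ul\beta'=0$ stabilization; identifying that complex with $C_\bullet(S^\phi,\mathcal O^\phi)$ (and likewise identifying $p\circ\iota$ with $\id_\X$ rather than with the $\ul\beta'=0$ stabilization morphism) is already an instance of the corollary being proved. You need the direct check as a base case, and once you have it, the same check proves the general case verbatim. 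Second, coverage: the statement also permits stabilizing the target without stabilizing the source (take $\ul\Phi'$ to be $\ul\Phi$ followed by $L_X\hookrightarrow L_X\oplus\ZZ^n$), which is a perfectly good immersion of stacky fans with no condition on $\ul\beta'$; in that situation $\ul\beta'(\ZZ^n)\subseteq\ul\phi(N_Y)$ need not hold, your $\TT'$ does not preserve $\Y\times\TT^n$, and the quotient argument breaks down, whereas the direct argument covers this case without modification.
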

 
\begin{lem}[Functoriality along embeddings with equivariant codimension 2 complement]
	\label{lemma:functorialityCodim2}
	Suppose we have a diagram
	\[
			\begin{tikzcd}
				\; [Y\cap X^\circ/\mathbb T] \arrow{d}{\phi_{[Y/\mathbb T]}}  & Y\cap X^\circ \arrow{l}  \arrow{d}{\phi^\circ}  \arrow{r} & Y \arrow{d}{\phi} \\
				\; [X^\circ/\mathbb{T}] 	\arrow[bend right]{rrr}{i} &  X^\circ \arrow{r}{I} 	\arrow{l}{\pi^\circ}		  &  X \arrow{r}{\pi} & \;[X/\mathbb{T}]
			\end{tikzcd}
	\]
	where the image of $Y$ is invariant under $\TT$ and $i$ is an inclusion of  equivariant codimension two (in the sense of \cref{def:equivariantCodimension2Complement}). Then $\pi^*i_\flat C_\bullet(S^{\phi_{[Y/\mathbb T]}}, \mathcal O^{\phi_{[Y/\mathbb T]}})= C_\bullet(S^\phi, \mathcal O^\phi)$.
\end{lem}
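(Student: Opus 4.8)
The plan is to check that $\pi^{*}i_{\flat}C_\bullet(S^{\phi_{[Y/\TT]}},\mathcal O^{\phi_{[Y/\TT]}})$ and $C_\bullet(S^\phi,\mathcal O^\phi)$ are assembled from the same stratified real torus with the same discrete Morse data, so that the claim reduces to matching the line-bundle labels and the boundary-morphism differentials under $\pi^{*}\circ i_{\flat}$. The governing observation is that, because $i\colon[X^\circ/\TT]\to[X/\TT]$ has complement of equivariant codimension two, every ray $\rho\in\Sigma_X(1)$ not in the image of $\Sigma_{X^\circ}(1)$ points in a $\TT$-direction (\cref{def:equivariantCodimension2Complement}); since $Y$ is $\TT$-invariant these directions lie inside the directions of $Y$, so each such $u_\rho$ maps to $0$ in the normal lattice $L_X/L_Y$. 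Such rays neither cut out a hyperplane in the stratification of the $k$-dimensional real torus attached to a codimension-$k$ embedding nor contribute a nonzero value to any Bondal-type support function (the latter pairs a lattice element against $u_\rho$, which is trivial in the normal directions). Consequently the stratified real torus underlying $S^{\phi_{[Y/\TT]}}$ for $[Y\cap X^\circ/\TT]\hookrightarrow[X^\circ/\TT]$ coincides with the one underlying $S^\phi$ for $Y\hookrightarrow X$; both depend only on $\Sigma_{X^\circ}$ and on $Y$. Under this identification I take the admissible Morse functions to agree, so the two complexes have matching index sets in each degree and the same incidence pattern for the differentials.

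It then remains to match graded pieces and differentials. For the graded pieces, \cref{lem:exactEquivariantPushforward} gives $i_{\flat}\mathcal O_{[X^\circ/\TT]}(\bF^\circ(\strata))=\mathcal O_{[X/\TT]}(\ul I_{\flat}\bF^\circ(\strata))$, and since the morphism of stacky fans underlying $\pi\colon X\to[X/\TT]$ is the identity on the $L$-side, \cref{prop:pullbackOfSupportFunction} yields $\pi^{*}\mathcal O_{[X/\TT]}(\ul I_{\flat}\bF^\circ(\strata))=\mathcal O_{X}(\ul I_{\flat}\bF^\circ(\strata))$. Finally $\ul I_{\flat}\bF^\circ(\strata)$ agrees with the support function labelling $\strata$ in $C_\bullet(S^\phi,\mathcal O^\phi)$: on the rays of $\Sigma_{X^\circ}$ the relative Bondal map of \cref{subsec:defOfCbullet} is computed ray by ray exactly as on $[X^\circ/\TT]$, and on the remaining rays of $\Sigma_X$ both support functions are identically $0$ by the first paragraph. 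Hence $\pi^{*}\circ i_{\flat}$ identifies the degree-$k$ term of $C_\bullet(S^{\phi_{[Y/\TT]}},\mathcal O^{\phi_{[Y/\TT]}})$ with that of $C_\bullet(S^\phi,\mathcal O^\phi)$, compatibly with the identification of index sets. For the differentials, which are signed sums of boundary morphisms $\bm_{\strata,\stratb}$: that $\pi^{*}$ sends boundary morphisms to boundary morphisms is the case $\ul\Phi=\ul\id$ of the computation in \cref{lem:thomsenPullback}, and the analogous fact for $i_{\flat}$ holds because $\bm_{\strata,\stratb}$ is characterized, as in \cref{prop:cocoreMorphisms} and the description of $\Delta_\beta$, as the unique equivariant morphism between the relevant line bundles restricting to the tautological isomorphism over the open orbit $\TT_N\subset X^\circ$, and $\pi^{*}\circ i_{\flat}$ is the identity over that open orbit since it only alters sections across $X\setminus X^\circ$. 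Uniqueness then forces $\pi^{*}i_{\flat}\bm_{\strata,\stratb}=\bm_{\strata,\stratb}$, so the differentials match and the two complexes are equal.

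I expect the main obstacle to be the bookkeeping behind the first two paragraphs: one has to unwind the explicit construction of \cref{subsec:defOfCbullet} to confirm that the stratified real torus, the chosen Morse function, and the stratumwise support functions for $Y\hookrightarrow X$ depend only on $\Sigma_{X^\circ}$ and $Y$ — so that adjoining or deleting rays in the $\TT$-directions changes nothing — and to promote the resulting equalities of line bundles ``up to linear equivalence'' to the canonical isomorphisms that intertwine the boundary morphisms, which is essentially an $i_{\flat}$-analogue of \cref{lem:thomsenPullback}. The other inputs are direct applications of \cref{lem:exactEquivariantPushforward} and \cref{prop:pullbackOfSupportFunction}.
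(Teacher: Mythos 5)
Your proposal is correct and follows essentially the same route as the paper: identify the two sets of rays that actually cut hyperplanes (the equivariant-codimension-two condition plus $\TT$-invariance of $Y$ forces the extra rays of $\Sigma_X(1)$ to be irrelevant), conclude the stratifications and hence the indexing quivers coincide, and then match line bundles and boundary morphisms via the support-function property of $i_\flat$ from \cref{lem:exactEquivariantPushforward} together with \cref{prop:pullbackOfSupportFunction}. Your extra argument for the differentials (uniqueness of the boundary morphism as the section $0\in\Delta_\beta$ restricted to the open orbit, where $\pi^*\circ i_\flat$ acts trivially) is a harmless elaboration of what the paper compresses into a single citation.
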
		
\begin{proof}[Proof of \cref{thm:resolutionOfSubstacks}] As before, we want to be able to resolve toric substacks in local models, and restrict to that setting.
\begin{description}
	\item[Step 3: Resolution of $\kk$-admissible toric subvarieties of smooth charts]  Let $X=\AA^n$ be a smooth toric chart, and let $Y$ be a closed $\kk$-admissible toric subvariety (\cref{def:admissibleOverK}).	
	By \cref{prop:quotientOfAffineSubstacks}, we can construct $I\colon X^\circ \into X$ so that $[X^\circ/\TT_Y]$ is covered by smooth stacky charts, and $Y\cap X^\circ=\TT_Y$. We reprint the diagram of stacks here for readability 
\[
		\begin{tikzcd}
			e \arrow{d}{\phi_e}  & \mathbb{T}_Y  \arrow{l}  \arrow{d}{\phi^\circ}  \arrow{r} & Y \arrow{d}{\phi} \\
			\; [X^\circ/\mathbb{T}_Y] 	\arrow[bend right]{rrr}{i} &  X^\circ \arrow{r}{I} 	\arrow{l}{\pi^\circ}		  & \mathbb{A}^n \arrow{r}{\pi} & \;[\mathbb{A}^n/\mathbb{T}_Y]
		\end{tikzcd}
\]
Let $\phi_e \colon  \{e\} \to [X^\circ/\TT_Y] $ be the inclusion of the identity point. By \href{thm:resolutionOfPointsInStacks}{Theorem $3.1'$}, we have  $C_\bullet(S^{\phi_e}, \mathcal O^{\phi_e})$ which resolves $(\phi_e)_*\mathcal O_{e}$. 
By \cref{lemma:functorialityCodim2}, we obtain \[\pi_*i_\flat C_\bullet(S^{\phi^\circ}, \mathcal O^{\phi^\circ})=C_\bullet(S^\phi, \mathcal O^\phi).\]  
For any point $x_0\in \AA^n$, there exists a point $x_1\in X^\circ$ so that $x_0$ is in the closure of $\TT_Y \cdot x_1$.  Then by \cref{lem:exactEquivariantPushforward}, we can compute the stalks of $H_k(C_\bullet(S^\phi, \mathcal O^\phi))$ on $\AA^n$ by comparing them to stalks of points in $X^\circ$. We obtain:
 \[H_k(C_\bullet(S^\phi, \mathcal O^\phi))_{x_0}= H_k(C_\bullet(S^\phi, \mathcal O^\phi))_{x_1}= H_k( C_\bullet(S^{\phi^\circ}, \mathcal O^{\phi^\circ}))_{i^{-1}\pi(x_1)}.\]
Since $C_\bullet(S^{\phi^\circ}, \mathcal O^{\phi^\circ})$ is a resolution of $\mathcal O_{\TT_Y}$, we learn that 
\begin{itemize}
	\item $C_\bullet(S^\phi, \mathcal O^\phi)$ is a resolution, i.e., it is exact except at $k=0$;
	\item  As the closure of the $\TT_Y$ orbit in $X$ is $Y$, we obtain that $H_0(C_\bullet(S^\phi, \mathcal O^\phi))$ has the same stalks as $\phi_*\mathcal O_Y$. We have a morphism $\alpha\colon \mathcal O_{\AA^n}\to H_0(C_\bullet(S^\phi, \mathcal O^\phi))$. By another application of \cref{lem:exactEquivariantPushforward}, this morphism is an epimorphism on stalks for all $x\in Y$ and is zero otherwise.  We conclude that $H_0(C_\bullet(S^\phi, \mathcal O^\phi))\cong \phi_*\mathcal O_Y$. 
\end{itemize}
	\item[Step 4: Resolution of closed toric substacks of smooth stacky charts] As $\X$ is a smooth stacky coordinate chart, $\X$ is a finite quotient $\X=[\AA^n/G]$. Just as in Step $1'$, the statement follows from the previous step and  \cref{lem:pushforwardquotient}.
	\item[Step 5: Resolution of toric substacks]
	By repeated application of \cref{lem:functorialityRestriction} (as in Step 2 to reduce to Step 4), $C_\bullet(S^\phi, \mathcal O^\phi)$ restricts to a resolution of $\phi_*\mathcal O_\Y$ on every smooth stacky toric chart. This is not enough to determine that $C_\bullet(S^\phi, \mathcal O^\phi)$ is a resolution of $\mathcal O_\Y$. Let $C_\bullet(S^\phi, \mathcal O^\phi)\to \mathcal F$ be exact. The morphism $\alpha^\phi$ determines a map $\mathcal O_\X \to \mathcal F$. Since $\mathcal F$ agrees with $\phi_* \mathcal O_\Y$ on every chart for a cover of $\X$, it is a twist of $\phi_*\mathcal O_\Y$ by a line bundle. The existence of an epimorphism $\mathcal O_\X\to \mathcal F$ determines that $\mathcal F \cong \phi_*\mathcal O_\Y$.
\end{description}
\end{proof}
\subsection{Exit paths and sheaves} \label{subsec:pathsandsheaves}

Let $(\Sigma_X, \ul \beta_X)$ and $(\Sigma_Y, \ul \beta_Y)$ be stacky fans associated to toric stacks $\X$ and $\Y$, respectively. Suppose further we have a closed immersion $\phi \colon \Y\to \X$ induced by an immersion of stacky fans $(\ul \Phi \colon L_Y\to L_X, \ul \phi\colon  N_Y\to N_X )$. As $(\ul \phi, \ul \Phi)$ is an immersion (\cref{def:inclusion}), we have that the cones of $\Sigma_Y$ are $\ul\Phi^{-1}(\Sigma_X)$.
We have a dual map $\ul\phi^* \colon  M_X\to M_Y$ inducing a map $\tilde\phi: M_X\tensor \RR/ M_X\to M_Y\tensor \RR/ M_Y$ on tori.
Consider now the \emph{real} torus $T^{\phi}:=\ker( \tilde{\phi})$, whose dimension is the codimension of $\Y$. \footnote{This is a torus (not a disjoint union of tori) by a variation of \cref{prop:soManyThingsAreTheSame} and the assumption that $\coker(\phi)$ is free.}
For every ray $\rho\in \Sigma_X(1)$, we obtain a map
\begin{align*}
     T^\phi\to& \, \RR/\ZZ\\
	[m]\mapsto& \, \ul \beta_X^*m(u_\rho)
\end{align*}
where, as before, $u_\rho$ is the primitive generator of $\rho$. We denote the kernel of this map by $T^{\phi}(\rho)$ which is either all of $T^\phi$ (when $\ul\beta_X(\rho)\in \Im(\ul\phi)$), or a disjoint union of toric hyperplanes\footnote{For simplicity of notation, we will usually call the $T^{\phi}(\rho)$ a toric hyperplane, even if it consists of several disjoint translates of one hyperplane.}. 

The set of subtori $\{T^{\phi}(\rho)\st \ul\beta_X(\rho) \not\in \Im(\ul\phi)\}$ is a toric hyperplane arrangement. Let $S^\phi$ be the corresponding stratification of $T^\phi$. 

By applying \cref{eq:lineBundleFromTorusPoints} and working under the assumption that $\Sigma_X$ is smooth, we associate to each stratum $\strata\subset T^\phi\subset M_{X, \RR}/M$  of $S^\phi$ a line bundle $\mathcal O^\phi(\strata)\in \Pic(\X)$.
For each point $p\in T^{\phi}$, let $\dim_{\sS}(p)$ be the dimension of the stratum containing $p$. If $\strata, \stratb$ are strata of $S^\phi$ such that $\strata> \stratb$, we let $\EP(\strata, \stratb)$ denote the homotopy classes of paths $\gamma \colon I\to T^{\phi}$ satisfying
\begin{align*}
	\gamma(0)\in \sigma, &  & \gamma(1)\in \tau, \text{ and} &  & t_1\leq t_2 \Rightarrow \dim_{\sS}(\gamma(t_1))\leq \dim_{\sS}(\gamma(t_2)).
\end{align*}
The exit path category $\EP(\sS^\phi)$ has objects equal to strata of $\sS^\phi$, morphisms given by $\hom(\strata, \stratb) = \EP(\strata, \stratb)$, and composition defined by concatenation of paths.
Given $\gamma\in \EP(\strata, \stratb)$, there exists lifts  $\tilde \strata, \tilde \stratb \in \sS_{\Sigma_X, \beta_X}$ satisfying the properties 
\begin{align*}
	(\tilde \strata \cap \ker(\ul\phi^*)_\RR)/\ker(\ul\phi)=\strata && (\tilde \stratb\cap \ker(\ul\phi^*)_\RR)/\ker(\ul\phi)=\stratb && \tilde \strata>\tilde \stratb
\end{align*}
By \cref{prop:cocoreMorphisms}, this determines a boundary morphism
\[ \bm^\phi_\gamma: \mathcal O^\phi(\strata)\to\mathcal \mathcal O^\phi( \stratb)\]
which is independent of the lift chosen. In summary:
\begin{prop}
There is a functor $\mathcal O^\phi \colon  \EP(\sS^\phi)\to \Coh(X) $ which sends
	\begin{align*}
	\strata\mapsto & \mathcal O^\phi(\strata) \\
	\gamma \mapsto & \bm^\phi_\gamma.
\end{align*}
\end{prop}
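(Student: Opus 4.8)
The plan is to check the functor axioms directly, since the assignments on objects and on exit paths have already been written down just before the statement; the real content is to verify independence of the auxiliary choices and compatibility with composition. On objects: for a stratum $\strata$ of $\sS^\phi$ and $m\in\strata$, the support function $\bF(m)$ is independent of $m$ up to a globally linear summand, because $\lceil\ul\beta_X^*m(u_\rho)\rceil$ is constant on $\strata$ for rays $\rho$ with $\ul\beta_X(u_\rho)\notin\Im(\ul\phi)$ (a single chamber of the arrangement $\{T^\phi(\rho)\}$), while for rays with $\ul\beta_X(u_\rho)\in\Im(\ul\phi)$ the quantity $\ul\beta_X^*m(u_\rho)=(\ul\phi^*m)(w_\rho)$ is an integer on all of $T^\phi$ (as $\ul\phi^*m\in M_Y$ there) and is constant on the connected set $\strata$. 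So $\mathcal O^\phi(\strata):=\mathcal O_\X(\bF(\strata))\in\Coh(X)$ is well defined, and I extend the assignment by $\mathcal O^\phi(\id_\strata)=\id$, consistent with the fact that the boundary morphism attached to $\strata>\strata$ is the identity (it is the section $0\in\Delta_\beta(0)$).

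On morphisms I would pass to the universal cover $V:=\ker(\ul\phi^*_\RR)\subset M_{X,\RR}$ of $T^\phi$, with deck group $\Lambda:=M_X\cap V$. The lifted stratification $\tilde{\sS}^\phi$ is the restriction to $V$ of the $M_X$-periodic Bondal arrangement $\sS_X=\{m\in M_{X,\RR}:\ul\beta_X^*m(u_\rho)\in\ZZ\}_{\rho\in\Sigma_X(1)}$, because for a ray $\rho$ with $\ul\beta_X(u_\rho)\in\Im(\ul\phi)$ the function $\ul\beta_X^*m(u_\rho)$ vanishes identically on $V$, so the corresponding hyperplanes of $\sS_X$ either contain $V$ or miss it, whereas the transverse rays cut $V$ exactly into the strata of $\tilde{\sS}^\phi$. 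Consequently every stratum $\tilde\strata$ of $\tilde{\sS}^\phi$ equals $\widehat{\tilde\strata}\cap V$ for a unique stratum $\widehat{\tilde\strata}$ of $\sS_X$; the assignment $\tilde\strata\mapsto\widehat{\tilde\strata}$ is order preserving and $M_X$-equivariant and satisfies $\bF(\widehat{\tilde\strata})=\bF(\tilde\strata)$. Given $\gamma\in\EP(\strata,\stratb)$, lift it to a path $\tilde\gamma$ in $V$; since the covering is a strata-preserving local homeomorphism, $\tilde\gamma$ is an exit path, so its endpoints lie in strata $\tilde\stratb<\tilde\strata$ of $\tilde{\sS}^\phi$, hence $\widehat{\tilde\stratb}<\widehat{\tilde\strata}$ in $\sS_X$, and \cref{prop:cocoreMorphisms} produces $\bm^\phi_\gamma:=\bm_{\widehat{\tilde\strata},\widehat{\tilde\stratb}}$, read as an element of $\hom(\mathcal O^\phi(\strata),\mathcal O^\phi(\stratb))$ via the canonical identifications $\mathcal O_\X(\bF(\widehat{\tilde\strata}))=\mathcal O^\phi(\strata)$ and $\mathcal O_\X(\bF(\widehat{\tilde\stratb}))=\mathcal O^\phi(\stratb)$.

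Two independence statements remain. First, replacing $\tilde\gamma$ by another lift $\tilde\gamma+\lambda$, $\lambda\in\Lambda\subset M_X$, replaces $\widehat{\tilde\strata},\widehat{\tilde\stratb}$ by $\widehat{\tilde\strata}+\lambda,\widehat{\tilde\stratb}+\lambda$; the section $0$ still represents the boundary morphism because $\Delta_\beta$ depends only on $\bF(\widehat{\tilde\stratb})-\bF(\widehat{\tilde\strata})$, which is unchanged, and the twist by the globally linear support function $\ul\beta_X^*\lambda$ is the identity after the canonical identifications --- this is exactly the ``independence of the lift'' referred to in the text. Second, given an exit-homotopy between $\gamma$ and $\gamma'$ and a fixed lift of the common starting point, unique path lifting for $V\to T^\phi$ lifts the homotopy; since the endpoints stay in $\strata$ and $\stratb$ throughout, the lifted homotopy keeps $\tilde\strata$ fixed and keeps $\tilde\gamma(1)$ inside a single connected lift of $\stratb$, hence in the same stratum $\tilde\stratb$, so $\bm^\phi_\gamma$ depends only on the class of $\gamma$.

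Functoriality is then formal. Identities were handled above. For composition take $\gamma\in\EP(\strata,\stratb)$ and $\gamma'\in\EP(\stratb,\stratc)$; fixing a lift of $\gamma(0)$, let $\tilde\gamma$ end in $\tilde\stratb$, lift $\gamma'$ to $\tilde\gamma'$ starting in that same $\tilde\stratb$ and ending in $\tilde\stratc$; then $\tilde\gamma'\ast\tilde\gamma$ lifts $\gamma'\ast\gamma$, running from $\tilde\strata$ to $\tilde\stratc$, so $\bm^\phi_{\gamma'\ast\gamma}=\bm_{\widehat{\tilde\strata},\widehat{\tilde\stratc}}$, while lift-independence lets me evaluate $\bm^\phi_{\gamma'}$ using precisely the lift $\tilde\stratb$ reached by $\gamma$, giving $\bm^\phi_{\gamma'}\circ\bm^\phi_\gamma=\bm_{\widehat{\tilde\stratb},\widehat{\tilde\stratc}}\circ\bm_{\widehat{\tilde\strata},\widehat{\tilde\stratb}}=\bm_{\widehat{\tilde\strata},\widehat{\tilde\stratc}}$ by the composition law for boundary morphisms proved above. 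I expect the main obstacle to be precisely the morphism step: extracting a single well-defined boundary morphism from a homotopy class of exit paths in the torus $T^\phi$ requires separating the periodic arrangement on $M_{X,\RR}/M_X$ from its trace on $T^\phi$, lifting to $V$, and checking that the resulting comparable pair of strata of $\sS_X$ is insensitive to the lift and to the homotopy representative; once the lift-independence is in hand --- resting on the $M_X$-periodicity of $\sS_X$ and the triviality of the $\ul\beta_X^*\lambda$-twist --- functoriality drops out of the composition law.
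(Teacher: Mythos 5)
Your proof is correct and takes essentially the same route as the paper, which states this proposition only as a summary of the immediately preceding construction (lift the strata of $\sS^\phi$ to the Bondal stratification of $M_{X,\RR}$, apply \cref{prop:cocoreMorphisms} to get $\bm^\phi_\gamma$, and use the earlier composition law $\bm_{\stratb,\stratc}\circ\bm_{\strata,\stratb}=\bm_{\strata,\stratc}$) without writing out any verifications. You have supplied exactly the omitted details --- constancy of $\bF$ on a stratum, uniqueness of the ambient stratum containing a lifted stratum, independence of the choice of lift via $M_X$-periodicity and linear equivalence, and homotopy invariance --- and these all check out.
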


\subsection{Definition of \texorpdfstring{$C_{\bullet}(S^\phi, \mathcal O^\phi)$}{the resolution}}
\label{subsec:defOfCbullet}
Whenever the 1-dimensional cones of $\Sigma$ project to an $\RR$-basis for $N_X$ under $\ul\beta$, which occurs when we assume that $\X$ is covered by smooth stacky charts, the strata of $\sS^\phi$ are simply connected. If $\sS^\phi$ is a regular CW complex, then $\EP(\sS^\phi)$ is a posetal category, i.e.,  $|\EP(\strata,\stratb)| \in \{ 0,1 \}$. In our setting, it is possible that $S^\phi$ is not a regular CW complex --- for example, when $\phi \colon  \{\bullet\}\into \AA^1 $, as in \cref{ex:pointInA1}. From $\sS^\phi$, we build quiver $Q^\phi$ whose vertices are the strata of $\sS^\phi$, and whose edge sets are 
\[E(\strata, \stratb) =\left\{\begin{array}{cc}
	 \EP(\strata, \stratb) & \text{if $\dim(\stratb)=\dim(\strata)-1$}\\ \emptyset & \text{otherwise}\end{array}
\right.\] 
 We will let $Q^\phi$ denote this quiver, and $\gamma$ denote the edges in the quiver. We retain a map $|-|:Q^\phi \to \NN$ which assigns to each stratum its dimension.
Observe that when $\sS^\phi$ is a regular CW complex, $Q^\phi$ is the Hasse diagram of the poset associated with $S^\phi$.

Now arbitrarily assign orientations $o_\strata$ to every stratum $\strata\in \sS^\phi$. To each edge $\gamma\in Q^\phi$, we say that $\sgn(\gamma)=+1$ if the boundary orientation of $\strata$ (with outwards direction assigned by $\dot \gamma$) agrees with the boundary orientation of $\stratb$, and $-1$ otherwise. 

\begin{prop}\label{prop:QisaMorseQuiver}
		$(Q^\phi, |-|, o_\strata)$ is an oriented Morse quiver (see \cref{def:MorseQuiver} and following discussion). The functor $\mathcal O^\phi \colon  Q^\phi\to \Coh(X) $ is a sheaf on $Q^\phi$ (as defined in \cref{app:morseForQuivers}).
\end{prop}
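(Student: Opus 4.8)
The plan is to verify the two assertions separately: that $(Q^\phi, |-|, o_\strata)$ satisfies the axioms of an oriented Morse quiver from \cref{def:MorseQuiver}, and that $\mathcal O^\phi$, restricted along the inclusion of $Q^\phi$ into $\EP(\sS^\phi)$, is a sheaf on the quiver in the sense of \cref{app:morseForQuivers}. Most of the axioms should fall out of the construction in \cref{subsec:pathsandsheaves,subsec:defOfCbullet}; the one genuinely combinatorial ingredient — the sign condition guaranteeing that the differential assembled from $(|-|, o_\strata)$ on any sheaf squares to zero — I would reduce to a local statement about linear hyperplane arrangements; and the sheaf axioms I would read off from the composition law for boundary morphisms established in \cref{subsec:Thomsen}.

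First I would record the structural axioms. The collection $\{T^\phi(\rho) : \ul\beta_X(\rho)\notin\Im(\ul\phi)\}$ is a finite family of finite disjoint unions of translates of codimension-one subtori of $T^\phi$, so $\sS^\phi$ has finitely many strata and $|-|$ is well defined. Since $\X$ is covered by smooth stacky charts, the rays of $\Sigma_X$ project to an $\RR$-basis of $N_{X,\RR}$, so each stratum of $\sS^\phi$ is simply connected, hence orientable, which is what is needed to choose the $o_\strata$. When $\dim\stratb = \dim\strata - 1$, a path in $\EP(\strata,\stratb)$ is dimension-constant until its endpoint and so is determined, up to the equivalence defining $\EP$, by the germ at its terminal point of the branch of $\strata$ meeting $\stratb$; there are finitely many such branches along the contractible stratum $\stratb$, so each $E(\strata,\stratb)$ is finite and $Q^\phi$ is a finite quiver. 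By definition an arrow of $Q^\phi$ exists only when $|\strata| - |\stratb| = 1$, so $|-|$ grades $Q^\phi$ with unit drops and $\sgn$ is defined on every arrow.

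The main work is the orientation axiom: for every pair $\strata, \stratc$ with $|\stratc| = |\strata| - 2$, the sum $\sum_{\stratb}\,\sum_{\gamma_1\in E(\strata,\stratb),\ \gamma_2\in E(\stratb,\stratc)}\sgn(\gamma_1)\sgn(\gamma_2)$ over composable pairs of arrows must vanish. I would prove this by localizing near a point $p$ of the stratum $\stratc$. Only strata containing $\stratc$ in their closure contribute, and for these the homotopy class of an exit path is pinned down by branch data along the closure; consequently, in a small ball $B$ around $p$, each global two-path $\strata \xrightarrow{\gamma_1} \stratb \xrightarrow{\gamma_2} \stratc$ is recorded faithfully, with its sign, by an incident chain of local branches $(\text{branch of }\strata)\supset(\text{branch of }\stratb)\supset(B\cap\stratc)$. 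Because each $T^\phi(\rho)$ is the zero locus of an affine-linear map $T^\phi\to\RR/\ZZ$, the germ of $\sS^\phi$ at $p$ is isomorphic to the germ at the origin of a central arrangement of linear hyperplanes in $\RR^{\dim T^\phi}$; splitting off the lineality space of the face corresponding to $\stratc$, the branches of $\stratb$ (resp.\ of $\strata$) incident to $\stratc$ become the rays (resp.\ the two-dimensional faces) of the resulting essential arrangement, and the displayed sum is exactly the codimension-two instance of $\partial^2 = 0$ for the cellular chain complex of that arrangement — the classical fact that the link of the apex inside a two-face is an arc whose two boundary rays carry opposite signs, summed cyclically around the alternating rays and two-faces. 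The one wrinkle is that $\sS^\phi$ need not be a regular CW complex (e.g.\ for $\phi\colon\{\bullet\}\into\AA^1$), which is precisely why $E(\strata,\stratb)$ can have more than one element; but the failure of regularity is a controlled non-injectivity of attaching maps, which is reflected faithfully in repeated local branch incidences, so the local model above still applies verbatim. I expect this axiom to be the main obstacle, as it is the only step requiring direct reasoning about the combinatorics of the stratification rather than an appeal to an earlier lemma.

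For the sheaf statement: the functor $\mathcal O^\phi\colon\EP(\sS^\phi)\to\Coh(X)$ of \cref{subsec:pathsandsheaves} assigns each stratum a line bundle in the Thomsen collection — well defined since $\Sigma_X$ is smooth, via \cref{eq:lineBundleFromTorusPoints} — and each exit path a boundary morphism $\bm^\phi_\gamma$, independent of the lift by \cref{prop:cocoreMorphisms}, with concatenation of paths sent to composition of boundary morphisms by the composition law in \cref{subsec:Thomsen}. Restricting this functor to the subquiver $Q^\phi$ of unit-drop exit paths yields the data of a sheaf on $Q^\phi$, and its required compatibilities are precisely the functoriality just recalled, so the argument finishes by matching these against \cref{app:morseForQuivers}.
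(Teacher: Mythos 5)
Your structural checks and the localization idea are reasonable, but the key axioms are not verified in the form in which they are actually defined, and this is a genuine gap. \cref{def:MorseQuiver} does not ask that the signed sum over two-step paths vanish; it asks for the \emph{data} of a fixed-point-free involution $I_{\strata\stratc}$ on $P(\strata,\stratc)$ for every pair with $|\strata|-|\stratc|=2$, together with anti-invariance of $\sgn$ under that involution, and ``sheaf on $Q^\phi$'' means precisely that $\mathcal O^\phi$ takes equal values on the two paths in each $I$-orbit. Your proposal never constructs this involution, and the substitute you prove (vanishing of $\sum\sgn(\gamma_1)\sgn(\gamma_2)$) is strictly weaker: it supplies neither the pairing that the discrete Morse machinery of \cref{app:morseForQuivers} uses (acyclic matchings, Morse reduction, the involution on broken trajectories) nor the sheaf condition, and by itself it does not even give $d^2=0$ for a general sheaf. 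Likewise, the sheaf condition is not ``precisely functoriality'': distinct classes in $\EP(\strata,\stratc)$ give genuinely different boundary morphisms (already for $e\into\AA^1$ the two exit paths are sent to multiplication by $x$ and by $1$), so before functoriality can be invoked one must know that the two routes being paired concatenate to the \emph{same} class in $\EP(\strata,\stratc)$ -- and that requires having specified the pairing.

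The repair is close to what you already have, and is how the paper argues: pass to the universal cover (equivalently, work in your transverse slice at a point of the codimension-two stratum), where the strata become convex. There, between a lift $\tilde\strata$ and a lift $\tilde\stratc$ with index difference two there are exactly two intermediate strata $\tilde\stratb^{\pm}$, and all relevant exit-path classes between lifted strata are unique; define $I_{\strata\stratc}$ to swap the two routes around this diamond, using the paper's identification of $\EP(\strata,\stratc)$ with the set of lifts $\tilde\stratc<\tilde\strata_0$. Anti-invariance of $\sgn$ is then exactly your observation that the two boundary rays of a sector carry opposite signs, and since both routes concatenate to the unique class in $\EP(\tilde\strata,\tilde\stratc)$, functoriality of $\mathcal O^\phi$ on the exit path category (together with the composition law for boundary morphisms) yields the sheaf condition. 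With the involution made explicit in this way, your local cellular argument and the paper's universal-cover argument coincide in substance.
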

\begin{proof}
	Let $|\strata|-|\stratb|=2$. Take lifts $\strata, \stratb$ to the universal cover so that the $\tilde \strata>\tilde \stratb$. Then there are two unique strata, $\tilde\stratc^+, \tilde\stratc^-$ with the property that $\tilde \strata>\tilde \stratc^\pm> \tilde \stratb$, and (because all strata are simply connected and we're on the universal cover) there exist unique morphisms $\gamma_1^\pm\in \EP(\tilde \strata, \tilde \stratc^\pm)$ and $\gamma_2^\pm\in \EP( \tilde \stratc^\pm, \tilde \stratb)$. This set has an involution (sending $\gamma_1^+\cdot \gamma_2^+$ to $\gamma_1^-\cdot \gamma_2^-)$, and $\sgn$ is constructed in such a way that $\sgn(\gamma_1^+\cdot \gamma_2^+)=-\sgn(\gamma_1^-\cdot \gamma_2^-)$. Observe that after fixing a lift $\tilde \strata_0$ of $\strata$, we can compute $\EP(\strata, \stratb)=\bigcup_{\text{Lifts $\tilde \stratb<\tilde \strata_0$}} \EP(\tilde \strata, \tilde \stratb)$. Therefore we have an involution on $Q^\phi$ making it an oriented Morse quiver. Since we have an agreement of the compositions $\gamma_1^+\cdot \gamma_2^+=\gamma_1^-\cdot \gamma_2^-$, $\mathcal O^\phi$ is a sheaf on $Q^\phi$ (as $\mathcal O^\phi$ is a functor on the exit path category).
\end{proof}
 By \cref{eq:morseComplex}, we obtain a chain complex $C_\bullet(Q^\phi, \mathcal O^\phi)$. For the purpose of streamlining notation, we will write \[C_\bullet(S^\phi, \mathcal O^\phi):= C_\bullet(Q^\phi, \mathcal O^\phi)\] instead.
The stratification $\sS^\phi$ has a special stratum $\sigma_0$ corresponding to the identity of the torus. The stratum $\sigma_0$ is always zero dimensional under our assumptions and $\bF(\sigma_0)=\mathcal O_{\X}$. We define the morphism $\alpha \colon  \mathcal O_{\X}\to C_{\bullet}(S^\phi, \mathcal O^\phi) $ to map via the identity to that direct summand.

\subsection{Examples of resolutions}
\begin{example}[Resolution of point in $\AA^1$]
\label{ex:pointInA1}
    For $\phi \colon e\to \AA^1 $ the inclusion of the identity point, the real torus is $T^\phi=S^1$. Observe that the dimension of the torus is the codimension of the point. The stratification comes from a single point corresponding to the one toric divisor of $\AA^1$. 
\begin{equation}
\begin{tikzpicture}

\draw  (0,0) ellipse (1 and 1);
\node[fill, circle, scale=.5] at (0,-1) {};
\draw[red] (0,-1) -- (-0.5,-1);
\node at (0,1.5) {$\mathcal O(-1)$};
\node at (0,-1.5) {$\mathcal O$};
\draw[->] (120:1.5) arc  (120:240:1.5);
\draw[->] (60:1.5) arc (60:-60:1.5);
\node[left] at (-1.5,0) {$x$};
\node[right] at (1.5,0) {$-1$};
\end{tikzpicture} \label{eq:resolutionPtInA1}
\end{equation}
    The complex is the standard resolution for the point in $\AA^1$.
\end{example}
\begin{example}[Resolution of point in $\PP^2$]
\label{ex:pointInP2}
	We now consider the identity point $e = (1:1:1)$ in $\PP^2$ included by $\phi \colon e \to \PP^2$.
	The stratification  $S^\phi$ is drawn in \cref{fig:fltzp2}. We label each stratum with a sheaf and each exit path with a morphism of sheaves in \cref{fig:fltzCP}. 
	\begin{figure}
		\centering
		\begin{subfigure}{.45\linewidth}
			\scalebox{.6}{\begin{tikzpicture}[scale=2]
\newcommand{\OO}{\mathcal O}
\usetikzlibrary{calc, decorations.pathreplacing,shapes.misc}
\usetikzlibrary{decorations.pathmorphing}

\tikzstyle{fuzz}=[red,
    postaction={draw, decorate, decoration={border, amplitude=0.15cm,angle=90 ,segment length=.15cm}},
]

\draw  (-1.5,3) rectangle (3.5,-2);
\draw[fuzz](1,3) node (v7) {} -- (1,-2);
\draw[fuzz](-1.5,0.5) -- (3.5,0.5) node (v6) {};
\draw[fuzz] (3.5,-2) node (v10) {} -- (-1.5,3);
\draw[thick,->] (0,0.5) -- (-0.5,0.5);
\draw[thick,->] (0,1.5) -- (0.5,1);
\draw[thick,->] (1,1) -- (1,1.5);
\end{tikzpicture} }
			\caption{FLTZ Stratification associated to $\PP^2$. Orientation is read off from the small arrows.}
			\label{fig:fltzp2}
		\end{subfigure}
		\begin{subfigure}{.45\linewidth}
			\scalebox{.6}{\begin{tikzpicture}[scale=2]
\newcommand{\OO}{\mathcal O}
\usetikzlibrary{calc, decorations.pathreplacing,shapes.misc}
\usetikzlibrary{decorations.pathmorphing}

\tikzstyle{fuzz}=[red, 
    postaction={draw, decorate, decoration={border, amplitude=0.15cm,angle=90 ,segment length=.15cm}},
]

\draw  (-1.5,3) rectangle (3.5,-2);
\draw[fuzz](1,3) node (v7) {} -- (1,-2);
\draw[fuzz](-1.5,0.5) -- (3.5,0.5) node (v6) {};
\draw[fuzz] (3.5,-2) node (v10) {} -- (-1.5,3);
\node[fill=white, fill opacity=50] (v2) at (1,0.5) {$\OO(0)$};
\node[fill=white, fill opacity=50] (v11) at (-1.5,3) {$\OO(-1)$};
\node[fill=white, fill opacity=50] (v1) at (-1.5,0.5) {$\OO(-1)$};
\node[fill=white, fill opacity=50] (v4) at (1,-2) {$\OO(-1)$};
\node[fill=white, fill opacity=50] (v3) at (-0.5,-1) {$\OO(-1)$};
\node[fill=white, fill opacity=50] (v8) at (2.5,2) {$\OO(-2)$};
\draw  (v1) edge (v2);
\node (v9) at (3.5,3) {};
\node (v5) at (-1.5,-2) {};

\draw[thick,  ->]  (v3) edge  node[midway, fill=white, fill opacity=50]{1} (v1);
\draw[thick,  ->]  (v3) edge  node[midway, fill=white, fill opacity=50]{1} (v4);
\draw[thick,  ->]  (v3) edge  node[midway, fill=white, fill opacity=50]{1} (v5);
\draw[thick,  ->]  (v1) edge  node[midway, fill=white, fill opacity=50]{$-x_2$}(v2);
\draw[thick,  ->]  (v6) edge  node[midway, fill=white, fill opacity=50]{$x_0$}(v2);
\draw[thick,  ->]  (v4) edge  node[midway, fill=white, fill opacity=50]{$x_2$}(v2);
\draw[thick,  ->]  (v7) edge  node[midway, fill=white, fill opacity=50]{$-x_1$}(v2);
\draw[thick,  ->]  (v8) edge  node[midway, fill=white, fill opacity=50]{$-x_0$}(v7);
\draw[thick,  ->]  (v8) edge  node[midway, fill=white, fill opacity=50]{$-x_1$}(v6);
\draw[thick,  ->]  (v8) edge  node[midway, fill=white, fill opacity=50]{$-x_2$}(v9);
\draw[thick,  ->]  (v10) edge node[midway, fill=white, fill opacity=50]{$-x_0$}(v2);
\draw[thick,  ->]  (v11) edge node[midway, fill=white, fill opacity=50]{$x_1$}(v2);
\end{tikzpicture} }
			\caption{Overlaying a diagram of sheaves, with signs determined by the orientation}
			\label{fig:fltzCP}
		\end{subfigure}
    \caption{Stratification and associated diagram of sheaves for resolving the identity point in $\PP^2$.}
	\end{figure}
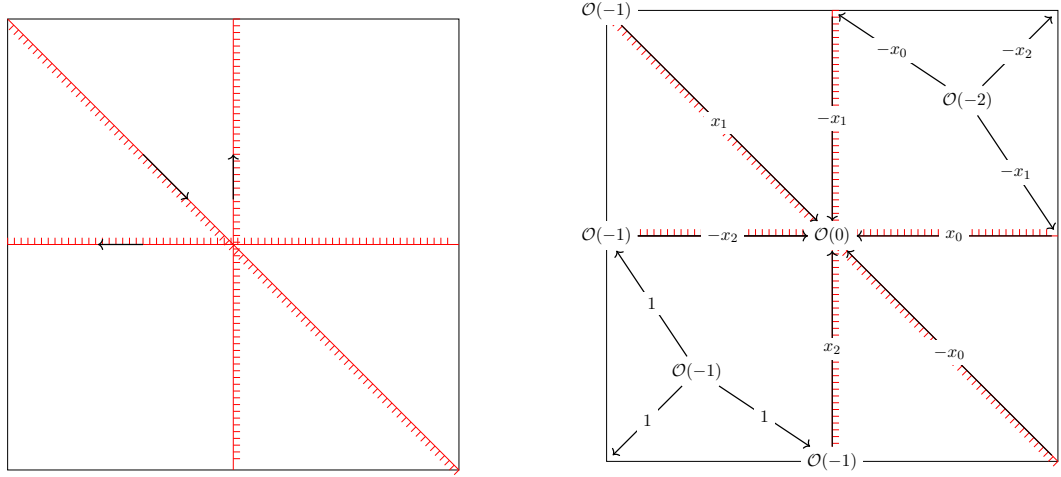
	For this example, the orientation on the 2-strata is given by the standard orientation of the plane, while the orientation on the 1-strata is given by the arrows indicated in \cref{fig:fltzp2}. The resulting complex $C_\bullet(\sS^\phi, \mathcal O^\phi)$ is given by the top row of the following commutative diagram.
	\[
		\scalebox{.75}{\begin{tikzpicture}

            \node (v1) at (-6.5,1.5) {$\mathcal O(-2)\oplus \mathcal O(-1)$};
            \node (v2) at (1,1.5) {$\mathcal O(-1)\oplus \mathcal O(-1)\oplus \mathcal O(-1)$};
            \node (v3) at (10,1.5) {$\mathcal O$};
            \draw[->]  (v1) edge node[fill=white]{$\begin{pmatrix} -x_2& 1\\ -x_0 & 1\\ -x_1& 1\end{pmatrix}$} (v2);
            \draw[->]  (v2) edge node[fill=white]{$\begin{pmatrix} x_0-x_1& x_1-x_2& x_2-x_0 \end{pmatrix}$}(v3);
            \node (v6) at (10,-1.5) {$\mathcal O$};
            \node (v5) at (1,-1.5) {$\mathcal O(-1)\oplus \mathcal O(-1)$};
            \node (v4) at (-6.5,-1.5) {$\mathcal O(-2)$};
            \draw[->]  (v1) edge[bend left] node[fill=white, right]{$\begin{pmatrix} -1& 0 \end{pmatrix}$}(v4);
            \draw[->]  (v2) edge[bend left] node[fill=white, right]{$\begin{pmatrix} -1 & 0& 1\\ 0& -1&1  \end{pmatrix}$} (v5);
            \draw[->]  (v3) edge[bend left] node[fill=white, right]{$\begin{pmatrix} 1 \end{pmatrix}$}(v6);

            \draw[<-]  (v1) edge[bend right] node[fill=white, left]{$\begin{pmatrix} -1\\ x_0-x_1+x_2\end{pmatrix}$}(v4);
            \draw[<-]  (v2) edge[bend right] node[fill=white, left]{$\begin{pmatrix} 0& -1\\ -1 & 0\\ -1&-1  \end{pmatrix}$} (v5);
            \draw[<-]  (v3) edge[bend right] node[fill=white, left]{$\begin{pmatrix} 1  \end{pmatrix}$}(v6);

            \draw  (v5) edge node[fill=white]{$\begin{pmatrix} x_0-x_1& x_1-x_2  \end{pmatrix}$} (v6);
            \draw  (v4) edge node[fill=white]{$\begin{pmatrix} x_1-x_2\\ x_1-x_0  \end{pmatrix}$}(v5);

        \end{tikzpicture} }
	\]
	We first check ``by hand'' that this is a resolution of  $\mathcal O_e$. The bottom row provides a resolution of $\mathcal O_e$ (as the intersection of the lines $x_0-x_1$ and $x_1-x_2$). The morphisms between the top and bottom rows are chain homotopy equivalences.

	We now instead sketch how to use \cref{lem:functorialityRestriction} to prove that $C_\bullet(\sS^\phi, \mathcal O^\phi)$ resolves $\mathcal O_e$ following our general proof strategy. 
	Let $i_\rho\colon \AA^2\to \PP^2$ be the toric inclusion of the $x_0x_1$-plane. Then we can then look at  $i^*_{\rho}C_\bullet(\sS^\phi, \mathcal O^\phi)$ which is a diagram of sheaves on $\AA^2$ (see \cref{fig:fltzCP2}). 
	\begin{figure}
		\centering
		\begin{subfigure}{.45\linewidth}
			\scalebox{.6}{\begin{tikzpicture}[scale=2]
\newcommand{\OO}{\mathcal O}
\usetikzlibrary{calc, decorations.pathreplacing,shapes.misc}
\usetikzlibrary{decorations.pathmorphing}

\tikzstyle{fuzz}=[red,
    postaction={draw, decorate, decoration={border, amplitude=0.15cm,angle=90 ,segment length=.15cm}},
]

\draw  (-1.5,3) rectangle (3.5,-2);
\draw[fuzz](1,3) node (v7) {} -- (1,-2);
\draw[fuzz](-1.5,0.5) -- (3.5,0.5) node (v6) {};
\draw (3.5,-2) node (v10) {} (-1.5,3);
\node[fill=white, fill opacity=50] (v2) at (1,0.5) {$\OO$};
\node[fill=white, fill opacity=50] (v11) at (-1.5,3) {$\OO$};
\node[fill=white, fill opacity=50] (v1) at (-1.5,0.5) {$\OO$};
\node[fill=white, fill opacity=50] (v4) at (1,-2) {$\OO$};
\node[fill=white, fill opacity=50] (v3) at (-0.5,-1) {$\OO$};
\node[fill=white, fill opacity=50] (v8) at (2.5,2) {$\OO$};
\draw  (v1) edge (v2);
\node (v9) at (3.5,3) {};
\node (v5) at (-1.5,-2) {};

\draw[thick,  ->]  (v3) edge  node[midway, fill=white, fill opacity=50]{1} (v1);
\draw[thick,  ->]  (v3) edge  node[midway, fill=white, fill opacity=50]{1} (v4);
\draw[thick, red, ->]  (v3) edge  node[midway, fill=white, fill opacity=50]{1} (v5);
\draw[thick,  ->]  (v1) edge  node[midway, fill=white, fill opacity=50]{$-x_2$}(v2);
\draw[thick,  ->]  (v6) edge  node[midway, fill=white, fill opacity=50]{$x_0$}(v2);
\draw[thick,  ->]  (v4) edge  node[midway, fill=white, fill opacity=50]{$x_2$}(v2);
\draw[thick,  ->]  (v7) edge  node[midway, fill=white, fill opacity=50]{$-x_1$}(v2);
\draw[thick,  ->]  (v8) edge  node[midway, fill=white, fill opacity=50]{$-x_0$}(v7);
\draw[thick,  ->]  (v8) edge  node[midway, fill=white, fill opacity=50]{$-x_1$}(v6);
\draw[thick,  ->]  (v8) edge  node[midway, fill=white, fill opacity=50]{$-x_2$}(v9);
\draw[thick,  ->]  (v10) edge node[midway, fill=white, fill opacity=50]{$-x_0$}(v2);
\draw[thick,  ->]  (v11) edge node[midway, fill=white, fill opacity=50]{$x_1$}(v2);

\end{tikzpicture} }
			\caption{Taking $i^*_\rho C_\bullet(\sS^\phi, \mathcal O^\phi)$}
			\label{fig:fltzCP2}
		\end{subfigure}
		\begin{subfigure}{.45\linewidth}
			\scalebox{.6}{\begin{tikzpicture}[scale=2]
\newcommand{\OO}{\mathcal O}
\usetikzlibrary{calc, decorations.pathreplacing,shapes.misc}
\usetikzlibrary{decorations.pathmorphing}

\tikzstyle{fuzz}=[red,
    postaction={draw, decorate, decoration={border, amplitude=0.15cm,angle=90 ,segment length=.15cm}},
]

\draw  (-1.5,3) rectangle (3.5,-2);
\draw[fuzz](1,3) node (v7) {} -- (1,-2);
\draw[fuzz](-1.5,0.5) -- (3.5,0.5) node (v6) {};
\draw (3.5,-2) node (v10) {} (-1.5,3);
\node[fill=white, fill opacity = 50] (v2) at (1,0.5) {$\OO$};
\node[fill=white, fill opacity = 50] (v1) at (-1.5,0.5) {$\OO$};
\node[fill=white, fill opacity = 50] (v4) at (1,-2) {$\OO$};
\node[fill=white, fill opacity = 50] (v3) at (-1.5,-2) {$\OO$};
\node (v8) at (3.5,3) {};
\draw  (v1) edge (v2);
\node (v9) at (3.5,3) {};
\node (v5) at (-1.5,-2) {};

\draw[thick, ->]  (v3) edge node[midway, fill=white, fill opacity=50]{1} (v1);
\draw[thick, ->]  (v3) edge node[midway, fill=white, fill opacity=50]{1} (v4);
\draw[thick, ->]  (v1) edge node[midway, fill=white, fill opacity=50]{-1}(v2);
\draw[thick, ->]  (v6) edge node[midway, fill=white, fill opacity=50]{$x_0$}(v2);
\draw[thick, ->]  (v4) edge node[midway, fill=white, fill opacity=50]{1}(v2);
\draw[thick, ->]  (v7) edge node[midway, fill=white, fill opacity=50]{$-x_1$}(v2);
\draw[thick, ->]  (v8) edge node[midway, fill=white, fill opacity=50]{$x_0$}(v7);
\draw[thick, ->]  (v8) edge node[midway, fill=white, fill opacity=50]{$x_1$}(v6);
\end{tikzpicture} }
			\caption{Applying a homotopy to $i^*_\rho C_\bullet(\sS^\phi, \mathcal O^\phi)$ to obtain $C_\bullet(S^{\phi'}, \mathcal O^{\phi'})$ where $\phi':e\to \AA^2$.}
			\label{fig:fltzCP3}
		\end{subfigure}
        \caption{Restriction to a chart in $\PP^2$ is homotopic to the diagram for $\AA^2$.}
	\end{figure}
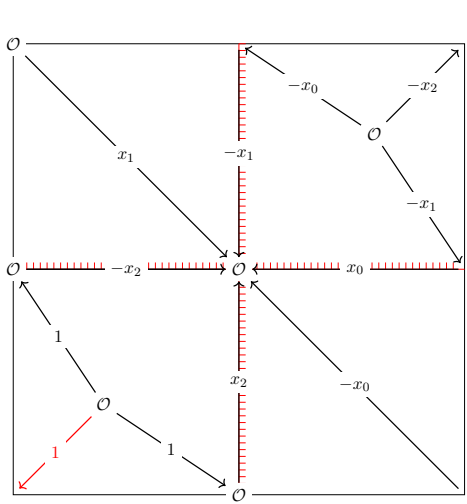
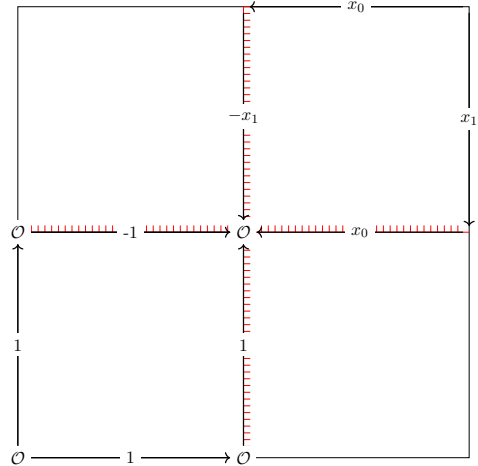
	The highlighted red arrow is now an invertible morphism, and we can construct a chain homotopy that simplifies the diagram by contracting along this arrow. This homotopy is the content of \cref{lem:functorialityRestriction}. The resulting diagram, drawn in \cref{fig:fltzCP3}, is the Koszul resolution of the skyscraper sheaf on $\AA^2$.
	Since a similar story applies to the $x_0x_2$ and $x_1x_2$ planes, we obtain that $C_\bullet(\sS^\phi, \mathcal O^\phi)$ is a resolution of the skyscraper sheaf by line bundles.
\end{example}

\begin{example}[Resolution of the diagonal on $\PP^1\times \PP^1$]
	We now consider the diagonal inclusion $\phi \colon \PP^1\to \PP^1\times \PP^1 $. 
	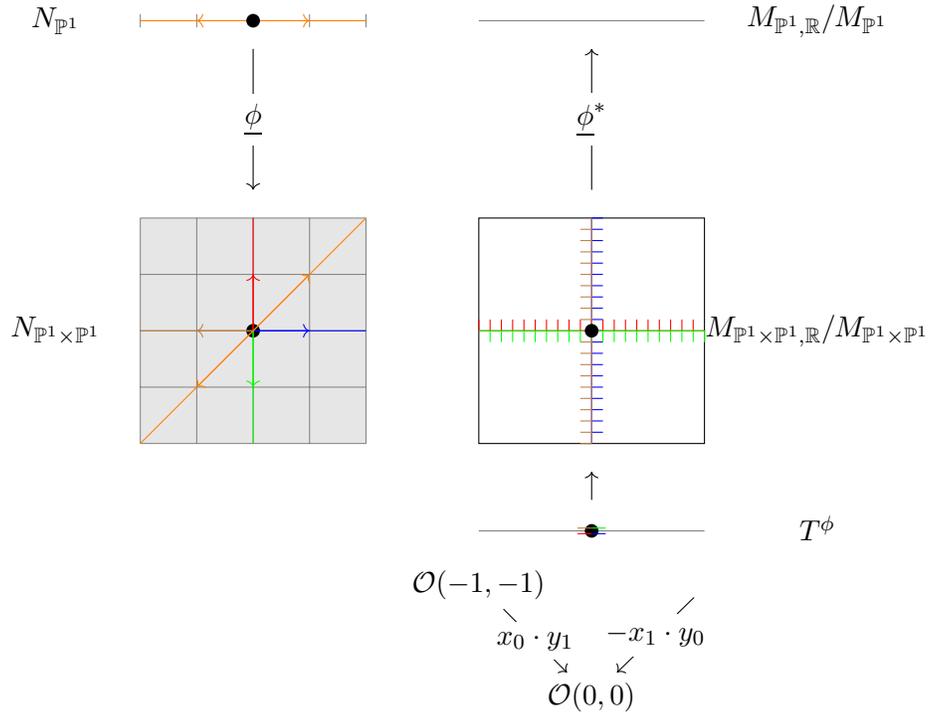
\begin{figure}
	    \centering
	    \begin{tikzpicture}[decoration=ticks, segment length=.75cm, scale=.75]

    \begin{scope}[]
        \begin{scope}[]
            \fill[gray!20]  (-2, -2) rectangle (2,2);
            \draw [help lines, step=1cm] (-2,-2) grid (2,2);
            \node[circle, fill=black, scale=.5] at (0,0) {};
            \draw[blue] (0,0) edge[->] (1,0) edge (2, 0);
            \draw[red] (0,0) edge[->] (0, 1) edge (0, 2);
            \draw[brown] (0,0) edge[->] (-1,0) edge (-2, 0);
            \draw[green] (0,0) edge[->] (0, -1) edge (0, -2);
            \draw[orange](-2,-2) edge (2,2);
            \draw[orange] (0,0) edge[->] (1,1) edge[->] (-1,-1);
        \end{scope}
        \begin{scope}[shift={(0,5.5)}]
            \draw[decorate, help lines](-2,0)-- (2.01,0);
            \draw[orange](-2,0) -- (2,0);
            \draw[orange] (0,0) edge[->] (1,0) edge[->] (-1,0);
        \end{scope}
\draw[->] (0,5) -- node[midway, fill=white]{$\ul \phi$} (0,2.5);
    \end{scope}

\tikzstyle{fuzz}=[red,
postaction={draw, decorate, decoration={border, amplitude=0.15cm,angle=90 ,segment length=.15cm}},
]

\begin{scope}[shift={(4.5,0)}]

\draw  (-0.5,2) rectangle (3.5,-2);
\draw[fuzz, blue](1.5,2) --(1.5,-2);
\draw[fuzz, red](-0.5,0) -- (3.5,0);
\draw[fuzz, green](3.5,0) -- (-0.5,0);
\draw[fuzz, brown] (1.5,-2)  -- (1.5,2);
\draw[gray] (-0.5,-3.55) -- (3.5,-3.55);
\node[circle, fill=black, scale=.5] at (1.5,-3.55) {};
\node[circle, fill=black, scale=.5]  at (1.5,0) {};
\draw[red] (1.5,-3.6) -- (1.25,-3.6);
\draw[brown] (1.5,-3.5) -- (1.25,-3.5);
\draw[blue] (1.5,-3.6) -- (1.75,-3.6);
\draw[green] (1.5,-3.5) -- (1.75,-3.5);
\draw[<-] (1.5,5) -- node[midway, fill=white]{$\ul \phi^*$} (1.5,2.5);
\end{scope}
\node (v1) at (4,-4.5) {$\mathcal O(-1, -1)$};
\node (v2) at (6,-6.5) {$\mathcal O(0,0)$};
\draw  (v1) edge[->] node[midway, fill=white]{$x_0\cdot y_1$} (v2);
\node (v3) at (8,-4.55) {$\;\;\;\;\;$};
\draw  (v3) edge[->] node[midway, fill=white]{$-x_1\cdot y_0$} (v2);
\node[circle, fill=black, scale=.5]  at (0,5.5) {};
\node at (-3.5,5.5) {$N_{\mathbb P^1}$};
\node at (-3.5,0) {$N_{\mathbb P^1\times \mathbb P^1}$};
\node at (10,-3.5) {$T^\phi$};
\node at (10,0) {$M_{\mathbb P^1\times \mathbb P^1,\mathbb R}/M_{\mathbb P^1\times \mathbb P^1}$};
\node at (10,5.5) {$M_{\mathbb P^1,\mathbb R}/M_{\mathbb P^1}$};
\draw[gray] (4,5.5) -- (8,5.5);
\draw[->] (6,-3) -- (6,-2.5);
\end{tikzpicture}
 	    \caption{Lattices and real tori which play a role in the construction of the resolution of the diagonal in $\PP^1\times \PP^1$.}
	    \label{fig:p1p1diagonal}
	\end{figure}
	On the left-hand side of \cref{fig:p1p1diagonal}, we have the map of fans for the diagonal inclusion $\phi \colon \PP^1\to \PP^1\times \PP^1 $. On the bottom of the right-hand side, we have the torus $M_{\PP^1, \RR}\times M_{\PP^1, \RR}$ with the stratification $\mathcal{S}_{\PP^1 \times \PP^1}$. The kernel of $\ul \phi^*$, on the bottom right, inherits a stratification labeled by line bundles on $\PP^1\times \PP^1$. The corresponding diagram gives a resolution for $\phi_*\mathcal O_{\PP^1}$. This can be seen, for instance, by restricting to the four toric charts (setting some of the $x_i, y_i$ to $1$) as in the previous example.
\end{example}

\begin{example}[Resolution of the diagonal on $\PP^2\times \PP^2$]
	While it is beyond our ability to draw the map of fans for the diagonal embedding $\phi \colon  \PP^2\to \PP^2\times \PP^2 $, the torus $T^\phi$ is two-dimensional. The stratification with the associated resolution overlayed is drawn in \cref{fig:p2DiagonalResolution}.
	\begin{figure}
    \centering
		\scalebox{.5}{\begin{tikzpicture}[scale=2]
\newcommand{\OO}{\mathcal O}
\usetikzlibrary{calc, decorations.pathreplacing,shapes.misc}
\usetikzlibrary{decorations.pathmorphing}

\tikzstyle{fuzz}=[
    postaction={draw, red, decorate, decoration={border, amplitude=0.15cm,angle=90 ,segment length=.15cm}},
]
\tikzstyle{antifuzz}=[
    postaction={draw, blue, decorate, decoration={border, amplitude=0.15cm,angle=-90 ,segment length=.15cm}},
]

\draw  (-1.5,3) rectangle (3.5,-2);
\draw[fuzz](1,3) -- (1,-2);
\draw[fuzz](-1.5,0.5) -- (3.5,0.5) ;
\draw[fuzz] (3.5,-2) -- (-1.5,3);
\draw[antifuzz](1,3) node (v7) {} -- (1,-2);
\draw[antifuzz](-1.5,0.5) -- (3.5,0.5) node (v6) {};
\draw[antifuzz] (3.5,-2) node (v10) {} -- (-1.5,3);
\node[fill=white, fill opacity=50] (v2) at (1,0.5) {$\OO(0)$};
\node[fill=white, fill opacity=50] (v11) at (-1.5,3) {$\OO(-1,-1)$};
\node[fill=white, fill opacity=50] (v1) at (-1.5,0.5) {$\OO(-1,-1)$};
\node[fill=white, fill opacity=50] (v4) at (1,-2) {$\OO(-1,-1)$};
\node[fill=white, fill opacity=50] (v3) at (-0.5,-1) {$\OO(-1,-2)$};
\node[fill=white, fill opacity=50] (v8) at (2.5,2) {$\OO(-2,-1)$};
\draw  (v1) edge (v2);
\node (v9) at (3.5,3) {};
\node (v5) at (-1.5,-2) {};

\draw[thick, ->]  (v3) edge node[midway, fill=white, fill opacity=50]{$1    \cdot  y_0$} (v4);
\draw[thick, ->]  (v3) edge node[midway, fill=white, fill opacity=50]{$1    \cdot  y_2$} (v5);
\draw[thick, ->]  (v3) edge node[midway, fill=white, fill opacity=50]{$1    \cdot  y_1$} (v1);
\draw[thick, ->]  (v1) edge node[midway, fill=white, fill opacity=50]{$ x_2 \cdot  y_0$}(v2);
\draw[thick, ->]  (v6) edge node[midway, fill=white, fill opacity=50]{$ x_0 \cdot  y_2$}(v2);
\draw[thick, ->]  (v4) edge node[midway, fill=white, fill opacity=50]{$ x_2 \cdot  y_1$}(v2);
\draw[thick, ->]  (v7) edge node[midway, fill=white, fill opacity=50]{$ x_1 \cdot  y_2$}(v2);
\draw[thick, ->]  (v8) edge node[midway, fill=white, fill opacity=50]{$ x_0 \cdot  1$}(v7);
\draw[thick, ->]  (v8) edge node[midway, fill=white, fill opacity=50]{$ x_1 \cdot  1$}(v6);
\draw[thick, ->]  (v8) edge node[midway, fill=white, fill opacity=50]{$ x_2 \cdot  1$}(v9);
\draw[thick, ->] (v10) edge node[midway, fill=white, fill opacity=50]{$ x_0 \cdot  y_1$}(v2);
\draw[thick, ->] (v11) edge node[midway, fill=white, fill opacity=50]{$ x_1 \cdot  y_0$}(v2);
\end{tikzpicture} }
  \caption{The resolution of the diagonal of $\PP^2\times \PP^2$ (signs omitted).}
  \label{fig:p2DiagonalResolution}
	\end{figure}
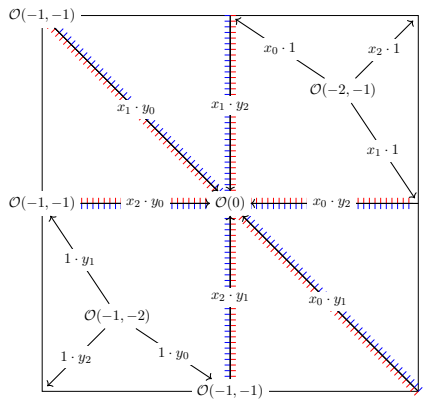
	The red (respectively blue) lines represent the 1-dimensional cones from the first (respectively second) fan in $\Sigma_{\PP^2}\times \Sigma_{\PP^2}$.
\end{example}
\section{Proofs} \label{sec:Resolutionproofs}
In this section, we prove the lemmas whose proofs were omitted in \cref{sec:Resolutionideas}.

\subsection{\texorpdfstring{\cref{lem:kunneth}}{Property}: K\"unneth Formula}
\label{pf:kunneth}

We will prove that if $\phi_1 \colon \Y_1\to \X_1 $ and $\phi_2 \colon  \Y_2\to \X_2 $ are toric immersions, then \[C_\bullet(S^{\phi_1\times \phi_2}, \mathcal O^{\phi_1\times \phi_2})=C_\bullet(S^{\phi_1}, \mathcal O^{\phi_1})\tensor C_\bullet(S^{\phi_2}, \mathcal O^{\phi_2}).\]
Let $\pi_i: \X_1\times \X_2\to \X_i$ for $i \in \{1,2\}$ be projection onto a factor and let $(\ul\pi_i, \ul\Pi_i)$ be the associated map of stacky fans.
The stratification $S^{\phi_1\times\phi_2}$ of $T^{\phi_1\times \phi_2}=T^{\phi_1}\times T^{\phi_2}$ is by hyperplanes of the form $T^{\rho_1}\times T^{\phi_2}$ and $T^{\phi_1}\times T^{\rho_2}$ where $\rho_i\in \Sigma_i(1)$. It follows that $S^{\phi_1\times\phi_2}$ is the product stratification $S^{\phi_1}\times S^{\phi_2}$. On $(\strata_1,\strata_2)\in S^{\phi_1\times \phi_2}$ it is clear that $\bF(\strata_1,\strata_2)=\ul \Pi_1^*\bF_1(\strata_1)+\ul \Pi_2^*\bF_2(\strata_2)$ and thus so $\mathcal O^{\phi_1\times \phi_2}(\strata_1, \strata_2)=\pi^*_1\mathcal O^{\phi_1} (\sigma_1) \tensor \pi^*_2\mathcal O^{\phi_2}(\strata_2)$.

Write $\strata\gtrdot\stratb$ is $\strata>\stratb$ and $|\strata|-|\stratb|=1$.
Additionally, if $\gamma_{12} \colon (\strata_1, \strata_2)\gtrdot(\stratb_1, \stratb_2)$ it follows that either $\strata_1\gtrdot\stratb_1$, $\strata_1=\stratb_2$ or $\strata_1=\strata_2, \stratb_1\gtrdot\stratb_2$. 
It follows that $C_\bullet(S^{\phi_1\times \phi_2}, \mathcal O^{\phi_1\times \phi_2})=
\pi_1^*C_\bullet(S^{\phi_1}, \mathcal O^{\phi_1})\tensor \pi_2^*C_\bullet(S^{\phi_2}, \mathcal O^{\phi_2})$

\subsection{\texorpdfstring{\cref{lem:functorialityRestriction}}{Property}: Restriction Homotopy Functoriality}
\label{pf:restrictionHomotopy}
	We now prove the ``meat'' of our approach: \cref{lem:functorialityRestriction}.
	We divide the proof into two cases. The first case, where there exists a $\rho'\in \Sigma(1)$ parallel to $\rho$, is straightforward. In that case, $S^\phi$ and $S^{\phi_\rho}$ agree as stratifications, and we have a commutative diagram of functors
	\[\begin{tikzcd}
		\EP(S^\phi) \arrow[equal]{r} \arrow{d}{\mathcal O^\phi} & \EP(S^{\phi_\rho}) \arrow{d}{\mathcal O^{\phi_\rho}}\\
		\Coh(X)\arrow{r}{i^*_\rho}& \Coh(X_\rho)
	\end{tikzcd}
	\]
    from which we obtain an isomorphism of complexes $C_\bullet(S^\phi,  \mathcal O^\phi) =i^*_\rho C_\bullet(S^{\phi_\rho}, \mathcal O^{\phi_\rho})$.

	We now examine the second case, where $\sS^\phi$ and $\sS^{\phi_\rho}$ do not agree. The stratification $\sS^\phi$ is a refinement of $S^{\phi_\rho}$, and we can draw inspiration from discrete Morse theory.
	\begin{thm}[\cite{forman1998morse}]
		Consider a CW complex $S$, and $\ZZ$ be a constant sheaf on $M$, the total space of $S$. Given a discrete Morse function $f\colon S\to \RR$, we can produce a chain complex  $C_\bullet(S, f;\ZZ)$ (the discrete Morse complex of $f$), and another CW complex $\tilde S$ (whose cells are the critical cells of $f$) so that 
		\begin{itemize}
			\item We have a homotopy equivalence $\HE\colon C_\bullet(\tilde S, \ZZ)\sim C_\bullet(S, f;\ZZ)$, and the spaces $S$ and $\tilde S$ are homotopic. 
			\item The discrete Morse complex of $f$ agrees with the cellular complex of $\tilde S$, i.e., $C_\bullet(S, f;\ZZ)=C_\bullet(\tilde S;\ZZ)$.
		\end{itemize}
	\end{thm}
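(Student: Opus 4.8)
The plan is to prove the two bulleted claims separately, reducing everything to the combinatorial data underlying a discrete Morse function $f$: the induced \emph{acyclic matching} on the face poset of $S$, which partitions the cells into \emph{critical} cells and matched pairs $(\sigma,\tau)$, where $\tau$ is a regular codimension-one face of $\sigma$ and $f(\tau)\ge f(\sigma)$. The first step is to record the standard fact that a genuine discrete Morse function produces such a matching and, crucially, that this matching is \emph{acyclic}: the associated gradient vector field $V$ has no nontrivial closed $V$-paths.

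For the topological statement I would run Forman's sublevel-set argument. Writing $M(c)=\{x\in M : f(x)\le c\}$, the two local lemmas are: (i) if no critical cell has critical value in an interval $(a,b]$, then $M(b)$ deformation retracts onto $M(a)$, because each cell of $M(b)\setminus M(a)$ lies in a matched pair both of whose members are in that range, and removing such a pair is an elementary collapse; acyclicity of the matching is exactly what allows these collapses to be ordered consistently and glued into one deformation retraction. (ii) If exactly one critical cell $\sigma^{(p)}$ has critical value in $(a,b]$, then $M(b)$ is homotopy equivalent to $M(a)$ with a single $p$-cell attached along its boundary. Sweeping $c$ upward through all critical values — finitely many, since $S$ is finite — builds a CW complex $\tilde S$ with one $p$-cell per critical $p$-cell, together with a homotopy equivalence $M\simeq |\tilde S|$; the attaching maps are read off from the gradient flow joining consecutive critical cells.

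For the chain-level statement I would use algebraic discrete Morse theory (the Gaussian-elimination viewpoint). In the cellular chain complex $C_\bullet(S;\ZZ)$, each matched pair $(\sigma,\tau)$ contributes an entry $[\sigma:\tau]=\pm 1$ to the boundary matrix — a unit precisely because $\tau$ is a regular face of $\sigma$ — and cancelling a single unit entry of a differential is a standard move producing a chain-homotopy-equivalent complex with two fewer generators and a differential acquiring ``zig-zag'' correction terms. Carrying this out for all matched pairs at once, acyclicity guarantees that these corrections assemble into finite sums indexed exactly by the $V$-paths of $f$, so the end result is the Morse complex $C_\bullet(S,f;\ZZ)$ together with an explicit chain homotopy equivalence $\HE$ to $C_\bullet(S;\ZZ)$. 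Comparing this with the cellular chain complex of $\tilde S$, whose generators are the critical cells and whose incidence numbers were computed above to be the same $V$-path counts, yields $C_\bullet(S,f;\ZZ)=C_\bullet(\tilde S;\ZZ)$ and hence the homotopy equivalence $\HE\colon C_\bullet(\tilde S,\ZZ)\sim C_\bullet(S,f;\ZZ)$.

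The main obstacle in both halves is the systematic use of acyclicity: without it the elementary collapses cannot be organized into a single deformation retraction, and the algebraic cancellations need not terminate in a well-defined differential. The real content is therefore (a) verifying that a discrete Morse function yields an acyclic matching, and (b) leveraging acyclicity — via the partial order it induces on cells — to upgrade the many \emph{local} moves (collapses, one-entry cancellations) into one coherent \emph{global} statement. A secondary, routine point is the sign bookkeeping that identifies the product of incidence numbers along a $V$-path with the coefficient appearing in the Morse differential, which is what makes the final equality $C_\bullet(S,f;\ZZ)=C_\bullet(\tilde S;\ZZ)$ hold on the nose rather than merely up to isomorphism.
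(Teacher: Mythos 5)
The paper does not actually prove this statement: it is imported verbatim from Forman, and even the version the paper genuinely uses (the quiver/sheaf-coefficient invariance statement, \cref{thm:MorseInvariance} in \cref{app:morseForQuivers}) is attributed to Forman and Sk\"oldberg with the proofs explicitly omitted. So there is nothing in the paper to compare your argument against line by line; what I can say is that your sketch is a faithful outline of the standard proofs. Your topological half (sublevel complexes, elementary collapses across matched pairs when no critical value is crossed, cell attachment at each critical cell) is Forman's original argument for the CW-structure statement, and your algebraic half via Gaussian elimination on unit entries of the boundary matrix, with acyclicity guaranteeing that the zig-zag corrections terminate and are indexed by $V$-paths, is precisely the Sk\"oldberg-style argument that the paper's appendix relies on. The one place where your write-up is lighter than the actual content is the final identification $C_\bullet(S,f;\ZZ)=C_\bullet(\tilde S;\ZZ)$: equating the incidence numbers of the attaching maps of the constructed complex $\tilde S$ with the signed $V$-path counts is Forman's Theorem 8.10 and requires a genuine degree computation, not just ``sign bookkeeping''; moreover $\tilde S$ is only determined up to homotopy equivalence, so the on-the-nose equality is something one arranges by choosing the attaching maps produced by the flow. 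This is a matter of emphasis rather than a gap, and the overall plan is sound.
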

	In particular, when $S$ is a refinement of $\tilde S$, there exists a discrete Morse function $f\colon S\to \RR$ whose critical cells are in bijection with the cells of $\tilde S$, so that $\HE\colon C_\bullet(S;\ZZ)\sim C_\bullet(S, f;\ZZ)$. \Cref{app:morseForQuivers} reviews the construction of this complex for certain quivers generalizing CW complexes and homology with coefficients in an abelian category $\mathcal C$ in place of $\ZZ$.

 We now sketch the proof of \cref{lem:functorialityRestriction} in the case where $S^\phi\neq S^{\phi_\rho}$. We first define an acyclic partial matching on the strata of $S^{\phi}$. The discrete Morse homology machinery ``simplifies'' our complex, giving us a new Morse quiver $\widetilde{S^\phi}$ with a new sheaf $\widetilde{ i^*_\rho\mathcal O^\phi}$. Then invariance of discrete Morse homology (the version we need is a variation of Forman's theorem, and given in \cref{thm:MorseInvariance}) provides us the homotopy equivalence which is the middle arrow in the diagram below. 
\[
	\begin{tikzcd}[column sep = 2cm]
		S^\phi\arrow{d}{\mathcal O^\phi}\arrow[equal]{r} & S^\phi \arrow{d}{i^*_\rho \mathcal O^\phi} & \widetilde {S^{\phi}}\arrow{d}{\widetilde{i^*_\rho \mathcal O^\phi}}\arrow[equal]{r} & S^{\phi_\rho}\arrow{d}{\mathcal O^{\phi_\rho}}\\
		\Coh(X) \arrow{r}{i^*_\rho} &\Coh(X_\rho)&  \Coh(X_\rho) \arrow[equal]{r} &\Coh(X_\rho)\\
		C_\bullet(S^\phi, \mathcal O^\phi) \arrow{r}{i^*_\rho} & C_\bullet(S^\phi, i^*_\rho\mathcal O^\phi)\arrow["\HE " above, " \text{\cref{thm:MorseInvariance}}" below]{r} & C_\bullet(\widetilde {S^\phi},\widetilde{ i^*_\rho\mathcal O^\phi}) \arrow[equal, "\text{ \cref{lem:equalityOfComplexes}}" below]{r} &  C_\bullet(S^{\phi_\rho},\mathcal O^{\phi_\rho}) 
	\end{tikzcd}.
\]
It remains to show that this new Morse complex is isomorphic (not simply homotopic!) to our desired result. 
\begin{prop}
	\label{lem:equalityOfComplexes}
	There exists an acyclic matching on $E^\ott$ on edges of $Q^\phi$ respecting the sheaf $i^*_\rho\mathcal O^\phi$ such that the Morse reduction (see \cref{def:MorseReduction})   $C_\bullet(\widetilde {S^\phi},\widetilde{ i^*_\rho\mathcal O^\phi})$ is isomorphic to $C_\bullet(S^{\phi_\rho},\mathcal O^{\phi_\rho})$.
\end{prop}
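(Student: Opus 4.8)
The strategy is to compare the two stratifications $\sS^\phi$ and $\sS^{\phi_\rho}$ explicitly and produce a canonical acyclic matching on $Q^\phi$ whose unmatched (critical) cells are exactly the cells of $Q^{\phi_\rho}$, and then check that the resulting Morse reduction recovers not just the cells but also the differentials of $C_\bullet(S^{\phi_\rho}, \mathcal O^{\phi_\rho})$. First I would set up the combinatorics: since we are in the case $\sS^\phi \neq \sS^{\phi_\rho}$, there is no ray of $\Sigma$ parallel to $\rho$, so passing from $\sS^{\phi_\rho}$ to $\sS^\phi$ amounts to adding back the single family of toric hyperplanes $T^\phi(\rho)$ (the preimage in $T^\phi$ of $0 \in \RR/\ZZ$ under $[m] \mapsto \ul\beta_X^* m(u_\rho)$). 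Thus every stratum $\tau'$ of $\sS^{\phi_\rho}$ is subdivided by $T^\phi(\rho)$ into: the strata lying on $T^\phi(\rho)$ (one dimension lower) and the strata in the two open halves it separates. I would package this as: each stratum $\tau$ of $\sS^\phi$ with $\tau \subset T^\phi(\rho)$ gets matched with the unique stratum $\tau^\uparrow$ of $\sS^\phi$ one dimension higher that has $\tau$ in its closure and lies on a fixed preferred side of $T^\phi(\rho)$ — this is the standard ``collapse across the new wall'' matching. The critical cells are exactly those strata of $\sS^\phi$ not on $T^\phi(\rho)$ and not of the form $\tau^\uparrow$; these biject naturally with the cells of $\sS^{\phi_\rho}$.

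Next I would verify the three things needed to invoke the discrete Morse machinery of \cref{app:morseForQuivers}: (i) the matching $E^\ott$ is acyclic — here I would use that the hyperplane arrangement defining $\sS^\phi$ is, locally in each smooth stacky chart, a genuine arrangement of coordinate hyperplanes (since the rays of $\Sigma$ project to a basis), so the local picture is the standard subdivision of $\RR^k$ by one extra hyperplane, for which acyclicity of the collapse matching is classical; (ii) the matching \emph{respects the sheaf} $i^*_\rho \mathcal O^\phi$ in the sense of \cref{def:MorseReduction}, i.e. the boundary morphism $\bm^\phi_\gamma$ across each matched edge $\gamma\colon \tau^\uparrow \to \tau$ becomes an \emph{isomorphism} after restriction along $i_\rho$. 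This is the crucial point: $\mathcal O^\phi(\tau^\uparrow)$ and $\mathcal O^\phi(\tau)$ differ only in the coefficient at the ray $\rho$, which is deleted in $\Sigma_\rho$, so by \cref{lem:thomsenPullback} their pullbacks $i_\rho^* \mathcal O^\phi(\tau^\uparrow)$ and $i_\rho^* \mathcal O^\phi(\tau)$ coincide, and the boundary morphism, which is multiplication by a monomial supported on $\rho$ (hence a unit on $X_\rho$), restricts to an isomorphism. This is precisely the phenomenon illustrated by the highlighted red arrow in \cref{fig:fltzCP2}.

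Then I would identify the Morse reduction. By the construction of the Morse complex (\cref{eq:morseComplex}), the reduced complex has underlying object $\bigoplus_{\text{critical }\tau} i_\rho^*\mathcal O^\phi(\tau)$, which by the bijection above and \cref{lem:thomsenPullback} is term-by-term equal to $\bigoplus_{\tau' \in \sS^{\phi_\rho}} \mathcal O^{\phi_\rho}(\ul i_\rho^* \tau')$, the underlying object of $C_\bullet(S^{\phi_\rho}, \mathcal O^{\phi_\rho})$. For the differential I would check that the sum over zig-zag paths through matched pairs (which is finite and, in fact, has length one in each local chart because of the coordinate-hyperplane structure) reproduces the boundary morphisms of $\sS^{\phi_\rho}$: a zig-zag $\tau_1 \to \tau^\uparrow \leftarrow \tau \to \tau_2$ realizes exactly the ``merge two adjacent cells of $\sS^\phi$ into one cell of $\sS^{\phi_\rho}$'' operation, and composing boundary morphisms (\cref{prop:cocoreMorphisms} and the composition law $\bm_{\strata,\stratb}\circ\bm_{\stratb,\stratc} = \bm_{\strata,\stratc}$) gives the boundary morphism of the coarser stratification, with the signs matching by the orientation conventions in \cref{prop:QisaMorseQuiver}.

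The main obstacle I expect is not the existence of the matching but bookkeeping of signs and the verification that the zig-zag sums truncate correctly — i.e. that no long alternating path contributes. For this one wants to work chart-by-chart where $\sS^\phi$ locally looks like a coordinate arrangement, use that the collapse matching there has no zig-zags of length greater than one passing between two distinct critical cells, and then argue that the globally-defined differential is determined by its restrictions to charts (as in Step 2 of the proof of \cref{thm:resolutionOfPoints}). Assembling the local isomorphisms into a global one, compatibly with the augmentation maps so that \cref{eq:extrarefdiagram} commutes, is then a routine gluing argument using functoriality of $\mathcal O^\phi$ on the exit-path category.
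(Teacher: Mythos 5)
Your strategy is the same as the paper's: collapse the new wall $T^{\phi}(\rho)$ by an acyclic matching, observe that the matched boundary morphisms become isomorphisms after applying $i_\rho^*$ because the relevant support functions differ only at $\rho$ (this part of your argument is exactly right and is the crux), identify the critical cells with the strata of $\sS^{\phi_\rho}$, and check that the zig-zag differential reproduces the coarse boundary morphisms. However, two of your combinatorial claims fail as stated. First, your matching and your description of the critical cells break down for strata of $\sS^{\phi_\rho}$ that are \emph{entirely contained} in $T^{\phi}(\rho)$. Take $X=\PP^2$ with $\rho$ the deleted ray, so the wall is the third circle through the triple point of the arrangement: the $0$-cell lies on the wall, yet it must be critical (it is the unique $0$-cell of $\sS^{\phi_\rho}$), and it has two one-dimensional cofaces on any ``preferred side'' (one on each of the two remaining circles), so your ``unique stratum one dimension higher on a fixed preferred side'' is not well defined, and your claim that no critical cell lies on the wall is false. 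The correct matching pairs a wall-stratum $\tau$ only with an adjacent piece of the coarse stratum of which $\tau$ is a codimension-one slice, i.e.\ it matches \emph{within} each coarse stratum; the critical cells are then, for each coarse stratum, the unique piece of its subdivision on which $\langle -, u_\rho\rangle$ is minimal. (Relatedly, since $T^{\phi}(\rho)$ is a union of parallel subtori, a coarse stratum may be cut into many pieces, not two, and gradient flow lines can be long zig-zags; they do contribute, but harmlessly, because every intermediate matched morphism is the identity.)

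Second, your acyclicity argument is purely local, but acyclicity is a global condition: a directed cycle in the modified quiver could a priori wrap around the torus $T^{\phi}$, and no chart-by-chart picture of a coordinate arrangement rules this out. The argument the paper uses is genuinely global: any candidate cycle is alternating with all of its lower strata contained in the wall, hence is contained in the closure of a single coarse stratum, which is simply connected after lifting to $M_\RR$; along the lift the value of $\langle -, u_\rho\rangle$ strictly increases, so the path cannot close up. Some version of this monotonicity argument is needed to complete your step (i).
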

\begin{proof} First, we construct the acyclic partial matching. 

We say that $\gamma\in \EP(\strata, \stratb)$ ends positively inside $T^{\phi}(\rho)$ if $|\strata|=|\stratb|+1$, $\gamma(1)\subset T^\phi(\rho)$, and the lift $\tilde \gamma\subset M$ satisfies $\gamma(0)(u_\rho)<\gamma(1)(u_\rho)$\footnote{In fact, since the strata are convex, we may arrange all of our paths to be straight lines, so the value of $u_\rho$ either increases, decreases or is constant along every path}.
Let $E^\ott$ denote the set of all $\gamma\in E(Q^\phi)$ which end positively inside $T^{\phi}(\rho)$. 
This is an acyclic partial matching on the quiver $Q^\phi$. It is a partial matching because for every $\strata$, 
\[\left|\bigcup_{E^\ott\cap\stratb\lessdot \strata}\EP(\strata,\stratb)\right|+ \left|\bigcup_{E^\ott\cap\strata\lessdot \stratc}\EP(\stratc,\strata)\right|\leq 1.\]
that is, every stratum exclusively is either the $\rho$-positive face of some stratum or has at most one $\rho$-positive face.
We now show that the partial matching is acyclic. Let $\eps:=\strata_0\tto\strata_1\ott\strata_2\tto\cdots \tto\strata_{2k}$ be a candidate cycle\footnote{Because of the partial matching condition, the $\ott$ may occur at most every other in a path. Since the $\ott$ decrease the index, while the $\tto$ arrows increase the index, any candidate cycle must be alternating}. 
Because the $\ott$ arrows end positively inside $T^{\phi}(\rho)$, we learn that every $\strata_{2i+1}\subset T^{\phi}(\rho)$. It follows that there exists $\strata\in S^{\phi_\rho}$ whose image contains the images of $\strata_i$ as subsets. Lift $\eps$ to a path $\tilde \eps$ in $M$. Since $\eps$ is contained in the simply connected region $\strata$, $\eps$ is a cycle if and only if its lift to $M$ is a cycle. However, the value of $u_\rho: M\to \RR$ increases along the path $\eps$, so $\tilde \eps$ is not a cycle.

Given a quiver $Q^\phi$ with an acyclic matching, there exists a Morse reduction $\tilde Q^\phi$, whose vertices are critical vertices of $Q^\phi$ and whose edges are Morse trajectories  (\cref{def:MorseReduction}). We now prove that the quivers $\tilde Q^\phi$ and $Q^{\phi_\rho}$ are isomorphic. For every stratum $\sigma \in Q^{\phi_\rho}$, look at the stratum $\stratb_i\in Q^\phi$ with $\stratb\subset \strata$ and $|\stratb_i|=|\strata|$. We can find lifts $\tilde {\stratb_i}, \tilde \strata$ so that $\tilde{\stratb_i}\subset \tilde \strata$. Then there is a unique $\tilde \stratb_0$ on which $u_\rho: M\to \RR$ achieves a minimal value, and $\stratb_0\in \Crit(Q^\phi, E^\ott)$. This sets up a bijection between vertices of $\tilde Q^\phi$ and $Q^{\phi_\rho}$. The bijection between gradient flow lines $\mathcal M(\strata, \stratb)$ which form the edges of $\tilde Q^\phi$ and the exit paths $\EP(\strata, \stratb)$ which form the edges of $Q^{\phi_\rho}$ is constructed in a similar manner. Let $H: \tilde Q^\phi\to Q^{\phi_\rho}$ denote this isomorphism of quivers. 

We now show that this acyclic matching respects $i^*_\rho\mathcal O^\phi: Q^\phi\to  \Coh(X)$. For, $\stratb\ott\strata \colon \gamma $ an exit path in $E^\ott$, the boundary morphism $\mathcal O^\phi(\gamma) \colon  \mathcal O^\phi(\strata)\to \mathcal O^\phi(\stratb) $ is computed by taking lifts $\tilde \stratb\lessdot \tilde \strata \subset M$ and taking the section indexed by the origin in $\Delta_\beta(\bF(\stratb)-\bF(\strata))$. 
Since $\bF(\stratb)(u_{\rho'})=\bF(\strata)(u_{\rho'})$ for all $\rho'\in (\Sigma(1)\setminus \{\rho\})= \Sigma_{\rho}(1)$, we obtain that $i^*_\rho\mathcal O^\phi(\stratb)=i^*_{\rho}\mathcal O^\phi(\strata)$, and $i^*_\rho \mathcal O^\phi(\gamma)=\id$.

To show that $C_\bullet(\widetilde {S^\phi},\widetilde{ i^*_\rho\mathcal O^\phi})$ is isomorphic to $ C_\bullet(S^{\phi_\rho},\mathcal O^{\phi_\rho})$, we must show that we have the following commutative diagrams:
\[
	\begin{tikzcd}
		\widetilde{Q^\phi} \arrow{rr}{\sim}\arrow{dr}{\widetilde{ i^*_\rho\mathcal O^\phi}} & & Q^{\phi_\rho} \arrow{dl}{\mathcal O^{\phi_\rho}} \\& \Coh(X_\rho)
	\end{tikzcd}
	\begin{tikzcd}
		\widetilde{Q^\phi} \arrow{rr}{\sim} \arrow{dr}{\tilde{\sgn}} & & Q^{\phi_\rho} \arrow{dl}{\sgn_\rho} \\ &\{\pm 1\}
	\end{tikzcd}
\]

For the first diagram, let $\eps \colon  \strata\to \stratb $ be an edge of $\widetilde{Q^\phi}$. We can expand it into its constituent edges from $Q^\phi$,  $\eps=\strata_1\xrightarrow{\gamma_1}\stratb_1\xleftarrow{\delta_1}\strata_2\xrightarrow{\gamma_2}\cdots \xrightarrow{\gamma_k}\stratb_k$. 
Observe that $i^*_\rho \mathcal O^\phi(\strata_i) = i^*_\rho \mathcal O^\phi(\stratb_i)= i^*_\rho\mathcal O^\phi(\strata_{i+1})$ whenever $i+1<k$. From the earlier discussion showing that $E^\ott$ respect $i^*_\rho \mathcal O^\phi$, we obtain that the morphisms $i^*_\rho\mathcal O^\phi(\delta_j)=\id$. A similar discussion shows that $i^*_\rho\mathcal O^\phi(\gamma_i)=\id$ when $i\neq k$. It follows that 
\begin{align*}
	\widetilde {i^*_\rho\mathcal O^\phi} (\epsilon)=&i^*_\rho\mathcal O^\phi(\gamma_k)\circ (i^*_\rho\mathcal O^\phi(\delta_{k-1}))^{-1}\circ\cdots\circ(i^*_\rho\mathcal O^\phi(\delta_1))^{-1}\circ  i^*_\rho\mathcal O^\phi(\gamma_1)\\
	=&i^*_\rho O^\phi(\gamma_k).
\end{align*}
This shows that the left diagram commutes. 

To show that the right diagram commutes requires orienting the stratification $\sS^{\phi_\rho}$. For any stratum $\strata\in \sS^{\phi_\rho}$, let $\stratb$ be the corresponding critical stratum of $\sS^\phi$. Since the image of $\stratb$ is a codimension 0 subset of $\strata$, we can define the orientation of $\strata$ to be the one that agrees with $\stratb$. For this choice of orientation, the signs assigned to Morse flow lines in \cref{def:MorseReduction} agrees with the signs assigned to exit paths for $\sS^{\phi_\rho}$.
\end{proof}

Finally, to complete the proof of \cref{lem:functorialityRestriction}, we observe that the identity stratum $\sigma_0$ is always critical of the lowest degree so $\Psi$ acts by the identity on $\sigma_0$. It follows that  \cref{eq:extrarefdiagram} commutes. 

\subsection{\texorpdfstring{\cref{lem:pushforwardquotient}}{Property}: Pushforward along finite quotients}

We have a stacky fan $(\Sigma, \ul\beta\colon L\to N)$  and a finite group quotient $(\ul\pi, \ul \Pi)\colon (\Sigma_X, \ul\beta_X)\to (\Sigma_{X'}, \ul\beta_{X'})$ (\cref{def:finitequotient}). Let $(\ul\phi', \ul\Phi')\colon (\Sigma_{Y'}, \ul\beta_{Y'})\to (\Sigma_{X'}, \ul\beta_{X'})$ be a closed substack. Construct from this the fan $(\Sigma_Y, \ul \beta_Y\colon L_Y\to N_Y)$ where $L_Y=L_{Y'}$, and $N_Y$ fits into the pullback square

\[
	\begin{tikzcd}
		N_Y \arrow{r}{\ul \phi} \arrow{d}{\ul \pi_Y}& N_X \arrow[hook]{d}{\ul \pi_X}\\
		N_{Y'}\arrow[hook]{r}{\ul \phi'}& N_{X'}
	\end{tikzcd}
\]
Because $\ul \phi', \ul\pi_X$ are injective, the maps $\ul\phi$ and $\ul\pi$ are injective as well. We obtain an injection $\coker(\ul\phi)\into \coker(\ul \phi')$. Since the latter is torsion free, the former is as well. Similarly, the injection $\coker(\ul\pi_Y)\into \coker(\ul\pi_X)$ tells us that the former is a finite group. 
We conclude that we have a pullback square
\[\begin{tikzcd}
	(\Sigma_Y, \ul{\beta}_Y) \arrow{r}{(\ul\phi, \ul \Phi)} \arrow{d}{(\ul\pi_Y, \ul \Pi_Y)} & (\Sigma_X, \ul{\beta}_X) \arrow{d}{(\ul\pi_X, \ul \Pi_X)}\\
	(\Sigma_{Y'}, \ul{\beta}_{Y'}) \arrow{r}{(\ul\phi', \ul \Phi')}& (\Sigma_{X'}, \ul{\beta}_{X'})
\end{tikzcd}
\]
making $\Y\to\X$ a closed immersion, and $\Y\to \Y'$ a finite group quotient.

By \cref{prop:quotientOfLineBundle}, we have $\phi'_*(\pi_Y)_*\mathcal O_\Y=\bigoplus_{q\in \ker(\tilde \pi_Y)}\phi'_*\mathcal O_{\Y'}(\bF(q))$; i.e. the pushforward of the structure sheaf is the sum over all possible ways to twist the structure sheaf by a character of $G_{\beta_Y}$.
We want to show that  $\pi_*C_\bullet(S^\phi, \mathcal O^\phi)$ admits a similar decomposition into sums of $C_{\bullet}(S^{\phi'}, \mathcal O^{\phi'})$ which have been twisted by a character of $G_{\beta_Y}$.
We have the following exact sequence of tori.
\[
	\begin{tikzcd}
		\ker(\tilde \pi_\phi)\arrow{d}\arrow{r}& \ker(\tilde \pi_X)\arrow{d}\arrow{r}& \ker(\tilde \pi_Y)\arrow{d}\\
		T^{\phi'} \arrow{r} \arrow{d}{\tilde \pi_\phi}& M_{X'}\tensor \RR/M_{X'}\tensor \RR \arrow{d}{\tilde \pi_X} \arrow{r}{\tilde\phi'}& M_{Y'}\tensor \RR/M_{Y'}\arrow{d}{\tilde \pi_Y }\\
		T^{\phi}\arrow{r} & M_X\tensor \RR/ M_X  \arrow{r}{\tilde \phi} & M_Y\tensor \RR/M_Y
	\end{tikzcd}
\]
The bottom two rows of this diagram split. Furthermore, the splitting can be chosen to commute with the vertical arrows. It follows that the top row of the diagram splits so we may write $\ker(\tilde \pi_X)=\ker(\tilde \pi_\phi)\times \ker(\tilde \pi_Y)$.
Write $i_1:\ker(\tilde \pi_\phi)\to \ker(\tilde \pi_X)$ and $i_2:\ker(\tilde \pi_Y)\to \ker(\tilde \pi_X)$.
Geometrically, this decomposition gives us a splitting 
\[\tilde \pi_X^{-1}(T^\phi)=\bigsqcup_{q\in \ker(\tilde \pi_Y)}T^{\phi'}+i_2(q)\]
Since $T^{\phi'}(\rho)= (\tilde \pi_\phi)^{-1}(T^{\phi})(\rho)$, $\tilde \pi_\phi$ sends strata to strata. Therefore, we can partition the stratification $S^{\phi'}$ into $\bigcup_{\strata\in S^\phi}\pi_\phi^{-1}(\strata)$. This also can be understood in terms of our splitting, as after picking preferred lifts $\tilde \strata\in\pi_\phi^{-1}(\strata)$, we have $S^{\phi}=\bigcup_{\strata\in S^\phi}\bigcup_{p\in \ker(\tilde \pi_\phi)} \tilde \strata + p$.

Combining this with \cref{prop:quotientOfLineBundle},
\begin{align*}
    (\pi_X)_*C_\bullet(S^\phi, \mathcal O^\phi)=&\bigoplus_{\strata\in S^\phi} (\pi_X)_*\mathcal O_{\X}(\bF(\strata))\\
    \intertext{By \cref{lemma:thomsenQuotient},}
    =&\bigoplus_{\strata\in S^\phi}\bigoplus_{(p, q)\in \ker(\tilde \pi_X)}\mathcal O_{\X'}(\bF'(\tilde\strata+(p, q)))\\
    =&\bigoplus_{\strata\in S^\phi}\bigoplus_{q \in \ker(\tilde \pi_Y)}\bigoplus_{p\in \ker(\tilde \pi_\phi)}\mathcal O_{\X'}(\bF'(\tilde\strata+(p, q)))\\
    =&\bigoplus_{q \in \ker(\tilde \pi_Y)}\bigoplus_{\strata'\in S^{\phi'}}\mathcal O_{\X'}(\bF'(\strata'+(0, q)))\\
	=&\bigoplus_{q\in \ker(\tilde \pi_Y)}C_{\bullet}(S^{\phi'}, \mathcal O^{\phi'})\tensor \mathcal O_{\X'}(\bF'(q)).
\end{align*}
Given any exit path $\gamma\in \EP(\strata,\stratb)$, it also follows from \cref{lemma:thomsenQuotient} that $\pi_* f_\gamma^\phi = \bigoplus_{\gamma'\in \tilde\pi_X^{-1}(\gamma)} \gamma'^\phi$. Since every exit path in $S^{\phi'}$ is isotopic to exactly one lift of an exit path in $S^\phi$, we obtain that the differentials on $\pi_*C_\bullet(S^\phi, \mathcal O^\phi)$ and $\bigoplus_{q\in \ker(\tilde \pi_Y)}C_{\bullet}(S^{\phi'}, \mathcal O^{\phi'})\tensor \mathcal O_{\X'}(\bF'(q))$  agree as well. 
Similarly, the pushforward of the augmentation map $\eps\colon C_\bullet(S^\phi, \mathcal O^\phi)\to \phi_*\mathcal O_Y$ is 
\[
\bigoplus_{q\in \ker(\tilde \pi_Y)} 
\bigoplus_{p\in \ker(\tilde \phi)} \eps\tensor \id_{\mathcal O_{\X'}(\bF(p, q))}. 
\]
This splits as the sum of maps $\eps_q: C_{\bullet}(S^{\phi'}, \mathcal O^{\phi'})\tensor \mathcal O_{\X'}(\bF'(q)) \to \phi'_*\mathcal O_{\Y'}\tensor \mathcal O_{\X'}(\bF'(q)$. Specializing to $q=0$ yields that $C_{\bullet}(S^{\phi'}, \mathcal O^{\phi'})$ resolves $\phi'_*\mathcal O_{\Y'}$.

\subsection{\texorpdfstring{\cref{lem:pullbackquotient}}{Property}: Pullback along toric reductions}
First, observe that we have the following commutative diagram
\[\begin{tikzcd}
	 \; & M_\TT \arrow[equal]{r} & M_\TT\\
	\ker(\ul\phi^*)		 \arrow[hook]{r} & M_\X \arrow{r}{\ul\phi^*} \arrow{u} 						& M_\Y \arrow{u}\\
	\ker((\ul\phi/\TT)^*)\arrow[hook]{r} & M_{[\X/\TT]}\arrow{r}{(\ul\phi/\TT)^*} \arrow[hook]{u}{\ul \pi_\X^*} & M_{[\Y/\TT]} \arrow[hook]{u}{\ul \pi_\Y^*} 
\end{tikzcd}\]
From this diagram, we see that $\Im(\underline \pi_\X^*|_{\ker((\ul\phi/\TT)^*)})= \ker(\ul\phi^*)$. Since $\ul\pi_\X^*$ is injective, we obtain an isomorphism between $\ker((\ul\phi/\TT)^*)$ and $\ker(\ul \phi^*)$. Similarly, we obtain an isomorphism of tori, which by abuse of notation we denote
\[\tilde \pi \colon T^{\phi/\TT}\to T^{\phi}.\]
Note that $\X$ and $\X/\TT$ are quotients of the same toric variety $X_\Sigma$. For every 1-dimensional cone $\rho\in \Sigma(1)$, we have subtori of $T^{\phi}$ and $T^{\phi/\TT}$ determined by the maps 
\[\begin{tikzcd}
	T^{\phi(\rho)}     \arrow{r} &T^\phi       \arrow{rr}{\text{Eval at }\ul\beta_X(\rho)} & &\RR/\ZZ\\
	T^{\phi/\TT(\rho)} \arrow{r} &T^{\phi/\TT} \arrow{rr}{\text{Eval at }\ul\beta_X\ul\pi(\rho)} \arrow{u}{\tilde \pi} &&\RR/\ZZ \arrow[equal]{u}
\end{tikzcd}\]
so the sub-tori $T^{\phi(\rho)}$ and $T^{\phi/\TT(\rho)}$ are identified under the map $\tilde \pi$. It follows that the stratifications $S^\phi$ and $S^{\phi/\TT}$ agree. Thus, \cref{lem:pullbackquotient} reduces to checking the behavior of the Thomsen collection under $\tilde \pi$ which is done in \cref{lem:thomsenPullback}.
The commutativity $\pi^*_\TT\circ \alpha^\phi = \alpha^{\phi/\TT}\circ \pi^*_\TT$ is a consequence of $\tilde \pi$ preserving the identity of the torus.

\subsection{\texorpdfstring{\cref{lemma:functorialityCodim2}}{Property}: Open inclusion with equivariant codimension 2 complement}
The two sets
\begin{align*}
\{\rho\in \Sigma(1)\st \ul\beta_X \rho\not\in \Im(\phi)\} && \{\rho\in \Sigma^\circ(1)\st \ul\beta_{X^\circ}\rho\not\in \Im(\phi_{[Y/T]})\}
\end{align*}
agree.
Since the stratifications $S^{\phi_{[Y/T]}}$ and $S^{\phi}$ only depend on the data of these cones, we obtain by the same argument as the proof of \cref{lem:pullbackquotient} that $S^\phi=S^{\phi_{[Y/T]}}$. So the quivers $Q^\phi, Q^{\phi_{[Y/T]}}$ which define the complexes are isomorphic. By \cref{lem:exactEquivariantPushforward}, at any stratum $\strata\in Q^{\phi_{[Y/T]}}$, we have $\pi^*i_\flat\mathcal O^{\phi_{[Y/T]}}(\strata)=\mathcal O^{\phi}(\strata)$, and for any exit path $\gamma$ we have $\pi^*i_\flat f_\gamma^{\phi_{[Y/T]}}=f_\gamma^\phi$. It follows that $\pi^*i_\flat C_\bullet(S^{\phi_{[Y/T]}}, \mathcal O^{\phi_{[Y/T]}})=C_\bullet(S^\phi, \mathcal O^\phi)$.

\section{Toric Frobenius and generation} 
\label{sec:frobgen}

In this section, we relate the Thomsen collection to the toric Frobenius morphism and conjecture a generalization of \cref{maincor:generate}. To illustrate this behavior concisely, we will largely restrict attention to smooth toric varieties.

On any toric stack, there is a family of toric morphisms 
$$ \fF_\ell \colon \X \to \X $$
parameterized by positive integers $\ell \in \NN$ and given by the morphism of stacky fans where $\Phi$ and $\phi$ are both multiplication by $\ell$. These morphisms are often referred to as the \emph{toric Frobenius} morphisms. Restricting to the algebraic torus $\TT_N \simeq \mathbb{G}_m^n$, $\fF_\ell$ is simply given by
$$ (z_1, \hdots, z_n ) \mapsto (z_1^\ell, \hdots, z_n^\ell).$$

When $X$ is a smooth toric variety, it was originally shown by Thomsen \cite{thomsen2000frobenius} that the pushforward of any line bundle under $\fF_\ell$ splits as a direct sum. Alternative proofs of Thomsen's theorem have been given in \cite{bogvad1998splitting, achinger2015characterization}. We give another short proof using \cref{prop:quotientOfLineBundle}. With more care, it seems likely that the proof presented here applies to a large class of toric stacks.

\begin{thm}[\cite{thomsen2000frobenius}] \label{thm:thomsen}
    Suppose $X$ is a smooth toric variety, $\fF_\ell$ is the toric Frobenius morphism for $\ell \in \NN$, and $D$ is a divisor on $X$. Then,
        \begin{equation} \label{eq:thomsen}
        (\fF_\ell)_* \mathcal{O}_{X}(D) \simeq \bigoplus_{E\in \Pic(X)} \mathcal{O}_{X} (E)^{\oplus \mu(E,D)}
        \end{equation}
    where $\mu(E,D)$ is the number of $\TT_N$-invariant divisors in linearly equivalent to $D-\ell E$ with non-negative coefficients less than $\ell$. 
\end{thm}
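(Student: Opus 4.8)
The plan is to apply \cref{prop:quotientOfLineBundle} to the toric Frobenius morphism $\fF_\ell$, recognizing it as a finite quotient in the sense of \cref{def:finitequotient}. First I would set up the relevant stacky-fan data. Since $X = X_\Sigma$ is a smooth toric variety, we present it via the stacky fan $(\Sigma, \ul\id \colon L \to L)$, so $G_{\id}$ is trivial and the source of $\fF_\ell$ is literally the variety $X$. The Frobenius $\fF_\ell$ is induced by the morphism of stacky fans $(\ul\pi, \ul\Pi)$ where $\ul\Pi \colon L \to L$ is multiplication by $\ell$ on the lattice carrying the fan $\Sigma$ (which is a map of fans since it scales each cone to itself) and $\ul\pi = \ul\Pi$ on the $N$-side. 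Here $\ul\Pi$ is \emph{not} the identity, so strictly speaking this is not literally in the form of \cref{def:finitequotient}; however, I would observe that $\fF_\ell$ factors (up to the equivalence of \cref{prop:stabilization}/rescaling the lattice) as a change of group with finite cokernel: the target can be re-presented with lattice $L$ and structure map $\ul\beta' = (\cdot\ell)\colon L \to L$, i.e.\ $X \simeq [X_{\Sigma'}/G_{\beta'}]$ where $\Sigma'$ is $\Sigma$ read through $\ul\Pi$, and then $(\ul\pi,\ul\Pi)$ with $\ul\Pi$ the identity on the common underlying fan is a change of group with $\coker(\ul\pi) = L/\ell L$, a finite group of order $\ell^n$. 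Since $\gcd(\ell, p)$ is irrelevant here —$X$ is a variety, not a stack with stacky structure— the hypothesis $|\coker(\ul\pi_*)| \in \kk^\times$ needed for \cref{prop:quotientOfLineBundle} should be checked; in fact for $X$ smooth the pushforward computation via the finite Fourier transform still works, but if one wants to stay inside the stated lemma one restricts to $p \nmid \ell$ and then removes the hypothesis afterward by a separate (classical) argument, or simply cites \cite{thomsen2000frobenius} for the characteristic-dividing case.

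Next I would run the formula. Writing $D = D_F$ for a support function $\sF$ on $\Sigma$ (every divisor class is so represented, \cref{subsec:stackbackground}), \cref{eq:quotientOfLineBundle} gives
\[
(\fF_\ell)_* \mathcal{O}_X(\sF) = \bigoplus_{[m] \in \coker(\ul\pi^* \colon M \to M)} \mathcal{O}_X\big(\ul\Pi_*(\sF - \ul\beta^* m)\big),
\]
where now $\ul\pi^* = (\cdot \ell)\colon M \to M$, so $\coker(\ul\pi^*) = M/\ell M$, a set of size $\ell^n$. I would then translate the pushforward support function $\ul\Pi_*(\sF - \ul\beta^* m)$ back into divisor language: $\ul\Pi_*$ post-composes with division by $\ell$ (the inverse of multiplication by $\ell$ on the cones), and $\ul\beta^* m$ corresponds to the principal divisor $\mathrm{div}(\chi^m)$. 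So each summand is $\mathcal O_X$ of the divisor $\tfrac{1}{\ell}(D - \mathrm{div}(\chi^m))$ read off on rays — and the key point is that this is an honest (integral) Cartier divisor precisely because $\fF_\ell$ being well-defined forces the relevant roundings to be exact; equivalently, the summands range over the line bundles $E$ such that $\ell E$ is linearly equivalent to $D$ minus some $\TT_N$-invariant divisor with coefficients in $\{0, 1, \dots, \ell-1\}$.

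The remaining step, and the only real bookkeeping, is the combinatorial reindexing: I would show that the multiset $\{[m] \in M/\ell M\}$ parametrizing the summands, when grouped by the isomorphism class $E$ of the resulting line bundle, produces exactly the multiplicity $\mu(E,D)$ as defined in the statement — i.e.\ the number of $\TT_N$-invariant divisors linearly equivalent to $D - \ell E$ with coefficients in $\{0,1,\dots,\ell-1\}$. This amounts to noting that choosing $[m] \in M/\ell M$ is the same as choosing a $\TT_N$-invariant divisor representative of the class $D - \ell E$ with coefficients reduced mod $\ell$ into the standard range $\{0,\dots,\ell-1\}$, which is bijective with the count defining $\mu$. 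I expect the \textbf{main obstacle} to be purely presentational: carefully matching the support-function / ceiling-function conventions of \cref{subsec:Thomsen,subsec:finiteQuotient} against the classical divisor-with-bounded-coefficients description of $\mu(E,D)$, and handling the re-presentation of the Frobenius target as a finite quotient cleanly (since $\ul\Pi$ is not literally the identity, one must either invoke \cref{prop:stabilization}-style lattice identifications or directly re-derive the Fourier-transform computation of \cref{subsubsec:characterPushforward} in this slightly more general setting). None of this is deep, but it is where an careless argument would go wrong.
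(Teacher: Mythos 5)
Your overall strategy coincides with the paper's: present $\fF_\ell$ in stacky-fan language, apply \cref{prop:quotientOfLineBundle} to get a sum over $M/\ell M$, and finish with the combinatorial reindexing against $\mu(E,D)$ (your description of that last bijection, via bounded-coefficient representatives of $D-\ell E$, is essentially the argument the paper gives). However, there is a genuine gap in the middle step. The re-presentation you propose does not work: the stacky fan $(\Sigma,\ \cdot\ell\colon L\to L)$ presents the quotient stack $[X_\Sigma/\mu_\ell^{\,n}]$, which is \emph{not} equivalent to $X$ as a stack (the action has nontrivial stabilizers along the toric boundary, and \cref{prop:stabilization} only covers adding lattice summands that map to zero, not rescaling). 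Consequently \cref{prop:quotientOfLineBundle} applied to $X\to[X/\mu_\ell^n]$ only lands you on the intermediate quotient stack; you still owe a second pushforward from $[X/\mu_\ell^n]$ down to its coarse space $X$. The paper makes this explicit by factoring $\fF_\ell = h_\ell\circ g_\ell$ through $[X/G_\ell]$ and computing $(h_\ell)_*$ separately as the sheaf of $G_\ell$-invariant sections.

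That second pushforward is precisely where your claim that ``$\fF_\ell$ being well-defined forces the relevant roundings to be exact'' fails. The invariant monomial sections of $\mathcal O(\sF-m)$ on $[X/G_\ell]$ are indexed by $\Delta_{\beta_\ell}(\sF - m)\cap \ell M$, and rescaling by $1/\ell$ produces the support function $u_\rho\mapsto \lceil (\sF(u_\rho)-\langle m,u_\rho\rangle)/\ell\rceil$; the divisor $\tfrac{1}{\ell}(D-\mathrm{div}(\chi^m))$ is not integral in general, and the floors in the resulting formula are genuinely present. If the roundings were exact as you assert, every summand $E$ would satisfy $\ell E\sim D-\mathrm{div}(\chi^m)$ on the nose and the multiplicity count would not match $\mu(E,D)$. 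Your subsequent ``equivalently'' reformulation (that $\ell E$ is $D$ minus a divisor with coefficients in $\{0,\dots,\ell-1\}$) is the correct statement, but it is a consequence of the floor computation, not of exactness. To repair the proof, insert the invariant-sections computation for the coarse-space map and derive the floor formula before running your final bijection.
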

\begin{proof}[Proof (Assuming $\ell$ is coprime to the characteristic $p$)] 
When considered as a map of stacky fans, $\fF_\ell$ can be factored as
\[\begin{tikzcd}
	N & N & N \\
	N & N & N
	\arrow["{\ul \beta_\ell = \ell \cdot }", from=1-2, to=2-2]
	\arrow["\id"', from=1-1, to=2-1]
	\arrow["{\ell \cdot }", from=2-1, to=2-2]
	\arrow["{\id}", from=1-1, to=1-2]
	\arrow["{\id}", from=2-2, to=2-3]
	\arrow["{\ell \cdot }", from=1-2, to=1-3]
	\arrow["\id", from=1-3, to=2-3]
\end{tikzcd}\]
giving us maps of toric stacks $ X\xrightarrow{g_\ell} \X_{\Sigma, \ell} = [X/G_\ell] \xrightarrow{h_\ell} X$.
The first map is a finite quotient by $G_\ell \simeq (\ZZ/\ell \ZZ)^{\dim(X)}$.
Let $F$ be a support function for $D$. We apply \cref{prop:quotientOfLineBundle} to obtain that 
\begin{align*}(g_\ell)_* \mathcal O_{X}(\sF)= &\bigoplus_{[m]\in \coker\left(\ul g_\ell^*: M\to M\right)} \mathcal O_{\X_{\Sigma, \ell}}((\ul G_\ell)_*\sF- m) \\
=&\bigoplus_{[m]\in M/\ell M} \mathcal O_{\X_{\Sigma, \ell}}(\sF- m)\end{align*}
We now describe the pushforward $(h_\ell)_*\mathcal O_{\X_{\Sigma, \ell}}(\sF- m)$.
For any $U\subset X_\Sigma$, we have 
$$
    (h_\ell)_*  \mathcal O_{\X_{\Sigma, \ell}}(\sF- m) (U) = \mathcal O_{\X_{\Sigma, \ell}}^{G_\ell}(\sF- m)((H_\ell)^{-1}(U)
    =\mathcal O_{\X_{\Sigma, l}}^{G_\ell}(\sF- m)(U)
$$
since $(H_\ell)^{-1}(U)$ is the union over the $G_\ell$ translates of $U$. 
The $G_\ell$ invariant sections of $\mathcal O_{\X_{\Sigma, \ell}}^{G_\ell}(\sF- m)$ correspond with those of a line bundle on $X$, with a basis of sections indexed by  $\Delta_{\beta_\ell}(\sF-m)\cap \ell M$. We compute
\begin{align*}
    \Delta_{\beta_\ell}(\sF+\ul\beta_\ell^*m)\cap \ell M =& \{\ell m'\in M \st \ell \langle m', u_\rho \rangle \geq F(u_\rho) - \langle m, u_\rho \rangle \text{ for all } \rho \in \Sigma(1) \}\\
    =&\left\{ m'\in M \st  \langle m', u_\rho \rangle \geq \frac{\sF(u_\rho) - \langle m, u_\rho \rangle }{\ell} \text{ for all } \rho \in \Sigma(1) \right\} \\
    =& \Delta (\sF_{m, \ell})
\end{align*}
where $\sF_{m,\ell}$ is the support function defined by $F_{m, \ell}(u_\rho) = \left\lceil \frac{F(u_\rho) - \langle m, u_\rho \rangle }{\ell} \right\rceil$ for all $\rho \in \Sigma(1)$ where $\lceil \cdot \rceil$ is the ceiling function. That is, we have shown
\begin{equation} \label{eq:Frobfloor}
    (\fF_\ell)_*\mathcal O_{X}(D)=\bigoplus_{[m]\in M/\ell M} \mathcal O_{X}\left(\sum_{\rho \in \Sigma(1)} \left\lfloor \frac{a_\rho - \langle m, u_\rho \rangle}{\ell} \right\rfloor D_\rho \right)
\end{equation} 
where $\lfloor \cdot \rfloor$ is the floor function, $D = \sum a_\rho D_\rho$, and we have changed the sign of $m$ for convenience. It remains to show that this agrees with \eqref{eq:thomsen}. That is, for a given divisor $E = \sum e_\rho D_\rho$ on $X$, we need to show that $\mu(E,D)$ is equal to the number of divisors that appear in \eqref{eq:Frobfloor} linearly equivalent to $E$. 

We choose a fundamental domain $V \subset M$ in bijection with $M/\ell M$. For $m \in V$, we have that $\sum \left\lfloor \frac{a_\rho - \langle m, u_\rho \rangle}{\ell} \right\rfloor D_\rho$ is linearly equivalent to $E$ if and only if there is an $m' \in M$ such that
$$ \left\lfloor \frac{a_\rho - \langle m, u_\rho \rangle}{\ell} \right\rfloor = e_\rho +  \langle m', u_\rho \rangle, $$
that is,
$$ \frac{a_\rho - \langle m, u_\rho \rangle}{\ell}=   e_\rho + \langle m', u_\rho \rangle + \frac{k_\rho}{\ell} $$
for some $k_\rho \in \{0, \hdots, \ell-1 \}$ and all $\rho \in \Sigma(1)$. Rearranging, we have
$$ a_\rho - \ell e_\rho = k_\rho + \langle m + \ell m', u_\rho \rangle $$
for all $\rho \in \Sigma(1)$. Note that $D' = \sum k_\rho D_\rho$ is a divisor linearly equivalent to $D - \ell E$ so we have produced the desired bijection since every $m'' \in M$ can be written uniquely as $m + \ell m'$ for $m \in V$ and $m' \in M$.
\end{proof}

\begin{rem} In characteristic $p$, the pushforward under $\fF_\ell$ computed in \cref{thm:thomsen} agrees with that of absolute Frobenius when $\ell = p^k$ \cite[Theorem 1]{thomsen2000frobenius}. Unfortunately, the proof presented here does not apply to that setting. However, note that \eqref{eq:Frobfloor} applies without any assumptions as the combinatorial argument relating this expression to \eqref{eq:thomsen} does not use the assumption on characteristic.
\end{rem}

\begin{rem} Since $\fF_\ell$ is an affine morphism, all higher direct images vanish, and \cref{thm:thomsen} also describes the derived pushforward. 
\end{rem}

\begin{rem}
    \cref{thm:thomsen} has been extended to smooth toric DM stacks in \cite{ohkawa2013frobenius}. Using that, the following discussion applies to that setting with minor modifications.
\end{rem}

We will be primarily interested in which line bundles are summands of $(\fF_\ell)_*\mathcal{O}_X(D)$ and not their multiplicity. Thus, for a divisor $D$ on a smooth toric variety $X$ we set $\Frob_\ell(D) \subset \Pic(X)$ to be the set of line bundles that are summands of $(\fF_\ell)_* \mathcal{O}_\X(D)$ for a given $\ell \in \NN$, and we set
$$ \Frob(D) = \bigcup_{\ell \in \NN} \Frob_\ell(D). $$
It follows from \cref{thm:thomsen} that $\Frob(D)$ is always finite \cite[Proposition 1]{thomsen2000frobenius}.

In the case $D=0$, we have $\Frob_\ell(0) = \Frob(0)$ for large highly divisible $\ell$. In fact, for large highly divisible $\ell$, the image of \eqref{eq:quotientOfLineBundle} coincides with the image of $M_{\ZZ[1/\ell]}/M$ as the stratification $\mathcal S_{\Sigma}$ is given by finitely many rational hyperplanes. As \eqref{eq:Frobfloor} identifies the image of $M_{\ZZ[1/\ell]}/M$ with $\Frob_\ell(0)$, we obtain the following.

\begin{cor}
    On a smooth toric variety, the Thomsen collection coincides with $Frob(0)$.
\end{cor}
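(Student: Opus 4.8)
The statement to prove is that on a smooth toric variety $X$, the Thomsen collection $\Theta$ coincides with $\Frob(0) = \bigcup_\ell \Frob_\ell(0)$. The plan is to compare two descriptions of subsets of $\Pic(X)$: on one side, the image of the map $\mathcal O(\bF(-)) \colon M_\RR/M \to \Pic(X)$ from \eqref{eq:lineBundleFromTorusPoints}; on the other, the set of line bundles that appear as summands of $(\fF_\ell)_* \mathcal O_X$ for some $\ell$. First I would recall from \cref{thm:thomsen}, and more specifically from \eqref{eq:Frobfloor} with $D = 0$, that
\[
(\fF_\ell)_* \mathcal O_X = \bigoplus_{[m] \in M/\ell M} \mathcal O_X\!\left( \sum_{\rho \in \Sigma(1)} \left\lfloor \frac{-\langle m, u_\rho\rangle}{\ell} \right\rfloor D_\rho \right),
\]
so that $\Frob_\ell(0)$ is exactly the image of the map $M/\ell M \to \Pic(X)$ sending $[m]$ to that line bundle. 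Rewriting the floor in terms of the point $\tfrac{m}{\ell} \in M_\RR$, this is precisely $\mathcal O(\bF(\tfrac{m}{\ell}))$ — i.e. $\Frob_\ell(0)$ is the image under $\mathcal O(\bF(-))$ of the $\tfrac{1}{\ell}$-torsion points $\tfrac{1}{\ell}M/M \subset M_\RR/M$. Hence $\Frob(0)$ is the image of $M_{\ZZ[1/\ell]}/M = \bigcup_\ell \tfrac{1}{\ell}M/M$, the subgroup of torsion points of $M_\RR/M$, which is dense.

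Next I would show this dense image equals the image of all of $M_\RR/M$, which is $\Theta$ by definition. The containment $\Frob(0) \subseteq \Theta$ is immediate since torsion points lie in $M_\RR/M$. For the reverse, the key observation is that $\bF \colon M_\RR \to \mathrm{SF}(\Sigma)$ is constant on each stratum of the hyperplane arrangement $\mathcal S_\Sigma$ defined by $\{\langle m, \beta(u_\rho)\rangle \in \ZZ\}_{\rho \in \Sigma(1)}$, and this arrangement consists of finitely many \emph{rational} hyperplanes (a fundamental domain has only finitely many strata, and $M$-periodicity handles the rest). Every stratum — being a nonempty relatively open rational polyhedral region — contains a rational point, hence a point of $\tfrac{1}{\ell}M/M$ for $\ell$ sufficiently divisible. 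Therefore every line bundle in the image of $\mathcal O(\bF(-))$ is already realized at some torsion point, so $\Theta \subseteq \Frob(0)$. Combining the two inclusions gives equality.

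The only mildly delicate point is making the last density/rationality argument precise: one must check that each stratum of $\mathcal S_\Sigma$ genuinely contains a rational point and that there is a \emph{single} $\ell$ that works simultaneously for all (finitely many) strata in a fundamental domain, so that in fact $\Frob_\ell(0) = \Frob(0) = \Theta$ for all large highly divisible $\ell$ — this sharper statement is what the excerpt asserts in the sentence preceding the corollary. This is not really an obstacle so much as a bookkeeping step: rationality of the hyperplanes follows because $\beta(u_\rho) \in N$ and the defining conditions $\langle m, \beta(u_\rho)\rangle \in \ZZ$ cut out rational affine subspaces, and a nonempty intersection/complement of rational halfspaces always meets $\tfrac{1}{\ell}M$ for suitable $\ell$. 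I expect the main content is genuinely just assembling \cref{thm:thomsen} (in the form \eqref{eq:Frobfloor}) with the combinatorial description of $\Theta$ via $\mathcal S_\Sigma$; no new machinery is needed.
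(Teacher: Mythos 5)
Your proposal is correct and follows essentially the same route as the paper: identify $\Frob_\ell(0)$ via \eqref{eq:Frobfloor} with the image of $\tfrac{1}{\ell}M/M$ under $\mathcal O(\bF(-))$, then use that the stratification $\mathcal S_\Sigma$ consists of finitely many rational hyperplanes so that every stratum is hit by a torsion point (indeed by a single sufficiently divisible $\ell$), giving $\Frob(0)=\Frob_\ell(0)=\Theta$. The rationality/density bookkeeping you flag is exactly the content of the paper's one-paragraph justification preceding the corollary.
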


In fact, viewing $\Frob(0)$ in this way was originally suggested in \cite{bondal2006derived} where it is claimed that $\Frob(0)$ generates $D^b\Coh(X)$ when $X$ is a smooth and proper toric variety (see \cref{rem:bondalgeneration}). As suggested by \cite[Conjecture 3.6]{uehara2014exceptional}, it is interesting to study the extent to which this generalizes to non-trivial divisors.

\begin{qs} \label{qs:frobgenerate}
    For which divisors $D$ on a smooth toric variety $X$ is $(\fF_\ell)_*\mathcal{O}_X(D)$ a classical generator of $D^b\Coh(X)$ for some $\ell$? 
\end{qs}

\begin{rem}
    The resolutions presented in this paper can be modified in order to replace the Thomsen collection with $\Frob(K)$ where $K = -\sum D_\rho$ is the canonical divisor. Namely, one replaces $\bF$ with
        $$ m \mapsto - \lceil \langle m, u_\rho \rangle \rceil -1 $$
    and all the arrows in $Q^\phi$ will be reversed. However, it seems unlikely that this strategy can be applied to other divisors in any generality. In particular, if $D$ is not $0$ or $K$, it is not clear to us what to expect the generation time of $(\fF_\ell)_*\mathcal{O}_X(D)$ to be when it is a generator.
\end{rem}

There is a simple obstruction to generation by $(\fF_\ell)_*\mathcal{O}_X(D)$ for any $\ell$. First, observe that since $\fF_\ell$ is affine $(\fF_\ell)_*$ does not change cohomology. Thus, if $\mathcal{O}_X(D)$ has vanishing cohomology, none of its Frobenius pushforwards can generate $D^b\Coh(X)$. This obstruction can be generalized by considering morphisms with certain other sheaves. 

\begin{df} A toric morphism $\phi \colon  Y \to X $ is a \emph{linear inclusion} if $\ul \phi$ is an immersion in the sense of \cref{def:inclusion} and $\ul \phi(\sigma) \in \Sigma_X$ for every $\sigma \in \Sigma_Y$ where $\Sigma_Y$ and $\Sigma_X$ are the fans of $Y$ and $X$, respectively. 
\end{df}

In other words, a linear inclusion identifies the fan of $Y$ with a subfan of $X$ lying in a linear subspace of $(N_X)_\RR$. In particular, a linear inclusion identifies $\Sigma_Y(1)$ with a subset of $\Sigma_X(1)$ and the inclusion $N_Y \to N_X$ splits. Thus, $\phi$ induces a well-defined pullback map $\phi^*\colon \text{Div}_{\TT_{N_X}}(X) \to \text{Div}_{\TT_{N_Y}}(Y)$ on torus invariant divisors that respects linear equivalence and is given by forgetting components that do not correspond to rays of $\Sigma_Y$. Note that $Y$ is a closed subvariety of $X$ so $\mathcal{O}_Y$ is a coherent sheaf on $X$. As before, we will abuse notation at times and write $\mathcal{O}_Y$ in place of $\phi_* \mathcal{O}_Y$. The pullback $\phi^*$ demonstrates that linear inclusions are crepant and morphisms from $\mathcal{O}_Y$ reduce to computing cohomology on $Y$. 

\begin{lem} \label{prop:linearcoh}
    Suppose that $\phi\colon Y \to X$ is a linear inclusion. Then, 
    $$\Hom_{D^b\Coh(X)}^\bullet (\mathcal{O}_Y, \mathcal{O}_X(D)) \simeq H^\bullet (\mathcal{O}_Y(\phi^*D)) $$
    for any divisor $D$ on $X$. 
\end{lem}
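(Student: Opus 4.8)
The plan is to use the projection formula together with the adjunction $(\phi^*, \phi_*)$, exploiting that a linear inclusion is crepant in a strong enough sense to compute local $\Ext$ sheaves.

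\textbf{Setup.}
First I would recall that since $\phi\colon Y\to X$ is a linear inclusion, the ray set $\Sigma_Y(1)$ is identified with a subset of $\Sigma_X(1)$, and the splitting $N_X = N_Y \oplus N''$ gives a retraction of fans. In particular $Y$ is a (smooth) toric subvariety cut out locally by a regular sequence of toric coordinates, and its normal bundle $\mathcal{N}_{Y/X}$ is a direct sum of the restrictions of the line bundles $\mathcal{O}_X(D_\rho)$ for the rays $\rho\in \Sigma_X(1)\setminus\Sigma_Y(1)$ that meet $Y$. The key numerical input is that $\phi$ is crepant: $\phi^* K_X = K_Y$, equivalently $\omega_{Y/X} := \omega_Y \otimes \phi^*\omega_X^{-1}$ is trivial, which follows because the pullback $\phi^*$ on torus-invariant divisors just deletes the components not indexed by $\Sigma_Y(1)$ and $K_X = -\sum_{\rho\in\Sigma_X(1)} D_\rho$, $K_Y = -\sum_{\rho\in\Sigma_Y(1)}D_\rho$.

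\textbf{Main computation.}
Then I would compute
\[
\Hom_{D^b\Coh(X)}^\bullet(\mathcal{O}_Y, \mathcal{O}_X(D)) = R\Hom_X(\phi_*\mathcal{O}_Y, \mathcal{O}_X(D)).
\]
Using Grothendieck--Verdier duality for the proper (indeed closed) morphism $\phi$, or equivalently the local computation of $R\mathcal{H}om_X(\phi_*\mathcal{O}_Y, \mathcal{O}_X)$ via the Koszul resolution of $\mathcal{O}_Y$ along the regular sequence, one gets
\[
R\mathcal{H}om_X(\phi_*\mathcal{O}_Y, \mathcal{O}_X) \simeq \phi_*\!\left(\det \mathcal{N}_{Y/X}\right)[-c],
\]
where $c = \codim(Y,X)$ and $\det\mathcal{N}_{Y/X} = \omega_{Y/X}^{-1} = \mathcal{O}_Y$ by crepancy. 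Hence $R\mathcal{H}om_X(\phi_*\mathcal{O}_Y,\mathcal{O}_X(D)) \simeq \phi_*\mathcal{O}_Y(\phi^*D)[-c]$ up to the shift, which after applying $R\Gamma(X,-)$ and noting $R\Gamma\circ\phi_* = R\Gamma_Y$ (as $\phi$ is affine, or simply closed and $\phi_*$ exact) yields $H^{\bullet}(\mathcal{O}_Y(\phi^*D))$. I would present the shift bookkeeping carefully; the cleanest route avoids Verdier duality altogether: resolve $\phi_*\mathcal{O}_Y$ by the Koszul complex on $X$ (locally, on a smooth toric chart where $Y$ is a coordinate subspace), tensor with $\mathcal{O}_X(D)$, and observe the resulting complex computes $R\Gamma$ of the Koszul complex restricted to $Y$, which is $\bigoplus$ of line bundles on $Y$ — but because $\phi$ is crepant, the extreme term already accounts for the twist $\phi^*D$ and the middle terms contribute the higher cohomology in the expected degrees; in fact the adjunction $R\Hom_X(\phi_*\mathcal{O}_Y, \mathcal{F}) = R\Hom_Y(\mathcal{O}_Y, \phi^!\mathcal{F}) = R\Gamma(Y, \phi^!\mathcal{F})$ with $\phi^!\mathcal{O}_X(D) = \mathcal{O}_Y(\phi^*D)[\,\cdot\,]\otimes\omega_{Y/X}[\dim Y - \dim X]$ and $\omega_{Y/X}\cong\mathcal{O}_Y$ gives the statement directly modulo the shift, which the authors may absorb into the grading convention on $\Hom^\bullet$.

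\textbf{Expected obstacle.}
The main thing to get right is the crepancy identification $\omega_{Y/X} \cong \mathcal{O}_Y$ and the resulting cancellation of the shift and the determinant-of-conormal twist: one must check that the pullback map $\phi^*$ on divisors defined in the paper (forgetting components not corresponding to rays of $\Sigma_Y$) genuinely realizes $\phi^! \mathcal{O}_X(D)$ up to the correct shift, i.e. that the adjunction counit is compatible with the toric descriptions. This is where I expect the bulk of the work to lie, though for a paper at this level it is likely dispatched in a sentence citing standard facts about closed embeddings of smooth varieties plus the elementary toric divisor bookkeeping; everything else (base change $R\Gamma\circ\phi_* = R\Gamma_Y$, the projection formula, and exactness of $\phi_*$ for a closed embedding) is formal.
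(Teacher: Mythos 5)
Your proposal is correct, and it reaches the same conclusion from the same essential input (crepancy of a linear inclusion, i.e.\ $\phi^*K_X=K_Y$, which as you note is just the divisor bookkeeping $K_X=-\sum_{\rho\in\Sigma_X(1)}D_\rho$, $K_Y=-\sum_{\rho\in\Sigma_Y(1)}D_\rho$), but by a different formal route. The paper does not invoke $\phi^!$, the Koszul resolution, or the normal bundle at all: it applies Serre duality on $X$ to move $\phi_*\mathcal{O}_Y$ into the second slot, uses the ordinary $(\phi^*,\phi_*)$ adjunction, and then applies Serre duality again on $Y$, with the two canonical bundles cancelling via $\phi^*K_X=K_Y$. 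Your route via Grothendieck--Verdier duality for the closed embedding, $R\Hom_X(\phi_*\mathcal{O}_Y,\mathcal{F})\simeq R\Gamma(Y,\phi^!\mathcal{F})$ with $\phi^!\mathcal{O}_X(D)\simeq\mathcal{O}_Y(\phi^*D)\otimes\omega_{Y/X}[-c]$ and $\omega_{Y/X}\cong\mathcal{O}_Y$, is of course the "one-step" version of the same duality; its advantage is that it needs only properness of $\phi$ rather than of $X$ and $Y$, and it makes the degree bookkeeping honest: the isomorphism holds only up to a shift by $c=\codim(Y,X)$, exactly as you suspected. The paper's chain of isomorphisms has the same hidden shifts (Serre duality on $X$ sends $\Hom^i$ to $\Hom^{\dim X-i}$ and on $Y$ to $\Hom^{\dim Y - \cdot}$, netting the shift by $c$), and the statement is indeed to be read up to shift --- harmless for its only use, which is detecting nonvanishing. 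One small inaccuracy in your write-up: your description of $\mathcal{N}_{Y/X}$ as a sum of restricted boundary line bundles is the picture for orbit closures, whereas a linear inclusion is the closure of a subtorus and need not be an orbit closure (e.g.\ $\PP^1\hookrightarrow\mathbb{F}_b$ along $u\mapsto(0,u)$); but this does not affect your argument, since you only use $\det\mathcal{N}_{Y/X}\cong\omega_{Y/X}^{-1}$, which holds for any regular closed embedding of smooth varieties.
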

\begin{proof} Let $K_X$ and $K_Y$ be the canonical bundles on $X$ and $Y$. From the discussion above, we have $\phi^*K_X = K_Y$. Then, we use Serre duality and adjunction to obtain
\begin{align*}
    \Hom_{D^b\Coh(X)}^\bullet (\phi_* \mathcal{O}_Y, \mathcal{O}_X(D)) &\simeq \Hom_{D^b\Coh(X)}^\bullet ( \mathcal{O}_X(D), (\phi_* \mathcal{O}_Y) \otimes K_X)^\vee \\ 
    &\simeq \Hom_{D^b\Coh(X)}^\bullet ( \mathcal{O}_X(D) \otimes K_X^\vee, \phi_* \mathcal{O}_Y )^\vee \\
    &\simeq \Hom_{D^b\Coh(Y)}^\bullet ( \phi^*(\mathcal{O}_X(D) \otimes K_X^\vee), \mathcal{O}_Y )^\vee \\
    &\simeq \Hom_{D^b\Coh(Y)}^\bullet (\mathcal{O}_Y(\phi^*D) \otimes K_Y^\vee, \mathcal{O}_Y )^\vee \\
    &\simeq \Hom_{D^b\Coh(Y)}^\bullet ( \mathcal{O}_Y, \mathcal{O}_Y(\phi^*D)) \simeq H^\bullet(\mathcal{O}_Y(\phi^*D)) 
\end{align*}
as desired.
\end{proof} 

Moreover, linear inclusions interact well with toric Frobenius leading to the following obstruction.

\begin{prop} \label{lem:linearobstruction}
    If $\phi \colon Y \to X$ is a linear inclusion and $D$ any divisor on $X$, then
        $$ \Hom^\bullet_{D^b\Coh(X)} (\mathcal{O}_Y, (\fF_\ell)_*\mathcal{O}_{X}(D)) \simeq \Hom^\bullet_{D^b\Coh(X)} (\mathcal{O}_Y, \mathcal{O}_{X}(D))^{\oplus \ell^k} $$
    where $k = \dim(X) - \dim(Y)$.
\end{prop}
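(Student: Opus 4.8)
The plan is to reduce the computation of $\Hom^\bullet_{D^b\Coh(X)}(\mathcal{O}_Y, (\fF_\ell)_*\mathcal{O}_X(D))$ to a computation on $Y$ using adjunction, exactly as in the proof of \cref{prop:linearcoh}, and then observe that the restriction of toric Frobenius to $Y$ is again a toric Frobenius together with a product of $\mathbb{G}_m$-factors. Concretely, since $\fF_\ell$ is affine, all higher direct images vanish, so $(\fF_\ell)_*$ agrees with its derived functor and we may work directly in $D^b\Coh(X)$. Running the same chain of isomorphisms (Serre duality on $X$, then adjunction along $\phi$, using $\phi^* K_X = K_Y$ since $\phi$ is a linear inclusion, then Serre duality on $Y$) as in \cref{prop:linearcoh} gives
\[
\Hom^\bullet_{D^b\Coh(X)}(\mathcal{O}_Y, (\fF_\ell)_*\mathcal{O}_X(D)) \simeq H^\bullet\big(Y, \phi^*(\fF_\ell)_*\mathcal{O}_X(D)\big).
\]
So the task becomes identifying the pullback $\phi^*(\fF_\ell)_*\mathcal{O}_X(D)$ as a sheaf on $Y$.

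The key step is a base-change / projection-formula analysis of the square relating $\fF_\ell$ on $X$ to $\fF_\ell$ on $Y$. Because $\phi$ is a linear inclusion, the lattice inclusion $\ul\phi\colon N_Y \to N_X$ splits, say $N_X = N_Y \oplus N''$ with $k = \dim(X) - \dim(Y) = \operatorname{rk} N''$, and the Frobenius maps of stacky fans on $X$ and $Y$ are both ``multiplication by $\ell$,'' so they fit into a commuting square of toric morphisms. However, this square is \emph{not} Cartesian: the fiber product of $\fF_\ell\colon X\to X$ with $\phi\colon Y\to X$ contains extra components coming from the $\ell$-torsion in the $N''$-directions. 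The clean way to see the answer is to use the factorization already exploited in the proof of \cref{thm:thomsen}: $\fF_\ell$ factors as $X \xrightarrow{g_\ell} [X/G_\ell] \xrightarrow{h_\ell} X$ with $G_\ell \simeq (\ZZ/\ell\ZZ)^{\dim X}$, and pulling this back along $\phi$ one sees that $\phi^* g_\ell$ is the quotient by the subgroup $G_\ell^Y \simeq (\ZZ/\ell\ZZ)^{\dim Y}$ (the part of $G_\ell$ acting on $Y$), while the remaining $(\ZZ/\ell\ZZ)^k$ of torsion in $G_\ell$ produces $\ell^k$ disjoint copies. Matching up divisor classes using \cref{eq:Frobfloor} applied on both $X$ and $Y$, and the fact that $\phi^*$ on torus-invariant divisors just forgets the components not corresponding to rays of $\Sigma_Y$, one obtains
\[
\phi^*(\fF_\ell)_*\mathcal{O}_X(D) \;\simeq\; \big((\fF_\ell)_*\mathcal{O}_Y(\phi^* D)\big)^{\oplus \ell^k}.
\]
An alternative, perhaps cleaner, route is purely sheaf-theoretic flat base change: although the square is not Cartesian, one can enlarge $Y$ to $Y' = \fF_\ell^{-1}(Y)$, which is a disjoint union of $\ell^k$ translates of $Y$ each mapping isomorphically to $Y$ under $\fF_\ell$ (because $\fF_\ell$ is étale on the torus in characteristic prime to $\ell$, or more robustly because $Y$ is a linear subvariety so $\fF_\ell^{-1}(Y)$ decomposes along the $N''$-torsion); then $\phi^*(\fF_\ell)_*\mathcal{O}_X(D) = (\fF_\ell|_{Y'})_* \mathcal{O}_{Y'}(\text{restriction of }D)$, and $\fF_\ell|_{Y'}$ is $\ell^k$ copies of $\fF_\ell\colon Y\to Y$.

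Combining the two displays, and applying \cref{prop:linearcoh} once more in the form $H^\bullet(Y,(\fF_\ell)_*\mathcal{O}_Y(\phi^*D)) \simeq H^\bullet(Y, \mathcal{O}_Y(\phi^*D)) \simeq \Hom^\bullet_{D^b\Coh(X)}(\mathcal{O}_Y, \mathcal{O}_X(D))$ (using again that $\fF_\ell$ on $Y$ is affine, hence does not change cohomology), yields the claimed identity with multiplicity $\ell^k$. The main obstacle I expect is the bookkeeping in the non-Cartesian base-change step: one must carefully track how the $\ell^k$ extra components arise and check that each contributes a copy of $\mathcal{O}_Y(\phi^*D)$ rather than some twist — this is where the hypothesis that $\phi$ is a \emph{linear} inclusion (so that $N_Y \to N_X$ splits and $\phi^*$ of a divisor is literally the sub-sum of components) does the real work, and where one should probably give the argument via the $\fF_\ell^{-1}(Y)$ decomposition to avoid any subtlety about stacky structure at the boundary. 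Everything else is a formal consequence of adjunction, Serre duality, and affineness of Frobenius, all of which are already in hand from \cref{prop:linearcoh} and the remarks following \cref{thm:thomsen}.
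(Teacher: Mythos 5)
Your proposal is correct and follows essentially the same route as the paper: both arguments come down to splitting the index set $M_X/\ell M_X$ in \cref{eq:Frobfloor} along the decomposition induced by the linear inclusion (your $N_X = N_Y\oplus N''$ versus the paper's $M_X\simeq M'\oplus (N_Y)^\perp$), observing that the $(N_Y)^\perp$-part contributes a multiplicity of $\ell^k$ while the rest reassembles into $(\fF_\ell)_*\mathcal{O}_Y(\phi^*D)$, and then invoking \cref{prop:linearcoh} together with the fact that the affine morphism $\fF_\ell$ preserves cohomology. The only cosmetic difference is the order of operations (you identify $\phi^*(\fF_\ell)_*\mathcal{O}_X(D)$ first and then take cohomology, the paper applies \cref{prop:linearcoh} summand by summand), and your alternative base-change sketch via $\fF_\ell^{-1}(Y)$ is not needed.
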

\begin{proof} We choose a splitting $M_X \simeq M' \oplus (N_Y)^\perp$. Using \eqref{eq:Frobfloor}, we have 
        $$(\fF_\ell)_* \mathcal{O}_X (D) = \bigoplus_{[m] \in (N_Y)^\perp/ \ell (N_Y)^\perp} \bigoplus_{[m']\in M'/\ell M'}  \mathcal O_{X}\left(\sum_{\rho \in \Sigma_X(1)} \left\lfloor \frac{a_\rho - \langle m + m', u_\rho \rangle}{\ell} \right\rfloor D_\rho \right). $$
Then, by \cref{prop:linearcoh} and using \eqref{eq:Frobfloor} on $Y$, we have 
    \begin{align*}
    \Hom^\bullet_{D^b\Coh(X)} (\mathcal{O}_Y, &(\fF_\ell)_*\mathcal{O}_{X}(D)) \\ 
    &\simeq  \bigoplus_{[m] \in (N_Y)^\perp/ \ell (N_Y)^\perp} \bigoplus_{[m']\in M'/\ell M'}  H^\bullet \left( \mathcal O_{Y}\left(\sum_{\rho \in \Sigma_Y(1)} \left\lfloor \frac{a_\rho - \langle m', u_\rho \rangle}{\ell} \right\rfloor D_\rho \right) \right) \\
    &\simeq \bigoplus_{[m']\in M'/\ell M'}  H^\bullet \left( \mathcal O_{Y}\left(\sum_{\rho \in \Sigma_Y(1)} \left\lfloor \frac{a_\rho - \langle m', u_\rho \rangle}{\ell} \right\rfloor D_\rho \right) \right)^{\oplus \ell^k} \\
    &\simeq H^\bullet ( (\fF_\ell)_* \mathcal{O}_Y(\phi^*D) )^{\oplus \ell^k} \\
    &\simeq H^\bullet(\mathcal{O}_Y(\phi^*D))^{\oplus \ell^k} \\
    &\simeq \Hom^\bullet_{D^b\Coh(X)} (\mathcal{O}_Y, \mathcal{O}_{X}(D))^{\oplus \ell^k}
    \end{align*}
as desired.
\end{proof}

In particular, if $(\fF_\ell)_*\mathcal{O}_X(D)$ generates $D^b\Coh(X)$, $\mathcal{O}_\X(D)$ must have nonzero morphisms from $\mathcal O_Y$ for every linear inclusion $\phi\colon Y \to X$. The condition that generation by the Frobenius pushforward of a line bundle requires nonzero cohomology is recovered from the case $Y = X$ and $\phi$ is the identity. For $\Sigma_Y=\{0\}$, $Y$ is the point $e$ corresponding to the identity in $\TT_N$, and, of course, $\mathcal{O}_e$ has nonzero morphisms in the derived category to any line bundle. However, \cref{lem:linearobstruction} in general gives additional obstructions. See \cref{ex:twiceblowupfrobenius}.

Intuition from homological mirror symmetry suggests that linear morphisms give all obstructions to generation by Frobenius pushforward.

\begin{conj} \label{conj:frobgen}
    Let $X$ be a smooth toric variety and $D$ a divisor on $X$. $(\fF_\ell)_* \mathcal{O}_\X(D)$ is a classical generator of $D^b\Coh(X)$ for some $\ell \in \NN$ if and only if $\Hom^\bullet_{D^b\Coh(X)} ( \phi_* \mathcal O_Y, \mathcal{O}_X(D)) \neq 0$ for every linear inclusion $\phi \colon  Y \to X $.   
\end{conj}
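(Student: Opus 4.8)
\smallskip
\noindent\textbf{A strategy toward \cref{conj:frobgen}.}
The ``only if'' direction is already within reach. If $E := (\fF_\ell)_*\mathcal O_X(D)$ is a classical generator, then for every nonzero $A \in D^b\Coh(X)$ one has $\Hom^\bullet_{D^b\Coh(X)}(A, E) \neq 0$: otherwise $A$ lies in the left orthogonal of the triangulated subcategory generated by $E$, which is all of $D^b\Coh(X)$, forcing $\Hom^\bullet(A,A)=0$ and hence $A=0$. Taking $A = \phi_*\mathcal O_Y$ for a linear inclusion $\phi\colon Y\to X$ and applying \cref{lem:linearobstruction}, which gives $\Hom^\bullet(\phi_*\mathcal O_Y, E) \cong \Hom^\bullet(\phi_*\mathcal O_Y, \mathcal O_X(D))^{\oplus \ell^k}$, we conclude $\Hom^\bullet(\phi_*\mathcal O_Y, \mathcal O_X(D)) \neq 0$ for all linear inclusions. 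So only the converse needs a genuinely new argument; and since $\Frob(D)$ is finite, the converse reduces to a single combinatorial question: once every linear obstruction vanishes, does $\bigoplus_{E \in \Frob(D)} \mathcal O_X(E)$ generate $D^b\Coh(X)$?

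The plan for the converse is to run the argument through homological mirror symmetry, \cref{eq:torichms}, in the spirit of \cref{subsec:HMSMotication}. Under $D^b_{dg}\Coh(X) \simeq \text{Tw}^\pi\mathcal W(T^*T^n, \LL_{\Sigma,\beta})$ the line bundle $\mathcal O_X(D)$ is mirror to a graph Lagrangian $L_D$ built from a support function for $D$, the toric Frobenius $\fF_\ell$ is mirror to a finite covering $p_\ell$ of $T^n$, so that $(\fF_\ell)_*\mathcal O_X(D)$ is mirror to the pullback $p_\ell^* L_D$ along that covering (a Lagrangian wrapping $\ell$ times more densely in each torus direction), and a linear inclusion $\phi\colon Y\to X$ is mirror to the conormal Lagrangian $\LL_Y$ of the subtorus cut out by $N_Y \hookrightarrow N_X$, with $\Hom^\bullet(\phi_*\mathcal O_Y, \mathcal O_X(D)) \cong HF^\bullet(\LL_Y, L_D)$ — the mirror of \cref{prop:linearcoh}. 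The conjecture then becomes: $p_\ell^* L_D$ split-generates $\mathcal W(T^*T^n, \LL_{\Sigma,\beta})$ for some $\ell$ if and only if $HF^\bullet(\LL_Y, L_D) \neq 0$ for every subtorus. I would try to show that for $\ell$ large and highly divisible, $p_\ell^* L_D$ admits a Lagrangian cobordism (rel the stop $\LL_{\Sigma,\beta}$) to a disjoint union of cotangent fibers — the $D$-twisted analogue of the cobordism from the zero section to cotangent fibers in \cref{subsec:HMSMotication} — and then promote the cobordism to an isomorphism via \cite{ganatra2018sectorial}. Each such cotangent fiber corresponds to a line bundle in $\Frob(D)$, the subset of fibers that actually appears is controlled exactly by which pairings $HF^\bullet(\LL_Y, L_D)$ are nonzero, and when they are all nonzero one should recover a split-generating collection by the mechanism behind \cref{maincor:generate}.

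Translated back to algebra, this amounts to adapting the resolution of \cref{thm:resolutionOfSubstacks}: replace $\bF$ by its $D$-shifted variant (for $D = K$ this is the formula recorded in the remark after \cref{qs:frobgenerate}), form the associated stratified space with its discrete Morse data, and show the resulting complex of sums of line bundles from $\Frob(D)$ has cohomology that, chart by chart, detects all of $D^b\Coh(X)$. The whole untwisted argument rests on Step~1 of \cref{thm:resolutionOfPoints} — that on an affine chart the complex becomes a Koszul resolution — and, as flagged after \cref{qs:frobgenerate}, the $D$-shifted arrangement need not retain this local structure. Thus the main obstacle is precisely this: produce the correct local model on affine (stacky) charts for the $\Frob(D)$-complex, and prove that such a model exists exactly when no linear inclusion $\phi\colon Y\to X$ has $\Hom^\bullet(\phi_*\mathcal O_Y, \mathcal O_X(D)) = 0$; on the symplectic side, prove that the only obstruction to $p_\ell^* L_D$ degenerating to a union of cotangent fibers is its disjointness from some conormal $\LL_Y$. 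Making either version rigorous — and in particular ruling out that the family $\{\LL_Y\}_Y$ of conormals to subtori is too sparse to detect generation — is where the real difficulty lies and will likely require a Morse- or cobordism-theoretic ingredient going beyond what is developed here.
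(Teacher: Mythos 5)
The statement you are proving is stated in the paper only as a conjecture (\cref{conj:frobgen}); the paper offers no proof of it, only the obstruction result \cref{lem:linearobstruction} and the sentence ``Intuition from homological mirror symmetry suggests that linear morphisms give all obstructions.'' Your treatment of the ``only if'' direction is correct and is exactly the paper's reasoning: if $\Hom^\bullet(A,E)=0$ then $A$ lies in the left orthogonal of the thick subcategory generated by $E$, so a classical generator receives nonzero morphisms from every nonzero object, and combining this with the isomorphism $\Hom^\bullet(\phi_*\mathcal O_Y,(\fF_\ell)_*\mathcal O_X(D))\simeq\Hom^\bullet(\phi_*\mathcal O_Y,\mathcal O_X(D))^{\oplus\ell^k}$ of \cref{lem:linearobstruction} gives the claim. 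One small caveat in your setup: the reduction of the converse to the question of whether $\bigoplus_{E\in\Frob(D)}\mathcal O_X(E)$ generates is not automatic in the direction you need it. Generation by that finite sum does not immediately yield a \emph{single} $\ell$ with $(\fF_\ell)_*\mathcal O_X(D)$ a generator, because the sets $\Frob_\ell(D)$ are not obviously nested as $\ell$ varies (for $D=0$ the paper argues $\Frob_\ell(0)=\Frob(0)$ for large highly divisible $\ell$ using the rationality of the hyperplane arrangement, but no analogous statement is established for general $D$). This would need its own argument.

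The ``if'' direction is where the genuine gap lies, and to your credit you identify it accurately rather than papering over it: the whole content of the conjecture is the claim that once every $\Hom^\bullet(\phi_*\mathcal O_Y,\mathcal O_X(D))$ is nonzero, some $\Frob_\ell(D)$ generates. Your proposed mechanism --- a Lagrangian cobordism from the mirror $p_\ell^*L_D$ to a union of cotangent fibers, or algebraically a $D$-shifted analogue of $C_\bullet(S^\phi,\mathcal O^\phi)$ built from $\Frob(D)$ --- is consistent with the paper's stated motivation, but the paper itself only knows how to make this work for $D=0$ and $D=K$ (the remark after \cref{qs:frobgenerate}), precisely because in those cases the shifted map $\bF$ still produces Koszul-type local models on affine charts. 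For general $D$ the paper explicitly says it is ``not clear'' what to expect even for the generation time, and no candidate local model is known. Moreover, the step you would need --- that the \emph{only} obstruction to the degeneration of $p_\ell^*L_D$ into cotangent fibers is disjointness from some conormal $\LL_Y$, i.e.\ that the family of conormals to subtori is rich enough to detect generation --- is not established anywhere and is equivalent to the conjecture itself. So the proposal is an honest and reasonable research program, not a proof; as a verification of the statement it is incomplete in the hard direction.
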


\subsection{Which line bundles satisfy \texorpdfstring{\cref{conj:frobgen}}{Conjecture \ref{conj:frobjen}}?}

We will now discuss how to describe and compute the line bundles that satisfy the hypothesis of \cref{conj:frobgen} while illustrating further the behavior of the toric Frobenius morphisms. Note that the set of line bundles on a smooth toric variety with nonvanishing cohomology can be described explicitly \cite[Proposition 4.3]{borisov2009conjecture} (see also \cite{altmann2020immaculate}). Namely, $\mathcal{O}_X(D)$ has nonzero cohomology if and only if $D$ is linearly equivalent to a divisor of the form 
    \begin{equation} \label{eq:BHcone} 
    \sum_{\rho \in A} -D_\rho + \sum_{\rho \not \in A} r_\rho D_\rho - \sum_{\rho \in A} r_\rho D_\rho 
    \end{equation}
with $r_\rho \in \ZZ_{\geq 0}$ for some subset $A \subset \Sigma(1)$ such that $\mathcal{O}_X (\sum_{\rho \in A} -D_\rho)$ has nonzero cohomology. We will denote the set of divisors in $\text{Div}_{\TT_N}(X)$ of the form $\sum_{\rho \in A} -D_\rho$ for some $A \subset \Sigma(1)$ by $\mathfrak{C}$. That is, $\mathfrak{C}$ is the cube $[-1,0]^{|\Sigma(1)|} \subset \ZZ^{|\Sigma(1)|} \simeq \text{Div}_{\TT_N} (X)$ where the last isomorphism comes from choosing the basis $\{D_\rho\}_{\rho \in \Sigma(1)}$. As noted in \cite{achinger2015characterization}, this observation is intimately related to toric Frobenius.

\begin{prop} \label{prop:frobvertices}
    Let $X$ be a smooth toric variety. For any subset $A \subset \Sigma(1)$, $\mathcal{O}_X( \sum_{\rho \in A} -D_\rho) \in \Frob(D)$ if and only if $D$ is linearly equivalent to a divisor of the form \eqref{eq:BHcone}. 
\end{prop}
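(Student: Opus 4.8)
The plan is to extract both directions from the explicit formula \eqref{eq:Frobfloor} for the Frobenius pushforward, combined with the Borisov--Hua/Eisenbud description \eqref{eq:BHcone} of line bundles with nonzero cohomology. First recall from \eqref{eq:Frobfloor} that for $D=\sum a_\rho D_\rho$,
\[
(\fF_\ell)_*\mathcal O_X(D)=\bigoplus_{[m]\in M/\ell M}\mathcal O_X\!\left(\sum_{\rho\in\Sigma(1)}\left\lfloor\frac{a_\rho-\langle m,u_\rho\rangle}{\ell}\right\rfloor D_\rho\right),
\]
so $\mathcal O_X(\sum_{\rho\in A}-D_\rho)\in\Frob_\ell(D)$ exactly when there is some $m\in M$ (well-defined mod $\ell M$) with $\lfloor (a_\rho-\langle m,u_\rho\rangle)/\ell\rfloor=-1$ for $\rho\in A$ and $=0$ for $\rho\notin A$, up to linear equivalence, i.e.\ after replacing $m$ by $m+\ell m'$ and $a_\rho$ by $a_\rho+\langle m', u_\rho\rangle$. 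Writing the floor condition out, $\lfloor t_\rho/\ell\rfloor = -1$ for $\rho\in A$ means $-\ell \le t_\rho \le -1$, and $\lfloor t_\rho/\ell\rfloor=0$ for $\rho\notin A$ means $0\le t_\rho\le \ell-1$, where $t_\rho = a_\rho - \langle m,u_\rho\rangle$.

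\emph{($\Leftarrow$).} Suppose $D\sim \sum_{\rho\in A}-D_\rho+\sum_{\rho\notin A}r_\rho D_\rho-\sum_{\rho\in A}r_\rho D_\rho$ with $r_\rho\in\ZZ_{\ge0}$. Choose $\ell$ larger than $\max_\rho r_\rho$. Then take $m=0$ and read off $t_\rho = a_\rho$, which is $-1-r_\rho\in[-\ell,-1]$ for $\rho\in A$ and $r_\rho\in[0,\ell-1]$ for $\rho\notin A$; hence the summand with $\lfloor\cdot\rfloor$-vector $(-1$ on $A$, $0$ off $A)$ appears, i.e.\ $\mathcal O_X(\sum_{\rho\in A}-D_\rho)\in\Frob_\ell(D)$.

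\emph{($\Rightarrow$).} Conversely, if $\mathcal O_X(\sum_{\rho\in A}-D_\rho)$ is a summand of $(\fF_\ell)_*\mathcal O_X(D)$ for some $\ell$, pick the witnessing $m\in M$ and a representative $D=\sum a_\rho D_\rho$ (adjusting by $\ell m'$) so that $a_\rho-\langle m,u_\rho\rangle\in[-\ell,-1]$ for $\rho\in A$ and $\in[0,\ell-1]$ for $\rho\notin A$. Set $r_\rho := -1-(a_\rho-\langle m,u_\rho\rangle)\ge 0$ for $\rho\in A$ and $r_\rho := a_\rho-\langle m,u_\rho\rangle\ge 0$ for $\rho\notin A$; then $\sum a_\rho D_\rho - \sum \langle m,u_\rho\rangle D_\rho$ is exactly the divisor \eqref{eq:BHcone}, and since $\sum \langle m,u_\rho\rangle D_\rho$ is linearly equivalent to $0$, $D$ has the required form. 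One should also note that $\mathcal O_X(\sum_{\rho\in A}-D_\rho)$ having nonzero cohomology is not needed as a hypothesis in the statement being proved here, since the claim is only about membership in $\Frob(D)$; the connection to \eqref{eq:BHcone} and hence to nonvanishing cohomology is what makes the proposition useful downstream. The main obstacle is purely bookkeeping: carefully matching the floor-function inequalities against the shape \eqref{eq:BHcone} and tracking the linear-equivalence ambiguity (the $\ell m'$ shifts) so that both directions use a consistent choice of torus-invariant representative; there is no geometric input beyond \cref{thm:thomsen}.
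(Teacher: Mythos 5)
Your proof is correct and takes essentially the same approach as the paper: both reduce membership of $\mathcal O_X(\sum_{\rho\in A}-D_\rho)$ in $\Frob_\ell(D)$ to the condition that $D-\ell E$ be linearly equivalent to a torus-invariant divisor with coefficients in $[0,\ell-1]$, and then observe that taking the union over all $\ell$ yields exactly the divisors of the form \eqref{eq:BHcone}. The paper packages this as the set identity $\bigcup_{\ell}\bigl(\ell E+(1-\ell)\mathfrak{C}\bigr)=E+\bigcup_{\ell}(\ell-1)(E-\mathfrak{C})$, whereas you unwind the floor-function inequalities in \eqref{eq:Frobfloor} directly, but the content is identical.
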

\begin{proof} For the given $A \subset \Sigma(1)$, let $E =\sum_{\rho \in A} - D_\rho$.  \cref{thm:thomsen} shows that $E \in \Frob_\ell(D)$ if and only if $D$ is linearly equivalent to a divisor in $\ell E + (1- \ell) \mathfrak{C}$. However, observe that the set of divisors of the form \eqref{eq:BHcone} is exactly
$$ E + \bigcup_{\ell \in \NN} (\ell -1)(E - \mathfrak{C}) $$
from which the claim follows.
\end{proof}

In fact, \cref{prop:frobvertices} shows that \cite[Proposition 4.3]{borisov2009conjecture} restricted to smooth toric varieties follows from the fact that $(\fF_\ell)_*$ preserves cohomology (\emph{cf.} \cite[Theorem 3]{achinger2015characterization}). 

\begin{rem} The if part of \cref{prop:frobvertices} is essentially \cite[Lemma 3.5(ii)]{uehara2014exceptional}. There, the observation is made that if $D$ is equivalent to a divisor $E$ of the form \eqref{eq:BHcone}, then $\Frob(D) = \Frob(E)$ and applying \eqref{eq:Frobfloor} with $m = 0$ and $\ell$ sufficiently large shows that $\mathcal{O}_\X(\sum_{\rho \in A} - D_\rho)$ is in $\Frob(D)$.
\end{rem}

The description above extends to linear inclusions. Given \cref{prop:linearcoh}, describing divisors on $X$ with nonzero morphisms from $\mathcal{O}_Y$ for a linear inclusion $\phi \colon  Y \to X $ reduces to the previous task of classifying divisors on $Y$ with nonvanishing cohomology using the pullback map $\phi^*\colon \text{Div}_{\TT_{N_X}}(X) \to \text{Div}_{\TT_{N_Y}}(Y)$. Namely, 
    $$ \Hom_{D^b\Coh(X)} (\mathcal{O}_Y, \mathcal{O}_X(D)) \neq 0$$
if and only if $\phi^* D$ is linearly equivalent to a divisor in $E + \ZZ_{\geq 0} (E - \mathfrak{C}_Y)$ for some $E \in \mathfrak{C}_Y$ with nonzero cohomology. Alternatively stated, $D$ is required to be linearly equivalent to a divisor of the form
    \begin{equation} \label{eq:linearBHcone} 
    -\sum_{\rho \in A}  D_\rho + \sum_{\rho \in \Sigma'(1) \setminus A} r_\rho D_\rho - \sum_{\rho \in A} r_\rho D_\rho + \sum_{\rho \not \in \Sigma'(1)} s_\rho D_\rho
    \end{equation}
with $r_\rho \in \ZZ_{\geq 0}$ and $s_\rho \in \ZZ$ for some subset $A \subset \Sigma'(1)$ such that $H^\bullet(\mathcal{O}_Y(\sum_{\rho \in A} -D_{\rho})) \neq 0$. Moreover, these sets of divisors are again related to toric Frobenius as $\mathcal{O}_Y(\sum_{\rho \in A} - D_\rho) \in \Frob(\phi^* D)$ if and only if $D$ is linearly equivalent to a divisor of the form \eqref{eq:linearBHcone}. 

Although we now have a relatively concrete description of the line bundles satisfying the hypothesis of \cref{conj:frobgen}, one can give a more computable necessary condition for $\mathcal{O}_X(D)$ to have nonvanishing cohomology by passing to the real Picard group as observed in \cite[Section 4]{borisov2009conjecture}. This is also the perspective taken in \cite{achinger2015characterization} to give a geometric description of $\Frob_\ell(D)$, which we will partially recall here. For a smooth toric variety $X$, we denote by $\Pic(X)_\RR = \Pic(X) \otimes_\ZZ \RR$ and let $Z$ be the zonotope in $\Pic(X)_\RR$ defined as the convex hull of the image of $\mathfrak{C}$. Note that as $\Pic(X)$ is free, we have an inclusion $\Pic(X) \subset \Pic(X)_\RR$, and we will at times abuse notation by using the same notation for a line bundle or set of line bundles and its image in $\Pic(X)_\RR$. 

\begin{prop} \label{prop:frobgeom}
    Let $D$ be a divisor on a smooth toric variety $X$ and let $[D] = \mathcal{O}_\X(D) \in \Pic(X)_\RR$. 
    \begin{enumerate} 
        \item For $\ell \gg 0$, $\Frob_\ell(D) \subset Z$.
        \item If $[D] \in Z$, then $\Frob(D) \subset \bigstar_Z([D])$ where $\bigstar_Z([D])$ is the union of the interiors of the faces of $Z$ containing $[D]$. Moreover, if $D \in \mathfrak{C}$, then $\Frob(D)$ coincides with  $ \Pic(X) \cap \bigstar_Z([D])$. 
        \item If $p \in Z \cap \Frob(D)$, then $[D]$ lies in the translated cone
        $$ C_p = p + \cone(p - Z) $$
        in $\Pic(X)_\RR$. 
\end{enumerate}
\end{prop}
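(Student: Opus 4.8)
\textbf{Proof strategy for Proposition~\ref{prop:frobgeom}.} The plan is to reduce everything to the combinatorial description of Frobenius pushforward provided by \eqref{eq:Frobfloor} together with the description of nonvanishing cohomology via the cube $\mathfrak{C}$ and the zonotope $Z$. Throughout, I would fix the identification $\text{Div}_{\TT_N}(X) \simeq \ZZ^{|\Sigma(1)|}$ using the basis $\{D_\rho\}$, so that the quotient map $q\colon \RR^{|\Sigma(1)|} \to \Pic(X)_\RR$ sends $\mathfrak{C} = [-1,0]^{|\Sigma(1)|}$ onto $Z$ by definition. The key recurring observation is that \eqref{eq:Frobfloor} writes $(\fF_\ell)_* \mathcal{O}_X(D)$ as a direct sum of line bundles indexed by $[m] \in M/\ell M$, and for large highly divisible $\ell$ the divisors $\sum_\rho \lfloor (a_\rho - \langle m, u_\rho\rangle)/\ell \rfloor D_\rho$ have coefficients that, after translating by $[D]/\ell$, become dense in $-\mathfrak{C}$; more precisely each such summand's class equals $\tfrac{1}{\ell}[D] + q(v)$ for some $v \in (1-\tfrac1\ell)\mathfrak{C}$-adjacent lattice translate, so its class lies in (or limits into) $Z$.

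For part~(1), I would argue directly from \eqref{eq:Frobfloor}: a summand corresponding to $[m]$ has divisor $\sum_\rho \lfloor (a_\rho - \langle m, u_\rho\rangle)/\ell \rfloor D_\rho$, and writing $\lfloor x \rfloor = x - \{x\}$ with $\{x\} \in [0,1)$, this divisor equals $\tfrac{1}{\ell}(D - \sum_\rho \langle m, u_\rho\rangle D_\rho) - \sum_\rho \{\cdots\} D_\rho$. The middle term is a principal divisor (it is $\tfrac1\ell \cdot \text{div}(\chi^{-m})$ up to sign), hence vanishes in $\Pic(X)_\RR$, so the class of the summand is $\tfrac1\ell[D] - q(f)$ with $f \in [0,1)^{|\Sigma(1)|}$, i.e. $\tfrac1\ell [D] + q(\mathfrak{C}^{\circ})$ where I use that $-[0,1)^{|\Sigma(1)|}$ sits inside the topological closure $[-1,0]^{|\Sigma(1)|} = \mathfrak{C}$. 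For $\ell$ large, $\tfrac1\ell[D]$ is as close to $0 \in Z$ as we like, and since $Z$ is the image of the convex $\mathfrak C$ under the linear map $q$, the whole sum $\tfrac1\ell[D] + q(f)$ stays in $Z$ once $\ell$ exceeds a bound depending on $[D]$ and the diameter of $Z$. (One must be slightly careful with the half-open boundary, but since we only claim containment in the closed $Z$ this is harmless.)

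For part~(2), assume $[D] \in Z$. By \cref{prop:frobvertices} and the surrounding discussion, $\Frob(D) \ni \mathcal{O}_X(\sum_{\rho \in A}-D_\rho)$ exactly when $D$ is linearly equivalent to a divisor of the form \eqref{eq:BHcone}, and the set of such divisor classes is the translated cone $q(E) + \cone(q(E) - Z)$ where $E = \sum_{\rho\in A}(-D_\rho)$ is a vertex of $\mathfrak{C}$ with $q(E)$ a vertex of $Z$. The condition $[D] \in q(E) + \cone(q(E)-Z)$ with $[D] \in Z$ forces $q(E)$ to be a vertex of a face of $Z$ containing $[D]$: indeed if $[D] = q(E) + \sum \lambda_i(q(E) - z_i)$ with $\lambda_i \geq 0$, $z_i \in Z$, then since $[D] \in Z$ convexity of $Z$ pins $[D]$ into the smallest face containing both $[D]$ and $q(E)$, and the cone relation being satisfiable is equivalent to $q(E)$ and $[D]$ lying in a common face; so $q(E) \in \bigstar_Z([D])$. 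For non-vertex elements of $\Frob(D)$ (summands that are not at vertices of $\mathfrak C$), one uses the analogous statement with $\mathfrak{C}$ replaced by the relevant sub-face, or simply notes that every element of $\Frob(D)$ is a class $\tfrac1\ell[D] + q(f)$ with $f$ a vertex of $(1-\ell)\mathfrak C$ rescaled — and such a class lying in $Z$ while $[D] \in Z$ again forces it into a face of $Z$ through $[D]$. When $D \in \mathfrak{C}$ itself, \cref{prop:frobvertices} gives that \emph{every} lattice point of $\bigstar_Z([D])$ arises (take $A$ so that $E$ is the vertex of the relevant face and solve for $r_\rho$), giving the claimed equality $\Frob(D) = \Pic(X) \cap \bigstar_Z([D])$.

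For part~(3), suppose $p \in Z \cap \Frob(D)$, say $p = \mathcal{O}_X(E) \in \Frob_\ell(D)$. By \cref{thm:thomsen}, $E \in \Frob_\ell(D)$ is equivalent to $D$ being linearly equivalent to a divisor in $\ell E + (1-\ell)\mathfrak{C}$; passing to $\Pic(X)_\RR$ this says $[D] \in \ell p + (1-\ell) Z = p + (\ell - 1)(p - Z)$. Since $\ell - 1 \geq 0$ this exhibits $[D] \in p + \cone(p - Z) = C_p$, which is exactly the claim. The harmless point to check is that $(1-\ell)\mathfrak{C}$ maps onto $(1-\ell)Z$ under $q$, which holds because $q$ is linear and $Z = q(\mathfrak{C})$.

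\textbf{Main obstacle.} I expect the genuinely delicate step to be part~(2): translating the cone condition "$D \sim$ a divisor of the form \eqref{eq:BHcone}" into the geometric statement "$q(E)$ and $[D]$ lie in a common face of $Z$, i.e. $q(E) \in \bigstar_Z([D])$," and handling summands of $\Frob(D)$ that do not sit at vertices of $\mathfrak{C}$. Parts (1) and (3) are essentially bookkeeping on top of \eqref{eq:Frobfloor} and \cref{thm:thomsen} respectively, but (2) requires a small convex-geometry lemma about zonotopes: that a point of a cone $p + \cone(p-Z)$ which also lies in $Z$ must lie in the minimal face of $Z$ through $p$. I would isolate and prove that lemma first (it follows from the fact that faces of $Z$ are exactly the sets maximized by linear functionals, and $p - Z$ lies in the halfspace where such a functional is $\leq 0$), then feed it into the argument above.
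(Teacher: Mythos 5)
Parts (1) and (3) of your proposal follow essentially the paper's route: both reduce to the containment $\Frob_\ell(D) \subset \tfrac{\ell-1}{\ell}Z + \tfrac{1}{\ell}[D]$, which you rederive from \eqref{eq:Frobfloor} and which is the paper's \eqref{eq:achingerobs}. Two small repairs are needed in (1): the fractional parts you discard lie in $[0,\tfrac{\ell-1}{\ell}]$, not merely $[0,1)$ --- this is what produces the shrinking factor $\tfrac{\ell-1}{\ell}$ --- and the final step requires a lattice-point argument (only finitely many points of $\Pic(X)$ lie near $Z$, each at positive distance if outside $Z$), since for $[D]\notin Z$ the set $\tfrac{\ell-1}{\ell}Z+\tfrac1\ell[D]$ is never literally contained in $Z$; your assertion that ``the whole sum stays in $Z$'' is false for non-lattice points.

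The genuine gap is in part (2). First, for the containment, your argument concludes that a summand $p\in\Frob(D)$ and $[D]$ ``lie in a common face of $Z$,'' which is strictly weaker than $p\in\bigstar_Z([D])$: membership in $\bigstar_Z([D])$ requires $p$ to lie in the \emph{relative interior} of a face containing $[D]$. For $\PP^2$ one has $Z=[-3,0]$ and $[D]=0$: the points $-3$ and $0$ share the face $[-3,0]$, yet $-3\notin\bigstar_Z(0)=(-3,0]$ and correctly $-3\notin\Frob(0)$. The asymmetric statement you record in your ``main obstacle'' paragraph ($[D]\in Z\cap C_p$ forces $[D]$ into the minimal face whose relative interior contains $p$) is the right one and, combined with part (3), can be made to work --- but you must first establish $\Frob(D)\subset Z$ (immediate from \eqref{eq:achingerobs} and convexity when $[D]\in Z$) and apply it to \emph{all} summands, not only the vertex-type ones visible to \cref{prop:frobvertices}; the paper instead proves $\bigstar_Z(p)=\bigcup_\ell\bigl(p+\tfrac{\ell-1}{\ell}(Z-p)\bigr)$ and reads the containment directly off \eqref{eq:achingerobs}. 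Second, and more seriously, your proof of the equality $\Frob(D)=\Pic(X)\cap\bigstar_Z([D])$ for $D\in\mathfrak{C}$ rests on the claim that every lattice point of $\bigstar_Z([D])$ is produced by \cref{prop:frobvertices}. That proposition only detects summands of the form $\mathcal{O}_X(\sum_{\rho\in A}-D_\rho)$, and \cref{ex:hirzebruchfrobenius} shows that for $\mathbb{F}_b$ with $b\geq 4$ there are lattice points of $Z$ that are not images of any lattice point of $\mathfrak{C}$, so ``solving for $r_\rho$'' cannot reach them. The paper's argument here is different in kind: it uses \eqref{eq:Frobfloor} to identify $\Frob_\ell(D)$ with the image of $M_{\ZZ[1/\ell]}/M$ under an explicit piecewise-constant map on $M_\RR/M$, and then observes that for large highly divisible $\ell$ this saturates the image of all of $M_\RR/M$, which is exactly $\Pic(X)\cap\bigstar_Z([D])$. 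Some such density argument is indispensable, and nothing in your proposal supplies it.
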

\begin{proof} First, observe that \cref{thm:thomsen} implies that 
\begin{equation} \label{eq:achingerobs} \Frob_\ell(D) \subset \left(\frac{\ell -1}{\ell} Z + \frac{1}{\ell} [D]\right) \end{equation}
from which the first claim follows.

For the second claim, we have
$$ \bigstar_Z(p) = \bigcup_{\ell \geq 1} \left( p + \frac{\ell-1}{\ell}(Z -p) \right)$$
for all $p \in Z$. Combined with \eqref{eq:achingerobs}, it is then apparent that $\Frob(D) \subset \bigstar_Z([D])$ given that $[D] \in Z$. Now, suppose that $D \in \mathfrak{C}$ so that $D = \sum_{\rho \in A} -D_\rho$ for some subset $A \subset \Sigma(1)$. By \eqref{eq:Frobfloor}, we have that
    \begin{align*} \Frob_\ell(D) &= \bigcup_{[m] \in M/\ell M} \mathcal{O}_{X} \left(\sum_{\rho \in A} \left\lfloor \frac{-1 - \langle m, u_\rho \rangle}{\ell} \right\rfloor D_\rho + \sum_{\rho \not\in A} \left\lfloor - \frac{\langle m, u_\rho \rangle}{\ell} \right\rfloor D_\rho \right) \\
    &= \bigcup_{[m] \in M/\ell M} \mathcal{O}_{X} \left(\sum_{\rho \in A} \left( \left\lceil - \frac{ \langle m, u_\rho \rangle}{\ell} \right\rceil -1 \right) D_\rho + \sum_{\rho \not\in A} \left\lfloor - \frac{\langle m, u_\rho \rangle}{\ell} \right\rfloor D_\rho \right).
    \end{align*}
In other words, $\Frob_\ell(D)$ coincides with the image of $M_{\ZZ[1/\ell]}/M$ under the map $M_\RR/M \to \Pic(X)$ given by
$$ m \mapsto \mathcal{O}_{\X} \left(\sum_{\rho \in A} \left( \left\lceil - \langle \beta^*m, u_\rho \rangle \right\rceil -1 \right) D_\rho + \sum_{\rho \not\in A} \left\lfloor - \langle \beta^*m, u_\rho \rangle \right\rfloor D_\rho \right).$$
As $\mathcal{S}_{\Sigma}$ is given by finitely many rational hyperplanes, the image of $M_{\ZZ[1/\ell]}/M$ coincides with that of $M_\RR/M$ for large highly divisible $\ell$. Finally, observe that the latter is given by the divisor classes that are linear equivalent as $\RR$ divisors to an element of $\bigstar_{\mathfrak{C}}(D)$, and hence are the integral points in its image $\bigstar_Z ([D])$.

For the third claim, similarly, observe that 
$$C_p = \bigcup_{\ell \geq 1} \big( p + (\ell -1)(p - Z) \big) .$$
As a result,
\begin{align*}
[D] \in C_p &\iff [D] \in \big( p + (\ell -1)(p - Z) \big) \text{ for some } \ell \geq 1 \\
&\iff p \in \frac{\ell -1}{\ell} Z + \frac{1}{\ell}[D] \text{ for some } \ell \geq 1 \\
&\impliedby p \in \Frob_\ell(D) \text{ for some } \ell \geq 1
\end{align*}
where the final implication follows from \eqref{eq:achingerobs}. 
\end{proof}

\begin{rem}
    Note that \eqref{eq:achingerobs} is not always an equality even though $\Pic(X)$ is free. See \cref{ex:hirzebruchfrobenius} below. In fact, \cite[Example VII.8]{altmann2020immaculate} shows that there is a line bundle on a smooth projective toric variety lying in $C_{[0]}$ which has vanishing cohomology. In particular, $[0]$ lies in the set on the right-hand-side of \eqref{eq:achingerobs} in this example, but $\mathcal{O}$ is not an element of $\Frob(D)$. 
\end{rem}

\cref{prop:frobgeom} implies that the only sheaves with nonzero cohomology that lie in the preimage of $Z$ must lie over vertices of $Z$ as noted in \cite{achinger2015characterization}. Moreover, if $H^\bullet(\mathcal{O}_Y(\phi^*D)) \neq 0$ for a linear inclusion $\phi\colon Y \to X$, then $\Frob(D) \cap Z$ must contain a point that pulls back to a vertex of the zonotope in $\Pic(Y)_\RR$. Given \cref{prop:frobgeom}, \cref{conj:frobgen} is implied by the following conjecture concerning the finite set of line bundles in $\Pic(X)$ that lie in $Z$ of which many of their Frobenius pushforwards can be computed directly from the second claim of \cref{prop:frobgeom}.

\begin{conj} 
    Suppose that the subvarieties of positive dimension with linear inclusions into a smooth toric variety $X$ are $Y_0 = X, Y_1, \hdots, Y_k$. If $D_0, \hdots, D_k$ are divisors on $X$ such that $\mathcal{O}_X(D_j) \in Z$ and
        $$ \Hom_{D^b\Coh(X)}^\bullet( \mathcal{O}_{Y_j}, \mathcal{O}_X (D_j)) \neq 0 $$
    for all $j$, then 
        $$ \bigoplus_{j=0}^k \bigoplus_{\mathcal F \in \Frob(D_j)} \mathcal F $$
    is a classical generator of $D^b\Coh(X)$. 
\end{conj}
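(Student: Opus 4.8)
The plan is to reduce the desired generation statement to a combination of \cref{maincor:generate} (generation by the full Thomsen collection $\Frob(0)$) and the resolution-of-subvarieties machinery of \cref{thm:resolutionOfSubstacks}, using the explicit structure of $Z$ and its faces described in \cref{prop:frobgeom}. The key observation is that each $Y_j$ admits a linear inclusion $\phi_j \colon Y_j \to X$, and that since $X$ is covered by smooth stacky charts, $\phi_j$ is an immersion of a closed toric substack in the sense of \cref{thm:resolutionOfSubstacks}; applying \cref{mainthm:res} to $\phi_j \colon Y_j \to X$ twisted appropriately produces a finite resolution of $(\phi_j)_* \mathcal{O}_{Y_j} \otimes \mathcal{O}_X(D_j')$ for various divisors $D_j'$, all of whose terms are direct sums of line bundles in $\Frob(D_j)$ after suitable bookkeeping. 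The upshot would be that the derived category generated by $\bigoplus_j \bigoplus_{\mathcal{F}\in \Frob(D_j)} \mathcal{F}$ contains all the sheaves $(\phi_j)_*\mathcal{O}_{Y_j}(\phi_j^* D_j')$, i.e. all structure sheaves of the linear subvarieties $Y_j$ together with the line bundles $\Frob(0) = \Frob(D_0)$ when $Y_0 = X$.

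\textbf{Key steps, in order.} First I would set up the combinatorial correspondence: since $\mathcal{O}_X(D_j) \in Z$ and $\Hom^\bullet(\mathcal{O}_{Y_j}, \mathcal{O}_X(D_j)) \neq 0$, by the discussion following \cref{prop:frobgeom} the divisor $D_j$ is linearly equivalent to one of the form \eqref{eq:linearBHcone} with a nonempty cohomology-witnessing subset $A_j \subset \Sigma_{Y_j}(1)$, and $\Frob(D_j) = \Frob(\sum_{\rho \in A_j}-D_\rho)$ contains the distinguished vertex $v_{A_j}$ of $Z$ (restricted appropriately to $\Pic(Y_j)$). Second, I would invoke \cref{thm:resolutionOfSubstacks} applied to $\phi_j$ to obtain a resolution of $(\phi_j)_*\mathcal{O}_{Y_j}$ by the Thomsen collection on $X$ of length $\codim(Y_j)$; then twisting this resolution by an appropriate line bundle in the Thomsen collection (or more precisely, by an element guaranteeing the terms land in $\Frob(D_j)$), and using the fact that $\Frob(D_j)$ is exactly the set of integral points in the star $\bigstar_Z([D_j])$ pulled back to the relevant faces, I would argue each term of the twisted resolution lies in $\Frob(D_j)$. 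Third, I would use \cref{maincor:generate} applied to the identity inclusion $Y_0 = X$ (whose associated collection is $\Frob(0) = \Frob(D_0)$ once $D_0$ is equivalent to a cube divisor with the appropriate witnessing set, here $A_0$ can be taken so that $\Frob(D_0) \supseteq \Frob(0)$ or one argues directly). Finally, I would assemble: the subcategory generated by $\bigoplus_j \bigoplus_{\mathcal F \in \Frob(D_j)}\mathcal F$ contains every $(\phi_j)_*\mathcal{O}_{Y_j}$ (by the resolutions) and contains all of $\Frob(0)$ (from the $j=0$ term), hence contains a classical generator of $D^b\Coh(X)$ by \cref{maincor:generate}, so it is all of $D^b\Coh(X)$.

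\textbf{Main obstacle.} The hard part will be verifying that the \emph{terms} of the twisted resolution from \cref{thm:resolutionOfSubstacks} for $Y_j \hookrightarrow X$ actually all lie in $\Frob(D_j)$, rather than merely in some Thomsen-type collection on $X$. The resolution produced by \cref{mainthm:res} has terms indexed by strata of the torus arrangement $S^{\phi_j}$, and each such term is $\mathcal{O}_X(\mathbf{F}(\sigma))$ for $\sigma$ a stratum of the induced arrangement on $T^{\phi_j}$; the claim is that after an overall twist by $\mathcal{O}_X(\sum_{\rho\in A_j}-D_\rho + (\text{extra }s_\rho\text{-part})$) these coincide with line bundles lying in the star $\bigstar_Z$ of $[D_j]$. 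This requires matching the combinatorics of the Bondal stratification (equivalently, the map $\mathbf{F}\colon M_\RR \to \text{SF}(\Sigma)$ and its ceiling-function description) with the zonotope-face description of $\Frob(D_j)$ in \cref{prop:frobgeom}(2), using that a linear inclusion identifies $\Sigma_{Y_j}(1)$ with a subset of $\Sigma_X(1)$ so that $T^{\phi_j}$ sits inside $M_{X,\RR}/M_X$ as exactly the directions not killed by $\phi_j^*$. I expect this matching to be a careful but ultimately mechanical floor/ceiling-function computation analogous to the proof of \cref{prop:frobvertices} and the second part of \cref{prop:frobgeom}; the genuinely new input needed beyond the present paper is only the observation that linear inclusions interact compatibly with the stratification $S^\phi$, which \cref{lem:thomsenPullback} and the linear-inclusion pullback $\phi^*$ already make essentially transparent.
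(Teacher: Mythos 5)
This statement is one of the paper's open conjectures: it is introduced only as a strengthening that would imply \cref{conj:frobgen} and is checked in examples, so there is no proof in the paper to compare against, and your proposal must stand on its own. It does not. The fatal gap is the final assembly step, where you reduce to \cref{maincor:generate} by asserting that the $j=0$ term supplies ``all of $\Frob(0)$,'' i.e.\ that $\Frob(D_0)\supseteq\Frob(0)$. The hypotheses force $[D_0]$ to be a vertex of $Z$ with nonvanishing cohomology, and by \cref{prop:frobgeom} one then has $\Frob(D_0)\subset\bigstar_Z([D_0])$ --- the star of \emph{that} vertex, not of $[0]$. Already on $\PP^2$ with $D_0$ linearly equivalent to the canonical divisor (degree $-3$), one has $\Frob(D_0)=\{\mathcal O(-1),\mathcal O(-2),\mathcal O(-3)\}$, which omits $\mathcal O\in\Frob(0)$; that this smaller set still generates is itself an instance of the conjecture and is not delivered by \cref{maincor:generate}. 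Without that containment your argument has no engine for generation: the remaining objects you place in the generated subcategory are twists of $(\phi_j)_*\mathcal O_{Y_j}$ for $j\geq 1$, torsion sheaves supported on proper subvarieties, which cannot generate anything supported away from them. Note also that the logic of the conjecture runs opposite to your proof: \cref{lem:linearobstruction} shows the conditions $\Hom^\bullet(\mathcal O_{Y_j},\mathcal O_X(D_j))\neq 0$ are \emph{necessary} for generation by Frobenius pushforwards, and the conjecture asserts they are sufficient; resolving the obstruction objects $\mathcal O_{Y_j}$ by elements of $\Frob(D_j)$ does not address sufficiency.

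Your intermediate step is also unsupported. The resolutions of \cref{thm:resolutionOfSubstacks} have terms dictated by $\bF$, hence by $\Frob(0)$ (or, after reversing all arrows, $\Frob(K)$), and the paper explicitly remarks that it ``seems unlikely that this strategy can be applied to other divisors in any generality.'' For a vertex $[D_j]$ of $Z$ other than $[0]$ or $[K]$, the star $\bigstar_Z([D_j])$ is not a translate of $\bigstar_Z([0])$, so no single overall twist carries the terms of the resolution of $\mathcal O_{Y_j}$ into $\Frob(D_j)$; the ``mechanical floor/ceiling computation'' you defer is exactly where the approach breaks, not a routine verification. A genuine attack on this conjecture will need input beyond the resolution of the diagonal --- the authors point toward homological mirror symmetry and coverage of the FLTZ skeleton --- rather than a re-twisting of the constructions already in the paper.
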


\subsection{Examples}

We conclude with some examples illustrating the behavior described in the previous section.

\begin{example} Consider $\PP^n$ with fan generated by $\alpha_0 = -e_1-\hdots-e_n, \alpha_1 = e_1, \hdots, \alpha_n = e_n$. Then, all $D_i$ are equivalent and the zonotope $Z$ is the interval $[-n-1, 0]$. It can be directly computed that
$$ \Frob(kD_0) \cap Z = \begin{cases} \{  \mathcal{O}, \mathcal{O}(-1), \hdots, \mathcal{O}(-n) \} & k \geq 0 \\
\{  \mathcal{O}(-1), \hdots, \mathcal{O}(-n) \} & -n-1 < k < 0 \\
\{ \mathcal{O}(-1), \hdots, \mathcal{O}(-n-1) \} & k \leq -n-1
\end{cases}$$
and thus generates the derived category if and only if $k \not\in (-n -1, 0)$.  Note that $C_{0} = [0, \infty)$, $C_{-n-1} = (-\infty, -3]$, and $C_{-1} = \hdots = C_{-n} = \RR$. See \cref{fig:pnfrobex} for the case of $n = 2$. Note that in this example the only linear inclusions are that of $X$ itself and the identity point of $\TT_N$. That is, nonvanishing cohomology is the only requirement for generation by some pushforward under toric Frobenius.
\end{example}

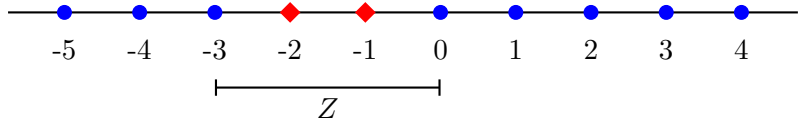
\begin{figure} 
\centering 
\begin{tikzpicture}
	\foreach \x in {-5,-4,...,4}
    {        
      \coordinate (A\x) at ($(\x,0)$) {};
      \node at ($(A\x)+(0,-3ex)$) {\x};
    }

    \draw[thick] ($(A-5)-(0.75,0)$) -- ($(A4)+(0.75,0)$);

    \draw[thick, |-|] ($(A-3) - (0,1)$) -- ($(A0) - (0,1)$) node [midway, below] {$Z$};
    
    \foreach \x in {-5,-4,-3}
    {        
      \node at ($(A\x)$) [circle, fill, blue, inner sep=2pt]{};
    }

    \foreach \x in {0,1,...,4}
    {        
      \node at ($(A\x)$) [circle, fill, blue, inner sep=2pt]{};
    }

    \foreach \x in {-2,-1}
    {        
      \node at ($(A\x)$) [shape=diamond, fill, red, inner sep=2pt]{};
    }

\end{tikzpicture} \caption{The real Picard group of $\PP^2$ with the zonotope $Z$. Line bundles such that $\Frob_\ell(D)$ generates $D^b\Coh(\PP^2)$ for some $\ell$ are blue dots, while those line bundles for which this is not true are red diamonds.} 
\label{fig:pnfrobex}
\end{figure}

\begin{example} \label{ex:hirzebruchfrobenius} The Hirzebruch surface $\mathbb{F}_b$ of degree $b$ is the smooth toric variety associated to the complete fan generated by $\alpha_1 = (1,0), \alpha_2 = (0,1), \alpha_3 = (-1, b)$, and $\alpha_4=(0,-1)$. Then, we have $D_2 + b D_3 - D_4 = D_1 - D_3 = 0$ up to linear equivalence, and thus, we will identify $\Pic(\mathbb{F}_b) \simeq \ZZ^2$ by choosing the generators $D_1, D_2$. Under this identification, we compute that the vertices of $\mathfrak{C}$ project to the points
\begin{align*} 
&(0,0), (-1,0), (-2, 0), (0,-1), (-1,-1), (-2, -1), \\
&(-b, -1), (-b, -2), (-b -1, -1), (-b-1, -2), (-b-2, -1), (-b-2, -2), 
\end{align*}
and as a result,
$$ (0,0), (-2,0), (0,-1), (-b, -2), (-b-2, -1), (-b-2,-2) $$
are the vertices of $Z$ for $b\geq 1$ (for $b =0$ there are only 4 vertices). Therefore, $Z$ has $8$ integer points on its boundary and $b+1$ in its interior. In particular, for $b \geq 4$, there are integer points in $Z$ that do not come from integer points of $\mathfrak{C}$. As a consequence, \eqref{eq:achingerobs} is not always an equality. For a concrete example, take $b=4, \ell =2$ and $D = D_1 + D_2$ and observe $2(-1,0) - (1,1) = (-3, -1) \in (2-1)Z = Z$ so $-D_1 \in \frac{1}{2} Z + \frac{1}{2} D$, but one can check directly using \eqref{eq:Frobfloor} that $\mathcal{O}_{\mathbb{F}_4}(-D_1)$ is not a summand of $(F_2)_* \mathcal{O}_{\mathbb{F}_4}(D_1 +D_2)$.

On the other hand, the translated cones determine exactly which line bundles on $\mathbb{F}_b$ have vanishing cohomology. That is, the line bundles with nonvanishing cohomology are exactly those lying in the translated cones over the vertices $(0,0), (-2,0), (-b, -2)$, and $(-b-2, -2)$. There is a non-trivial linear inclusion of $\PP^1$ into $\mathbb{F}_b$ given by $\ul\phi(u) = (0,u)$.
Choosing the basis $D_2$ for $\Pic(\PP^1)$, the pullback $\phi^* \colon  \Pic(X) \to \Pic(Y) $ becomes the projection $\ZZ^2 \to \ZZ$ onto the second factor. Thus, all four vertices of $Z$ with nonvanishing cohomology project to vertices of the zonotope of $\PP^1$ with nonvanishing cohomology. It follows that the only requirement for generation by $\Frob_\ell(D)$ for some $\ell$ is nonvanishing cohomology.

For $b=1$, the zonotope and some translated cones are depicted in \cref{fig:hirzfrobex}. 
\end{example}

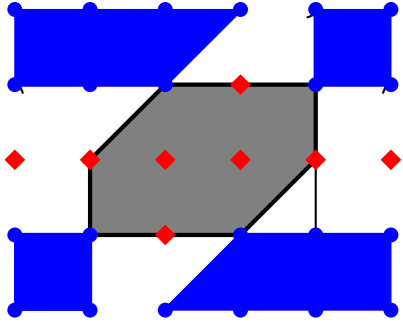
\begin{figure} 
\centering 
\begin{tikzpicture}
\draw[->, thick] (0,0) -- (1, 0);
\draw[->, thick] (0,0) -- (0,1);
\draw[->, thick] (0,0) -- (0,-3);
\draw[->, thick] (0,0) -- (-4, 0);

\draw[gray, fill = gray, opacity = 0.3] (0, 0) -- (-2,0) -- (-3, -1) -- (-3,-2) -- (-1,-2) -- (0,-1) -- (0,0);
\draw[black, ultra thick] (0, 0) -- (-2,0) -- (-3, -1) -- (-3,-2) -- (-1,-2) -- (0,-1) -- (0,0);

\draw[blue, fill = blue, opacity = 0.3] (0, 0) -- (1,0) -- (1, 1) -- (0,1) -- (0,0);
\draw[blue, ultra thick] (0, 1) -- (0,0) -- (1,0);

\draw[blue, fill = blue, opacity = 0.3] (-3, -2) -- (-4,-2) -- (-4, -3) -- (-3,-3) -- (-3,-2);
\draw[blue, ultra thick] (-4, -2) -- (-3,-2) -- (-3,-3);

\draw[blue, fill = blue, opacity = 0.3] (-2, 0) -- (-1,1) -- (-4, 1) -- (-4,0) -- (-2,0);
\draw[blue, ultra thick] (-1, 1) -- (-2,0) -- (-4,0);

\draw[blue, fill = blue, opacity = 0.3] (-1, -2) -- (-2,-3) -- (1, -3) -- (1,-2) -- (-1,-2);
\draw[blue, ultra thick] (-2, -3) -- (-1,-2) -- (1,-2);

\node at (0,0) [circle, fill, blue, inner sep=2pt]{};
\node at (-2,0) [circle, fill, blue, inner sep=2pt]{};
\node at (-3,-1) [shape=diamond, fill, red , inner sep=2pt] {};
\node at (-3,-2) [circle, fill, blue, inner sep=2pt]{};
\node at (-1,-2) [circle, fill, blue, inner sep=2pt]{};
\node at (0,-1) [shape=diamond, fill, red, inner sep=2pt]{};

\node at (-1,0) [shape=diamond, fill, red, inner sep=2pt]{};
\node at (-2,-2) [shape=diamond, fill, red, inner sep=2pt]{};

\node at (-1,-1) [shape=diamond, fill, red, inner sep=2pt]{};
\node at (-2,-1) [shape=diamond, fill, red, inner sep=2pt]{};

\node at (1,1) [circle, fill, blue, inner sep=2pt]{};
\node at (1,0) [circle, fill, blue, inner sep=2pt]{};
\node at (1,-1) [shape=diamond, fill, red, inner sep=2pt]{};
\node at (1,-2) [circle, fill, blue, inner sep=2pt]{};
\node at (0,-2) [circle, fill, blue, inner sep=2pt]{};
\node at (1,-3) [circle, fill, blue, inner sep=2pt]{};
\node at (0,-3) [circle, fill, blue, inner sep=2pt]{};
\node at (-1,-3) [circle, fill, blue, inner sep=2pt]{};
\node at (-2,-3) [circle, fill, blue, inner sep=2pt]{};
\node at (-3,-3) [circle, fill, blue, inner sep=2pt]{};
\node at (-4,-3) [circle, fill, blue, inner sep=2pt]{};
\node at (-4,-2) [circle, fill, blue, inner sep=2pt]{};
\node at (-4,-1) [shape=diamond, fill, red, inner sep=2pt]{};
\node at (-4,0) [circle, fill, blue, inner sep=2pt]{};
\node at (-3,0) [circle, fill, blue, inner sep=2pt]{};
\node at (-4,1) [circle, fill, blue, inner sep=2pt]{};
\node at (-3,1) [circle, fill, blue, inner sep=2pt]{};
\node at (-2,1) [circle, fill, blue, inner sep=2pt]{};
\node at (-1,1) [circle, fill, blue, inner sep=2pt]{};
\node at (0,1) [circle, fill, blue, inner sep=2pt]{};

\end{tikzpicture} \caption{The real Picard group of the blow-up of $\PP^2$ at a point with the zonotope $Z$ shaded in grey with solid black boundary. Line bundles such that $\Frob_\ell(D)$ generates $D^b\Coh(\PP^2)$ for some $\ell$ are blue dots, while those line bundles for which this is not true are red diamonds. The translated cones $C_p$ for vertices of $Z$ with nonzero cohomology are shaded in light blue and have a solid blue boundary.} 
\label{fig:hirzfrobex}
\end{figure}

\begin{example} \label{ex:twiceblowupfrobenius} Let $X$ be the blow-up at two points of $\PP^2$ with complete fan generated by 
    $$(\pm 1, 0), (0,1), (1,1) \text{ and } (-1,-1).$$ 
Let $D_1, D_2$ and $D_3$ be the divisors corresponding to the rays generated by $(1,0), (0,1),$ and $(-1,-1)$, respectively. The exceptional divisors $E_1,E_2$ correspond to the rays generated by $(1,1)$ and $(-1,0)$.  Then, we have that $D_1 - D_3 + E_1 - E_2 = D_2 + E_1 - D_3 = 0$ up to linear equivalence. Thus, we can identify $\Pic(X) \simeq \ZZ^3$ by choosing $D_1, D_2$ and $E_1$ as generators. 
The vertices of the zonotope $Z$ with nonzero cohomology are the points 
    \begin{align*} 
    &(0,0,0), (-1,-1,0), (-2,1,0), (0,-2,-1), (0,-1,-2), (-1,-1,-1), \\ &(-1,-2,-1), (-2,0,0), (-2,1,-1), (0,-2,-2), (-1,0,-2), \text{ and } (-2,-1, -2).
    \end{align*}
There are two linear inclusions of $\PP^1$ into $X$ corresponding to the lines $u_2 = 0$ and $u_1 = u_2$ in $N_\RR$. $D_1$ and $E_1$ form bases of the Picard groups of these linear inclusions so each pullback map is given by the projection $\ZZ^3 \to \ZZ$ onto the first or third factor. Thus, there are many vertices of $Z$ with non-vanishing cohomology whose Frobenius pushforwards do not generate $D^b\Coh(X)$. For example $-E_1 -E_2 = (-1,1-1)$ has nonzero cohomology and corresponds to the vertex $(-1, 1, -1)$ which projects to $-1$ in the zonotope of $\PP^1$ for both linear inclusions.
\end{example}

\appendix
\section{Discrete Morse cohomology for quivers}
\label{sec:quivers}
\label{app:morseForQuivers}
In this appendix, we state some results from discrete Morse theory used to prove \cref{lem:functorialityRestriction}. The results we state are more general than those in the literature (\cite{forman1998morse,skoldberg2006morse}). In particular, 
\begin{itemize}
\item \cite{skoldberg2006morse} constructs from a chain complex a quiver and then defines discrete Morse theory for that particular quiver. All quivers constructed this way will be digraphs (i.e., no multiple arrows between vertices). We instead provide our quivers with some extra structure so that we can associate a chain complex with the quiver.
\item \cite{skoldberg2006morse} constructs a Morse complex of $R$-modules. Our complexes instead take values in an abelian category.
\end{itemize}
The results of \cite{skoldberg2006morse} extend to our setting with only minor modifications. As such, we do not include their proofs here.

\subsection{Quivers from CW complexes and Cohomology}
Let $Q=(V(Q), E(Q))$ be a quiver. When the quiver is clear, we will simply write $Q=(V, E)$. To keep consistent notation with our application in \cref{lem:functorialityRestriction}, we will denote the elements of $E$ by $\delta, \gamma$ and the elements of $V$ by $\strata,\stratb,\stratc$. Let $P(\strata,\stratb)$ be the set of directed paths between $\strata,\stratb$.
\begin{df}\label{def:MorseQuiver}
    A \emph{Morse quiver} is a quiver $Q$ with a map $|-|\colon V\to \ZZ$ such that the following hold.
    \begin{itemize}
        \item  All edges $\gamma \colon \strata\to\stratb $ satisfy $|\strata|-|\stratb|=1$.
        \item For every $\strata,\stratb$ with $|\strata|-|\stratb|=2$, we have specified an involution 
    \[
        I_{\strata\stratb}:P(\strata, \stratb)\to P(\strata, \stratb)
    \]
    with no fixed points.
    \end{itemize}
\end{df}

Given a map $\sgn \colon  E\to \{\pm 1\}$, we can extend this to a map \[\sgn\colon P(\stratb, \strata)\to \{\pm 1\}\] via $\sgn(\gamma_1\cdots \gamma_k)=\sgn(\gamma_1)\cdots\sgn(\gamma_k)$. We say that $\sgn$ is an orientation if for all $\epsilon\in P(\strata,\stratb)$ with $|\strata|-|\stratb|=2$ the involution is anti-invariant with respect to our choice of signs $\sgn(\epsilon)=-\sgn(I_{\strata\stratb}(\epsilon))$. When $Q$ is a Morse quiver with a choice of orientation, we call $Q$ an \emph{oriented Morse quiver}.

\begin{example}
    \label{ex:orientedMorseQuiver}
    Let $X$ be a regular oriented CW complex. Then the Hasse diagram of the poset associated with $X$ is an example of an oriented Morse quiver.

    More generally, let $X$ be an oriented CW complex, whose attaching maps $\alpha_k \colon D^k\to X_{k-1} $ satisfy the following property: for all cells $\stratb=D^{k-1}\subset X_{k-1}$ and $x\in D^k$ with $\alpha(k)\in \stratb$, there exists a local chart $U\ni x$ so that $\stratb\subset \alpha(U)$ and $\alpha|_U$ is a homeomorphism onto its image. We associate to $X$ a quiver whose vertices are labeled by the cells of $X$, and whose edges $E(\strata,\stratb)$ denote the connected components of $\alpha_{\strata}^{-1}(\stratb)$. This quiver is an oriented Morse quiver.
    \label{ex:cwQuiver}
\end{example}

    A sheaf on a Morse quiver with values in an abelian category $\mathcal C$ is a functor $\mathcal F \colon  Q\to C $ with the property that for all $|\strata|-|\stratb|=2$ and $\gamma_1\cdot\gamma_2\in P(\strata, \stratb)$, we have 
    \[\mathcal F(\gamma_1\cdot\gamma_2)=\mathcal F(I_{\strata\stratb}(\gamma_1\cdot\gamma_2)).\]
Associated to the data of a sheaf on an oriented Morse quiver is a complex $C_\bullet(Q, \sgn, \mathcal F)$, where
\begin{align}
    \label{eq:morseComplex}
    C_k(Q, \sgn, \mathcal F)=& \bigoplus_{\strata\in Q, |\strata|=k}\mathcal F(\strata)\\
    d_{\strata}^\stratb=& \sum_{\gamma\in P(\strata,\stratb)}\sgn(\gamma)\cdot \mathcal F(\gamma)
\end{align}
\begin{example}
    When $X$ is a CW complex as in \cref{ex:cwQuiver}, then the constant sheaf $\ZZ(\sigma)=\ZZ, \ZZ(\gamma)=\id$ has the property that $C_k(Q, \ZZ)=C_k^{CW}(X, \ZZ)$.
\end{example}
Given different choices of sign function $\sgn, \sgn' \colon  E(Q)\to \{\pm 1\} $, the complexes $C_k(Q, \sgn, \mathcal F)$ and $C_k(Q, \sgn', \mathcal F)$ are isomorphic. For this reason, we will write $C_k(Q, \mathcal F)$ for the associated chain complex unless the choice of signs is important.

Discrete Morse theory is a method for finding a chain complex homotopic to $C_k(Q, \mathcal F)$ whose cochain groups have small rank.
\begin{df}
    Let $Q$ be a Morse quiver. An \emph{acyclic partial matching} of $Q$ is a partial matching $E^\ott\subset E(Q)$ of edges with the following property: The quiver $Q^{E^\ott}$, obtained from $Q$ by reversing all arrows in  $E^\ott$, has no directed cycles. 
\end{df}

Let $Q$ be a quiver and $E^\ott$ an acyclic partial matching. We denote by $E^\tto=E\setminus E^\ott$. The \emph{critical elements} of $Q$, denoted $\Crit(Q, E^\ott)$, are the $\strata\in Q$ which do not belong to an edge $\gamma\in E^\ott$.  
\subsection{Gradient flow lines}
 A \emph{gradient flow line} is a directed path in $Q^{E^\ott}$ which begins and ends with an element in $E^\tto$. 
For any vertices $\strata,\stratb$, let $\overline M(\strata,\stratb)$ denote the set of gradient flow lines starting at $\strata$ and ending at $\stratb$.
We write such paths as 
    \[\epsilon =\strata_1\tto\strata_2\ott\strata_3\tto\cdots \tto\strata_k\] 
We will use $\strata\tto\stratb$ when we know that the edge is contained in $E^\tto$ and  $\strata\ott\stratb$ when the edge is contained in $E^\ott$. 
Define $\ind(\epsilon)= |\strata|-|\stratb|-1$; this also counts the difference between the number of $\tto$ arrows and number of $\ott$ arrows. 
When $\ind(\epsilon)=1$, the path $\epsilon$ must be alternating between edges in $E^\tto$ and $E^\ott$.
Let $\ell(\epsilon)$ be the length of the path (as defined by the number of edges). 

\begin{example}
    If we take $E^{\ott}=\emptyset$, then $\Crit(Q, E^{\ott})=V$ and every edge of $E$ is a gradient flow line. 
\end{example}
The expectation from smooth Morse theory is that the downward flow spaces of the critical points provide a decomposition of $Q$ which can be used to compute $H_k(Q, \mathcal F)$. Additionally, the Morse-Smale-Witten approach to smooth Morse cohomology suggests that the quiver whose vertices are critical points and whose edges are gradient flow lines is another example of an oriented Morse quiver. This can be encapsulated in the following proposition. 

\begin{prop}\label{cor:involutionOnBrokenPaths}
Let $\strata$ and $\stratb$ be critical elements such that $|\strata|=|\stratb|+2$. There is a fixed-point free involution on the set $\bigcup_{\stratc\in \Crit(Q, E^\ott)}\overline{M}(\strata,\stratc)\times \overline{M}(\stratc,\stratb)$.
\end{prop}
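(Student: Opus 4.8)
The statement is the ``broken trajectory'' analogue of \cref{prop:QisaMorseQuiver} and should be proven by the standard discrete Morse theory argument adapted to the quiver setting, exactly as in \cite{skoldberg2006morse}. The plan is to reduce to the case $|\strata| = |\stratc| + 1 = |\stratb|+2$ (i.e.\ broken paths through a single intermediate critical cell, since this is forced by the index count) and to build the involution by a local surgery near the intermediate cell $\stratc$. Concretely, given a pair $(\eps_1, \eps_2) \in \overline M(\strata, \stratc)\times \overline M(\stratc, \stratb)$, one looks at the terminal segment of $\eps_1$ arriving at $\stratc$ and the initial segment of $\eps_2$ leaving $\stratc$. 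Because $\stratc$ is critical, both of these segments use edges in $E^\tto$ at $\stratc$; concatenating gives a directed path in $Q^{E^\ott}$ from $\strata$ to $\stratb$ passing through $\stratc$, and near $\stratc$ we have a length-$2$ subpath $\stratd \tto \stratc \tto \stratd'$ (where I write $\stratd, \stratd'$ for the cells immediately before and after $\stratc$ — I will rename these to avoid the macro collision, using $\tau_1,\tau_2$).

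First I would recall the local structure: at $\stratc$, which has $|\stratd| = |\stratc|+1$ and $|\stratc| = |\stratd'|+1$, the underlying Morse quiver structure provides the fixed-point-free involution $I_{\tau_1 \tau_2}$ on $P(\tau_1, \tau_2)$ coming from \cref{def:MorseQuiver}, together with the property that along each orbit the acyclic matching $E^\ott$ behaves predictably (each such length-$2$ path either is already alternating $\tto\ott$ or $\ott\tto$, or else becomes so after the involution). The key observation is that the concatenation $\eps_1 \cdot \eps_2$ contains, somewhere, a unique ``pivot'': a subpath of the form $\tau_1 \to \stratc \to \tau_2$ with both edges in $E^\tto$ and $|\tau_1| - |\tau_2| = 2$. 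Applying $I_{\tau_1\tau_2}$ to this subpath produces another path from $\strata$ to $\stratb$; one then reorganizes this new concatenated path back into a pair of gradient flow lines $(\eps_1', \eps_2')$ by cutting at the (now possibly relocated) unique critical cell of index $|\stratc|$ that it passes through. I would then check (a) this reorganization is well-defined — the new path still passes through exactly one critical cell of the right index, which follows because the surgery is supported away from critical cells other than $\stratc$ and does not change the index-$|\stratc|$ critical cells traversed; (b) it is an involution — immediate from $I_{\tau_1\tau_2}^2 = \id$ together with uniqueness of the pivot; and (c) it has no fixed points — this is exactly where the fixed-point-freeness of $I_{\tau_1\tau_2}$ is used: a fixed point of the broken-path involution would force a fixed point of $I_{\tau_1\tau_2}$.

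The main obstacle is bookkeeping: ensuring that the pivot $\tau_1 \to \stratc \to \tau_2$ is genuinely \emph{unique} inside an arbitrary element of $\overline M(\strata,\stratc)\times\overline M(\stratc,\stratb)$, and that after applying $I_{\tau_1\tau_2}$ the resulting path still decomposes uniquely as a product of two gradient flow lines through a single critical cell of the intermediate index. Uniqueness of the pivot comes from the index count ($\ind(\eps_1) = \ind(\eps_2) = 0$ forces both $\eps_1$ and $\eps_2$ to be alternating, so the only place where two consecutive $E^\tto$ edges meet is precisely at the junction cell $\stratc$), and the decomposition statement comes from the fact that $I_{\tau_1\tau_2}$ replaces the length-$2$ segment at $\stratc$ by another length-$2$ segment at $\stratc$ without touching the rest of the path. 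I would organize the argument by first treating the case where $\eps_1$ ends $\tau_1 \tto \stratc$ and $\eps_2$ starts $\stratc \tto \tau_2$ (the generic case), deriving the involution directly; the degenerate cases (where one of $\eps_1,\eps_2$ is the trivial path at $\stratc$) are handled identically and in fact more simply. Since these arguments are verbatim adaptations of \cite[proof of the corresponding lemma]{skoldberg2006morse} with $R$-modules replaced by objects of $\mathcal C$ and digraphs replaced by Morse quivers, and since the excerpt has already announced that ``the results of \cite{skoldberg2006morse} extend to our setting with only minor modifications,'' the write-up can be kept brief, citing \cite{skoldberg2006morse} for the combinatorial core and spelling out only the identification of the pivot and the verification of fixed-point-freeness.
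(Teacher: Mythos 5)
Your reduction to a unique ``pivot'' is correct: an index-zero gradient flow line has one more $\tto$ arrow than $\ott$ arrows, begins and ends with a $\tto$ arrow, and can never contain two consecutive $\ott$ arrows (that would force the middle vertex into two matched edges), so it is strictly alternating; hence the concatenation $\eps_1\cdot\eps_2$ has exactly one $\tto\tto$ junction, sitting at $\stratc$. The gap is in what happens after you apply the involution there. Write the pivot as $\mu\tto\stratc\tto\nu$ with $|\mu|-|\nu|=2$. The involution $I_{\mu\nu}$ furnished by \cref{def:MorseQuiver} replaces this by some other length-two path $\mu\to\stratc'\to\nu$ in $Q$, but nothing in the Morse-quiver axioms relates $I_{\mu\nu}$ to the matching, so the output need not be a broken gradient path: (a) the new intermediate vertex $\stratc'$ need not be critical, in which case the new concatenation is an ``unbroken'' index-one path whose unique $\tto\tto$ junction sits at a matched vertex and which therefore does not lie in $\bigcup_{\stratc}\overline M(\strata,\stratc)\times\overline M(\stratc,\stratb)$ at all; and (b) one or both of the new edges may lie in $E^\ott$, in which case the new word is not even a directed path in $Q^{E^\ott}$ (a matched edge may only be traversed upward). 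Your claims that the surgery ``does not change the index-$|\stratc|$ critical cells traversed'' and that each length-two path ``becomes alternating after the involution'' are precisely the assertions that fail; the surgery is supported exactly at the intermediate cell and can move it off the critical set.

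What is missing is the discrete analogue of flowing to the \emph{other} end of the moduli space: after one application of $I_{\mu\nu}$ one must, in general, iterate --- slide along the matched edge at $\stratc'$, apply the local involution at the next degree-two configuration, and so on --- and prove that this zig-zag terminates in a broken path, which is where acyclicity of $E^\ott$ is genuinely used. That iteration is in effect the content of Forman's Theorem 8.10, and it is not a one-line bookkeeping matter. (Note also that the degenerate case you mention cannot occur: a trivial path is not a gradient flow line, and $\stratc=\strata$ is excluded by degree reasons.) Alternatively, one can sidestep the combinatorics: the algebraic form of discrete Morse theory in \cite{skoldberg2006morse} gives $\tilde d^2=0$ directly, which over $\ZZ/2$ says the set of broken paths has even cardinality and hence admits \emph{some} fixed-point-free involution; but the involution actually used in \cref{def:MorseReduction} must in addition be compatible with $\sgn$ and with $\widetilde{\mathcal F}$, so this cheap existence argument does not suffice either without further work. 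As it stands, the single application of $I_{\mu\nu}$ does not define a map from the set in question to itself.
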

In the smooth setting, this proposition is proved by studying the boundary compactification (by broken flow lines) of the moduli space of index 1 flow lines between $\strata, \stratb$.  
\subsection{The Morse Complex}
\begin{df}
    \label{def:MorseReduction}
Let $E^\ott$ be an acyclic matching on an oriented Morse quiver $Q$. The Morse reduction of $Q$ along $E^\ott$ is the Morse quiver $\tilde Q$ whose
\begin{itemize} 
    \item vertices are $\Crit(Q, E^\ott)$;
    \item edges between $\strata, \stratb$ with $|\strata|=|\stratb|+1$ are $\overline{M}(\strata,\stratb)$.
    \item linearization is given by restriction of $|-|\colon Q\to \ZZ$ to the critical elements
    \item orientation is given by $\widetilde{\sgn}(\epsilon)=(-1)^{(\ell(\epsilon)-1)/2}\prod_{i=1}^k \sgn(\gamma_i)$.
    \item involution is given by \cref{cor:involutionOnBrokenPaths}
\end{itemize}
\end{df}
An acyclic partial matching is said to respect a sheaf $\mathcal F \colon  Q\to \mathcal C $ if whenever $\delta\in E^\ott$, $\mathcal F(\delta)$ is an isomorphism.
Given an acyclic partial matching that respects $\mathcal F$, we can define a sheaf
$\widetilde {\mathcal F} \colon  \tilde Q\to \mathcal C $ that agrees with $\mathcal F$ on objects and takes an edge $\strata_1\xrightarrow{\gamma_1}\stratb_1\xleftarrow{\delta_1}\strata_2\xrightarrow{\gamma_2}\cdots \xrightarrow{\gamma_k}\stratb_k$ to 
\[\widetilde {\mathcal F} (\epsilon)=\mathcal F(\gamma_k)\circ (\mathcal F(\delta_{k-1}))^{-1}\circ\cdots\circ(\mathcal F(\delta_1))^{-1}\circ\mathcal F(\gamma_1).\]

In summary, $\tilde Q$ is a Morse quiver, and $\widetilde{\mathcal  F}$ is a sheaf on $\tilde Q$.
The complex $C_\bullet(\tilde Q, \widetilde{\mathcal F})$ is called the Morse complex of $Q$ with respect to the acyclic partial matching $E^\ott$.
The discussion in \cite[Theorem 8.10]{forman1998morse} shows that this complex (defined via counts of flow lines) agrees with the algebraically defined Morse complex from \cite[Equation 7.1]{forman1998morse}, and is homotopic to $C_\bullet(Q, \mathcal F)$. 

\begin{thm}[\cite{forman1998morse}, invariance of discrete Morse homology]
    Let $Q$ be an oriented Morse quiver, $\mathcal F \colon  Q\to \mathcal C $ a sheaf, and $E^\ott$ an acyclic partial matching respecting $\mathcal F$. Then  $C_\bullet(\tilde Q, \widetilde{\mathcal F})$ and $C_\bullet(Q, \mathcal F)$ are chain homotopic.\label{thm:MorseInvariance}
\end{thm}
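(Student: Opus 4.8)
\textbf{Proof plan for Theorem~\ref{thm:MorseInvariance} (invariance of discrete Morse homology).}

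The plan is to reduce to the case of a \emph{single} acyclic partial matching consisting of one edge, and then to iterate. More precisely, I would first observe that the combinatorial content of $C_\bullet(Q,\mathcal F)$ and $C_\bullet(\tilde Q,\widetilde{\mathcal F})$ is insensitive to the choice of orientation (as noted after \eqref{eq:morseComplex}), so we may fix convenient signs throughout. The first real step is the \emph{reduction lemma}: if $E^\ott=\{\delta\}$ is a single edge $\strata_1\to\stratb_1$ with $\mathcal F(\delta)$ an isomorphism, then $\tilde Q$ has vertex set $V(Q)\setminus\{\strata_1,\stratb_1\}$, its edges from $\strata$ to $\stratb$ are the old edges together with the ``zig-zags'' $\strata\to\stratb_1\xleftarrow{\delta}\strata_1\to\stratb$, and $C_\bullet(\tilde Q,\widetilde{\mathcal F})$ is obtained from $C_\bullet(Q,\mathcal F)$ by the standard algebraic operation of cancelling the isomorphism $\mathcal F(\delta)\colon \mathcal F(\strata_1)\xrightarrow{\sim}\mathcal F(\stratb_1)$ appearing as a matrix entry of the differential. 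This is the classical Gaussian-elimination homotopy equivalence of chain complexes: if a differential has block form $\begin{psmallmatrix}\mathcal F(\delta)&b\\ c&d\end{psmallmatrix}$ with $\mathcal F(\delta)$ invertible, then the complex is homotopy equivalent to the one with differential $d-c\,\mathcal F(\delta)^{-1}b$, and one writes down the explicit chain homotopy. The only thing to check here is bookkeeping: that the matrix entry $d-c\,\mathcal F(\delta)^{-1}b$ is exactly the differential of $\tilde Q$ computed by counting length-$1$ gradient flow lines through $\delta$, i.e.\ that the term $c\,\mathcal F(\delta)^{-1}b$ assembles the contributions of the zig-zag paths with the sign $\widetilde{\sgn}$ prescribed in \cref{def:MorseReduction}. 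This is where \cref{cor:involutionOnBrokenPaths} enters: it guarantees that the new data $(\tilde Q,\widetilde{\mathcal F})$ is again an oriented Morse quiver with a sheaf, so that $C_\bullet(\tilde Q,\widetilde{\mathcal F})$ is genuinely defined.

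The second step is the \emph{induction on $|E^\ott|$}. Since the matching is acyclic, one can order the edges of $E^\ott$ so that removing them one at a time keeps acyclicity — concretely, pick a vertex $\strata$ of $Q^{E^\ott}$ that is a source among the reversed edges, peel off the unique $\delta\in E^\ott$ incident to it, apply the single-edge reduction lemma, and note that the residual matching $E^\ott\setminus\{\delta\}$ on $\tilde Q$ is still acyclic (a cycle in $(\tilde Q)^{E^\ott\setminus\{\delta\}}$ would lift to a closed zig-zag in $Q^{E^\ott}$). One then checks that the two possible orders of contraction yield the same $(\tilde Q,\widetilde{\mathcal F})$ up to the canonical identification, so the Morse reduction is well-defined, and composing the homotopy equivalences from each single-edge step gives the homotopy equivalence $C_\bullet(\tilde Q,\widetilde{\mathcal F})\simeq C_\bullet(Q,\mathcal F)$. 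Alternatively — and this is the route taken in \cite{skoldberg2006morse} — one can avoid the induction entirely by writing down a single ``global'' chain homotopy built from the reduction, deformation retract, and inclusion maps; the same formulas work verbatim with $R$-modules replaced by objects of an abelian category $\mathcal C$, since nothing in those formulas uses elements, only the isomorphisms $\mathcal F(\delta)^{-1}$ for $\delta\in E^\ott$ and formal sums of compositions, all of which make sense in any additive category. I would cite \cite{skoldberg2006morse} for the formulas and only indicate the modifications.

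The step I expect to be the main obstacle is verifying that the sign conventions match: that the globally defined differential on $C_\bullet(\tilde Q,\widetilde{\mathcal F})$, with signs $\widetilde{\sgn}(\epsilon)=(-1)^{(\ell(\epsilon)-1)/2}\prod_i\sgn(\gamma_i)$, coincides on the nose with the differential produced by iterated Gaussian elimination, and that the resulting complex is independent of all auxiliary choices (orientation of strata, order of edge-contractions, choice of source vertex). This is precisely the point where one must invoke \cref{cor:involutionOnBrokenPaths}: the fixed-point-free involution on broken flow lines of index~$1$ is exactly what forces $d^2=0$ for the reduced complex and what makes the sign $\widetilde{\sgn}$ consistent, mirroring the boundary-of-the-moduli-space argument in smooth Morse--Smale--Witten theory. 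Since, as the excerpt notes, \cite[Theorem 8.10]{forman1998morse} already identifies the flow-line count with Forman's algebraic Morse complex in the CW case, and the abelian-category generalization of \cite{skoldberg2006morse} supplies the homotopy, the proof amounts to assembling these two inputs and checking that the extra generality (quivers with multiple edges, coefficients in $\mathcal C$) does not disturb either argument; I would present this as a short verification rather than a from-scratch construction.
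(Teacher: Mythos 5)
Your plan is correct and coincides with what the paper actually does: the paper offers no proof of \cref{thm:MorseInvariance}, deferring entirely to \cite{forman1998morse} and \cite{skoldberg2006morse} with the remark that the extension to quivers with multiple edges and coefficients in an abelian category requires "only minor modifications." Your single-edge Gaussian-elimination reduction, the induction on $|E^\ott|$ using acyclicity, and the sign check $(-1)^{(\ell(\epsilon)-1)/2}$ against iterated elimination are exactly the content of those references, so your sketch is, if anything, more explicit than the paper's treatment.
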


\section{Quotient Stacks}
\label{subsec:stackBackground}
In this appendix, we collect some background material on quotient stacks, which are our chosen language to handle equivariant data for the abelian group actions in our constructions. In particular, we write an explicit formula in terms of characters for the pushforward of an equivariant sheaf under a finite quotient. 

Let $m:G\times G\to G$ be an abelian group, and let $X$ be a variety equipped with a $G$-action  $a\colon G\times X\to X$. Denote by $p\colon G\times X\to X$ the projection to $X$. A $G$-equivariant sheaf on $X$ is a sheaf $\mathcal F$ on $X$, together with an isomorphism of $\mathcal O_{G\times X}$-modules 
\[\theta\colon a^* \mathcal F\to p^*\mathcal F\]
satisfying the following associativity relation:
\[p^*\theta\circ (1\times a)^*\theta = (m\times 1)^*\theta.\]
A morphisms of equivariant sheaves $(\mathcal F, \theta_{\mathcal F})\to (\mathcal G, \theta_{\mathcal G})$ is a $G$-equivariant morphism of sheaves, that is, a morphism $s\colon \mathcal F\to \mathcal G$ satisfying  $p^*s \theta_{\mathcal F}= \theta_{\mathcal G} a^* s$. We will frequently think of $\theta$ as giving us a product on the sections of $\mathcal F$, and write $g\cdot_G(-)\cdot \mathcal F(U) \to \mathcal F(U)$ for this product where convenient.

We define the category of sheaves on the quotient stack $[X/G]$ to be the category of $G$-equivariant sheaves on $X$. The notation is inspired by the case where $G$ acts freely on $X$ where the categories $\Sh([X/G])$ and $\Sh(X/G)$ are equivalent. In general, these categories are not equivalent. When the quotient $X/G$ is algebraic, there is a pullback functor $\pi^*:\Sh(X/G)\to \Sh([X/G])$.

Given a group homomorphism $f\colon H\to G$, an $f$-morphism $\phi \colon [X/H]\to [Y/G] $ is a morphism $\Phi \colon  X\to Y $  so that the following diagram commutes
\[
	\begin{tikzcd}
		H\times X \arrow{d}{a_H} \arrow{r}{f\times \Phi} &  G\times Y \arrow{d}{a_G}\\
		X\arrow{r}{\Phi}& Y
	\end{tikzcd}
\] 
In this paper, we generally use upper-case characters to denote morphisms between varieties and lower-case characters to denote morphisms between their quotient stacks. We will be interested in explicit descriptions of $\phi_* \colon  \Sh([X/H])\to \Sh([Y/G]) $ and $\phi^* \colon  \Sh([Y/G])\to \Sh([X/H]) $ in the following two cases:
\begin{itemize}
	\item The map $f\colon G\to G$ is a group isomorphism. Then, the definitions of pushforward and pullback are similar to that in the non-equivariant case.
	\item The map $f\colon H\to G$ is a group homomorphism, but the map between varieties $\Phi \colon  X\to X $ is the identity. The constructions of $\phi_*$ and $\phi^*$ arise from representation theory. We are particularly interested in the special case where $H$ is the trivial group.
\end{itemize}

Given any $f$-morphism, we can define the inverse image functor $\phi^* \colon  \Sh([Y/G])\to \Sh([X/H]) $ via $\phi^*(\mathcal F, \theta)= (\Phi^*\mathcal F, (f\times \Phi)^*\theta)$.
In the setting where $\Phi=\id \colon  X\to X $, then the functor $\phi \colon  [X/H]\to [X/G] $ has a simpler expression. Namely, given $(\mathcal F, \theta_F)\in \Sh([X/G])$, the sheaf $\phi^*\mathcal (F, \theta_F)$ is the pair $(\mathcal F, \theta_F(f, -))$. 
\subsection{Pushforward along an \texorpdfstring{$f$}{f}-morphism}
We now focus on the construction of the pushforward when $\Phi=\id$. All of the constructions we consider are variations on the following example.

\begin{example}
It is already interesting to consider the pushforward where $f\colon H \to G$ is an arbitrary group homomorphism, $\Phi:X \to X$ is the identity, and $X$ is a point. A sheaf on $[X/H]$ is a vector space $V$ with a representation $\rho \colon  H\to End(V) $. That is, $\Sh([X/G])=Rep(G)$ and $\Sh([X/H])=Rep(H)$. Pushforward and pullback are then described by a pair of adjoint functors, the forgetful functor $\phi^* \colon Rep(G)\to Rep(H) $ and the coinduction functor $\phi_* \colon Rep(G)\to Rep(H)$.
\end{example}

Now, consider a group homomorphism $f\colon H\to G$ a group homomorphism and $\Phi \colon  X\to X $ the identity on any variety $X$. We denote by $\phi \colon  [X/H]\to [X/G]$ the map on quotient stacks. 
Given a $G$-sheaf $\mathcal F$, we have already defined the pullback $\phi^*\mathcal F$.
We now prove that the right adjoint is given by the coinduced representation
\[\phi_*\mathcal G :=\hom_{\Sh([X/H])}(\KK[G],\mathcal G^K).\]
where 
\begin{itemize}
    \item $K$ is the kernel of $f$
    \item $\mathcal G^K$ is the subsheaf of $K$-invariant sections
    \item $\KK[G]$ has the structure of an $H$-equivariant sheaf via the pushforward action $h\cdot_Hg = f(h)\cdot_G g$
    \item $G$ acts on $x\in \hom_{\Sh([X/H])}(\KK[G],\mathcal G^K)$ by $g\cdot_G x( - ) = x(g\cdot_G (-))$
\end{itemize}
This functor maps morphisms $s\in \hom_{\Sh([X/H])}(\mathcal F,\mathcal G)$  to the postcomposition 
\[\hom_{\Sh([X/H])}(\KK[G],\mathcal F^K)\xrightarrow{\phi_*(s):=s\circ}\hom_{\Sh([X/H])}(\KK[G],\mathcal G^K).\]
We now exhibit the adjunction 
\[\widehat{(-)}: \hom_{[X/H]}(\phi^* \mathcal F, \mathcal G)\leftrightarrow \hom_{[X/G]}(\mathcal F,\phi_* \mathcal G):\widecheck{(-)}\]
Given $s\in \hom_{[X/H]} (\phi^* \mathcal F, \mathcal G)$,
we produce a morphism in $\hat s \in \hom_{[X/G]}(\mathcal F, \phi_* \mathcal G)$ as follows.
For each open $U$, and section $x\in \mathcal F(U)$, we define $\hat s (x)=(g \mapsto s(g\cdot x))$.
The map $\hat s(x) \colon \KK[G]\to \mathcal G^K $ is $H$-equivariant by the following computation:
\begin{align*}
	\hat s(h\cdot_H x)(g)= \hat s(x)(h\cdot_H g) =  s(h\cdot_H g\cdot_G x)= h\cdot_H s(g\cdot_G x) = h\cdot_H \hat s (x)(g)
\end{align*}
The computation also shows that $\hat s(x)(g)$ is $K$-invariant.
We confirm that the morphism $\hat s$ is $G$-equivariant 
\[\hat s(g'\cdot_G x) =(g\mapsto s(g\cdot_G g'\cdot_G x))=g\cdot_G(\hat s(x)). \]
Similarly, for every $t\in \hom_{[X/G]}(\mathcal F, \phi_*\mathcal G)$, we produce a morphism $\check t \in   \hom_{[X/H]} (\phi^* \mathcal F, \mathcal G)$ which sends $y\in \phi^*\mathcal F$ to $\check t(y) = t(y)(1)$.
We also need to verify that the morphism $\check t$ is $H$-equivariant :
\begin{align*}
	\hat t(h\cdot_H y)=& t(h \cdot_H y)(1) = t(f(h)\cdot_G y)(1)\\
  =&f(h)\cdot_G t(y) (1) = t(y)(f(h)\cdot_G 1) = t(y)(h\cdot_H 1)=
  h\cdot_H t(y)(1)
\end{align*}
	We now show that $\widehat{(-)}, \widecheck{(-)}$  are inverses. Let $x \in \mathcal F(U)$, and $t\in \hom_{[X/G]}(\mathcal F, \phi_*\mathcal G)$. 
Then,
\begin{align*}
	\hat{\check t}(x)=(g\mapsto \check t(g\cdot_G x)) = g\mapsto t(g\cdot_G x)(1)=g\mapsto t(x)(g)= t(x)
 \intertext{and}
	\check{\hat s}(y)= \hat s(y)(1) = (g\mapsto s(g\cdot y))(1) = s(y).
\end{align*}
\subsubsection{Pushforward via Characters}
\label{subsubsec:characterPushforward}
We now make further assumptions to obtain a more computable formula for the pushforward. We assume $\kk$ is algebraically closed, the short exact sequence
\[ 0 \to H\to G\to \coker(f) \to 0 \]
splits, so for any $g \in G$, we can uniquely write $g=(h, c)$ for some $h \in H$ and $c \in \coker(f)$, and that $|\coker(f)|\in \KK^\times$. In particular, $\coker(f)$ is finite. Let $\chi_1, \ldots , \chi_k$ denote the characters of $\coker(f)$.
Define the character functor $\phi_c \colon \Sh([X/H])\to \Sh([X/G]) $ to act on objects by
 \[\phi_c\mathcal G:=\bigoplus_{i=1}^k \mathcal G\]
 The $G$-action on $\phi_c\mathcal G$ is defined using the decomposition $g=(h, c)$
 \[
	   (x_1, \ldots, x_k) \mapsto (\chi_1(c)  hx_1, \ldots, \chi_k(c) h x_k).
 \]
 For this definition to be well-defined, the characters must take value in $\kk$. Here we assume that $\kk$ is algebraically closed so that this automatically holds.
On morphisms, our functor sends $s\in \hom_{\Sh([X/H])}  (\mathcal G, \mathcal F)$ to
 \[\phi_c s (x_1, \ldots, x_k)= (s(x_1), \ldots, s(x_k)).\]
 We check that the morphism $\phi_c s$ is $G$-equivariant:
 \[\phi_c(s)(g\cdot_G(x_i)_{i=1}^k)= s(\chi_i(c)h\cdot_Hx_i)_{i=1}^k = (\chi_i(c)h\cdot_Hs(x_i))_{i=1}^k = g\cdot_G\phi_c(s)((x_i)_{i=1}^k)\]
 We prove that the functor $\phi_c$ agrees with $\phi_*$ by exhibiting identifications
\begin{align*}
	\mathfrak{F}:\hom_{\Sh([X/H])}(\kk[G],\mathcal G)\leftrightarrow&  \bigoplus_{i=1}^k\mathcal G: \hat{\mathfrak F}
	\\
\end{align*}
which can be thought of as finite Fourier transforms.
The forward transform is defined via  $\mathfrak F(x)= \frac{1}{|\coker(f)|} \sum_{c\in G}(\chi_i(c^{-1})  x(c))_{i=1}^k$.
We check that $\mathfrak F(x)$ is $G$-equivariant in the $x$-argument
\begin{align*}
	\mathfrak{F}(g'\cdot x)=& \frac{1}{|\coker(f)|} \sum_{c\in \coker(f)}\left(\chi_i(c^{-1})  x(c\cdot g')\right)_{i=1}^k\\
    =& \frac{1}{|\coker(f)|} \sum_{g\in \coker(f)}\left(\chi_i(c^{-1})  (h')^{-1} x(c\cdot c')\right)_{i=1}^k
	\intertext{Reindexing the sum over $c''=c\cdot c'$}
	=& \frac{1}{|\coker(f)|} \sum_{g''\in  \coker(f)}\left(\chi_i((c'')^{-1} c')(h')^{-1}\cdot_H x(c'') \right)_{i=1}^k
	= g'\cdot_G \mathfrak{F}(x)
 \end{align*}

The inverse transformation is defined via 
$\check{\mathfrak F}(y)=\left(g\mapsto \sum_{i=1}^k \chi_i(c) h\cdot y_i\right)$.
We now check that $\check{\mathfrak F}(y)$ is $G$-equivariant in the $y$-argument. 
\begin{align*}
	\check{\mathfrak F}(g'\cdot y) =&\left( g\mapsto\left(  \sum_{i=1}^k \chi_i(c)h\cdot_H \chi_i(c')h'  y_i\right)\right)\\
	=&\left( g\mapsto\left(  \sum_{i=1}^k \chi_i(cc') (hh')y_i\right)\right)= g'\cdot \mathfrak F ( y)
\end{align*}

These maps are inverses by Schur's orthogonality relations. 
\begin{align*}
	\mathfrak F\circ \hat{\mathfrak F}(y)=& \frac{1}{|\coker(f)|} \sum_{c\in \coker(f)}(\chi_i(c^{-1}) \hat{\mathfrak F}(y)(c))_{i=1}^k\\
	=& \frac{1}{|\coker(f)|}\sum_{c\in \coker(f)}\left(\chi_i(c^{-1})\sum_{j=1}^k\chi_j (c)y_j\right)_{i=1}^k\\
	=& \left(\sum_{j=1}^k \langle \chi_i, \chi_j\rangle y_j \right)_{i=1}^k\\
	=& y
\end{align*}
\begin{align*}
	\hat{\mathfrak F} \circ \mathfrak F(x)(g)=& \left(\sum_{i=1}^k \chi_i(c)h \mathfrak F(x)_i\right)\\
	=&\frac{1}{|\coker(f)|}\sum_{i=1}^k \chi_i(c )h\left(\sum_{c'\in \coker(f)} \chi((c')^{-1}) x(c')\right)\\
	=&\frac{1}{|\coker(f)|}\sum_{c'\in \coker(f)} \left(\sum_{i=1}^k \chi_i(c^{-1}c')\right) h x(c')\\
	=&\frac{1}{|\coker(f)|} \sum_{c'\in \coker(f)} \delta_{c, c'} h x(c')
	=hx(c)=x(g)
\end{align*}
In this fashion, the transformations $\mathfrak F, \check{\mathfrak F}$ give an equivalence of functors $\phi_*$ and $\phi_c$. 
\section{Generation and Rouquier dimension} \label{subsec:Rouquierbackground} 
  Here, we summarize some of the results of \cite{ballard2012hochschild} that explain how to deduce \cref{maincor:rdim} from \cref{maincor:diagonal}. We first provide some background on the Rouquier dimension of a triangulated category. Then, we discuss how it can be bounded by the generation time of the diagonal by product objects. 

Let $\mathcal C$ be a triangulated category over a field $\kk$. Given a full subcategory $\mathcal G\subset \mathcal C$, let $\langle \mathcal G \rangle = \langle \mathcal G \rangle_0$ be the smallest full subcategory of $\mathcal C$ containing $\mathcal G$ which is closed under isomorphisms, shifts, finite direct coproducts, and summands.  Inductively define    $$\langle \mathcal G \rangle_k = \langle \langle \mathcal{G} \rangle * \langle \mathcal{G} \rangle_{k-1} \rangle$$ 
where $\langle \mathcal{G} \rangle * \langle \mathcal{G} \rangle_{k-1}$ is the full subcategory of $\mathcal C$ whose objects are isomorphic to cones between objects of $\langle \mathcal G\rangle_0$ and $\langle \mathcal G \rangle_{k-1}$.
Given $\mathcal E\subset \mathcal C$ a full subcategory, and $\mathcal G\subset \mathcal C$ the generation\footnote{We allow for taking direct summands in generation; in some literature this is distinguished from generation and called split-generation.} time of $\mathcal E$ by $\mathcal G$ is 
\[
    \gentime_{\mathcal G}(\mathcal E):=\min\{k\in \NN, \infty \;|\; \mathcal E\subset \langle \mathcal G\rangle_k \} .
\]
Given $G,E$ objects of $\mathcal C$, we will abuse notation and write $G, E$ for their endomorphism algebras considered as full subcategories of $\mathcal C$.
 The \emph{Rouquier dimension} of $\mathcal C$ is defined by 
 \[
     \Rdim(\mathcal C):=\min\{k \;|\; \exists G\in \Ob(\mathcal C) \text{ such that } \gentime_G(\mathcal C)=k\}. 
 \]
Given objects $G_0, \ldots, G_k$, we write $[G_2\xrightarrow{f_1}G_1]$ for the mapping cone of $f_1\colon G_2\to G_1$. We additionally employ the notation  
\[[G_k\xrightarrow{f_{k-1}}G_{k-1}\to \cdots \xrightarrow{f_1} G_1]:=[G_k\xrightarrow{f_{k-1}}[G_{k-1} \cdots\xrightarrow{f_2}  [G_2\xrightarrow{f_1}G_1]\cdots]]]\]
for an iterated mapping cone. If there is an object $G$ of $\mathcal C$ such that for any object $E$ there exist objects $G_i\in \langle G\rangle_0$ such that $E\cong[G_n\to \cdots \to G_1]$, then $\Rdim(\mathcal C)\leq n$.

The main tool that we use for computing the Rouquier dimension of the derived category of coherent sheaves is the  Fourier-Mukai transform. 
\begin{df}(\cite[\href{https://stacks.math.columbia.edu/tag/0FYU}{Tag 0FYU}]{stacks-project})
    Let $X_1, X_2$ be schemes of finite type, and additionally assume that $X_1$ is proper. A \emph{Fourier-Mukai kernel} is an object $F\in D^b\Coh(X_1\times X_2)$. The associated Fourier-Mukai transform is the functor 
    \begin{align*}
        \Phi_F: D^b\Coh (X_1)\rightarrow& D^b\Coh(X_2)\\
        E\mapsto& (\pi_2)_*(F\tensor \pi_1^*(E)).
    \end{align*}
\end{df}
All functors are implicitly derived. Many functors between derived categories of coherent sheaves can be written as Fourier-Mukai transforms. A key example comes from morphisms $f\colon X_1\to X_2$. Let $\Gamma(f)\subset X_1\times X_2$ be the graph of $\Gamma$. Then,
\[\Phi_{\mathcal O_{\Gamma(f)}}=f_*.\]
In particular, $\Phi_{\mathcal O_\Delta}$ is the identity functor.

Fourier-Mukai transforms are triangulated functors. In particular, they send iterated cone decompositions to iterated cone decompositions. They are also triangulated in the argument of the kernel.
\begin{prop}
    Let $F, F'\in D^b\Coh(X_1\times X_2)$ be Fourier-Mukai kernels, and let $F''=\cone(f\colon F\to F')$.
    For every $E\in D^b\Coh(X_1)$, there exists a decomposition
    \[\Phi_{F''}(E)= \cone(\Phi_{F}(E)\to \Phi_{F'}(E)).\]
    \label{lem:FMofCone}
\end{prop}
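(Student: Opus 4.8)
The statement to prove is \cref{lem:FMofCone}: given Fourier--Mukai kernels $F, F' \in D^b\Coh(X_1 \times X_2)$ and $F'' = \cone(f\colon F \to F')$, one has $\Phi_{F''}(E) \cong \cone(\Phi_F(E) \to \Phi_{F'}(E))$ for every $E \in D^b\Coh(X_1)$.

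\medskip

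The plan is to unwind the definition of the Fourier--Mukai transform and observe that $\Phi_{(-)}(E)$ is obtained from the kernel by applying a composition of functors each of which is exact (triangulated). Concretely, fix $E \in D^b\Coh(X_1)$ and consider the sequence of functors $D^b\Coh(X_1 \times X_2) \to D^b\Coh(X_1 \times X_2) \to D^b\Coh(X_1\times X_2) \to D^b\Coh(X_2)$ given by $G \mapsto G \otimes^{\mathbf L} \pi_1^* E \mapsto \mathbf{R}(\pi_2)_*(G \otimes^{\mathbf L} \pi_1^* E)$; here I would note that $\pi_1^* E$ is a fixed object, so $- \otimes^{\mathbf L} \pi_1^* E$ is a triangulated endofunctor, and $\mathbf{R}(\pi_2)_*$ is triangulated. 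First I would apply the distinguished triangle $F \xrightarrow{f} F' \to F'' \xrightarrow{+1}$ in $D^b\Coh(X_1\times X_2)$. Since derived tensor with $\pi_1^*E$ is triangulated, applying it yields a distinguished triangle $F \otimes^{\mathbf L}\pi_1^* E \to F' \otimes^{\mathbf L}\pi_1^* E \to F'' \otimes^{\mathbf L}\pi_1^* E \xrightarrow{+1}$. Then applying the triangulated functor $\mathbf{R}(\pi_2)_*$ gives a distinguished triangle $\Phi_F(E) \to \Phi_{F'}(E) \to \Phi_{F''}(E) \xrightarrow{+1}$, which is exactly the assertion that $\Phi_{F''}(E) \cong \cone(\Phi_F(E)\to \Phi_{F'}(E))$.

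\medskip

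The key steps in order are therefore: (1) recall that all functors in sight are implicitly derived, so that $\pi_1^*$, $\otimes$, and $(\pi_2)_*$ denote $\mathbf{L}\pi_1^*$, $\otimes^{\mathbf L}$, and $\mathbf R(\pi_2)_*$; (2) observe that for fixed $E$, both $- \otimes^{\mathbf L} \mathbf{L}\pi_1^* E$ and $\mathbf{R}(\pi_2)_*$ are triangulated (exact) functors of triangulated categories, hence preserve distinguished triangles and in particular send cones to cones; (3) apply these two functors successively to the defining triangle of $F''$; (4) read off the resulting triangle. The properness of $X_1$ is what guarantees $\mathbf{R}(\pi_2)_*$ lands in $D^b\Coh(X_2)$, but this is already built into the standing hypotheses on Fourier--Mukai kernels and need not be reproved.

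\medskip

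I do not expect any genuine obstacle here: the statement is essentially a formal consequence of the fact that a Fourier--Mukai transform, regarded as a functor in its \emph{kernel} variable, is a composition of triangulated functors, together with the elementary fact that triangulated functors commute with the formation of cones up to the canonical isomorphism. The only point requiring a word of care is functoriality/naturality --- that the morphism $\Phi_F(E) \to \Phi_{F'}(E)$ appearing in the conclusion is indeed $\Phi_f(E)$, i.e.\ the image of $f$ under the transform --- but this is immediate from applying the functors to the morphism $f$ rather than just to its target, since $- \otimes^{\mathbf L}\mathbf{L}\pi_1^*E$ and $\mathbf R(\pi_2)_*$ act on morphisms. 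Hence the whole proof is a short paragraph, and no delicate convergence or boundedness issue arises beyond what is already assumed.
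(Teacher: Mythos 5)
Your proof is correct and is exactly the reasoning the paper relies on: the paper states this proposition without a written proof, justifying it only by the preceding remark that Fourier--Mukai transforms are ``triangulated in the argument of the kernel,'' and your argument simply spells out that $\mathbf{R}(\pi_2)_*\bigl((-)\otimes^{\mathbf L}\mathbf{L}\pi_1^*E\bigr)$ is a composition of triangulated functors and hence carries the defining triangle of $F''$ to the desired one. No gap; nothing further is needed.
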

\begin{prop}
    Suppose that $F=\pi_1^*F_1\tensor \pi_2^*F_2$. Then 
    \[\Phi_F(E)= F_2\tensor \Gamma(F_1\tensor E)\tensor \Gamma(\mathcal O_{X_1}),\]
    where $\Gamma$ is the global sections functor.
    \label{prop:FMofExternalTensor}
\end{prop}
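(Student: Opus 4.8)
The plan is to prove \cref{prop:FMofExternalTensor} by unwinding the definition of $\Phi_F$ and reducing to two standard facts: the (derived) projection formula and flat base change. As everywhere in this section, all functors are derived.

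First I would expand
\[\Phi_F(E)=(\pi_2)_*\bigl(\pi_1^*F_1\tensor \pi_2^*F_2\tensor \pi_1^*E\bigr),\]
and use that $\pi_1^*$ is symmetric monoidal to rewrite the integrand as $\pi_1^*(F_1\tensor E)\tensor \pi_2^*F_2$. Applying the projection formula to the morphism $\pi_2$ then pulls $\pi_2^*F_2$ out of the pushforward, giving $\Phi_F(E)\simeq (\pi_2)_*\bigl(\pi_1^*(F_1\tensor E)\bigr)\tensor F_2$. It remains to evaluate $(\pi_2)_*\pi_1^*(F_1\tensor E)$, and for this I would use the Cartesian square exhibiting $X_1\times X_2$ as the fiber product of $X_1\to \Spec\kk$ and $X_2\to\Spec\kk$, in which $\pi_1$ is the base change of $X_1\to\Spec\kk$ and $\pi_2$ the base change of $X_2\to\Spec\kk$. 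Since $X_2\to\Spec\kk$ is flat, flat base change identifies $(\pi_2)_*\pi_1^*(F_1\tensor E)$ with the pullback to $X_2$ of $\Gamma(X_1,F_1\tensor E)$, that is, with the complex of $\kk$-vector spaces $\Gamma(X_1,F_1\tensor E)$ tensored up along $\kk\to\mathcal O_{X_2}$. Substituting back yields the asserted formula (under the paper's conventions for external tensor products over $\kk$; note that the factor $\Gamma(\mathcal O_{X_1})$ is $\simeq \kk$ — hence harmless — precisely when $X_1$ has vanishing higher structure-sheaf cohomology, as for the smooth proper toric $X_1$ of interest). I would also record the consequence actually used later: since $X_1$ is proper, $\Gamma(X_1,F_1\tensor E)$ is a bounded complex with finite-dimensional cohomology, hence quasi-isomorphic to the direct sum of its shifted cohomology groups, so $\Phi_F(E)$ is a finite direct sum of shifts of $F_2$; in particular $\Phi_F(E)\in\langle F_2\rangle_0$.

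The hard part is the base-change step: one must invoke the \emph{derived} flat base change theorem rather than any naive version, and check that $\pi_2$ is proper (being the base change of the proper morphism $X_1\to\Spec\kk$) so that $(\pi_2)_*$ preserves $D^b\Coh$ and all objects in sight stay in the bounded coherent category. The monoidality of $\pi_1^*$ and the projection formula are routine once one works consistently with derived functors, and the only genuine geometric input beyond them is the properness of $X_1$ over $\kk$, which is part of the definition of a Fourier--Mukai kernel.
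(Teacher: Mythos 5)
Your argument is correct; the paper states \cref{prop:FMofExternalTensor} without proof, and the route you take (monoidality of $\pi_1^*$, the derived projection formula for $\pi_2$, then derived flat base change along the Cartesian square over $\Spec\kk$ — equivalently, K\"unneth) is the standard and evidently intended one. You are also right to flag the factor $\Gamma(\mathcal O_{X_1})$: the computation gives $\Phi_F(E)\simeq F_2\tensor_\kk\Gamma(F_1\tensor E)$ with no such factor, so the displayed formula is literally an isomorphism only when $\Gamma(\mathcal O_{X_1})\simeq\kk$, which holds for the smooth proper toric varieties where it is applied and is in any case harmless for the generation argument, where only membership in $\langle F_2\rangle_0$ matters.
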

From here on, given sheaves $E_1\in D^b\Coh(X_1), E_2\in D^b\Coh(X_2)$, we write $E_1\boxtimes E_2:= \pi_1^*E_1\otimes \pi_2^*E_2$.
\begin{prop}[Lemma 2.16, \cite{ballard2012hochschild}]
    Consider a proper smooth variety $X$. Suppose that there exists a subcategory $\mathcal G=\langle G'_1\boxtimes G_1, \ldots G'_n\boxtimes G_n\rangle_0 \subset D^b\Coh(X\times X)$, where $G_i, G_i'\in D^b\Coh(X)$ and $G_i$ are irreducible. If $\gentime_{\mathcal G}(\mathcal O_\Delta)=k$ then $\Rdim(D^b\Coh(X))\leq k$.
\end{prop}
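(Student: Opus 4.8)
The statement to prove is \cite[Lemma 2.16]{ballard2012hochschild}, so the plan is essentially to reconstruct that argument using the Fourier-Mukai formalism just recalled. The core idea is that the Fourier-Mukai transform $E \mapsto \Phi_F(E)$ with $E$ fixed behaves like a triangulated functor in the \emph{kernel} $F$ (\cref{lem:FMofCone}), so a cone decomposition of the diagonal kernel $\mathcal{O}_\Delta$ in $D^b\Coh(X\times X)$ transports to a cone decomposition of $\Phi_{\mathcal O_\Delta}(E) = E$ in $D^b\Coh(X)$ for every $E$.

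First I would unwind the hypothesis: $\gentime_{\mathcal G}(\mathcal O_\Delta) = k$ means that $\mathcal O_\Delta \in \langle \mathcal G \rangle_k$, where $\mathcal G = \langle G_1'\boxtimes G_1, \dots, G_n'\boxtimes G_n\rangle_0$. Chasing through the definition of $\langle - \rangle_k$, this means $\mathcal O_\Delta$ is a summand of an iterated cone $[H_{k+1} \to H_k \to \cdots \to H_1]$ where each $H_j$ lies in $\langle \mathcal G \rangle_0$, i.e. each $H_j$ is a summand of a finite direct sum of shifts of the $G_i' \boxtimes G_i$. Now fix an arbitrary $E \in D^b\Coh(X)$ and apply $\Phi_{(-)}(E)$, viewed as a functor in the kernel variable. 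By \cref{lem:FMofCone} it sends cones to cones and it obviously commutes with shifts, finite direct sums, and passing to summands; hence $E \cong \Phi_{\mathcal O_\Delta}(E)$ is a summand of the iterated cone $[\Phi_{H_{k+1}}(E) \to \cdots \to \Phi_{H_1}(E)]$. The point is then to control each $\Phi_{H_j}(E)$: by \cref{prop:FMofExternalTensor}, $\Phi_{G_i'\boxtimes G_i}(E) = G_i \otimes \Gamma(G_i' \otimes E) \otimes \Gamma(\mathcal O_X)$, which is a (possibly infinite-rank, but after taking cohomology finite-dimensional since $X$ is proper) direct sum of shifts of the single object $G_i$. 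Since the $G_i$ are irreducible, $\Phi_{G_i'\boxtimes G_i}(E) \in \langle G \rangle_0$ where $G = \bigoplus_i G_i$; taking summands and finite sums, each $\Phi_{H_j}(E) \in \langle G \rangle_0$ as well.

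Putting this together, $E$ is a summand of an iterated cone of length $k$ (i.e. $k+1$ terms, $k$ cones) with all terms in $\langle G \rangle_0$, so $E \in \langle G \rangle_k$. As $E$ was arbitrary, $\gentime_G(D^b\Coh(X)) \le k$, and therefore $\Rdim(D^b\Coh(X)) \le k$ by definition of Rouquier dimension. The one genuine technical point — the place I expect to spend the most care — is the finiteness and boundedness needed to conclude $\Phi_{G_i'\boxtimes G_i}(E) \in \langle G_i\rangle_0$: a priori $\Gamma(G_i'\otimes E)$ is a complex of vector spaces, but since $X$ is proper (and $G_i', E$ bounded coherent), its total cohomology is finite-dimensional and bounded, so in $D^b(\kk)$ it is quasi-isomorphic to a finite direct sum of shifts of $\kk$, making the tensor product a finite direct sum of shifts of $G_i$. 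Everything else is a formal diagram chase through the definitions of $\langle - \rangle_k$ and the triangulated-functoriality of Fourier-Mukai transforms; the hypothesis that the kernels generating $\mathcal G$ split as external tensor products $G_i'\boxtimes G_i$ is exactly what makes \cref{prop:FMofExternalTensor} applicable and hence what keeps the output within $\langle G\rangle_0$ rather than a larger subcategory.
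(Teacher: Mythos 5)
Your proposal is correct and follows essentially the same route as the paper: decompose $\mathcal O_\Delta$ as (a summand of) an iterated cone of objects built from the $G_i'\boxtimes G_i$, apply the Fourier--Mukai transform in the kernel variable using its triangulated behavior there, and use the external-tensor formula to land each term in $\langle \bigoplus_i G_i\rangle_0$. Your explicit remark that properness is what guarantees $\Gamma(G_i'\otimes E)$ has bounded finite-dimensional cohomology is a point the paper leaves implicit, and is worth making.
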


\begin{proof}
     We prove that  $\mathcal G_X:=\langle G_1', \ldots, G_n'\rangle_0$ generates any object $E \in D^b\Coh(X)$ in time $k$. 
     By assumption, we can write $\mathcal O_\Delta\cong [\hat F_k\to \cdots \to \hat F_0]$, where each $\hat F_k=\bigoplus_l F_{kl}\boxtimes F_{kl}'$ is a shift of a direct summand of one of the objects $G_i\tensor G_i'$. 
    By \cref{lem:FMofCone}, we can re-express
     \begin{align*}
        E=\Phi_{\mathcal O_\Delta}(E)=& [\Phi_{\hat F_n}\circ E \to \cdots \to \Phi_{\hat F_1}\circ E].
    \intertext{At each term in the iterated mapping cone we apply \cref{prop:FMofExternalTensor} to obtain}
        \Phi_{\hat F_k}(E)=&\bigoplus_l\Gamma(\mathcal O_X)\tensor  \Gamma(G'_{kl}\tensor F)\tensor \Gamma(\mathcal O_X)\tensor G'_{kl}
        \end{align*}
    from which we conclude that $\Phi_{F_k}(E)\in \langle G_1', \ldots, G_n'\rangle_0$. From this, it follows that $E$ is generated in time $k$ by $\mathcal G_X$. 
\end{proof}
\begin{rem}
    Since the only constructions used in the proof are the exactness of the six operations, the statement also holds for other examples (e.g., smooth proper quotient stacks). Additionally, whenever $X$ admits a smooth compactification $\bar X$, the statement holds for $X$ as $D^b\Coh(X)$ is a localization of $D^b\Coh(\bar X)$ and localization does not increase Rouquier dimension. 
\end{rem}
Every line bundle $E$ on the product of smooth toric stacks $\X \times \X$ is of the form $F'\boxtimes F$, and the diagonal is a toric substack. By applying \cref{thm:resolutionOfSubstacks}, we obtain:
\begin{cor}
        If $\X$ is a toric stack covered by smooth stacky charts, $\Rdim(D^b\Coh(\X))\leq \dim(\X)$.
\end{cor}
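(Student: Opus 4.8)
The plan is to feed the resolution of the diagonal from \cref{maincor:diagonal} (that is, \cref{thm:resolutionOfSubstacks} applied to $\Delta \subset \X \times \X$) into the generation-time criterion \cite[Lemma~2.16]{ballard2012hochschild} recorded above, reducing to the proper case by localization. So suppose first that $\X$ is proper. By \cref{maincor:diagonal} there is a resolution $\eps\colon C_\bullet \to \mathcal O_\Delta$ of the structure sheaf of the diagonal in $\X \times \X$ of length $\dim(\X)$, whose terms $C_k$ are direct sums of sheaves in the Thomsen collection on $\X\times \X$. By the remark following \cref{maincor:diagonal}, every sheaf in that collection has the form $\mathcal F \boxtimes \mathcal F'$ with $\mathcal F, \mathcal F' \in \Theta$; in particular $\mathcal F$ and $\mathcal F'$ are line bundles, hence indecomposable, which is precisely the hypothesis imposed on the second tensor factors in \cite[Lemma~2.16]{ballard2012hochschild}.

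Next I would package this as a bound on generation time. Set $\mathcal G = \langle\, \mathcal F \boxtimes \mathcal F' : \mathcal F, \mathcal F' \in \Theta \,\rangle_0 \subseteq D^b\Coh(\X\times \X)$. Since $\langle \mathcal G \rangle_0$ is closed under finite direct sums, shifts and summands, each term $C_k$ lies in $\langle \mathcal G \rangle_0$, and the resolution realizes $\mathcal O_\Delta$ as an iterated mapping cone $[\,C_{\dim(\X)} \to \cdots \to C_0\,]$ with every $C_k \in \langle \mathcal G \rangle_0$; hence $\gentime_{\mathcal G}(\mathcal O_\Delta) \le \dim(\X)$. Applying \cite[Lemma~2.16]{ballard2012hochschild}, in the form extended to smooth proper quotient stacks by the remark immediately following it, then gives $\Rdim(D^b\Coh(\X)) \le \dim(\X)$ whenever $\X$ is proper.

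Finally I would remove the properness hypothesis, and this is the only step I expect to require any care, since all of the substance is already contained in \cref{thm:resolutionOfSubstacks} and the discrete Morse machinery of \cref{app:morseForQuivers}. Every toric stack covered by smooth stacky charts admits an open immersion into a smooth \emph{complete} toric stack $\bar\X$ again covered by smooth stacky charts: complete the fan of $\X$ to a complete fan on the same lattice and then apply toric resolution of singularities, which subdivides only the singular cones and therefore leaves the cones of $\X$ untouched and does not alter $\ul\beta$; thus $\X$ sits in $\bar\X$ as an open substack, and the finite group attached to each maximal cone of $\bar\X$ is $\coker(\ul\beta)$, whose order is invertible in $\kk$ by hypothesis. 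Then $D^b\Coh(\X)$ is a Verdier quotient of $D^b\Coh(\bar\X)$, so by the second part of the remark following \cite[Lemma~2.16]{ballard2012hochschild} (Rouquier dimension does not increase under localization) we conclude $\Rdim(D^b\Coh(\X)) \le \Rdim(D^b\Coh(\bar\X)) \le \dim(\bar\X) = \dim(\X)$.
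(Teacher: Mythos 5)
Your proposal is correct and follows essentially the same route as the paper: apply the resolution of the diagonal (\cref{maincor:diagonal}, i.e.\ \cref{thm:resolutionOfSubstacks} for $\Delta\subset\X\times\X$) to bound $\gentime_{\mathcal G}(\mathcal O_\Delta)$ by $\dim(\X)$, invoke the stacky extension of \cite[Lemma~2.16]{ballard2012hochschild} proved just above, and handle the non-proper case by toric compactification plus the fact that Rouquier dimension does not increase under localization. The only difference is that you spell out the compactification step in more detail than the paper does, which is harmless.
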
 

\printbibliography 

\Addresses

\end{document}